\title{A generalisation of the toric resolution of curves}
\author{Simone Muselli}
\address{University of Bristol, Bristol, UK.}
\subjclass[2010]{14H20 (Primary), 14H10, 14M25, 14Q05 (Secondary). 
Keywords: Models of curves, toric varieties, Newton polygons.}
\begin{document}

\maketitle
\begin{abstract}
    Let $k$ be a perfect field and let $C_0:f=0$ be a smooth curve in the torus $\G_{m,k}^2$. Let $\T_\Delta$ be the toric variety associated to the Newton polygon of $f$. Extending the toric resolution of $C_0$ on $\T_\Delta$, we construct an explicit model over $k$ of the smooth completion of $C_0$. Such a model exists for any smooth projective curve and can be described via a combinatorial algorithm using an iterative construction of Newton polygons. 
\end{abstract}

\section{Introduction}
Let $U$ be any smooth affine curve defined over a perfect field $k$. Up to isomorphism there exists a unique smooth projective curve $C/k$ birational to $U$, called the \textit{smooth completion} of $U$. 
In this paper we study the problem of finding explicit \textit{models of $C$ over $k$}, i.e\ curves $\tilde C$ isomorphic to $C$ over $k$. More precisely, we present an algorithm to construct a model over $k$ of smooth projective curves which are birational to a smooth curve $C_0\subset\G_{m,k}^2$.
In fact, every smooth projective curve is the smooth completion of a curve $C_0$ as above (Corollary \ref{cor:smoothopenmodel}). Note that a curve is not required to be connected in this work (see conventions and notations in \S\ref{subsec:OutlineandNotation}).

% Let $k$ be a perfect field. The main objects of this paper are curves over $k$, i.e.\ $k$-schemes of finite type, pure of dimension $1$. 
% Let $U$ be any smooth affine curve over $k$. Up to isomorphism there exists a unique smooth projective curve $C$ birational to $U$, called the \textit{smooth completion} of $U$. In particular, every smooth projective curve is uniquely determined by any dense open subset. This is one of the reasons for which we are interested in describing the structure of such a curve $C$.
% The explicit construction of \textit{models of $C$ over $k$}, i.e\ curves $\tilde C$ isomorphic to $C$ over $k$, is of great importance in Algebraic Geometry. In this paper we describe an explicit model over $k$ of smooth projective curves $C$ which have a dense open $U$ isomorphic to a closed subscheme $C_0$ of the torus $\G_{m,k}^2$.
% In fact, every smooth projective curve is the smooth completion of a curve $C_0$ as above (Corollary \ref{cor:smoothopenmodel}).

\subsection{Overview}\label{subsec:Overview}
When it exists, a \textit{Baker's model} of a smooth projective curve $C/k$ is an explicit model of $C$ over $k$. It is constructed via a toric resolution of a smooth curve $C_0\subset \G_{m,k}^2$, birational to $C$. 
A Baker's model helps in studying the geometry of $C$. For example, it gives combinatorial interpretations of the genus, the gonality, the Clifford index and the Clifford degree \cite{CC}. 
Let us give a brief description of this model.

For any $C$ and $C_0$ as above,
let $f=\sum_{(i,j)\in\Z^2}c_{ij}\,x^iy^j\in k[x^{\pm 1},y^{\pm 1}]$ be a Laurent polynomial defining 
$C_0:f=0$ in the torus $\G_{m,k}^2$. Let $\Delta$ be the Newton polygon of $f$. A classical construction associates a $2$-dimensional toric variety $\T_\Delta$ to the integral polytope $\Delta$. The Zariski closure $C_1$ of $C_0$ in $\T_\Delta$ is called the \textit{completion of $C_0$ with respect to its Newton polygon}. It is an easy-to-describe projective curve, whose $C_0$ is a dense open. 
The construction of $C_1$ from $C_0$ is said \textit{toric resolution} on $\T_\Delta$. 
If $C_1$ is regular, it is isomorphic to 
$C$ and
is said a Baker's model of $C$.
A smooth projective curve does not always admit a Baker's model (see Appendix \ref{appendix:ExistenceBakerModel}). 
Its existence is closely related to another interesting property: the nondegeneracy. 

For any face $\lambda$ of $\Delta$ (of any dimension) let $f_\lambda=\sum_{(i,j)\in\Z^2\cap\lambda}c_{ij}\,x^iy^j$. The Laurent polynomial $f$ is nondegenerate if for every face $\lambda$ of $\Delta$ the system of equations $f_\lambda=x\frac{\partial f_\lambda}{\partial x}=y\frac{\partial f_\lambda}{\partial y}=0$ has no solutions in $(\bar k^\times)^2$. 
The nondegeneracy of $f$ has a geometric interpretation in terms of $C_1$. From the explicit description of $C_1$, there is a canonical way to endow the subset $C_1\setminus C_0$ with a structure of closed subscheme. We say $C_1$ is \textit{outer regular} if $C_1\setminus C_0$ is smooth. One can prove that $f$ is nondegenerate if and only if $C_1$ is outer regular. This is a sufficient condition for the regularity of $C_1$.

A smooth projective curve $C$ is said nondegenerate if it admits an outer regular Baker's model.
Nondegenerate curves have several applications. They have turned out to be useful in singular theory \cite{Kou} and in the theory of sparse resultants \cite{GKZ}, as well as for studying specific classes of curves \cite{Mik},\cite{BP},\cite{KWZ}. Over finite fields, nondegenerate curves have also been used in $p$-adic cohomology theory \cite{AS}, in the computation of zeta-functions \cite{CDV} and in the study of the torsion subgroup of their own Jacobians \cite{CST}. Unfortunately, nondegenerate curves are rare, especially for high genera \cite{CV1}. In fact, recall that even a Baker's model may not exist.

Let $C/k$ be any smooth projective curve. In this paper we construct an explicit model $C_n$ of $C$ over $k$, called \textit{generalised Baker's model} (Definition \ref{defn:GeneralisedBakerModel}), extending the classical toric resolution without losing the connection with Newton polygons. \textit{Every} smooth projective curve $C$ has a generalised Baker's model and it can be constructed from \textit{any} smooth curve $C_0\subset \G_{m,k}^2$ birational to $C$. Similarly to the classical case, the subset $C_n\setminus C_0$ will naturally be equipped with a structure of closed subscheme. We say that $C_n$ is \textit{outer regular} if the subscheme $C_n\setminus C_0$ is smooth. Although not all smooth projective curves are nondegenerate, they always have an outer regular generalised Baker's model (Corollary \ref{cor:OuterRegBakerModExistenceGeneral}). Let us describe our approach briefly. 

For any smooth curve $C_0\subset\G_{m,k}^2$, we construct a sequence of proper birational morphisms of curves
\begin{equation}\label{eqn:Thesequence}
\dots\xrightarrow{s_{n+1}} C_{n+1}\xrightarrow{s_n} C_n\xrightarrow{s_{n-1}}\dots\xrightarrow{s_1} C_1,  
\end{equation}
where $C_1$ is the completion of $C_0$ with respect to its Newton polygon. The curves $C_n$ are birational to $C_0$ and explicitly constructed over an algebraic closure $\bar k/k$ via an iterative construction of Newton polygons. We also describe the action of the absolute Galois group $\Gal(\bar k/k)$ on $C_n\times_k\bar k$. Note that since $C_1$ is projective, the curves $C_n$ will be projective as well. If $C_n$ is regular, for some $n$, then it is a model over $k$ of $C$. 
Such $C_n$ is what we call a generalised Baker's model of $C$. Thus the following theorem is a key result of our paper.

\begin{thm}[Theorems \ref{thm:secondmodel}, \ref{thm:Galoismodel}]\label{thm:FirstTheorem}
For a sufficiently large $n$, the curve $C_n$ is outer regular.
\end{thm}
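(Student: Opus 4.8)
The plan is to localise the statement at the finite subscheme $B_n:=C_n\setminus C_0$ and to control the effect of the morphisms $s_n$ on an integer invariant built out of the local Newton polygons there. Since $C_0$ is smooth and lies in $C_n$ as a dense open subscheme, $C_n$ can fail to be outer regular only along $B_n$, which is finite over $k$. By construction the morphisms $s_n$ are isomorphisms over $C_0$ and over the smooth points of $B_n$, and the whole tower (\ref{eqn:Thesequence}) is equivariant for $\Gal(\bar k/k)$; as outer regularity is the smoothness of $B_n$, a condition insensitive to base change, it suffices to argue over $\bar k$, the passage back to $k$ being Theorem~\ref{thm:Galoismodel} once Theorem~\ref{thm:secondmodel} is in hand. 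So fix $P\in B_n(\bar k)$, lying on one or two components of the boundary divisor of the ambient surface of $C_n$. In local toric coordinates in which these components are coordinate hyperplanes, $C_n$ is cut out near $P$ by a power series $g_P$, the step $s_n$ is, in a neighbourhood of $P$, exactly the toric resolution of $\{g_P=0\}$ with respect to the Newton polygon $\Gamma_P$ of $g_P$, and a direct inspection of this local model shows that $C_n$ is outer regular at $P$ precisely when $\Gamma_P$ is trivial, i.e.\ when $g_P$ cuts out a smooth germ transversal to each boundary component through $P$.

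To each $P\in B_n(\bar k)$ I would attach a non-negative integer $\mu_P$ measuring the non-triviality of $\Gamma_P$ — for instance combining the local delta-invariant of $C_n$ at $P$ with the boundary-tangency data of $C_n$ at $P$, both read off from $\Gamma_P$ — normalised so that $\mu_P=0$ if and only if $C_n$ is outer regular at $P$, and set $\mu(C_n):=\sum_{P\in B_n(\bar k)}\mu_P\in\Z_{\ge 0}$. Then $\mu(C_n)=0$ if and only if $C_n$ is outer regular, so the theorem follows once one knows that $\mu(C_{n+1})<\mu(C_n)$ whenever $\mu(C_n)>0$. Since $s_n$ is an isomorphism near every trivial boundary point and the local models at distinct points of $B_n$ do not interact, this reduces to the purely local assertion that, if $\Gamma_P$ is non-trivial, the toric resolution of $\{g_P=0\}$ replaces $P$ by finitely many new boundary points $Q_1,\dots,Q_s$ with $\sum_i\mu_{Q_i}<\mu_P$.

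This local decrease is the heart of the argument and the main obstacle. When $g_P$ is Newton-nondegenerate it is immediate: a single toric modification already makes the strict transform smooth and transversal to the new boundary, so all $\mu_{Q_i}$ vanish while $\mu_P>0$. In general one must compute, chart by chart of the toric resolution of $\Gamma_P$, the new equations $g_{Q_i}$ and their Newton polygons, and show that the complexity genuinely drops; the delicate cases are edges of $\Gamma_P$ of non-primitive direction or with degenerate edge polynomial, where one modification need not complete the resolution but, as in the Newton–Puiseux algorithm, the multiplicities governing the $\mu_{Q_i}$ are strictly smaller than those governing $\mu_P$. The structural reason this cannot continue forever is that $C_n$ is proper and birational to the \emph{smooth} curve $C$, whose normalisation morphism $C\to C_n$ resolves every boundary germ after finitely many blow-ups and dominates the entire tower (\ref{eqn:Thesequence}); every $s_n$ with $\mu(C_n)>0$ is a non-trivial step of this kind.

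Granting the local inequality, the sequence $\mu(C_1)\ge\mu(C_2)\ge\cdots$ of non-negative integers is eventually strictly decreasing and hence reaches $0$, at which point $C_n$ is outer regular; this is exactly the assertion of the theorem. Tracking the drop quantitatively in the degenerate case, via the explicit description of the $\Gamma_{Q_i}$, would in addition yield an explicit $n$ after which outer regularity holds, although only the qualitative statement is needed here.
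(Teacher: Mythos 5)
There is a genuine gap: the entire weight of the theorem is termination, and your argument leaves precisely that step unproved. You never actually define the invariant $\mu_P$ (only ``for instance'' combining a delta-invariant with tangency data), and the crucial inequality $\sum_i\mu_{Q_i}<\mu_P$ is asserted, with the degenerate cases waved at ``as in the Newton--Puiseux algorithm''. This is exactly where a naive choice fails: outer regularity at a boundary point is strictly stronger than regularity of $C_n$ there (see the example in \S\ref{sec:Example}, where $C_3$ is regular but not outer regular, and $C_{\gamma_5}$ is regular although $f|_{\gamma_5}$ is inseparable), so the local delta-invariant of $C_n$ at $P$ vanishes at steps where work still remains, and it is not clear that any one-step toric modification strictly decreases a tangency-type measure without a careful case analysis. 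Your fallback ``structural reason'' --- that the normalisation $C\to C_n$ dominates the tower and resolves everything after finitely many blow-ups --- does not by itself show that \emph{this particular} sequence of modifications makes progress at every step; that is the content to be proved, not a given. Finally, the Galois-equivariance of the tower, which you take for granted, is not automatic: it requires the specific choices of $\tilde{\m G}_p$, of the matrices $M_\beta$, and of the normalisation of $\m F_\gamma$ made in \S\ref{sec:GaloisBaker} (Theorem \ref{thm:Galoisequivariantpolynomials} and Remark \ref{rem:Galoisconditionscanbeachieved}).

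For comparison, the paper avoids local multiplicity bookkeeping entirely and uses a global invariant in two stages. First (Theorem \ref{thm:eventuallyregular}), since each $s_n$ is finite and proper birational, there is an exact sequence $0\to\O_{C_n}\to s_n^\ast\O_{C_{n+1}}\to\m S_n\to 0$ with $\m S_n$ a skyscraper sheaf, so $p_a(C_{n+1})=p_a(C_n)-\dim_k H^0(C_n,\m S_n)$; as $p_a(C_n)\geq 1-r$ for $r$ the number of irreducible components, the genus sequence is eventually constant, forcing the $s_n$ to be isomorphisms and hence $\singpointnt_n\subset\Reg(C_n)$. Second (Theorem \ref{thm:eventually_pn_in_C0}), once the resolved points are regular on $C_n$, Lemma \ref{lem:snexceptionallocus} guarantees the new boundary charts $\bar C_\gamma$, $\gamma\in\Sigma_p$, are regular, so the \emph{number} of boundary points singular on some $\bar C_\gamma$ drops by exactly $|\singpointnt_n|$ at each step; this integer-valued count bounded below by $0$ forces $\singpointnt_n=\varnothing$ eventually, which is outer regularity (Theorem \ref{thm:secondmodel}), and the descent to $k$ is Theorem \ref{thm:Galoismodel}. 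If you want to salvage your local approach, you would need to pin down $\mu_P$ precisely and prove the strict one-step decrease for the paper's specific modification, including the case where $P$ is a regular point of $C_n$ lying singularly on $\bar C_\gamma$ --- at which point you would in effect be reproving Lemma \ref{lem:snexceptionallocus} and the counting argument, plus a substitute for the genus bound.
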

From the explicit construction of an outer regular generalised Baker's model one can also describe the set $C(\bar k)\setminus C_0(\bar k)$. The result that is obtained extends the known one for nondegenerate curves. We will state them in \S\ref{subsec:IntroductionTheorem}, in the case of geometrically connected curves. In the next subsection we discuss one of the main motivations of this work: the study of regular models of curves over discrete valuation rings.

\subsection{Models of curves over discrete valuation rings}
Let $K$ be a complete discretely valued field with ring of integers $O_K$ and residue field $k$. 
Let $C/K$ be a projective curve. A \textit{model of $C$ over $O_K$} is a proper flat scheme $\m C\rightarrow\Spec O_K$ of dimension $2$ such that its generic fibre $\m C_\eta=\m C\times_{O_K} K$ is a model of $C$ over $K$.
The study of regular models over $O_K$ of geometrically connected smooth projective curves $C$ is of great interest in Arithmetic Geometry. The understanding of such models is essential for describing the arithmetic of $C$ and leads to the computation of important objects, such as Tamagawa numbers and integral differentials.

Let $C_0\subset \G_{m,K}^2$ be an affine curve given by $f(x,y)=0$ and let $C_1$ be the completion of $C_0$ with respect to its Newton polygon $\Delta$.
Via a toric resolution approach, \cite{Dok} constructs a model of $C_1$ over $O_K$, denoted $\m C_\Delta$. This is an innovative result, able to construct regular models of curves over discrete valuation rings in cases that were previously hard to tackle (such as the case of curves with wildly potential semistable reduction). However, this approach has two major limits. First, it can construct a model of a smooth projective curve $C$ only if $C$ admits a Baker's model. Second, although we are mainly interested in regular models, $\m C_{\Delta}$ may be singular. Let us discuss more in detail this second aspect.

The scheme $\m C_\Delta$ is given as the Zariski closure of $C_0$ in a toric scheme $X_\Sigma$. The ambient space $X_\Sigma$ is constructed from $\Delta$, taking into account also the valuations of the coefficients of $f$. The connection of $\m C_\Delta$ with toric resolution of curves goes beyond its generic fibre. 
Let $\m C_{\Delta,s}^\mathrm{red}$ be the reduced closed subscheme with the same underlying topological space of the special fibre $\m C_{\Delta,s}$ of $\m C_{\Delta}$. Then $\m C_{\Delta,s}^\mathrm{red}$ can be decomposed in principal components $\bar X_F$ and chains of $\P^1$s. The components $\bar X_F$ are the completions of curves $X_F\subset\G_{m,k}^2$ with respect to their Newton polygons. One can see that if all $\bar X_F$ are outer regular, then $\m C_{\Delta}$ is regular. Thus the fact that not every projective curve has an outer regular Baker's model is the main obstruction for the regularity of $\m C_{\Delta}$.

Therefore the existence of outer regular generalised Baker's models, subject of this paper, has the potential to extend Dokchitser's result to construct regular models of all smooth projective curves. Although such an extension is highly non-trivial, in \cite{Mus} we can already see an implicit application of generalised Baker's model towards that goal. Let us spend a few lines explaining why.
In \cite{Mus} the author constructs a regular model $\m C$ over $O_K$ for a wide class of hyperelliptic curves $C/K$ as follows. Let $C: y^2=h(x)$ be a hyperelliptic curve in this class. One considers smooth curves $C_0^w\subset\G_{m,K}^2$, for $w\in W\subseteq K$, given by $y^2=h(x+w)$ and so birational to $C$. For each $w\in W$, let $\m C_{\Delta^w}$ be the model of $C$ constructed from $C_0^w$ by \cite{Dok}. The regular model $\m C$ is then obtained by glueing regular open subschemes $\mathring{\m C}_{\Delta^w}$ of $\m C_{\Delta^w}$, containing all points of codimension $1$. In particular, for any principal component $\bar X_F$ of $\m C_{\Delta^w,s}^\mathrm{red}$ there exists a closed subscheme $\Gamma_\t$ of $\m C_s=\m C\times_{O_K} k$, birational to $\bar X_F$. The regularity of $\m C$ follows from the fact that $\Gamma_\t$ is an outer regular generalised Baker's model of the smooth completion of $X_F$ (this can be checked by comparing the description of $\Gamma_\t$ in \cite[\S5]{Mus} and the construction in \S\ref{sec:Superelliptic} of an outer regular generalised Baker's model for curves given by superelliptic equations).

\subsection{Outer regular generalised Baker's model}\label{subsec:IntroductionTheorem}
Let $k$ be a perfect field with algebraic closure $\bar k$. Let $f\in k[x^{\pm 1},y^{\pm 1}]$ such that $C_0:f=0$ is a geometrically connected smooth curve over $\G_{m,k}^2$, and let $\Delta$ be the Newton polygon of $f$. If $f$ is nondegenerate, then the completion $C_1$ of $C_0$ with respect to $\Delta$ is outer regular. In particular, $C_1$ is a Baker's model of the smooth completion $C$ of $C_0$. From $C_1$ we can describe the points in $C\setminus C_0$ in an elementary way as follows. 
%Our presentation follows from the approach used in the proof of \cite[Theorem 2.2]{Dok}.

\begin{defn}\label{defn:normalvector}
For any edge $\ell$ of an integral $2$-dimensional polytope $\m{P}$, consider the unique affine function $\ell^\ast:\Z^2\rightarrow\Z$ given by $\ell^\ast|_\ell=0$, $\ell^\ast|_\m{P}\geq 0$. Write $\ell^\ast(i,j)=ai+bj+c$, for some $a,b,c\in\Z$. Then the primitive vector $(a,b)\in\Z^2$ will be called the \textit{normal vector} of $\ell$.

We also extend this definition to segments $\m{P}$, considered as integral $2$-dimensional polytopes of zero volume. In this case $\m{P}$ has two edges, equal to $\m{P}$ itself, with opposite normal vectors.
\end{defn}

\begin{nt}\label{nt:deltabeta}
For any primitive vector $\beta=(\beta_1,\beta_2)\in\Z^2$ fix $\delta_\beta=(\delta_1,\delta_2)\in\Z^2$ such that $\delta_1\beta_2-\delta_2\beta_1=1$. 
Note that $\delta_\beta$ can be freely chosen, and depends (only) on $\beta$.
\end{nt}

For any edge $\ell$ of $\Delta$:
\begin{enumerate}[label=(\arabic*)]
    \item Consider its normal vector $\beta=(\beta_1,\beta_2)\in\Z^2$ and $\delta_\beta=(\delta_1,\delta_2)\in\Z^2$.
    \item Via the change of variables $x=X^{\delta_1}Y^{\beta_1}$, $y=X^{\delta_2}Y^{\beta_2}$, let $f_{\ell}\in k[X,Y]$ such that $X\nmid f_\ell$, $Y\nmid f_{\ell}$, and
\[f(x,y)=X^{n_X} Y^{n_Y}\cdot f_{\ell}(X,Y),\]
for some $n_X,n_Y\in\Z$.
\end{enumerate}    
Define the curve
$C_\ell:f_\ell(X,Y)=0$ in $\G_{m,k}\times\A_k^1=\Spec k[X^{\pm 1},Y]$. Note that $C_\ell\cap\G_{m,k}^2=C_0$. The completion of $C_0$ with respect to $\Delta$ is 
\[C_1=\bigcup_{\ell\subset\partial\Delta} C_\ell,\]
where the curves $C_\ell$ are glued along their common open subscheme $C_0$.

Let $P_1=\bigsqcup_{\ell\subset\partial\Delta}\{f_\ell\}$, where $\ell$ runs through all edges of $\Delta$. For any $f_\ell  \in P_1$ define $f|_\ell\in k[X]$ by $f|_\ell(X)=f_\ell(X,0)$. 
It is easy to see that $f$ is nondegenerate if and only if $f|_\ell$ has no multiple roots in $\bar k^\times$ for any edge $\ell$ of $\Delta$.
Then from the description of $C_1$ we have the following result.

\begin{thm}[{\cite[Theorem 2.2(3)]{Dok}}]\label{thm:classicalBakerModel}
Suppose $f$ nondegenerate. There is a natural bijection that preserves $\Gal(\bar k/k)$-action,
\[C(\bar k)\setminus C_0(\bar k)\xleftrightarrow{1:1}\bigsqcup_{f_\ell\in P_1}\{\text{(simple) roots of $f|_\ell$ in $\bar k^\times$}\}.\]
\end{thm}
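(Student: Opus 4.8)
The statement to prove is Theorem~\ref{thm:classicalBakerModel}, a bijection $C(\bar k)\setminus C_0(\bar k)\leftrightarrow\bigsqcup_{f_\ell\in P_1}\{\text{roots of }f|_\ell\text{ in }\bar k^\times\}$. Since $f$ is nondegenerate, the completion $C_1=\bigcup_{\ell\subset\partial\Delta}C_\ell$ is outer regular, hence regular, hence isomorphic to $C$; so it suffices to identify $C(\bar k)\setminus C_0(\bar k)$ with $C_1(\bar k)\setminus C_0(\bar k)$ and then to compute the latter from the explicit description of the $C_\ell$. The first step is therefore to analyze, for a fixed edge $\ell$ with normal vector $\beta$, the ``boundary at infinity'' of $C_\ell:f_\ell(X,Y)=0$ in $\G_{m,k}\times\A_k^1$: concretely, $C_\ell\setminus C_0$ is cut out by $Y=0$, so $C_\ell(\bar k)\setminus C_0(\bar k)=\{(X,0):f_\ell(X,0)=0,\ X\in\bar k^\times\}=\{\text{roots of }f|_\ell\text{ in }\bar k^\times\}$. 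This is essentially a restatement once one checks that the change of variables $x=X^{\delta_1}Y^{\beta_1}$, $y=X^{\delta_2}Y^{\beta_2}$ is an isomorphism of tori (its matrix has determinant $1$ by Notation~\ref{nt:deltabeta}) carrying $C_0$ isomorphically onto $C_\ell\cap\G_{m,k}^2$, and that the monomial factor $X^{n_X}Y^{n_Y}$ is a unit on $\G_{m,k}\times\A_k^1$ away from $Y=0$.

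\textbf{Key steps.} I would proceed as follows. (i) Recall from the construction of $C_1$ that $C_1\setminus C_0=\bigsqcup_{\ell\subset\partial\Delta}(C_\ell\setminus C_0)$ as sets — the $C_\ell$ are glued \emph{along} $C_0$, and the orbit-cone correspondence for $\T_\Delta$ shows the complements $C_\ell\setminus C_0$ sit over distinct one-dimensional torus orbits indexed by the edges $\ell$, hence are disjoint. (ii) For each $\ell$, show $C_\ell\setminus C_0=V(Y)\cap C_\ell$ and compute its $\bar k$-points as the roots of $f|_\ell(X)=f_\ell(X,0)$ in $\bar k^\times$; the condition $X\nmid f_\ell$ guarantees $f|_\ell\not\equiv 0$, and $Y\nmid f_\ell$ guarantees this locus is zero-dimensional (a genuine set of points, not a component). (iii) Invoke the nondegeneracy hypothesis in the form stated in the excerpt: $f$ is nondegenerate iff $f|_\ell$ has no multiple roots in $\bar k^\times$ for every edge $\ell$; combined with the Jacobian criterion this shows each such point is a smooth point of $C_\ell$, so $C_1$ is regular and the canonical map $C_1\to C$ is an isomorphism, giving $C(\bar k)\setminus C_0(\bar k)\xrightarrow{\sim}C_1(\bar k)\setminus C_0(\bar k)$. (iv) Assemble: $C(\bar k)\setminus C_0(\bar k)=\bigsqcup_{\ell}\{\text{roots of }f|_\ell\text{ in }\bar k^\times\}=\bigsqcup_{f_\ell\in P_1}\{\text{simple roots of }f|_\ell\text{ in }\bar k^\times\}$, the last equality by nondegeneracy.

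\textbf{Galois equivariance.} It remains to check the bijection respects the $\Gal(\bar k/k)$-action. Here I would note that all of the data — the Newton polygon $\Delta$, its edges $\ell$, the normal vectors $\beta$, and the polynomials $f_\ell$ (once a $k$-rational choice of $\delta_\beta$ is fixed for each $\beta$, as in Notation~\ref{nt:deltabeta}) — are defined over $k$, because $f\in k[x^{\pm1},y^{\pm1}]$. Hence the change of variables is defined over $k$, each $C_\ell$ is a $k$-scheme, and the decomposition $C_1\setminus C_0=\bigsqcup_\ell(C_\ell\setminus C_0)$ is Galois-stable. On $\bar k$-points, $\sigma\in\Gal(\bar k/k)$ sends the point of $C_\ell$ with coordinate $X=\alpha$ to the point with coordinate $\sigma(\alpha)$, which matches the usual action on roots of $f|_\ell\in k[X]$; so the bijection is equivariant.

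\textbf{Main obstacle.} The genuinely substantive input is not in this theorem's proof but is quoted: that nondegeneracy of $f$ implies $C_1$ is outer regular and therefore regular (so that $C_1\cong C$). Granting that, the only real care needed here is bookkeeping — verifying that the sets $C_\ell\setminus C_0$ for different edges are truly disjoint (rather than overlapping along lower-dimensional torus orbits, i.e.\ vertices of $\Delta$), which one handles by observing that a curve $C_0$ smooth in $\G_{m,k}^2$ meets each codimension-$2$ orbit of $\T_\Delta$ in the empty set for dimension reasons, so only the edges contribute; and tracking that the monomial factors $X^{n_X}Y^{n_Y}$ never vanish on the relevant locus $X\in\bar k^\times$. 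These are routine, so I expect the proof to be short.
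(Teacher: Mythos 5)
Your overall route is the right one, and it is essentially how this statement is established: the paper itself does not reprove it (it is quoted from \cite[Theorem 2.2(3)]{Dok}), and your argument --- identify $C$ with $C_1$ via nondegenerate $\Rightarrow$ outer regular $\Rightarrow$ regular, decompose $C_1\setminus C_0$ chart by chart, read off the boundary points of each $C_\ell$ as the roots of $f|_\ell$ in $\bar k^\times$, and get equivariance because each $f_\ell$ and each chart is defined over $k$ --- is exactly the first-step specialization of the machinery the paper uses for the general Theorem \ref{thm:introduction} (Lemma \ref{lem:star}, Lemma \ref{lem:regularpointsofbarCgamma}, Theorem \ref{thm:Galoismodel}\ref{item:Galoismodelitem(iii)}).

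There is, however, one step whose justification as written would fail, and it is the step carrying the real content. To conclude that ``the canonical map $C_1\to C$ is an isomorphism'' you need $C_1$ to be proper, not merely regular and birational to $C$ (a regular affine open of $C$ is also regular and birational). With the paper's definition $C_1=\bigcup_\ell C_\ell$ glued along $C_0$, the disjointness $C_1\setminus C_0=\bigsqcup_\ell(C_\ell\setminus C_0)$ that you single out as the delicate point is in fact supplied by the gluing itself (Remark \ref{rem:disjointchartsclassical}); what is \emph{not} automatic is that this union of affine charts is complete, equivalently that the Zariski closure of $C_0$ in $\T_\Delta$ contains no torus-fixed point (Remark \ref{rem:C1completeness}, quoting \cite[Remark 2.6]{Dok}). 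Your proposed justification --- that the closure ``meets each codimension-$2$ orbit in the empty set for dimension reasons'' --- is not valid: codimension-$2$ orbits of a toric surface are points, and nothing dimension-theoretic prevents the closure of a curve from passing through a point. The correct reason is combinatorial: the intersection of the closure with the boundary divisor $D_\ell\simeq\P^1$ of $\T_\Delta$ is cut out (up to a monomial unit) by $f|_\ell$, and its behaviour at the two fixed points $0,\infty\in D_\ell$ is governed by the coefficients of $f$ at the two endpoints of $\ell$, which are nonzero precisely because they are vertices of the Newton polygon; hence the closure meets $\partial\T_\Delta$ only in the open one-dimensional orbits, i.e.\ inside the charts $\T_{v_\ell}$. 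Once you either make this argument or simply invoke the projectivity of $C_1$ (Remark \ref{rem:C1completeness}), your steps (ii)--(iv) and the Galois-equivariance argument go through as written.
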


If $f$ is not nondegenerate, or, equivalently, if $C_1$ is not outer regular, we can construct from $C_1$ an outer regular generalised Baker's model $C_n$ of $C$, that always exists. Then the explicit description of $C_n$ can be used to obtain a more general version of Theorem \ref{thm:classicalBakerModel} capable to describe the points in $C\setminus C_0$ unconditionally.

First we are going to define finite indexed sets $P_n$ of polynomials in $\bar k[X,Y]$, for all $n\in\Z_+$. 
A polynomial in $P_n$ will be denoted by $f_\ell$ for an edge $\ell$ of some $2$-dimensional polytope. However, if $n\geq 2$ then $f_\ell\in P_n$ will be indexed not only by $\ell$ but also by a polynomial of $P_{n-1}$ and a non-zero element of $\bar k$. For any $f_\ell\in P_n$, define $f|_\ell\in \bar k[X]$ by $f|_\ell(X)=f_\ell(X,0)$.
Let $P_1$ be as above. For $n\in\Z_+$, we recursively construct the set $P_{n+1}$ from $P_n$ via the following algorithm.

\begin{alg}\label{alg:TheAlgorithm}
For any $f_\ell\in P_n$ and any multiple root $a\in \bar k^\times$ of $f|_\ell$ do:
\begin{enumerate}[label=(\arabic*)]
    \item Rename the variables of $f_\ell$ from $X,Y$ to $x,y$.
    \item Let $f_{\ell,a}\in\bar k[x,y]$ given by $f_{\ell,a}(x,y)=f_\ell(x+a,y)$.
    \item Draw the Newton polygon $\Delta_{\ell,a}$ of $ f_{\ell,a}$.
    \item For any edge $\ell'$ of $\Delta_{\ell,a}$ with normal vector $\beta=(\beta_1,\beta_2)\in\Z_+^2$, consider $\delta_\beta=(\delta_1,\delta_2)\in\Z^2$, previously fixed.
    \item Through the change of variables $x=X^{\delta_1}Y^{\beta_1}$, $y=X^{\delta_2}Y^{\beta_2}$, let $f_{\ell'}=(f_{\ell,a})_{\ell'}\in \bar k[X,Y]$ such that $X\nmid f_{\ell'}$, $Y\nmid f_{\ell'}$, and
    \[f_{\ell,a}(x,y)=X^{n_X} Y^{n_Y}\cdot f_{\ell'}(X,Y),\]
    for some $n_X,n_Y\in\Z$.
    %Define $f|_{\ell'}\in \bar k[X]$ by $f|_{\ell'}(X)=f_{\ell'}(X,0)$.
    \item Define $P_{\ell,a}=\bigsqcup_{\ell'\subset\partial\Delta_{\ell,a}}\{f_{\ell'}\}$, where $\ell'$ runs through all edges of $\Delta_{\ell,a}$ with normal vector in $\Z_+^2$.
\end{enumerate}
Then \[P_{n+1}:=\bigsqcup_{f_\ell,a}P_{\ell,a},\]
where $f_\ell$ runs through all polynomials in $P_n$ and $a$ runs through all multiple roots of $f|_\ell$ in $\bar k^\times$.
\end{alg}
For every $n\in\Z_+$, one can inductively define an action of $\Gal(\bar k/k)$ on $P_n$ with the following property: for any $\sigma\in\Gal(\bar k/k)$ and $f_{\ell}\in P_n$ the polynomials $\sigma\cdot f_{\ell}$ and $f_{\ell}^\sigma$ are equal. Note that this property is not enough to describe the action since $P_n$ is an indexed set. 

Let $\sigma\in\Gal(\bar k/k)$. If $f_{\ell}\in P_1$, then define $\sigma\cdot f_{\ell}=f_{\ell}$. Let $f_{\ell'}\in P_{n+1}$ for $n\in\Z_+$. From Algorithm \ref{alg:TheAlgorithm} it follows that $f_{\ell'}=(f_{\ell,a})_{\ell'}$ for some $f_\ell\in P_n$ and some multiple root $a\in\bar k^\times$ of $f|_\ell$. By inductive hypothesis $\sigma\cdot f_\ell$ is an element $f_{\sigma(\ell)}$ of $P_n$, and $\sigma(a)$ is a multiple root of $f|_{\sigma(\ell)}$. Moreover, $f_{\sigma(\ell),\sigma(a)}=f_{\ell,a}^\sigma$. Hence the Newton polygon $\Delta_{\sigma(\ell),\sigma(a)}$ coincides with $\Delta_{\ell,a}$. In particular, it has an edge $\sigma(\ell')$ with normal vector equal to the one of $\ell'$. Then define \[\sigma\cdot f_{\ell'}:=f_{\sigma(\ell')}=(f_{\sigma(\ell),\sigma(a)})_{\sigma(\ell')}\in P_{n+1}.\]

Iterate Algorithm \ref{alg:TheAlgorithm} until $P_{n+1}=\varnothing$, i.e.\ for all $f_\ell\in P_n$, the polynomials $f|_\ell$ have no multiple roots in $\bar k^\times$. The procedure terminates. Define
\[P=P_1\sqcup\dots\sqcup P_n.\]
Note that the Galois action on $P_i$ for all $1\leq i\leq n$ induces an action on $P$. For any $\sigma\in \Gal(\bar k/k)$ and $f_\ell\in P$, let $f_{\sigma(\ell)}\in P$ be the element $\sigma\cdot f_\ell$.
We can now generalise Theorem \ref{thm:classicalBakerModel}.
\begin{thm}\label{thm:introduction}
There is a natural bijection 
\[C(\bar k)\setminus C_0(\bar k)\xleftrightarrow{1:1}\bigsqcup_{f_{\ell}\in P}\{\text{simple roots of $f|_\ell$ in $\bar k^\times$}\},\]
that preserves $\Gal(\bar k/k)$-action, where $\sigma\in\Gal(\bar k/k)$ takes a simple root $r\in \bar k^\times$ of $f|_\ell$ to the simple root $\sigma(r)\in \bar k^\times$ of $f|_{\sigma(\ell)}$.
\end{thm}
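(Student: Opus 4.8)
The plan is to reduce the statement to bookkeeping along the tower~\eqref{eqn:Thesequence}, by describing the closed subscheme $C_i\setminus C_0$ for every $i$ inductively. By Theorem~\ref{thm:FirstTheorem} choose $n$ with $C_n$ outer regular; as in the classical case outer regularity implies regularity (the points of $C_n\setminus C_0$ are then smooth points of $C_n$ as well), so $C_n$ is a regular projective curve over the perfect field $k$, birational to $C_0$, and hence canonically the smooth completion $C$. It thus suffices to give, compatibly with $\Gal(\bar k/k)$, a bijection of $(C_n\setminus C_0)(\bar k)$ with $\bigsqcup_{f_\ell\in P}\{\text{simple roots of }f|_\ell\text{ in }\bar k^\times\}$. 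I would prove the stronger inductive statement: for every $i\geq 1$ there is a $\Gal(\bar k/k)$-equivariant identification of $(C_i\setminus C_0)(\bar k)$ with the disjoint union of $\{\text{simple roots of }f|_\ell\text{ in }\bar k^\times\}$ over all $f_\ell\in P_1\sqcup\cdots\sqcup P_{i-1}$ together with $\{\text{roots of }f|_\ell\text{ in }\bar k^\times\}$ over all $f_\ell\in P_i$, in such a way that the scheme-theoretic length of $C_i\setminus C_0$ at a point coming from $P_i$ equals the multiplicity of the corresponding root, while all points coming from earlier $P_j$ are reduced. The case $i=1$ is immediate from $C_1=\bigcup_{\ell\subset\partial\Delta}C_\ell$: in each chart $(C_1\setminus C_0)\cap C_\ell=V(Y)\cap C_\ell$, which base-changes to $\Spec\bar k[X^{\pm1}]/(f|_\ell)$; distinct edges contribute disjoint boundary strata; and $\Gal(\bar k/k)$ acts trivially on $P_1$.

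For the inductive step I would invoke the explicit construction of $s_i\colon C_{i+1}\to C_i$ and of the Galois action given by Theorems~\ref{thm:secondmodel} and~\ref{thm:Galoismodel}, whose relevant features are: \textup{(a)}~$s_i$ is an isomorphism over the complement of the Galois-stable finite set $R_i$ of $\bar k$-points $(a,0)\in C_\ell$ with $f_\ell\in P_i$ and $a$ a multiple root of $f|_\ell$; \textup{(b)}~for $(a,0)\in R_i$ in the chart $C_\ell$, a neighbourhood of $s_i^{-1}(a,0)$ in $C_{i+1}$ realises the toric resolution of $f_{\ell,a}$ built in Algorithm~\ref{alg:TheAlgorithm}, and $s_i^{-1}(a,0)\cap(C_{i+1}\setminus C_0)$ base-changes to the disjoint union of the schemes $\Spec\bar k[X^{\pm1}]/(f|_{\ell'})$ over $f_{\ell'}\in P_{\ell,a}$ --- indeed the charts $C_{\ell'}$ attached to edges with normal vector $\beta\in\Z_+^2$ are exactly those whose stratum $\{Y=0\}$ lies over $x=y=0$, because $x=X^{\delta_1}Y^{\beta_1}$ and $y=X^{\delta_2}Y^{\beta_2}$ both tend to $0$ as $Y\to 0$ precisely when $\beta_1,\beta_2>0$; \textup{(c)}~these identifications respect the Galois action defined before Theorem~\ref{thm:introduction}, $\sigma$ sending $s_i^{-1}(a,0)$ to $s_i^{-1}(\sigma a,0)$ and a root $r$ of $f|_{\ell'}$ to the root $\sigma(r)$ of $f|_{\sigma(\ell')}$, where $f_{\sigma(\ell')}=\sigma\cdot f_{\ell'}$. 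Granting \textup{(a)--(c)}, in passing from $C_i$ to $C_{i+1}$ the simple-root points --- those coming from $P_1\sqcup\cdots\sqcup P_{i-1}$ and the simple roots among those coming from $P_i$ --- lie off $R_i$, hence are preserved by $s_i$ and, by induction, remain reduced boundary points of every later $C_j$; while each $(a,0)\in R_i$ is replaced by the roots of $f|_{\ell'}$ for $f_{\ell'}\in P_{\ell,a}$, carrying their multiplicities. Since $P_{i+1}=\bigsqcup_{f_\ell,a}P_{\ell,a}$, this is exactly the inductive statement for $i+1$.

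Now take $i=n$ with $C_n$ outer regular: reducedness of $C_n\setminus C_0$ forces every $f|_\ell$ with $f_\ell\in P_n$ to have only simple roots, so the inductive identification becomes the asserted equivariant bijection $(C_n\setminus C_0)(\bar k)\leftrightarrow\bigsqcup_{f_\ell\in P}\{\text{simple roots of }f|_\ell\text{ in }\bar k^\times\}$, and $C_n\cong C$ finishes the proof. Concretely the bijection is natural: a point $p\in(C\setminus C_0)(\bar k)$ is pushed down the tower $C_n\to\cdots\to C_1$ until its image first meets a resolved point $(a,0)\in R_j$ or reaches $C_1$; in either case $p$ lies above that image through exactly one of the charts $C_{\ell'}$ built there and is matched with the corresponding simple root of $f|_{\ell'}$, the inverse lifting a simple root of $f|_\ell$ with $f_\ell\in P_j$, viewed as a smooth boundary point of $C_j$, uniquely up to $C_n$ by~\textup{(a)}.

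The step I expect to be the main obstacle is~\textup{(b)}: identifying exactly which boundary points of the toric resolution of $f_{\ell,a}$ lie over the resolved point and with which (possibly non-reduced) structure --- that among all edges of $\Delta_{\ell,a}$ precisely those with $\beta\in\Z_+^2$ contribute, each through $\Spec\bar k[X^{\pm1}]/(f|_{\ell'})$, and that $s_i$ patches this resolution onto $C_i$ without disturbing anything else. This is the geometric content to be drawn from Theorems~\ref{thm:secondmodel} and~\ref{thm:Galoismodel}. The complementary difficulty is purely combinatorial: keeping the $\Gal(\bar k/k)$-action on the \emph{indexed} set $P$ --- whose elements carry, besides an edge, a polynomial of an earlier $P_j$ and a scalar in $\bar k^\times$ --- in step with the Galois action on boundary points at every stage, so that the final correspondence is genuinely equivariant and not merely a bijection of underlying sets.
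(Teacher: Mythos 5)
Your overall route is the paper's own: run a Baker's resolution in which at every step all singular points of all boundary schemes $\bar C_\alpha$ are resolved, stop at an outer regular stage $C_n$, identify $C_n$ with $C\times_k\bar k$, and match the boundary charts with the indexed set $P$ of Algorithm \ref{alg:TheAlgorithm}. The part of your plan that tracks $(C_i\setminus C_0)(\bar k)$ is available off the shelf: Lemma \ref{lem:star} plus Theorem \ref{thm:Galoismodel}(ii)--(iii) already give, for outer regular $C_n$, a Galois-equivariant bijection of $C(\bar k)\setminus C_0(\bar k)$ with $\bigsqcup_{\gamma\in\Sigma_n}\{\text{simple roots of }f|_\gamma\}$ (note also that in the paper's setup $C_n$ is a $\bar k$-curve with $G_k$-action and the $k$-model is the quotient $C_n/G_k$, not $C_n$ itself). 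So the entire remaining content of the proof is the identification $P=\bigsqcup_{\gamma\in\Sigma_n}\{\m F_\gamma(X,Y)\}$, as indexed sets with Galois action --- and this is exactly what you flag in (b) and in your last paragraph but do not prove, attributing it instead to Theorems \ref{thm:secondmodel} and \ref{thm:Galoismodel}, which contain no such chart-level information.

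That deferral is the genuine gap. In the paper this step is carried out by making canonical choices in the construction: $\singpointnt_i=\bigsqcup_{\alpha\in\Sigma_i}\Sing(\bar C_\alpha)$; $\tilde{\m G}_p=\bar{\m G}_p$, which is admissible precisely because $C_0$ is geometrically connected (Remark \ref{rem:tildeGpchoice}); the fixed matrices $M_\beta$ with first row $\delta_\beta$ (Notation \ref{nt:deltabeta}); and the normalisation $\ord_{X_{j_\gamma}}(\m F_\gamma)=0$. With these choices the normal form at the resolved point corresponding to a multiple root $a$ is literally $\m F_{\alpha,p}(\tilde X_{m+1},\tilde Y)=\m F_\alpha(\tilde X_{m+1}+a,\tilde Y)$, i.e.\ the algorithm's $f_{\ell,a}$, and the substitution by $M_\beta$ reproduces $(f_{\ell,a})_{\ell'}$ on the nose; Theorem \ref{thm:Galoisequivariantpolynomials} together with Remark \ref{rem:strictGaloisconditions} then guarantees both that property \ref{eqn:Bakeradditionalprop} propagates and that the geometric Galois action agrees with the combinatorial action on the indexed set $P$. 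Without these normalisations your feature (b) fails as stated: a different admissible $\tilde{\m G}_p$ changes the normal form and hence the local Newton polygon, and a different choice of $\delta_\beta$ changes $f|_{\ell'}$ by a power of $X$ and breaks the indexing and equivariance bookkeeping. Likewise, the completeness claim inside (b) (that exactly the edges with $\beta\in\Z_+^2$ contribute and that nothing else lies over the resolved point, with $s_i$ undisturbed elsewhere) is the content of Lemmas \ref{lem:spstructure}, \ref{lem:spproperness} and \ref{lem:inverseimagesn}, not of the two theorems you cite. Once these ingredients are supplied, your induction is essentially the paper's items (i)--(iii) in its proof of Theorem \ref{thm:introduction}.
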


Theorem \ref{thm:introduction} is proved at the end of \S\ref{sec:GaloisBaker}.

\begin{exa}
Let $f=(x^2+1)^2+y-y^3\in\F_3[x^{\pm 1},y^{\pm 1}]$ and let $C_0:f=0$ in $\G_{m,\F_3}^2$. Note that $C_0$ is regular. 
By \cite[Proposition 3.2]{CV2}, the smooth completion $C$ of $C_0$ is not nondegenerate. Hence 
%even changing the open $C_0$ of $C$ 
Theorem \ref{thm:classicalBakerModel} cannot be used. 
We want to describe the points in $C\setminus C_0$ via Theorem \ref{thm:introduction}. First compute the set $P$ via Algorithm \ref{alg:TheAlgorithm}. One has $P=P_1\sqcup P_2$, where 
\begin{itemize}
    \item $P_1$ consists of $3$ polynomials $f_{\ell_1}, f_{\ell_2}, f_{\ell_3}$, where $f|_{\ell_1}=(X^2+1)^2$, $f|_{\ell_2}=X^3+X^2-1$, $f|_{\ell_3}=-X+1$, up to some power of $X$; 
    \item $P_2$ consists of $2$ polynomials $f_{\ell_4}, f_{\ell_5}$, satisfying $f_{\ell_5}=f_{\sigma(\ell_4)}$, where $\sigma$ is the Frobenius automorphism; furthermore, $f|_{\ell_4}=f|_{\ell_5}=-X+1$, up to some power of $X$.
\end{itemize}
Thus Theorem \ref{thm:introduction} shows that $C\setminus C_0$ consists of one point coming from $\ell_4,\ell_5$ with residue field $\F_9$, one point coming from $\ell_2$ with residue field $\F_{27}$ and one $\F_3$-rational point coming from $\ell_3$.
\end{exa}

\subsection{Outline of the paper and notation}\label{subsec:OutlineandNotation}
For the most part of the paper we will assume $k=\bar k$. In \S\ref{sec:ToricVarietiesCharts} we define toric varieties $\T_v$ attached to primitive integer-valued vectors $v$. The charts of the curves $C_n$ in the sequence (\ref{eqn:Thesequence}) will be the Zariski closures of dense opens of $C_0$ inside $\T_v$. In \S\ref{sec:Bakerconstruction} we show how to construct the sequence (\ref{eqn:Thesequence}) recursively and explain its connection with Newton polygons. We also prove the properties of the curves and the morphisms in (\ref{eqn:Thesequence}) previously listed in \S\ref{sec:Bakerdescription}.
%We prove that the curves $C_n$ are complete and birational to $C_0$. We show that the maps $s_n:C_{n+1}\rightarrow C_n$ are proper birational morphism and we describe their exceptional locus.
Section \ref{sec:GenBakersthm} gives the definition of generalised Baker's model and outer regularity over algebraically closed base fields. We prove crucial results of the paper and present some interesting consequences. %(and variants).
In \S\ref{sec:multipleSing} we see the construction developed in previous sections from a more general point of view. This will be useful to tackle the case of non-algebraically closed base fields, treated in \S\ref{sec:GaloisBaker}.
Finally, \S\ref{sec:Superelliptic} and \S\ref{sec:Example} consist of applications of our construction. In \S\ref{sec:Superelliptic} we discuss the case of superelliptic equations. In \S\ref{sec:Example} we show an explicit and non-trivial example of a generalised Baker's model.

\subsubsection*{Conventions and notations}
\begin{itemize}[leftmargin=3.5ex]
\item Throughout, $k$ will be a perfect field, algebraically closed in \S\ref{sec:ToricVarietiesCharts}-\ref{sec:multipleSing}. 

\item An \textit{algebraic variety $X$ over $k$}, denoted $X/k$, is a scheme of finite type over $\Spec k$. Let $\m{K}_X$ be the sheaf of stalks of meromorphic functions on $X$ (\cite[Definition 7.1.13]{Liu}). We denote by $k(X)$ the set of global sections of $\m{K}_X$, i.e.\ $k(X)=H^0(X,\m{K}_X)$. It will be called \textit{the ring of rational functions} or \textit{function ring} of $X$. It extends the notions of field of rational functions or function field of integral varieties. 
% Furthermore, for any dense open subset $U$ of $C$ there is a canonical isomorphism $\m{K}_C\rightarrow i_\ast \m{K}_U$, where $i:U\hookrightarrow C$ (\cite[Proposition 7.1.15]{Liu}). In particular, $k(U)=k(C)$. Finally, if $C$ is reduced (for example, when it is smooth), and $U=\Spec A$ is affine, then $k(U)=\mathrm{Frac}(A)$. 

\item Let $X/k$ be an algebraic variety. Since $k$ is perfect, $X$ is smooth if and only if it is regular. In this context we will then use the words \textit{smooth}, \textit{regular}, \textit{non-singular} interchangeably. 
We will denote by $\Reg(X)$ the open subset of regular points of $X$ and by $\Sing(X)$ the closed subset of singular points of $X$.

\item A morphism $X\rightarrow Y$ between two algebraic varieties $X,Y$ defined over $k$ will always be a \textit{morphism of $k$-schemes}, unless otherwise specified.

\item A \textit{birational map} $f:X\-->Y$ between algebraic varieties $X,Y$ over $k$ is a $k$-rational map (\cite[I.7.1.2]{EGA}) that comes from an isomorphism from a dense open $U\subseteq X$ onto a dense open $V\subseteq Y$. If such a map exists, we say that $X$ is birational to $Y$. A \textit{birational morphism} is a morphism which is (a representative of) a birational map (\cite[Definition 7.5.3]{Liu}). 
%Note that if $f: C\rightarrow C'$ is birational, then $k(C')$ is canonically isomorphic to $k(C)$.

\item A \textit{curve} is an equidimensional algebraic variety of dimension $1$. 
We will denote by $\G_m$ the affine algebraic group $\G_{m,k}=\Spec k[x_1^{\pm 1},y^{\pm 1}]$ whenever $k$ is algebraically closed. 
%The curve $C_0$ defined over $\G_m^2$ will be given by a Laurent polynomial $f\in k[x_1^{\pm1},y^{\pm 1}]$. It will always be smooth.

\item Given a ring $A$ and an ideal $I$ of $A$ we identify the ideals of $A/I$ with the ideals of $A$ containing $I$. Furthermore, sometimes we refer to an element $a\in A$ as an element of $A/I$ omitting the class symbol.

\item Finally, the set of natural numbers will contain $0$, i.e.\ $\N=\Z_{\geq 0}$. 
\end{itemize}

\subsection*{Acknowledgements} The author would like to thank his supervisor Tim Dokchitser for the very useful conversations, corrections and general advice.

\section{Ambient toric varieties and charts}\label{sec:ToricVarietiesCharts}

Let $k$ be an algebraically closed field, $n\in\Z_+$, $A=k[x_1^{\pm 1},\dots,x_n^{\pm 1}, y^{\pm 1}]$ and $\mathbb{G}_m^{n+1}=\Spec A$. Let $v=(v_1,\dots,v_n,v_{n+1})\in \Z^{n+1}$ be a primitive vector. Define the affine function $\phi_v:\Z^{n+1}\rightarrow \Z$ given by \[\phi_v(i_1,\dots,i_n,j)=v_1i_1+\dots+v_ni_n+v_{n+1}j.\] 

For any $i=(i_1,\dots,i_n,j)\in\Z^{n+1}$, denote by $\mathbf{x}^i$ the monomial $x_1^{i_1}\cdots x_n^{i_n}y^{j}$ of $k[x_1^{\pm 1},\dots,x_n^{\pm 1}, y^{\pm 1}]$. For any monomial $\mathbf{x}^i$ define $\ord_{v}(\mathbf{x}^i)=\phi_v(i)$. For $f\in A$, with $f\neq0$, expand
\[f=\sum_{i}c_i\mathbf{x}^i,\quad c_i\in k^\times,\]
and set $\ord_v(f)=\min_{i}\ord_v(\mathbf{x}^i)$.
We have just defined a map $\ord_v:A^\times\rightarrow\Z$, which naturally extends to a valuation $\ord_v: \mathrm{Frac}(A)^\times\rightarrow \Z$.

\begin{defn}
Given a primitive vector $w\in\Z^{n+1}$, we say that a matrix $M\in\SL_{n+1}(\Z)$ is \textit{attached to $w$} if its last row is $w$.
\end{defn}

Fix a matrix $M=(a_{ij})$ attached to $v$. It gives the change of variables
\begin{align*}
    (x_1,\dots,x_n,y)&=(X_1^{a_{11}}\cdots X_n^{a_{n1}}Y^{v_1},\dots,X_1^{a_{1(n+1)}}\cdots X_m^{a_{n(n+1)}}Y^{v_{n+1}})\\&=(X_1,\dots,X_n,Y)\bullet M,\\
    (X_1,\dots,X_n,Y)&=(x_1,\dots,x_n,y)\bullet M^{-1}.
\end{align*}
For any $f\in A^\times$, denoting by $\m F\in k[X_1^{\pm 1},\dots,X_n^{\pm 1},Y^{\pm 1}]^\times$ the Laurent polynomial given by
\[\m F(X_1,\dots,X_n,Y)=f((X_1,\dots,X_n,Y)\bullet M),\]
note that $\ord_v(f)=\ord_Y(\m F)$. We get an embedding 
\[A\stackrel{M}{\simeq}k[X_1^{\pm 1},\dots,X_n^{\pm 1},Y^{\pm 1}]\hookleftarrow k[X_1^{\pm 1},\dots,X_n^{\pm 1},Y]=:R,\] from which we define the affine toric variety $\T_v=\Spec R\hookleftarrow \G_{m}^{n+1}$. Since $v$ is the last row of $M$, the toric variety $\T_v$ only depends on $v$ up to isomorphisms that reduce to the identity on $\G_m^{n+1}$. 
Furthermore, up to isomorphism, the closed subvariety $\bar \T_v=\Spec R/(Y)\simeq\G_m^n$ of $\T_v$ only depends on $v$ as well.

Now let $I$ be an ideal of $A$ defining a curve $C_{0,I}=\Spec A/I$ in $\G_m^{n+1}$. 
We denote by $C_{v,I}$ the Zariski closure of $C_{0,I}$ in $\T_v$. Then $C_{v,I}$ is determined by $v$ and $I$, up to isomorphisms that preserve $C_{0,I}$. Recall that $C_{v,I}=\Spec R/\m{I}$, where $\m{I}$ is the inverse image of $I$ under the embedding $R\hookrightarrow A$ above. Suppose $\m{J}\subset R$ is an ideal such that $A/I \simeq R[Y^{-1}]/\m{J}R[Y^{-1}]$ via $M$. Then $\m{J}$ defines $C_{v,I}$ if and only if it equals its saturation with respect to $Y$, i.e.\ $\m{J}=Y^\infty:\m{J}$, or, equivalently, if the image of $Y$ in $R/\m{J}$ is a regular element.

Finally, let $f=f_1\in k[x_1^{\pm 1},y^{\pm1}]$ defining a smooth curve $C_0:f=0$ in $\G_m^2=\Spec k[x_1^{\pm 1},y^{\pm1}]$. For all $i=2,\dots,n$, let $g_i\in k[x_1^{\pm 1},y^{\pm 1}]$ and denote $f_i=x_i-g_i$. Then
\[\frac{k[x_1^{\pm 1},y^{\pm1}]_{g_2\cdots g_n}}{(f)}\simeq\frac{k[x_1^{\pm 1},x_2^{\pm1},\dots,x_n^{\pm 1},y^{\pm1}]}{(f_1,f_2,\dots,f_n)}.\]
Let $\tuplent$ be the tuple $(g_2,\dots,g_n)$ and $I$ the ideal $(f_1,\dots,f_n)$. Define $C_{0,\tuplent}=\Spec\frac{k[x_1^{\pm 1},y^{\pm1}]_{g_2\cdots g_n}}{(f)}$, an affine open of $C_0$. Then $\tuplent$ gives an open immersion $C_{0,\tuplent}\hookrightarrow \G_m^{n+1}$ with image $C_{0,I}$. Let $v\in\Z^{n+1}$ be a primitive vector. Denote by $C_{v,\tuplent}$ the curve $C_{v,I}$ (closure of $C_{0,I}$ inside $\T_v$). We will often identify $C_{0,\tuplent}$ with the dense open image of the immersion $C_{0,\tuplent}\simeq C_{0,I}\hookrightarrow C_{v,\tuplent}$.

Let $C_0$ as above.
For any $m\in\Z_+$ define \[\Omega_m=\{(v,\tuplent)\mid v\in\Z^{m+1}\text{ is a primitive vector and }\tuplent\in k[x_1^{\pm 1},y^{\pm 1}]^{m-1}\}.\]
If $\alpha=(v,\tuplent)\in\Omega_m$ for some $m\in\Z_+$, denote by $C_{0,\alpha}$, $C_\alpha$, respectively the curves $C_{0,\tuplent}$, $C_{v,\tuplent}$ introduced in the previous section. Furthermore, we set $C_\alpha=C_{0,\alpha}=C_0$ when $\alpha=0$. Define \[\Omega=\big\{\alpha\in\textstyle{\bigsqcup_{m\in\Z_+}}\Omega_m\mid C_{0,\alpha}\text{ is dense in }C_0\big\}.\]
If $\alpha=(v,\tuplent)\in\Omega$, denote by $\bar C_\alpha$ the scheme-theoretic intersection of $C_\alpha$ and $\bar \T_v$ in $\T_v$. Note that, up to isomorphism, $\bar C_\alpha$ only depends on $\alpha$.

From the open immersions with dense images $C_{0,\alpha}\hookrightarrow C_{\alpha}$, $C_{0,\alpha}\hookrightarrow C_0$, we have natural birational maps $s_{\alpha\alpha'}:C_\alpha\--> C_{\alpha'}$, for all $\alpha,\alpha'\in \Omega\sqcup\{0\}$. Denote by $U_{\alpha\alpha'}$ the largest (dense) open of $C_{\alpha}$ such that $s_{\alpha\alpha'}$ comes from an open immersion $U_{\alpha\alpha'}\hookrightarrow C_{\alpha'}$. Note that $C_{0,\alpha}\cap C_{0,\alpha'}$ embeds in $U_{\alpha\alpha'}$ via the canonical open immersion $C_{0,\alpha}\hookrightarrow C_\alpha$.

\begin{defn}
Let $m\in\Z_+$ and $c\in\Z$.
Let $v=(v_1,\dots,v_m,v_{m+1})\in\Z^{m+1}$ and $\beta=(\beta_1,\beta_2)\in\Z^2$ be primitive vectors. Define the primitive vector
\[\beta\circ_c v=(v_1\beta_2,v_2\beta_2,\cdots,v_m\beta_2,\beta_1+c\beta_2,v_{m+1}\beta_2)\in\Z^{m+1}.\]

If $g\in k[x_1^{\pm 1},\dots,x_{m}^{\pm 1},y^{\pm 1}]$, define $\beta\circ_g v=\beta\circ_{\ord_v(g)}v$.
\end{defn}

\begin{defn}\label{defn:circ}
Let $m\in\Z_+$ and $\alpha\in\Omega_m$. Write $\alpha=(v,\tuplent)$ where $\tuplent=(g_2,\dots,g_m)$. Fix $g\in k[x_1,\dots,x_m,y]$ and let $g_{m+1}\in k[x_1^{\pm 1},y^{\pm 1}]$ be the unique Laurent polynomial such that $g_{m+1}\equiv g\mod(f_2,\dots,f_m)$, where $f_i=x_i-g_i$. For any primitive vector $\beta\in\N\times\Z_+$, define \[\beta\circ_{g}\alpha=(\beta\circ_{g} v,(g_2,\dots,g_m,g_{m+1}))\in\Omega_{m+1}.\]
\end{defn}

Note that for any $\alpha,\alpha'\in\Omega_m$, polynomials $g,g'\in  k[x_1,\dots,x_m,y]$, and primitive vectors $\beta,\beta'\in\N\times\Z_+$, if $\beta\circ_{g}\alpha=\beta'\circ_{g'}\alpha'$, then $\alpha=\alpha'$.

\begin{defn}\label{defn:Omegaposet}
Let $m\in\Z_+$. Given $\alpha\in\Omega_m$ and $\gamma\in\Omega_{m+1}$, we will write $\alpha<\gamma$ if there exists a polynomial $g\in k[x_1,\dots,x_m,y]$ and a primitive vector $\beta\in\N\times\Z_+$ such that $\gamma=\beta\circ_g\alpha$. Endow $\Omega$ with a structure of partially ordered set by extending $<$ by transitivity.
\end{defn}

\section{Baker's resolution}\label{sec:Bakerdescription}

Let $k$ be an algebraically closed field and let $f\in k[x_1^{\pm 1},y^{\pm 1}]$ be a Laurent polynomial defining a smooth curve $C_0:f=0$ over $\G_m^2$. We will construct a sequence 
\[\dots \xrightarrow{s_{n+1}} C_{n+1}\xrightarrow{s_n} C_n \xrightarrow{s_{n-1}}\dots\xrightarrow{s_1} C_1\]
of proper birational morphisms of projective curves over $k$, birational to $C_0$. Such a sequence will be called a \textit{Baker's resolution of $C_0$} (Definition \ref{defn:BakerResolution}). 
Each curve $C_n$ will be explicitly described and inductively constructed via Newton polygons. In particular, the curve $C_1$ is the completion of $C_0$ with respect to the Newton polygon $\Delta$ of $f$. In \S\ref{sec:GenBakersthm} we will show how to use Baker's resolution to desingularise $C_1$, by finding a regular curve $C_n$, model over $k$ of the smooth completion of $C_0$.

For any $n\in\Z_+$, we aim to construct the projective curve $C_n$ as follows:
\begin{con}\label{construction}
We will define a finite subset $\Sigma_n\subset\Omega$. Then 
\[C_n:=\textstyle\bigcup_{\alpha\in\Sigma_n} C_\alpha\cup C_0,\] 
where the glueing morphisms are given by the birational maps $s_{\alpha\alpha'}$, for $\alpha,\alpha'\in\Sigma_n\sqcup\{0\}$. More precisely, the chart $C_\alpha$ is glued with $C_{\alpha'}$ along $U_{\alpha\alpha'}$ via the isomorphism $U_{\alpha\alpha'}\xrightarrow{\sim} U_{\alpha'\alpha}$ induced by $s_{\alpha\alpha'}$.
In fact, for our choice of $\Sigma_n$ the opens $U_{\alpha\alpha'}$ will be as small as possible, i.e.\ $C_\alpha\cap C_{\alpha'}=C_{0,\alpha}\cap C_{0,\alpha'}$, for any $\alpha,\alpha'\in\Sigma_n\sqcup\{0\}$, $\alpha\neq\alpha'$

Furthermore, for any $\alpha=(v,\tuplent)\in\Sigma_n$, we construct:
\begin{enumerate}[label=(\alph*)]
    \item An ideal $\got a_\alpha=(\m{F}_2,\dots\m{F}_m)\subset k[X_1^{\pm 1},\dots,X_m^{\pm 1},Y]$, 
    %which equals its saturation with respect to $Y$, 
    and a matrix $M_\alpha\in \SL_{m+1}(\Z)$ attached to $v$ defining an isomorphism
    \[\frac{k[x_1^{\pm 1},\dots, x_m^{\pm 1},y^{\pm 1}]}{(f_2,\dots,f_m)}\stackrel{M_\alpha}{\simeq} \frac{k[X_1^{\pm 1},\dots,X_m^{\pm 1},Y^{\pm 1}]}{(\m{F}_2,\dots,\m{F}_m)},\]
    where $f_i=x_i-g_i$ and $\tuplent=(g_2,\dots,g_m)$. 
    \label{eqn:Bakerprop1}
    \item \label{eqn:Bakerprop2} A positive integer $j_\alpha\leq m$ such that there is an embedding
    \[R_\alpha:=\frac{ k[X_1^{\pm 1},\dots, X_m^{\pm 1},Y]}{(\m{F}_2,\dots,\m{F}_m)}\hookrightarrow k(X_{j_\alpha},Y),\]
    taking $X_{j_{\alpha}}\mapsto X_{j_{\alpha}}$ and $Y\mapsto Y$.
    Moreover, $Y$ is not invertible in $R_\alpha$. 
    \item \label{eqn:Bakerprop3} A polynomial $\m{F}_\alpha\in k[X_{j_\alpha},Y]$, not divisible by $Y$, such that 
    \[C_\alpha=\Spec\frac{k[X_1^{\pm 1},\dots, X_m^{\pm 1},Y]}{(\m{F}_\alpha,\m F_2,\dots,\m{F}_m)}.\]
\end{enumerate}
\end{con}

The ideal $\got a_\alpha$ equals its saturation with respect to $Y$ by \ref{eqn:Bakerprop2}. Therefore \ref{eqn:Bakerprop1} implies that $\got a_\alpha$ is uniquely determined by $M_{\alpha}$.

The homomorphism in \ref{eqn:Bakerprop2} induces an injective ring homomorphism 
\[\frac{R_\alpha}{(Y)}=\frac{ k[X_1^{\pm 1},\dots, X_m^{\pm 1},Y]}{(\m{F}_2,\dots,\m{F}_m,Y)}\hookrightarrow k(X_{j_\alpha}),\]
taking $X_{j_\alpha}\mapsto X_{j_{\alpha}}$. Let $D_\alpha$ be its image. Then $D_\alpha$ is a localisation of $k[X_{j_\alpha}]$, isomorphic to $R_\alpha/(Y)$. 
More precisely, if $t_1,\dots,t_{m}$ are the images of $X_1,\dots,X_{m}$ in $k(X_{j_\alpha})$, then $D_\alpha=k[X_{j_\alpha},t_1^{\pm 1},\dots,t_{m-1}^{\pm 1}]$.

Then, from \ref{eqn:Bakerprop3}, there exists a non-zero polynomial $f|_{\alpha}\in k[X_{j_\alpha}]$, given by $f|_{\alpha}(X_{j_\alpha})=\m F_\alpha(X_{j_\alpha},0)$, such that 
\[\frac {k[X_1^{\pm 1},\dots, X_m^{\pm 1},Y]}{(\m{F}_\alpha,\m{F}_2,\dots,\m{F}_m,Y)}\simeq\frac{D_\alpha}{(f|_{\alpha})}.\]
The closed subscheme $\bar C_\alpha$ of $C_\alpha$ will be identified with
$\Spec D_\alpha/(f|_\alpha)$. 
As a set, it is finite and equals $C_\alpha\setminus C_{0,\alpha}$.

Finally, note that the injective homomorphism in \ref{eqn:Bakerprop2} and the description of $C_\alpha$ in \ref{eqn:Bakerprop3} give an open immersion $C_\alpha\hookrightarrow \Spec k[X_{j_\alpha},Y]/(\m F_\alpha)$.

\section{Construction of the sequence}\label{sec:Bakerconstruction}

Let $k$ be an algebraically closed field and let $f\in k[x_1^{\pm 1},y^{\pm 1}]$ be a Laurent polynomial defining a smooth curve $C_0:f=0$ over $\G_m^2$.

\subsection{Completion with respect to Newton polygon}\label{subsec:BakerFirstStep}
In this subsection we give a description of the curve $C_1$, completion of $C_0$ with respect to its Newton polygon, with the properties of \ref{construction}. We will show that $C_\alpha\simeq C_0$ for all but finitely many $\alpha\in\Omega_1\subset\Omega$. Defining $\Sigma_1\subseteq\Omega_1$ as the subset of those exceptional elements, the curve $C_1$ will be the glueing of $C_\alpha$, $\alpha\in\Sigma_1$, along the common open $C_0$.

%We can describe the set $\Sigma_1$ via the Newton polygon $\Delta$ of $f$. 
Let $v=(a,b)\in\Z^2$ be any primitive vector and $\alpha=(v,())\in\Omega_1$. Fix a matrix $M_\alpha=\lb\begin{smallmatrix}c&d\cr a&b\end{smallmatrix}\rb\in\SL_2(\Z)$ attached to $v$ and define $\phi_v:\Z^2\rightarrow\Z$ by $\phi_v(i,j)=ai+bj-\ord_v(f)$.  
Via the change of variables given by $M_\alpha$ we get
\[f((X_1,Y)\bullet M_\alpha)=X_1^\ast Y^{\ord_v(f)}\m{F}_\alpha(X_1,Y),\qquad\text{where }\m F_\alpha\in k[X_1,Y].\]
Then $\ord_Y(\m{F}_\alpha)=0$ and so
$C_\alpha=\Spec k[X_1^{\pm 1},Y]/(\m{F}_\alpha)$.

Note that $C_{0,\alpha}=C_0$. Let $f|_{\alpha}\in k[X_1]$ given by $f|_{\alpha}(X_1)=\m F_\alpha(X_1,0)$. Recall that the scheme $\bar C_\alpha=\Spec k[X_1^{\pm 1}]/(f|_\alpha)$ equals $C_\alpha\setminus C_{0,\alpha}$ as a set. Therefore $C_\alpha\simeq C_0$ if and only if $f|_\alpha$ is invertible in $k[X_1^{\pm 1}]$. Expand $f=\sum_{i,j} c_{ij}x_1^iy^j$. Let $\Delta$ be the Newton polygon of $f$. It follows that
\[f|_\alpha=X_1^\ast\cdot\sum_{(i,j)\in\phi_v^{-1}(0)}c_{ij}X_1^{ci+dj}.\]
Hence $f|_\alpha$ is not invertible in $k[X_1^{\pm 1}]$ if and only if $\phi_v^{-1}(0)\cap\Delta=\text{edge}$.

Then we can explicitly construct $\Sigma_1$ as follows. For every edge $\ell$ of $\Delta$ consider its normal vector $v_\ell\in\Z^2$ (see Definition \ref{defn:normalvector}). Define $\Sigma_1=\{(v_\ell,())\in\Omega_1\mid \ell\text{ edge of }\Delta\}$. 
The next result follows from the computations above.

\begin{prop}\label{prop:reductionfalphabasecase}
Let $v$ be the normal vector of an edge $\ell$ of $\Delta$ and let $\alpha=(v,())\in\Sigma_1$. Let $(i_0,j_0),\dots,(i_l,j_l)$ be the points of $\ell\cap\Z^2$, ordered along $\ell$ counterclockwise with respect to $\Delta$. Then
\[f|_\alpha=X_1^d\cdot\sum_{r=0}^l c_{i_rj_r}X_1^r,\qquad\text{for some $d\in\N$}.\]
\end{prop}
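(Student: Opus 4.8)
The plan is to unwind the change of variables $M_\alpha$ explicitly and track what happens to each monomial of $f$ lying on the edge $\ell$. First I would recall from the preceding discussion that with $M_\alpha = \lb\begin{smallmatrix}c&d\cr a&b\end{smallmatrix}\rb$, a monomial $x_1^i y^j$ is sent to $X_1^{ci+dj} Y^{ai+bj}$, so that
\[
f\big((X_1,Y)\bullet M_\alpha\big) = \sum_{i,j} c_{ij} X_1^{ci+dj} Y^{ai+bj}.
\]
Since $v=(a,b)$ is the normal vector of $\ell$ and $\ord_v(f) = ai+bj$ for $(i,j)$ on $\ell$, the factor $Y^{\ord_v(f)}$ pulls out exactly the contribution of the points of $\Delta$ on $\ell$ into the $Y^0$-coefficient. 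Thus, after dividing by $X_1^\ast Y^{\ord_v(f)}$, one gets
\[
f|_\alpha(X_1) = \m F_\alpha(X_1,0) = X_1^{\ast}\cdot\sum_{(i,j)\in\ell\cap\Z^2} c_{ij} X_1^{ci+dj},
\]
which is the formula already displayed in the text. This is the starting point.

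The substance of the proposition is then the combinatorial claim that, as $(i,j)$ ranges over the lattice points $(i_0,j_0),\dots,(i_l,j_l)$ of $\ell$ in counterclockwise order, the exponents $ci+dj$ are exactly $e, e+1, \dots, e+l$ for some integer $e$ — i.e. they form a consecutive run, realised in the given order. To see this I would note two things. First, since $(a,b)$ is primitive and $\det M_\alpha = cb - da = 1$, the linear map $(i,j)\mapsto(ci+dj,\,ai+bj)$ is an automorphism of $\Z^2$; it carries $\ell$ to a segment on which the second coordinate is constant (equal to $\ord_v(f)$), so the first coordinate $ci+dj$ is an affine-linear, injective function of position along $\ell$. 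Consequently consecutive lattice points of $\ell$ — which are spaced by the primitive direction vector of $\ell$ — map to consecutive integers, so $\{ci_r+dj_r\}_{r=0}^l$ is a block of $l+1$ consecutive integers. Second, I would check the \emph{sign}: the counterclockwise ordering with respect to $\Delta$ is chosen precisely so that $r\mapsto ci_r+dj_r$ is increasing rather than decreasing. Here the normalisation of the normal vector ($\ell^\ast|_\Delta\geq 0$, Definition \ref{defn:normalvector}) fixes the orientation of $(a,b)$, and $\det M_\alpha = +1$ then pins down the sign of the increment of $ci+dj$ along the counterclockwise direction; a one-line determinant computation comparing the direction vector of $\ell$ with $(c,d)$ settles it. Writing $d$ for the common value $\ast$ and absorbing the base exponent gives $f|_\alpha = X_1^{d}\sum_{r=0}^l c_{i_rj_r}X_1^r$ with $d\in\N$ (nonnegativity of $d$ follows since, by construction of $\m F_\alpha$, $X_1\nmid \m F_\alpha$ forces the lowest exponent among all monomials to be $0$, hence the exponents on $\ell$ are $\geq 0$).

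The main obstacle I anticipate is purely bookkeeping: getting the orientation and the exact form of the base exponent right, i.e. verifying that "counterclockwise with respect to $\Delta$" is compatible with "$ci+dj$ increasing" under the specific sign conventions in Definition \ref{defn:normalvector} and the choice of $M_\alpha\in\SL_2$. Everything else — primitivity of the edge direction, $\det = 1$ giving a lattice automorphism, consecutiveness of the exponents — is immediate. I would handle the orientation by fixing the primitive direction vector $u$ of $\ell$ so that $(u, v)$ (or $(v,u)$) is positively oriented in the sense making $r\mapsto r$ the counterclockwise enumeration, and then computing $c u_1 + d u_2 = \det\lb\begin{smallmatrix}u_1&u_2\cr -a&-b\end{smallmatrix}\rb/\det(\cdot)$-type expression to confirm it equals $+1$; this is the only place where a small computation is genuinely needed.
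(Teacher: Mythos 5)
Your proposal is correct and follows essentially the same route as the paper: the paper treats the statement as an immediate consequence of the displayed formula $f|_\alpha=X_1^\ast\sum_{(i,j)\in\phi_v^{-1}(0)}c_{ij}X_1^{ci+dj}$, and its explicit argument (given in the proof of the analogous Proposition \ref{prop:reductionfgamma}) is exactly your orientation computation, namely $(i_r,j_r)=(i_0,j_0)+r(b,-a)$ with $cb-da=1$, so the exponents increase by $1$ along the counterclockwise enumeration. The only nitpick is that nonnegativity of $d$ needs nothing about $X_1\nmid \mathcal{F}_\alpha$ (which the construction does not require): it follows simply because $\mathcal{F}_\alpha$ is a polynomial, so $f|_\alpha(X_1)=\mathcal{F}_\alpha(X_1,0)$ has nonnegative exponents.
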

Glueing $C_\alpha$ for any $\alpha\in\Sigma_1$ gives the curve $C_1$. Note that $C_1$ is the Zariski closure of $C_0$ in $\bigcup_{(v,())\in\Sigma_1}\T_v$ (where the toric varieties $\T_v$ are glued along their common open $\G_m^2$). 
%In particular, it is canonically equipped with an open immersion $\iota_1: C_0\hookrightarrow C_1$.

\begin{rem}\label{rem:C1completeness}
Consider the toric surface $\T_\Delta$ of $\Delta$. It is a complete algebraic variety. Then $\bigcup_{(v,())\in\Sigma_1}\T_v$ is a (non-proper) subscheme of $\T_\Delta$. Nevertheless the curve $C_1$ is also the Zariski closure of $C_0$ in $\T_\Delta$ (see \cite[Remark 2.6]{Dok}). Thus it is projective.
\end{rem}

\begin{rem}\label{rem:disjointchartsclassical}
Note that for any $\alpha\in\Sigma_1$, the points on $C_\alpha\setminus C_0$ are not visible on any other chart of $C_1$. Indeed for any $\alpha,\alpha'\in\Sigma_1$, where $\alpha\neq\alpha'$, consider the birational map 
\[s_{\alpha\alpha'}:C_{\alpha}=\Spec k[X_1^{\pm 1},Y]/(\m{F}_\alpha)\-->\Spec k[X_1^{\pm 1},Y]/(\m{F}_{\alpha'})=C_{\alpha'}\] 
given by the matrix $M_{\alpha\alpha'}=M_{\alpha}M_{\alpha'}^{-1}$. Since the lower left entry of $M_{\alpha\alpha'}$ is non-zero, the largest open $U_{\alpha\alpha'}$ of $C_{\alpha}$ for which $s_{\alpha\alpha'}$ comes from an open immersion $U_{\alpha\alpha'}\hookrightarrow C_{\alpha'}$ is $U_{\alpha\alpha'}=D(Y)\subset C_\alpha$, i.e. the image of $C_0$ in $C_\alpha$.
Thus $C_{\alpha}\cap C_{\alpha'}= C_0$ for any $\alpha,\alpha'\in\Sigma_1\sqcup\{0\}$, $\alpha\neq\alpha'$.
\end{rem}

\subsection{Inductive construction of the curves}\label{subsec:BakerInductiveStep}
Until the end of the section let $n\in\Z_+$ and suppose we constructed a finite subset $\Sigma_n\subset\Omega$ and a projective curve $C_n$ as in \ref{construction}. In particular, $C_n=\bigcup_{\alpha\in\Sigma_n}C_\alpha\cup C_0$.

\begin{rem}\label{rem:nonsingularbarjacobian}
Let $\alpha\in\Sigma_n$. Recall $C_{0,\alpha}$ is smooth as so is $C_0$. Therefore $\Sing(C_\alpha)\subseteq C_\alpha\setminus C_{0,\alpha}$.
Then, as an easy consequence of the Jacobian criterion, any singular point of $C_\alpha$ is the image of a singular point of $\bar C_\alpha$ under the closed immersion $\bar C_{\alpha}\hookrightarrow C_\alpha$. This fact can be observed by comparing the Jacobian matrices of $C_\alpha$, defined in \ref{construction}\ref{eqn:Bakerprop3}, and $\bar C_\alpha=C_\alpha\cap\{Y=0\}$, at points of $C_\alpha\setminus C_{0,\alpha}=\bar C_\alpha$. In particular, if $C_n$ is singular then $\bar C_\alpha$ is singular for some $\alpha\in\Sigma_n$.
\end{rem}

Let $\alpha\in\Sigma_n$ and fix $\singpointnt_n\subseteq \mathrm{Sing}(\bar C_\alpha)$. Via the immersion $\bar C_\alpha\hookrightarrow C_n$ given by the closed immersion $\bar C_\alpha\hookrightarrow C_\alpha$ and the inclusion $C_\alpha\subseteq C_n$, the points in $\singpointnt_n$ will be identified with their images in $C_n$. In this subsection we will construct a finite subset $\Sigma_{n+1}\subset\Omega$ defining a curve $C_{n+1}$ as indicated in \ref{construction}. Then, in \S\ref{subsec:sninductiveconstruction} we will define a proper birational morphism $s_n:C_{n+1}\rightarrow C_n$ with exceptional locus $s_n^{-1}(\singpointnt_n\cap\Sing(C_n))$.

Let $m\in\Z_+$ such that $\alpha\in\Omega_m$. Write $\alpha=(v,\tuplent)$, where $v\in\Z^{m+1}$ and $\tuplent\in k[x_1^{\pm 1},y^{\pm 1}]^{m-1}$. Let $M_\alpha\in\SL_{m+1}(\Z)$ be the matrix attached to $v$ fixed by \ref{construction}\ref{eqn:Bakerprop1}, defining a change of variables
\[(x_1,\dots,x_m,y)=(X_1,\dots,X_m,\tilde Y)\bullet M_\alpha.\]
Note that we have changed the notation for the variable $Y$ to $\tilde Y$ for avoiding confusion later.
Let $\got a_\alpha=(\tilde{\m{F}}_2,\dots,\tilde{\m{F}}_m)\subset k[X_1^{\pm 1},\dots,X_m^{\pm 1},\tilde Y]$ be the ideal in \ref{construction}\ref{eqn:Bakerprop1} and $\m{F}_\alpha\in k[X_{j_\alpha},\tilde Y]$ be the polynomial in \ref{construction}\ref{eqn:Bakerprop3} so that
\[C_\alpha=\Spec\frac{k[X_1^{\pm1},\dots,X_m^{\pm 1},\tilde Y]}{(\m{F}_\alpha,\tilde{\m{F}}_2\dots,\tilde{\m{F}}_m)}.\]
Denote $A_m=k[X_1^{\pm1},\dots,X_m^{\pm 1}]$.

Fix a point $p\in \singpointnt_n$. Recall $\bar C_\alpha=\Spec D_\alpha/(f|_\alpha)$, where $D_\alpha$ is a (non-trivial) localisation of $k[X_{j_{\alpha}}]$ and $f|_{\alpha}\in k[X_{j_\alpha}]$ is non-zero.
There exists some irreducible $\bar{\m{G}}_p\in D_\alpha$ such that $(\bar{\m{G}}_p)$ is the maximal ideal of $\O_{\bar C_\alpha,p}$. Then $f|_\alpha\in(\bar{\m{G}}_p)^2$. We choose $\bar{\m{G}}_p\in k[X_{j_\alpha}]$ monic of degree $1$.
Consider $p$ as a point of $C_n$. Then $p\in C_{\alpha}\setminus C_{0,\alpha}$. In particular, $p\notin C_0$, since $C_0\cap C_{\alpha}=C_{0,\alpha}$. For any $\tilde{\m{G}}_p \in k[X_{j_\alpha},\tilde Y]$ such that $\tilde{\m{G}}_p\equiv \bar{\m G}_p\mod\tilde Y$,
the ideal $(\tilde{\m{G}},\tilde Y)+\got a_\alpha$ is the maximal ideal of $\O_{C_\alpha,p}$.
We fix a choice of $\tilde{\m{G}}_p$ such that $\tilde{\m{G}}_p-\bar{\m G}_p\in \tilde Y k[\tilde Y]$ and  $\tilde{\m{G}}_p\nmid\m{F}_\alpha$.

\begin{rem}\label{rem:tildeGpchoice}
Note that such a choice of $\tilde{\m{G}}_p$ is always possible. Indeed, if $\deg_{\tilde Y}(\m{F}_\alpha)$ is the degree of $\m{F}_\alpha$ with respect to $\tilde Y$, it suffices to define \[\tilde{\m{G}}_p=\bar{\m{G}}_{p}+\tilde Y^{\deg_{\tilde Y}(\m{F}_\alpha)+1}\]
On the other hand, $\tilde{\m G}_p=\bar{\m G}_p$ is often admissible and better for computations. For instance, if $C_0$ is connected, then we can always choose $\tilde{\m G}_p=\bar{\m G}_p$.
\end{rem}

\begin{lem}\label{lem:D(G)pdense}
Consider the principal open set $U_p=D(\tilde{\m G}_p)$ of $C_\alpha$. Then $U_p$ is dense in $C_\alpha$.
\proof
As a consequence of \ref{construction}, we saw that there is a natural open immersion 
\[C_\alpha\hookrightarrow \Spec k[X_{j_\alpha},\tilde Y]/(\m F_\alpha).\]
Since $\tilde{\m G}_p\in k[X_{j_\alpha},\tilde Y]$, the image of $U_p$ is the open subset $V_p=D(\tilde{\m G}_p)$ of $\Spec k[X_{j_\alpha},\tilde Y]/(\m F_\alpha)$. Note that if $V_p$ is dense, then $U_p$ is dense in $C_\alpha$. In fact, $V_p$ is dense in $\Spec k[X_{j_\alpha},\tilde Y]/(\m F_\alpha)$ since $\tilde{\m G}_p\nmid \m F_\alpha$. 
\endproof
\end{lem}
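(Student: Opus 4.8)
The plan is to reduce the density of $U_p = D(\tilde{\m G}_p)$ in $C_\alpha$ to the density of a corresponding open subset in the ambient surface $\Spec k[X_{j_\alpha},\tilde Y]/(\m F_\alpha)$, exploiting the open immersion recorded at the end of \S\ref{sec:Bakerdescription}. First I would invoke that immersion: by Construction \ref{construction}\ref{eqn:Bakerprop2}--\ref{eqn:Bakerprop3} there is a natural open immersion $C_\alpha\hookrightarrow \Spec k[X_{j_\alpha},\tilde Y]/(\m F_\alpha)$. Since $\tilde{\m G}_p\in k[X_{j_\alpha},\tilde Y]$, this immersion carries $U_p=D(\tilde{\m G}_p)$ onto the principal open $V_p=D(\tilde{\m G}_p)$ of $\Spec k[X_{j_\alpha},\tilde Y]/(\m F_\alpha)$.

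Next I would observe that density passes through open immersions in the right direction: if $V_p$ is dense in $\Spec k[X_{j_\alpha},\tilde Y]/(\m F_\alpha)$, then its preimage — which is exactly the image of $U_p$ — is dense in the open subscheme $C_\alpha$, hence $U_p$ is dense in $C_\alpha$. This is the elementary fact that the intersection of a dense open with any open is dense in that open.

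It then remains to show $V_p$ is dense in $\Spec k[X_{j_\alpha},\tilde Y]/(\m F_\alpha)$. For this I would use that $\m F_\alpha$ is not divisible by $Y$ (part \ref{eqn:Bakerprop3}), so $\tilde Y$ — and in particular the constant term structure — behaves well, but the key input is the chosen property $\tilde{\m G}_p\nmid \m F_\alpha$. Since $\m F_\alpha\in k[X_{j_\alpha},Y]$ and $k$ is algebraically closed, the complement of $V_p$ is the vanishing locus of $\tilde{\m G}_p$ in $\Spec k[X_{j_\alpha},\tilde Y]/(\m F_\alpha)$; because $\tilde{\m G}_p$ does not divide $\m F_\alpha$, this vanishing locus contains no irreducible component of $\Spec k[X_{j_\alpha},\tilde Y]/(\m F_\alpha)$ (on each component $\m F_\alpha$ would have to vanish, forcing the corresponding irreducible factor of $\m F_\alpha$ to divide $\tilde{\m G}_p$ or conversely), hence $V_p$ meets every component densely and is dense.

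The only mildly delicate point — the "main obstacle," such as it is — is the last step: one must be careful that "$\tilde{\m G}_p\nmid\m F_\alpha$" genuinely prevents the zero locus of $\tilde{\m G}_p$ from swallowing a whole component of the (possibly reducible, possibly non-reduced) curve $\Spec k[X_{j_\alpha},\tilde Y]/(\m F_\alpha)$. Since this is a hypersurface in the affine plane $\A^2_k$, its components correspond to the irreducible factors of $\m F_\alpha$, and $\tilde{\m G}_p$ vanishes identically on the component cut out by an irreducible factor $q$ precisely when $q\mid\tilde{\m G}_p$; if that happened for every factor $q$ of $\m F_\alpha$ then, up to the power of $\tilde Y$ already removed, $\m F_\alpha\mid\tilde{\m G}_p^{\deg\m F_\alpha}$, which still does not immediately contradict $\tilde{\m G}_p\nmid\m F_\alpha$ — so I would instead argue component by component: $V_p\cap(\text{component }q)$ is the nonempty open complement of $V(q,\tilde{\m G}_p)$ in $V(q)$, which is a proper closed subset of the irreducible $V(q)$ exactly because $q\nmid\tilde{\m G}_p$ (as $q\mid\m F_\alpha$ but $\tilde{\m G}_p\nmid\m F_\alpha$ forces $q\neq\tilde{\m G}_p$ up to units, and $q$ irreducible then gives $q\nmid\tilde{\m G}_p$). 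Thus $V_p$ is dense in each component, hence in the whole scheme, and the lemma follows.
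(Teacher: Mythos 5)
Your proposal follows the paper's proof: the same reduction through the open immersion $C_\alpha\hookrightarrow \Spec k[X_{j_\alpha},\tilde Y]/(\m F_\alpha)$, the same observation that a dense open of the ambient hypersurface meets the open subscheme $C_\alpha$ densely, and the same use of $\tilde{\m G}_p\nmid\m F_\alpha$ to get density of $V_p$; your component-by-component elaboration is just making explicit what the paper leaves to the reader. One small repair is needed in your final inference: from ``$q$ is not an associate of $\tilde{\m G}_p$ and $q$ is irreducible'' you cannot in general conclude $q\nmid\tilde{\m G}_p$ (that would fail if $\tilde{\m G}_p$ were reducible). Here it is saved because $\tilde{\m G}_p$ is itself irreducible: by construction $\bar{\m G}_p\in k[X_{j_\alpha}]$ is monic of degree $1$ and $\tilde{\m G}_p-\bar{\m G}_p\in\tilde Y k[\tilde Y]$, so $\tilde{\m G}_p$ is monic of degree $1$ in $X_{j_\alpha}$ and primitive over $k[\tilde Y]$; hence any irreducible factor $q$ of $\m F_\alpha$ dividing $\tilde{\m G}_p$ would be an associate of $\tilde{\m G}_p$, contradicting $\tilde{\m G}_p\nmid\m F_\alpha$. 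With that one-line fix your argument is complete and matches the paper's.
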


Write $\tuplent=(g_2,\dots,g_m)$. From \ref{construction}\ref{eqn:Bakerprop1} recall the isomorphism
\[\frac{k[x_1^{\pm 1},x_2^{\pm1},\dots,x_m^{\pm 1},y^{\pm1}]}{(f_2,\dots,f_m)}\stackrel{M_\alpha}{\simeq}\frac{A_m[\tilde Y^{\pm1}]}{\got a_\alpha},\]\normalsize
where $f_i=x_i-g_i$ for all $i=2,\dots,m$.
Let $g_p\in k[x_1,\dots,x_m,y]$ such that
\[x_1^\ast\cdots x_m^\ast y^\ast\cdot g_p(x_1,\dots,x_m,y)=\tilde{\m{G}}_p\big((x_1,\dots,x_m,y)\bullet M_{\alpha}^{-1}\big).\]
We fix a canonical choice of $g_p$ by requiring $\ord_y{g_p}=0$, and $\ord_{x_i}(g_p)=0$ for all $i=1,\dots,m$. 

\begin{defn}\label{defn:polynomialrelated}
We say that $g_p\in k[x_1,\dots,x_m,y]$ is \textit{related to $\tilde{\m{G}}_p$ by $M_\alpha$} if it is defined as above. Note that it is uniquely determined by $\tilde{\m{G}}_p$ and $M_\alpha$.
\end{defn}

Define $\alpha_p=(0,1)\circ_{g_p}\alpha\in\Omega_{m+1}$ (Definition \ref{defn:circ}). 
Fix a choice of a matrix $M_{\alpha_p}\in\SL_{m+2}(\Z)$ attached to $(0,1)\circ_{g_p} v$ such that the change of variables \[(x_1,\dots,x_m,x_{m+1},y)=(X_1,\dots,X_m,\tilde X_{m+1},\tilde Y)\bullet M_{\alpha_p}\]
restricts to the change of variables given by the matrix $M_\alpha$ on the subring $k[x_1^{\pm 1},\dots, x_m^{\pm 1}, y^{\pm 1}]$ of $k[x_1^{\pm 1},\dots, x_{m+1}^{\pm 1}, y^{\pm 1}]$ and gives the equality
\begin{equation}\label{eqn:gpGp}
x_{m+1}-g_p=X_1^{n_1}\cdots X_m^{n_m} \tilde Y^{\ord_v(g_p)}(\tilde X_{m+1}-\tilde{\m{G}}_p),
\end{equation}
for some $n_1,\dots,n_m\in\Z$. In particular,
\begin{equation}\label{eqn:isoMalphap}
    \sfrac{k[x_1^{\pm 1},\dots,x_{m+1}^{\pm 1},y^{\pm 1}]}{(f_2,\dots,f_m, x_{m+1}-g_{m+1})}\stackrel{M_{\alpha_p}}{\simeq} \sfrac{A_m[\tilde X_{m+1}^{\pm 1}, \tilde Y^{\pm 1}]}{\got a_\alpha+(\tilde X_{m+1}-\tilde{\m{G}}_p)}\hookrightarrow k(X_{j_{\alpha}},\tilde Y).
\end{equation}
where $g_{m+1}\in k[x_1^{\pm 1},y^{\pm 1}]$ is the unique polynomial so that $g_{m+1}\equiv g_{p}\mod(f_2,\dots,f_m)$.

\begin{rem}\label{rem:Matrixalphap}
Such $M_{\alpha_p}$ is constructed as follows. Via $M_\alpha$ write
\[g_p=X_1^{n_1}\cdots X_m^{n_m}Y^{\ord_v(g_p)}\cdot\tilde{\m{G}}_p\]
for some $n_1,\dots,n_m\in\Z$. Then
\begin{itemize}
    \item The $(m+1)$-th row of $M_{\alpha_p}$ is the vector $(0,\dots,0,1,0)$;
    \item The $(m+1)$-th column of $M_{\alpha_p}$ is the vector $(n_1,\dots,n_m,1,\ord_{v}(g_p))$;
    \item The submatrix of $M_{\alpha_p}$ obtained by removing the $(m+1)$-th row and the $(m+1)$-th column equals $M_{\alpha}$.
\end{itemize}
This construction is unique. Indeed, the $(m+1)$-th column is fixed by the equality (\ref{eqn:gpGp}), while all other columns are fixed by the fact that $M_{\alpha_p}$ defines the same change of variables of $M_\alpha$ on $k[x_1^{\pm 1},\dots, x_m^{\pm 1}, y^{\pm 1}]$.
\end{rem}

\begin{lem}\label{lem:Calphap}
With the notation above
 \[C_{\alpha_p}=\Spec \frac{k[X_1^{\pm 1},\dots,X_m^{\pm 1},\tilde X_{m+1}^{\pm 1},\tilde Y]}{(\m{F}_\alpha,\tilde{\m{F}}_2,\dots,\tilde {\m{F}}_m, \tilde X_{m+1}-\tilde{\m{G}}_p)}.\] 
Furthermore, $C_{0,\alpha_p}$ is dense in $C_0$, i.e.\ $\alpha_p\in\Omega$, and the birational map $s_{\alpha_p\alpha}$ comes from an open immersion $s_{\alpha_p\alpha}:C_{\alpha_p}\hookrightarrow C_\alpha$ with image $D(\tilde{\m G}_p)\subset C_\alpha$. Finally, $s_{\alpha_p\alpha}$ induces $\bar C_{\alpha_p}\simeq\bar C_\alpha\setminus \{p\}$.
\proof
First note that $C_{0,\alpha_p}\subset C_{0,\alpha}$. Considering $C_{0,\alpha}$ as an open subscheme of $C_{\alpha}$, then $C_{0,\alpha_p}$ equals $D(\tilde{\m G}_p)\cap C_{0,\alpha}\subset C_\alpha$. Then $C_{0,\alpha_p}$ is dense in $C_{0,\alpha}$ by Lemma \ref{lem:D(G)pdense}. It follows that $C_{0,\alpha_p}$ is dense in $C_0$ since so is $C_{0,\alpha}$. In other words, $\alpha_p\in\Omega$.
The ring homomorphism
\[A_{\alpha}:=\tfrac{A_m[\tilde Y]}{(\m{F}_\alpha)+\got a_\alpha}\rightarrow\tfrac{A_m[\tilde X_{m+1}^{\pm 1},\tilde Y]}{(\m{F}_\alpha, \tilde X_{m+1}-\tilde{\m{G}}_p)+\got a_\alpha}=:A_{\alpha_p}\]
is injective by Lemma \ref{lem:D(G)pdense} and induces the birational map $s_{\alpha_p\alpha}$ if $C_{\alpha_p}=\Spec A_{\alpha_p}$ from (\ref{eqn:isoMalphap}). The injectivity implies that $\tilde Y$ is a regular element of $A_{\alpha_p}$ since $\tilde Y$ is a regular element of $A_\alpha$ by definition of $C_\alpha$. This concludes the proof by definition of $C_{\alpha_p}$.
\endproof
\end{lem}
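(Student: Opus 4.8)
The plan is to identify $C_{\alpha_p}$, through the change of variables $M_{\alpha_p}$, with the localisation $(A_\alpha)_{\tilde{\m G}_p}$ of the coordinate ring $A_\alpha$ of the chart $C_\alpha$, and then to read off all four assertions; apart from elementary commutative algebra, the only real input is Lemma \ref{lem:D(G)pdense}.

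First I would check that $\alpha_p\in\Omega$. By Definition \ref{defn:circ}, $C_{0,\alpha_p}$ is the open subscheme of $C_{0,\alpha}$ obtained by inverting $g_{m+1}$, and $g_{m+1}\equiv g_p\bmod(f_2,\dots,f_m)$; since $x_i=g_i$ on $C_{0,\alpha}$, the functions $g_{m+1}$ and $g_p$ define the same open subscheme there. By the relation defining $g_p$ (Remark \ref{rem:Matrixalphap}), under $M_\alpha$ the function $g_p$ becomes a monomial times $\tilde{\m G}_p$, hence agrees with $\tilde{\m G}_p$ up to a unit on the dense open $D(\tilde Y)=C_{0,\alpha}$ of $C_\alpha$. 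So, viewing $C_{0,\alpha}$ as $D(\tilde Y)\subseteq C_\alpha$, we get $C_{0,\alpha_p}=D(\tilde{\m G}_p)\cap C_{0,\alpha}$, the intersection of two dense opens of $C_\alpha$ (density of $D(\tilde{\m G}_p)$ being Lemma \ref{lem:D(G)pdense}); hence $C_{0,\alpha_p}$ is dense in $C_{0,\alpha}$ and therefore in $C_0$.

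For parts (1) and (3) I would run $M_{\alpha_p}$. Since it restricts to $M_\alpha$ on $k[x_1^{\pm1},\dots,x_m^{\pm1},y^{\pm1}]$, the isomorphism (\ref{eqn:isoMalphap}) accounts for $f_2,\dots,f_m,x_{m+1}-g_{m+1}$; quotienting it further by $f$ — which, as in the construction of $C_\alpha$, goes to a unit times $\m F_\alpha$ — shows that, with $R=k[X_1^{\pm1},\dots,X_m^{\pm1},\tilde X_{m+1}^{\pm1},\tilde Y]$ and $\m J=(\m F_\alpha,\tilde{\m F}_2,\dots,\tilde{\m F}_m,\tilde X_{m+1}-\tilde{\m G}_p)$, the ideal of $C_{0,\alpha_p}$ (embedded in $\G_m^{m+2}$) corresponds under $M_{\alpha_p}$ to $\m J R[\tilde Y^{-1}]$. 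By the saturation criterion recalled in \S\ref{sec:ToricVarietiesCharts}, $C_{\alpha_p}=\Spec R/\m J$ as soon as $\tilde Y$ is a non-zero-divisor in $R/\m J$. Eliminating $\tilde X_{m+1}$ via its linear relation $\tilde X_{m+1}-\tilde{\m G}_p$ (which, $\tilde X_{m+1}$ being invertible, amounts to inverting $\tilde{\m G}_p$) identifies $R/\m J$ with $(A_\alpha)_{\tilde{\m G}_p}$. Now $A_\alpha$ is reduced, being the coordinate ring of the scheme-theoretic closure of the smooth (hence reduced) curve $C_{0,\alpha}$, so the density of $D(\tilde{\m G}_p)$ in $C_\alpha$ (Lemma \ref{lem:D(G)pdense}) implies that $\tilde{\m G}_p$ lies in no minimal prime of $A_\alpha$, i.e.\ $\tilde{\m G}_p$ is a non-zero-divisor of $A_\alpha$ and $A_\alpha\hookrightarrow(A_\alpha)_{\tilde{\m G}_p}$; and $\tilde Y$, a non-zero-divisor in $A_\alpha$ (as $D(\tilde Y)=C_{0,\alpha}$ is dense in the reduced scheme $C_\alpha$), stays one after localisation since localisation is flat. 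This establishes part (1), and then $C_{\alpha_p}=\Spec(A_\alpha)_{\tilde{\m G}_p}\to C_\alpha=\Spec A_\alpha$ is the open immersion onto $D(\tilde{\m G}_p)$; it sends $\tilde Y\mapsto\tilde Y$, so on tori it is the canonical immersion $C_{0,\alpha_p}\hookrightarrow C_{0,\alpha}$ and hence represents $s_{\alpha_p\alpha}$, which is part (3).

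Part (4) is then immediate: $\bar C_{\alpha_p}=\{\tilde Y=0\}\subseteq C_{\alpha_p}$ is carried by the open immersion $s_{\alpha_p\alpha}$ (which is $\tilde Y\mapsto\tilde Y$) isomorphically onto $\{\tilde Y=0\}\cap D(\tilde{\m G}_p)=\bar C_\alpha\setminus V(\bar{\m G}_p)$ inside $C_\alpha$, where $\bar{\m G}_p$ is the reduction of $\tilde{\m G}_p$ modulo $\tilde Y$; since $(\bar{\m G}_p)$ is the maximal ideal of $\O_{\bar C_\alpha,p}$, the closed set $V(\bar{\m G}_p)\subseteq\bar C_\alpha$ is $\{p\}$, so $\bar C_{\alpha_p}\simeq\bar C_\alpha\setminus\{p\}$. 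I expect the one genuinely delicate point to be the verification that $\tilde Y$ is a non-zero-divisor in $R/\m J$ — equivalently, that $\m J$ is already $\tilde Y$-saturated, so that $C_{\alpha_p}$ is all of $\Spec R/\m J$ and not a proper closed subscheme of it; this is exactly where reducedness of $A_\alpha$ and Lemma \ref{lem:D(G)pdense} enter, the remaining steps being formal once the change of variables $M_{\alpha_p}$ and the definitions of $\bar C_\alpha$ and $s_{\alpha_p\alpha}$ are unwound.
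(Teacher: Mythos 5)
Your proposal is correct and follows essentially the same route as the paper: identify $A_{\alpha_p}$ with the localisation $(A_\alpha)_{\tilde{\m G}_p}$ via the relation $\tilde X_{m+1}-\tilde{\m G}_p$, use Lemma \ref{lem:D(G)pdense} to get injectivity of $A_\alpha\rightarrow A_{\alpha_p}$, and deduce that $\tilde Y$ remains regular so that the saturation criterion of \S\ref{sec:ToricVarietiesCharts} yields $C_{\alpha_p}=\Spec A_{\alpha_p}$, after which the open immersion onto $D(\tilde{\m G}_p)$ and $\bar C_{\alpha_p}\simeq\bar C_\alpha\setminus\{p\}$ are read off. You merely make explicit some steps the paper leaves implicit (reducedness of $A_\alpha$, flatness of localisation, and the verification of the last two claims).
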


Now consider the lexicographic monomial order $X_{j_\alpha}>\tilde X_{m+1}>\tilde Y$ on $k[X_{j_\alpha},\tilde X_{m+1},\tilde Y]$ and compute the \textit{normal form} $\m{F}_{\alpha,p}$ of $\m{F}_\alpha$ by $\tilde X_{m+1}-\tilde{\m{G}}_p$ with respect to $>$.
In other words, the polynomial $\m{F}_{\alpha,p}$ is the remainder of the complete multivariate division of $\m{F}_\alpha$ by $\tilde X_{m+1}-\tilde{\m{G}}_p$. Note that $\m{F}_{\alpha,p}\in k[\tilde X_{m+1},\tilde Y]$, as $\tilde{\m G}_p-\bar{\m G}_p\in \tilde Y k[\tilde Y]$ and $\bar{\m{G}}_p\in k[X_{j_\alpha}]$ of degree $1$.

Let $\beta\in\Z_+^2$ be any primitive vector. Fix a matrix $M_\beta\in\SL_2(\Z)$ attached to $\beta$. Then $M_\beta$ gives an isomorphism between $k[\tilde X_{m+1}^{\pm 1},\tilde Y^{\pm 1}]$ and $k[X_{m+1}^{\pm 1},Y^{\pm 1}]$ through the change of variables
$(\tilde X_{m+1},\tilde Y)=(X_{m+1},Y)\bullet M_\beta$. 
This transformation lifts to
\begin{equation}\label{eqn:ImMbetaisomorphism}
A_m[\tilde X_{m+1}^{\pm 1},\tilde Y^{\pm 1}]\stackrel{I_m\oplus M_\beta}{\simeq}A_m[X_{m+1}^{\pm1},Y^{\pm 1}],
\end{equation}
where $I_m\in\SL_m(\Z)$ is the identity matrix of size $m$. Since $\beta\in\Z_+^2$, the isomorphism (\ref{eqn:ImMbetaisomorphism}) restricts to a homomorphism 
\begin{equation*}
A_m[\tilde X_{m+1},\tilde Y]\xrightarrow{I_m\oplus M_\beta}A_m[X_{m+1}^{\pm1},Y]
\end{equation*}

Let $\beta=(\beta_1,\beta_2)$ and let $(\delta_1,\delta_2)$ be the first row of $M_\beta$, so $\delta_1\beta_2-\delta_2\beta_1=1$.
Set $A_{m+1}=A_m[X_{m+1}^{\pm 1}]$. Denote by $\m{F}_2,\dots,\m{F}_m, \m G_p\in A_{m+1}[Y]$ the images of $\tilde{\m{F}}_2,\dots,\tilde{\m{F}}_m, \tilde{\m G}_p$ under $I_m\oplus M_\beta$, respectively. Let $\m{F}_{m+1}=X_{m+1}^{\delta_1} Y^{\beta_1}-\m{G}_p$, image of $\tilde X_{m+1}-\tilde{\m G}_p$. Then we get the homomorphism
\begin{equation}\label{eqn:homIxMbeta}
\frac{A_m[\tilde Y]}{\got a_\alpha}\simeq\frac{A_m[\tilde X_{m+1}, \tilde Y]}{\got a_\alpha+(\tilde X_{m+1}-\tilde{\m{G}}_p)}\xrightarrow{I_m\oplus M_\beta}\frac{A_{m+1}[Y]}{(\m F_2,\dots,\m F_{m+1})}.
\end{equation}
Note that since $\beta_2>0$ then \[\m G_p\equiv \bar{\m G}_p\mod Y,\qquad\text{and}\qquad\m{F}_i\equiv \bar{\m F}_i\mod Y\quad\text{for any $i=2,\dots,m$},\] where $\bar{\m F}_i$ is the unique polynomial in $A_m$ such that $\tilde{\m{F}}_i\equiv \bar {\m F}_i\mod \tilde Y$.

Let $\gamma=\beta\circ_{g_p}\alpha\in\Omega_{m+1}$. By definition, $C_{0,\gamma}=C_{0,\alpha_p}$. Therefore $\gamma\in\Omega$ by Lemma \ref{lem:Calphap}. Let $\got a_\gamma$ be the ideal of $A_{m+1}[Y]$ generated by $\m{F}_2,\dots,\m{F}_{m+1}$ and set $M_\gamma=(I_m\oplus M_\beta)\cdot M_{\alpha_p}\in\SL_{m+2}(\Z)$. Note that the matrix $M_\gamma$ is attached to $\beta\circ_{g_p}v$.
Let $\m{F}_{\gamma}\in k[X_{m+1},Y]$, with $\ord_Y(\m{F}_{\gamma})=0$, satisfying
\[
    \m{F}_{\alpha,p}((X_{m+1},Y)\bullet M_\beta)=X_{m+1}^{n_X} Y^{n_Y}\cdot\m{F}_{\gamma}(X_{m+1},Y),
\]
for some $n_X,n_Y\in\Z$.
Note that 
\begin{equation}\label{eqn:Mgammaisomorphism}
    \tfrac{k[x_1^{\pm 1},\dots, x_{m+1}^{\pm 1},y^{\pm 1}]}{(f_2,\dots,f_{m+1})}\stackrel{M_{\alpha_p}}{\simeq}\tfrac{k[X_1^{\pm 1},\dots, X_m^{\pm 1},\tilde X_{m+1}^{\pm 1},\tilde Y^{\pm 1}]}{(\tilde{\m{F}}_2,\dots,\tilde{\m{F}}_m, \tilde X_{m+1}-\tilde{\m{G}}_p)} \stackrel{I_m\oplus M_\beta}{\simeq} \tfrac{k[X_1^{\pm 1},\dots,X_{m+1}^{\pm 1},Y^{\pm 1}]}{(\m{F}_2,\dots,\m{F}_{m+1})},
\end{equation}
where $f_{m+1}=x_{m+1}-g_{m+1}$. In particular, the ideal $\got a_\gamma$ and the matrix $M_\gamma$ satisfy \ref{construction}\ref{eqn:Bakerprop1} for $\gamma$. With $j_\gamma=m+1$ we are now going to show \ref{construction}\ref{eqn:Bakerprop2} for $\gamma$.

Recall from \ref{construction}\ref{eqn:Bakerprop2} there is an injective homomorphism $R_\alpha\hookrightarrow k(X_{j_\alpha},\tilde Y)$ taking $X_{j_\alpha}\mapsto X_{j_\alpha}$ and $\tilde Y\mapsto \tilde Y$. 
Since $\tilde{\m G}_p-\bar{\m G}_p\in \tilde Y k[\tilde Y]$ and $\bar{\m G}_p\in k[X_{j_\alpha}]$ monic of degree $1$, we have
\[\sfrac{A_m[\tilde X_{m+1},\tilde Y]}{\got a_\alpha+(\tilde X_{m+1}-\tilde{\m{G}}_p)}\hookrightarrow \sfrac{k(X_{j_{\alpha}},\tilde Y)[\tilde X_{m+1}]}{(\tilde X_{m+1}-\tilde{\m{G}}_p)}\simeq k(\tilde X_{m+1},\tilde Y).\]
Then we can construct the following commutative diagram
\begin{equation}\label{eqn:commutativediagramembeddings}
    \begin{tikzcd}
 \nicefrac{A_{m+1}[Y^{\pm 1}]}{\got a_\gamma}=\!\!\!\!\!\!\!\!\!\!\!\!\!\!\!\!\!\!\!\!\!\!&\frac{k[X_1^{\pm 1},\dots,X_{m+1}^{\pm 1},Y^{\pm 1}]}{(\m{F}_2,\dots,\m{F}_m,\m{F}_{m+1})} \arrow[r, "\iota_\gamma"] & {k(X_{m+1},Y)}\\
  &\frac{k[X_1^{\pm 1},\dots, X_m^{\pm 1},\tilde X_{m+1}^{\pm 1},\tilde Y^{\pm 1}]}{(\tilde{\m{F}}_2,\dots,\tilde{\m{F}}_m, \tilde X_{m+1}-\tilde{\m{G}}_p)} \arrow[u, hook, two heads, "I_m\oplus M_\beta"] \arrow[r, hook] & {k(\tilde X_{m+1},\tilde Y)} \arrow[u, "M_\beta"']
\end{tikzcd}
\end{equation}
given by the matrix $M_\beta$. Therefore the homomorphism $\iota_\gamma$ is injective and takes $X_{m+1}\mapsto X_{m+1}$ and $Y\mapsto Y$.

\begin{lem}\label{lem:Dgamma}
With the notation above, there is an isomorphism
\[\frac {k[X_1^{\pm1},\dots, X_{m+1}^{\pm1},Y]}{(\m{F}_2,\dots,\m{F}_{m+1},Y)}\simeq k[X_{m+1}^{\pm 1}],\]
taking $X_{m+1}\mapsto X_{m+1}$. The images of $X_1,\dots,X_{m}$ in $k[X_{m+1}^{\pm 1}]$ lies in $k$.
\proof
Recall that for every $i=2,\dots,m$ there exists a (unique) Laurent polynomial $\bar{\m F}_i\in A_m$ such that $\tilde{\m{F}}_i\equiv\bar{\m F}_i\mod \tilde Y$. Since $\m F_{m+1}\equiv \bar{\m G}_p\mod Y$ and $\m{F}_i\equiv \bar{\m F}_i\mod Y$ for any $i=2,\dots,m$, we have
\[\tfrac {k[X_1^{\pm1},\dots, X_{m+1}^{\pm1},Y]}{(\m{F}_2,\dots,\m{F}_{m+1},Y)}\simeq\tfrac {k[X_1^{\pm1},\dots, X_{m+1}^{\pm1},\tilde Y]}{(\tilde{\m{F}}_2,\dots,\tilde{\m{F}}_{m},\bar{\m{G}}_p,\tilde Y)}\simeq \tfrac{D_\alpha}{(\bar{\m{G}}_p)}[X_{m+1}^{\pm 1}]\simeq k[X_{m+1}^{\pm 1}],\] 
and the isomorphisms take $X_{m+1}\mapsto X_{m+1}$, as required.
\endproof
\end{lem}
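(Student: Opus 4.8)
The plan is to reduce everything modulo $Y$ and compute the resulting quotient by hand; the point is that $\beta=(\beta_1,\beta_2)\in\Z_+^2$ forces every positive power of $Y$ occurring in the generators to vanish. First I would record, as already observed, that $\m F_i\equiv\bar{\m F}_i\bmod Y$ for $i=2,\dots,m$, where $\bar{\m F}_i\in A_m$ is the reduction of $\tilde{\m F}_i$ modulo $\tilde Y$, and that $\m F_{m+1}=X_{m+1}^{\delta_1}Y^{\beta_1}-\m G_p$ with $\beta_1\geq 1$ and $\m G_p\equiv\bar{\m G}_p\bmod Y$, whence $\m F_{m+1}\equiv-\bar{\m G}_p\bmod Y$. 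Therefore, in $A_{m+1}[Y]$, the ideals $(\m F_2,\dots,\m F_{m+1},Y)$ and $(\bar{\m F}_2,\dots,\bar{\m F}_m,\bar{\m G}_p,Y)$ coincide, and since none of $\bar{\m F}_2,\dots,\bar{\m F}_m,\bar{\m G}_p$ involves $X_{m+1}$, the variable $X_{m+1}$ factors out:
\[\frac{A_{m+1}[Y]}{(\m F_2,\dots,\m F_{m+1},Y)}\simeq\Bigl(\frac{A_m}{(\bar{\m F}_2,\dots,\bar{\m F}_m,\bar{\m G}_p)}\Bigr)[X_{m+1}^{\pm1}].\]

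It then remains to identify the coefficient ring. By \ref{construction}\ref{eqn:Bakerprop2} and the discussion following Construction \ref{construction}, $A_m/(\bar{\m F}_2,\dots,\bar{\m F}_m)\simeq D_\alpha$, a localisation of $k[X_{j_\alpha}]$ sending $X_{j_\alpha}$ to $X_{j_\alpha}$, and under this isomorphism $\bar{\m G}_p$ corresponds to the chosen monic linear polynomial $X_{j_\alpha}-a$ generating the maximal ideal of $\O_{\bar C_\alpha,p}$. I would then argue $D_\alpha/(\bar{\m G}_p)\simeq k$: since $D_\alpha$ is a localisation of the PID $k[X_{j_\alpha}]$ and $\bar{\m G}_p=X_{j_\alpha}-a$ is linear, the ideal $(\bar{\m G}_p)D_\alpha$ is either the unit ideal or maximal; the first case is excluded because $(\bar{\m G}_p)$ is, by construction, the proper maximal ideal of $\O_{\bar C_\alpha,p}$, so $D_\alpha/(\bar{\m G}_p)$ is a field, and it must equal $k[X_{j_\alpha}]/(X_{j_\alpha}-a)=k$. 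Combining this with the isomorphism above gives $A_{m+1}[Y]/(\m F_2,\dots,\m F_{m+1},Y)\simeq k[X_{m+1}^{\pm1}]$ with $X_{m+1}\mapsto X_{m+1}$.

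For the last assertion I would just trace the images of $X_1,\dots,X_m$: under $A_m/(\bar{\m F}_2,\dots,\bar{\m F}_m)\simeq D_\alpha$ they map to the elements $t_1,\dots,t_m\in D_\alpha\subseteq k(X_{j_\alpha})$, which become scalars after passing to $D_\alpha/(\bar{\m G}_p)=k$, so their images in $k[X_{m+1}^{\pm1}]$ lie in $k$. The argument is essentially bookkeeping about reductions modulo $Y$ and $\tilde Y$; the one place that needs genuine care — and the closest thing to an obstacle — is the identity $D_\alpha/(\bar{\m G}_p)=k$, i.e.\ checking that the \emph{global} quotient by the linear form $\bar{\m G}_p$ is exactly $k$, rather than $0$ or a strictly larger localisation of $k$, which is precisely where one uses that $\bar{\m G}_p$ generates a proper maximal ideal of $\O_{\bar C_\alpha,p}$ with residue field $k$ together with $D_\alpha$ being a localisation of $k[X_{j_\alpha}]$.
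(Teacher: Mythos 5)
Your proposal is correct and follows essentially the same route as the paper: reduce modulo $Y$ (using $\beta\in\Z_+^2$ to kill the $Y$-terms, so the generators become $\bar{\m F}_2,\dots,\bar{\m F}_m,\bar{\m G}_p$, none involving $X_{m+1}$), split off $k[X_{m+1}^{\pm1}]$, and identify the coefficient ring via $A_m/(\bar{\m F}_2,\dots,\bar{\m F}_m)\simeq R_\alpha/(\tilde Y)\simeq D_\alpha$ and $D_\alpha/(\bar{\m G}_p)\simeq k$. Your write-up merely makes explicit the step $D_\alpha/(\bar{\m G}_p)\simeq k$ (which the paper asserts without comment), and your justification of it — $D_\alpha$ a localisation of $k[X_{j_\alpha}]$, $\bar{\m G}_p$ linear and non-invertible because it vanishes at $p\in\bar C_\alpha$ — is sound.
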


\begin{prop}\label{prop:embeddingRgamma}
With the notation above, there is an injective homomorphism
\[R_\gamma:=\sfrac{A_{m+1}[Y]}{\got a_\gamma}\hookrightarrow k(X_{m+1},Y).\]
taking $X_{m+1}\mapsto X_{m+1}$ and $Y\mapsto Y$. Furthermore, $YR_\gamma$ is prime ideal.
\proof
Lemma \ref{lem:Dgamma} shows $YR_\gamma$ is a prime ideal as $R_\gamma/(Y)\simeq k[X_{m+1}^{\pm 1}]$ is an integral domain. From (\ref{eqn:commutativediagramembeddings}) we have
\[\sfrac{A_{m+1}[Y]}{\got a_\gamma} \rightarrow \sfrac{A_{m+1}[Y^{\pm 1}]}{\got a_\gamma}\hookrightarrow k(X_{m+1},Y)\]
taking $X_{m+1}\mapsto X_{m+1}$ and $Y\mapsto Y$. Therefore it suffices to show that the ideal $\got a_\gamma$ of $A_{m+1}[Y]$ equals its saturation $\got a_\gamma:Y^\infty$ with respect to $Y$. Suppose not. Consider the primary decomposition of $\got a_\gamma$,
\[\got a_\gamma=\got q_1\cap\dots\cap\got q_s,\qquad \got p_i=\sqrt{\got q_i}.\]
Recall that the primary decomposition of $\got a_\gamma:Y^\infty$ consists of all the $\got q_i$'s which do not contain any power of $Y$. Hence there exists some $i=1,\dots,s$ such that $\got p_i\supseteq (Y)+\got a_\gamma$. Moreover, we can choose $i$ such that $\got p_i$ is a minimal prime ideal over $\got a_\gamma$, i.e.\ $\got p_i\in\mathrm{Min}(\got a_\gamma)$. Then, by Krull's height theorem, the height of $\got p_i$ is at most $m$ (the number of generators of $\got a_\gamma$), and so $\mathrm{ht}((Y)+\got a_\gamma)\leq m$. But
\[\dim\frac{A_{m+1}[Y]}{(Y)+\got a_\gamma}=1,\]
by Lemma \ref{lem:Dgamma}. This gives a contradiction, since 
\[\mathrm{ht}((Y)+\got a_\gamma)+\dim\tfrac{A_{m+1}[Y]}{(Y)+\got a_\gamma}=\dim A_{m+1}[Y]=m+2,\]
from the regularity of $A_{m+1}[Y]$.
\endproof
\end{prop}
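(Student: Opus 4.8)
The plan is to prove the two assertions separately. The primality of $Y R_\gamma$ is immediate from Lemma \ref{lem:Dgamma}: that lemma identifies $R_\gamma/(Y)$ with the Laurent polynomial ring $k[X_{m+1}^{\pm 1}]$, a nonzero integral domain, so $Y R_\gamma$ is a proper prime ideal. For the injection I would first reduce to a statement purely about $Y$. The commutative square (\ref{eqn:commutativediagramembeddings}) already furnishes an injective homomorphism $\iota_\gamma\colon A_{m+1}[Y^{\pm 1}]/\got a_\gamma \hookrightarrow k(X_{m+1},Y)$ sending $X_{m+1}\mapsto X_{m+1}$ and $Y\mapsto Y$. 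Precomposing with the localisation map $R_\gamma = A_{m+1}[Y]/\got a_\gamma \to A_{m+1}[Y^{\pm 1}]/\got a_\gamma$ yields a homomorphism $R_\gamma \to k(X_{m+1},Y)$ with the required effect on $X_{m+1}$ and $Y$; it is injective exactly when the localisation map is, i.e.\ when $Y$ is a regular element of $R_\gamma$, equivalently when $\got a_\gamma = \got a_\gamma : Y^\infty$ in $A_{m+1}[Y]$.

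So the heart of the matter is showing that $\got a_\gamma$ is saturated with respect to $Y$, and I would argue this by contradiction via primary decomposition. If $\got a_\gamma \neq \got a_\gamma : Y^\infty$, then writing $\got a_\gamma = \got q_1 \cap \dots \cap \got q_s$ with $\got p_i = \sqrt{\got q_i}$, some component $\got q_i$ must contain a power of $Y$, so $\got p_i \supseteq (Y) + \got a_\gamma$; after shrinking we may assume $\got p_i \in \mathrm{Min}(\got a_\gamma)$. Since $\got a_\gamma$ is generated by the $m$ polynomials $\m{F}_2,\dots,\m{F}_{m+1}$, Krull's height theorem gives $\mathrm{ht}(\got p_i) \le m$, hence $\mathrm{ht}((Y)+\got a_\gamma) \le m$. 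On the other hand, Lemma \ref{lem:Dgamma} gives $\dim A_{m+1}[Y]/((Y)+\got a_\gamma) = 1$; since $A_{m+1}[Y]$ is a localisation of $k[X_1,\dots,X_{m+1},Y]$ it is regular of dimension $m+2$, so $\mathrm{ht}((Y)+\got a_\gamma) + \dim A_{m+1}[Y]/((Y)+\got a_\gamma) = m+2$, forcing $\mathrm{ht}((Y)+\got a_\gamma) = m+1$, contradicting the bound $\le m$.

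I expect the only delicate point to be this closing height-versus-dimension bookkeeping: one must combine Krull's bound coming from the \emph{number} of generators of $\got a_\gamma$ (which is $m$, not $m+1$) with the value $\dim A_{m+1}[Y]/((Y)+\got a_\gamma) = 1$ supplied by Lemma \ref{lem:Dgamma}, and one must know that the identity $\mathrm{ht}(I) + \dim\big(A_{m+1}[Y]/I\big) = \dim A_{m+1}[Y]$ holds for $I = (Y)+\got a_\gamma$, which it does because $A_{m+1}[Y]$ is regular. Everything else --- the diagram-chase producing $\iota_\gamma$, the localisation reduction, and the primary-decomposition characterisation of $Y$-saturation --- is routine.
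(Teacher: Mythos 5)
Your proposal is correct and follows essentially the same route as the paper's proof: primality of $YR_\gamma$ via Lemma \ref{lem:Dgamma}, reduction of injectivity to the $Y$-saturation of $\got a_\gamma$ using the diagram (\ref{eqn:commutativediagramembeddings}), and then the contradiction argument combining Krull's height theorem (bound $m$ from the number of generators) with $\dim A_{m+1}[Y]/((Y)+\got a_\gamma)=1$ and the height-plus-dimension identity valid in the regular ring $A_{m+1}[Y]$ of dimension $m+2$. No gaps to report.
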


\begin{prop}\label{prop:Cgamma}
Let $\beta\in\Z_+^2$ and $\gamma=\beta\circ_{g_p}\alpha$ as above. Then
\[C_\gamma=\Spec\frac{k[X_1^{\pm 1},\dots,X_{m+1}^{\pm 1},Y]}{(\m{F}_\gamma,\m{F}_2,\dots,\m{F}_{m+1})}.\]
\proof
The isomorphism in (\ref{eqn:Mgammaisomorphism}) implies that $C_{0,\gamma}\simeq\Spec\tfrac{A_{m+1}[Y^{\pm 1}]}{(\m F_\gamma)+\got a_\gamma}$ via $M_\gamma$. Then from the definition of $C_\gamma$, it suffices to show that $Y$ is a regular element of $R_\gamma/(\m{F}_\gamma)$, where $R_\gamma=A_{m+1}[Y]/\got a_\gamma$. From Proposition \ref{prop:embeddingRgamma} there is an injective homomorphism
$R_\gamma\hookrightarrow k(X_{m+1},Y)$,
taking $X_{m+1}\mapsto X_{m+1}$ and $Y\mapsto Y$. Moreover, $YR_\gamma$ is a prime ideal.
% Let $\m{T}_1,\dots,\m{T}_{m}$ be the images of $X_1,\dots,X_{m}$ in $k(X_{m+1},Y)$. Then
% \[\sfrac{R_\gamma}{(\m{F}_\gamma)}\simeq \sfrac{k[X_{m+1}^{\pm 1},Y][\m{T}_1^{\pm 1},\dots,\m{T}_{m}^{\pm 1}]}{(\m{F}_\gamma)}.\]
Therefore if $Y$ is a zero-divisor of $R_\gamma/(\m{F}_\gamma)$ then $\m{F}_\gamma\in Y R_\gamma$, as $R_\gamma$ is an integral domain. But this is not possible as $Y$ is not invertible in $R_\gamma$ and we chose $\m{F}_\gamma$ such that $\ord_Y(\m{F}_\gamma)=0$. Thus $Y$ is a regular element of $R_\gamma/(\m{F}_\gamma)$.
\endproof
\end{prop}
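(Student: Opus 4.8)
The plan is to establish, for the ideal $\m J=(\m F_\gamma,\m F_2,\dots,\m F_{m+1})\subset A_{m+1}[Y]$, the two conditions from \S\ref{sec:ToricVarietiesCharts} characterising $C_\gamma$ as the Zariski closure of $C_{0,\gamma}$ inside $\T_{v'}$, where $v'=\beta\circ_{g_p}v$: first, that $\m J$ localised at $Y$ recovers the defining ideal of $C_{0,\gamma}$ via $M_\gamma$; second, that $\m J$ equals its own $Y$-saturation, equivalently that $Y$ is a regular element of $R_\gamma/(\m F_\gamma)$, where $R_\gamma=A_{m+1}[Y]/\got a_\gamma$ and $\got a_\gamma=(\m F_2,\dots,\m F_{m+1})$. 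The first condition I would read off the chain of isomorphisms (\ref{eqn:Mgammaisomorphism}), which is an isomorphism over the torus, together with the way $\m F_\gamma$ was built from $f$: applying $M_\gamma$ and localising at $Y$ identifies the coordinate ring of $C_{0,\gamma}$ with $A_{m+1}[Y^{\pm1}]/\big((\m F_\gamma)+\got a_\gamma\big)$, so that $C_{0,\gamma}$ is the distinguished open $D(Y)$ of $\Spec R_\gamma/(\m F_\gamma)$. Everything then reduces to the second condition.

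For that I would invoke Proposition \ref{prop:embeddingRgamma}, which furnishes an embedding $R_\gamma\hookrightarrow k(X_{m+1},Y)$ — so $R_\gamma$ is an integral domain in which $Y$ is a non-unit — and which asserts that $YR_\gamma$ is a prime ideal. Suppose for contradiction that $Y$ is a zero-divisor in $R_\gamma/(\m F_\gamma)$; then $Yg=h\m F_\gamma$ in $R_\gamma$ for some $g\notin(\m F_\gamma)$ and some $h\in R_\gamma$. Since $YR_\gamma$ is prime and $h\m F_\gamma\in YR_\gamma$, either $\m F_\gamma\in YR_\gamma$ or $h\in YR_\gamma$; in the latter case, writing $h=Yh'$ and cancelling $Y$ in the domain $R_\gamma$ yields $g=h'\m F_\gamma\in(\m F_\gamma)$, a contradiction, so necessarily $\m F_\gamma\in YR_\gamma$. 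But $\m F_\gamma$ was chosen with $\ord_Y(\m F_\gamma)=0$ and $Y$ is a non-unit of $R_\gamma$, so $\m F_\gamma\notin YR_\gamma$ — a contradiction. Hence $Y$ is regular in $R_\gamma/(\m F_\gamma)$, both conditions hold, and $C_\gamma=\Spec R_\gamma/(\m F_\gamma)$ as stated.

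I do not expect a genuine obstacle here: the substantive input is Proposition \ref{prop:embeddingRgamma} (itself resting on the Krull-height computation and on Lemma \ref{lem:Dgamma}) and the fact that $\m F_\gamma$ was defined with $\ord_Y$ equal to $0$. The one step needing a little care is deducing $\m F_\gamma\in YR_\gamma$ from the assumption that $Y$ is a zero-divisor modulo $\m F_\gamma$: this really uses the primality of $YR_\gamma$, and not merely that $R_\gamma$ is a domain. The rest is bookkeeping with the matrices $M_\alpha$, $M_\beta$, $M_\gamma$ and the identifications already in place.
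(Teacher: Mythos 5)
Your proposal is correct and follows essentially the same route as the paper: reduce via (\ref{eqn:Mgammaisomorphism}) to showing $Y$ is a regular element of $R_\gamma/(\m F_\gamma)$, then use Proposition \ref{prop:embeddingRgamma} (domain, $Y$ a non-unit, $YR_\gamma$ prime) together with $\ord_Y(\m F_\gamma)=0$ to rule out $\m F_\gamma\in YR_\gamma$. Your only addition is spelling out the factorisation $Yg=h\m F_\gamma$ and the case analysis showing that the primality of $YR_\gamma$, not just integrality of $R_\gamma$, is what is needed — a point the paper's wording leaves implicit.
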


\begin{nt}\label{nt:Notationforgamma}
Let $\gamma=\beta\circ_{g_p}\alpha$, with $\beta\in\Z_+^2$ primitive. We have defined:
\begin{itemize}
    \item $\got a_\gamma=(\m F_2,\dots,\m F_{m+1})$ and $M_\gamma=(M_\beta\oplus I_m)\cdot M_{\alpha_p}$ for some matrix $M_\beta$ attached to $\beta$;
    \item $j_\gamma=m+1$, $R_\gamma=k[X_1^{\pm 1},\dots,X_{m+1}^{\pm 1},Y]/\got a_\gamma$;
    \item $\m F_\gamma\in k[X_{m+1},Y]$, with $Y\nmid \m F_\gamma$, satisfying $\m{F}_{\alpha,p}\stackrel{M_\beta}{=}X_{m+1}^\ast Y^\ast\cdot\m{F}_{\gamma}$, and $f|_{\gamma}\in k[X_{m+1}]$ given by $f|_{\gamma}(X_{m+1})=\m F_{\gamma}(X_{m+1},0)$.
\end{itemize}
With the notation above, $\gamma$ satisfies the properties \ref{eqn:Bakerprop1}, \ref{eqn:Bakerprop2}, \ref{eqn:Bakerprop3} of \ref{construction} by (\ref{eqn:Mgammaisomorphism}) and Propositions \ref{prop:embeddingRgamma}, \ref{prop:Cgamma}.
\end{nt}

Define $\bar{\m G}_{\singpointnt_n}\in k[X_{j_\alpha}]$, $\tilde{\m G}_{\singpointnt_n}\in k[X_{j_\alpha},\tilde Y]$ and $g_{\singpointnt_n}\in k[x_1, \dots, x_m,y]$ by 
\[\bar{\m G}_{\singpointnt_n}=\prod_{p\in \singpointnt_n}\bar{\m G}_p,\qquad \tilde{\m G}_{\singpointnt_n}=\prod_{p\in \singpointnt_n}\tilde{\m G}_p,\qquad g_{\singpointnt_n}=\prod_{p\in \singpointnt_n}g_p.\]
Then $\tilde{\m G}_{\singpointnt_n}\equiv \bar{\m G}_{\singpointnt_n}\mod \tilde Y$ and $g_{\singpointnt_n}$ is related to $\tilde{\m G}_{\singpointnt_n}$ by $M_\alpha$, i.e.\ $x_1^\ast\cdots x_m^\ast y^\ast\cdot g_{\singpointnt_n}=\tilde{\m{G}}_{\singpointnt_n}$ via $M_{\alpha}$. Define $\tilde\alpha=(0,1)\circ_{g_{\singpointnt_n}}\alpha$. Analogously to what we did for $\alpha_p$ in Remark \ref{rem:Matrixalphap}, we can uniquely construct a matrix $M_{\tilde\alpha}\in\SL_{m+2}(\Z)$ attached to $(0,1)\circ_{g_{\singpointnt_n}}v$ in such a way that the change of variables given by $M_{\tilde \alpha}$
restricts to the change of variables given by $M_\alpha$ on $(x_1,\dots, x_m, y)$ and
\[x_{m+1}-g_{\singpointnt_n}=X_1^\ast\cdots X_m^\ast \tilde Y^{\ord_v(g_{\singpointnt_n})}(\tilde X_{m+1}-\tilde{\m{G}}_{\singpointnt_n})\quad\text{via }M_{\tilde\alpha}.\]
In particular, denoting by $g_{m+1}\in k[x_1^{\pm 1},y^{\pm 1}]$ the unique polynomial such that $g_{m+1}\equiv g_{\singpointnt_n}\mod(f_2,\dots,f_m)$ one has
\begin{equation}\label{eqn:Mtildealphaisomorphism}
\sfrac{k[x_1^{\pm 1},\dots,x_{m+1}^{\pm 1},y^{\pm 1}]}{(f_2,\dots,f_m, x_{m+1}-g_{m+1})}\stackrel{M_{\tilde\alpha}}{\simeq} \sfrac{A_m[X_{m+1}^{\pm 1}, \tilde Y^{\pm 1}]}{\got a_\alpha+(X_{m+1}-\tilde{\m{G}}_{\singpointnt_n})}\hookrightarrow k(X_{j_{\alpha}},\tilde Y).
\end{equation}

\begin{rem}\label{rem:Matrixtildealpha}
The construction of $M_{\tilde\alpha}$ is given by Remark \ref{rem:Matrixalphap} by replacing $M_{\alpha_p}$ with $M_{\tilde\alpha}$, $g_p$ with $g_{\singpointnt_n}$, and $\tilde{\m G}_p$ with $\tilde{\m G}_{\singpointnt_n}$.
\end{rem}

\begin{lem}\label{lem:Ctildealpha}
With the notation above
 \[C_{\tilde\alpha}=\Spec \frac{k[X_1^{\pm 1},\dots,X_m^{\pm 1},X_{m+1}^{\pm 1},\tilde Y]}{(\m{F}_\alpha,\tilde{\m{F}}_2,\dots,\tilde {\m{F}}_m,X_{m+1}-\tilde{\m{G}}_{\singpointnt_n})}.\] 
Moreover, $C_{0,\tilde\alpha}$ is dense in $C_0$, i.e.\ $\tilde \alpha\in\Omega$, and for any $p\in\singpointnt_n$ the birational maps $s_{\tilde\alpha\alpha}, s_{\tilde\alpha\alpha_p}, s_{\alpha_p\alpha}$ induce a commutative diagram of open immersions
\[
\small\begin{tikzcd}[row sep=10pt, column sep=10pt]
C_{\tilde\alpha}\arrow[dr,hook, "s_{\tilde\alpha\alpha_p}"']\arrow[rr,hook, "s_{\tilde\alpha\alpha}"]&& C_\alpha\\
&C_{\alpha_p}\arrow[ur,hook, "s_{\alpha_p\alpha}"']&
\end{tikzcd}\normalsize
\]
where $s_{\tilde\alpha\alpha}$ has image $D(\tilde{\m G}_{\singpointnt_n})\subset C_\alpha$. 
Finally, $s_{\tilde\alpha\alpha}$ induces $\bar C_{\tilde\alpha}\simeq\bar C_\alpha\setminus \singpointnt_n$.
\proof
First note that $C_{0,\tilde\alpha}=\bigcap_{p\in\singpointnt_n}C_{0,\alpha_p}$. Then $C_{0,\tilde\alpha}$ is a dense open of $C_0$ by Lemma \ref{lem:Calphap}. The ring homomorphism
\[A_{\alpha_p}:=\tfrac{A_m[\tilde X_{m+1}^{\pm 1},\tilde Y]}{(\m{F}_\alpha, \tilde X_{m+1}-\tilde{\m{G}}_{p})+\got a_\alpha}\simeq \tfrac{R_\alpha[\tilde{\m G}_p^{-1}]}{(\m F_\alpha)}\rightarrow \tfrac{R_\alpha[\tilde{\m G}_{\singpointnt_n}^{-1}]}{(\m F_\alpha)}\simeq\tfrac{A_m[X_{m+1}^{\pm 1},\tilde Y]}{(\m{F}_\alpha, X_{m+1}-\tilde{\m{G}}_{\singpointnt_n})+\got a_\alpha}=:A_{\tilde\alpha}.\]
is injective by Lemma \ref{lem:D(G)pdense} and induces the birational map $s_{\tilde\alpha\alpha_p}$ if $\Spec A_{\tilde\alpha}= C_{\tilde\alpha}$ from (\ref{eqn:isoMalphap}) and (\ref{eqn:Mtildealphaisomorphism}). Since $\tilde Y$ is a regular element of $A_{\alpha_p}$ by Lemma \ref{lem:Calphap}, then $\tilde Y$ is a regular element of $A_{\tilde\alpha}$. This proves $C_{\tilde\alpha}=\Spec A_{\tilde\alpha}$ by definition of $C_{\tilde\alpha}$ and gives the required commutative diagram again by Lemma \ref{lem:Calphap}.
\endproof
\end{lem}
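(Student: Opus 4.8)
The plan is to bootstrap everything from the one-point case, Lemma~\ref{lem:Calphap}, exploiting that $\tilde{\m G}_{\singpointnt_n}=\prod_{p\in\singpointnt_n}\tilde{\m G}_p$: passing from $C_\alpha$ to $C_{\tilde\alpha}$ should amount to inverting $\tilde{\m G}_{\singpointnt_n}$, i.e.\ to intersecting the opens $D(\tilde{\m G}_p)\subseteq C_\alpha$ one point at a time.

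First I would identify the function ring of $C_{\tilde\alpha}$. By construction $\tilde\alpha=(0,1)\circ_{g_{\singpointnt_n}}\alpha$, and (\ref{eqn:Mtildealphaisomorphism}), after the further quotient by $\m F_\alpha$, identifies the function ring of $C_{0,\tilde\alpha}$ with $A_{\tilde\alpha}[\tilde Y^{-1}]$ via $M_{\tilde\alpha}$, where
\[
A_{\tilde\alpha}:=\frac{A_m[X_{m+1}^{\pm 1},\tilde Y]}{\got a_\alpha+(\m F_\alpha,\,X_{m+1}-\tilde{\m G}_{\singpointnt_n})}.
\]
Eliminating $X_{m+1}$ via the relation $X_{m+1}=\tilde{\m G}_{\singpointnt_n}$ (legitimate since $X_{m+1}$ is a unit) shows $A_{\tilde\alpha}\simeq A_\alpha[\tilde{\m G}_{\singpointnt_n}^{-1}]$, where $A_\alpha:=R_\alpha/(\m F_\alpha)$ is the function ring of $C_\alpha$; chaining through any fixed $p\in\singpointnt_n$, $A_{\tilde\alpha}$ is equally a principal localisation of the function ring $A_{\alpha_p}$ of $C_{\alpha_p}$. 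From the description of Zariski closures inside toric varieties in \S\ref{sec:ToricVarietiesCharts}, $C_{\tilde\alpha}=\Spec A_{\tilde\alpha}$ holds precisely when $\tilde Y$ is a regular element of $A_{\tilde\alpha}$; and $\tilde Y$ is regular in $A_\alpha$ by \ref{construction}\ref{eqn:Bakerprop3} (equivalently, in $A_{\alpha_p}$ by Lemma~\ref{lem:Calphap}), hence stays regular in the localisation $A_{\tilde\alpha}$. This yields the displayed presentation of $C_{\tilde\alpha}$.

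Next I would handle density and the maps. Since $g_{\singpointnt_n}$ is the product of the $g_p$, one has $C_{0,\tilde\alpha}=\bigcap_{p\in\singpointnt_n}C_{0,\alpha_p}$, a finite intersection of opens each dense in $C_0$ by Lemma~\ref{lem:Calphap}; as $C_0$ is smooth, hence reduced with integral connected components, this intersection is again dense in $C_0$, so $\tilde\alpha\in\Omega$. Because $A_{\tilde\alpha}=A_\alpha[\tilde{\m G}_{\singpointnt_n}^{-1}]$, the birational map $s_{\tilde\alpha\alpha}$ is the genuine open immersion $C_{\tilde\alpha}=\Spec A_{\tilde\alpha}=D(\tilde{\m G}_{\singpointnt_n})\hookrightarrow C_\alpha$; likewise $s_{\tilde\alpha\alpha_p}$ is the open immersion $\Spec A_{\tilde\alpha}=D(\prod_{q\neq p}\tilde{\m G}_q)\hookrightarrow C_{\alpha_p}$, and since $s_{\tilde\alpha\alpha_p}$, $s_{\alpha_p\alpha}$ (Lemma~\ref{lem:Calphap}) and $s_{\tilde\alpha\alpha}$ all restrict to the identity on $C_0$, the triangle commutes; intersecting over $p$ identifies the image of $s_{\tilde\alpha\alpha}$ with $\bigcap_p D(\tilde{\m G}_p)=D(\tilde{\m G}_{\singpointnt_n})$. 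Finally, reducing modulo $\tilde Y$ and using $\tilde{\m G}_{\singpointnt_n}\equiv\bar{\m G}_{\singpointnt_n}\bmod\tilde Y$ gives $A_{\tilde\alpha}/(\tilde Y)\simeq(D_\alpha/(f|_\alpha))[\bar{\m G}_{\singpointnt_n}^{-1}]$, so $\bar C_{\tilde\alpha}=D(\bar{\m G}_{\singpointnt_n})\subseteq\bar C_\alpha$; each $\bar{\m G}_p$ is monic of degree $1$ and generates the maximal ideal of $\O_{\bar C_\alpha,p}$, whence $V(\bar{\m G}_p)=\{p\}$ as a set, $V(\bar{\m G}_{\singpointnt_n})=\singpointnt_n$, and $\bar C_{\tilde\alpha}\simeq\bar C_\alpha\setminus\singpointnt_n$.

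The only delicate point will be the regularity of $\tilde Y$ in $A_{\tilde\alpha}$ — equivalently, that no $\tilde Y$-saturation is needed in forming $C_{\tilde\alpha}$ — and this is precisely where one must use that $A_{\tilde\alpha}$ is a \emph{localisation} of $A_{\alpha_p}$ (or of $A_\alpha$), not merely the target of an injective ring map. Everything else is bookkeeping with the matrices $M_\alpha$, $M_{\alpha_p}$, $M_{\tilde\alpha}$ and with principal localisations; one should only check along the way that $\tilde{\m G}_{\singpointnt_n}\nmid\m F_\alpha$, so that these localisations are nonzero and the density assertions (cf.\ Lemma~\ref{lem:D(G)pdense}) are not vacuous, which holds because the $\tilde{\m G}_p$ are pairwise coprime irreducibles, none dividing $\m F_\alpha$ by our choice.
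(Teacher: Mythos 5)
Your proof is correct and follows essentially the same route as the paper: identify $A_{\tilde\alpha}$ with the principal localisation $R_\alpha[\tilde{\m G}_{\singpointnt_n}^{-1}]/(\m F_\alpha)$, deduce that $\tilde Y$ remains regular (the paper passes through $A_{\alpha_p}$ via Lemma \ref{lem:Calphap}, you use $A_\alpha$ directly — an immaterial difference), and conclude $C_{\tilde\alpha}=\Spec A_{\tilde\alpha}$, the open-immersion triangle, and $\bar C_{\tilde\alpha}\simeq\bar C_\alpha\setminus\singpointnt_n$ by reduction mod $\tilde Y$. Your extra checks (density of the finite intersection, $\tilde{\m G}_p\nmid\m F_\alpha$, agreement of maps on the dense open $C_{0,\tilde\alpha}$) are exactly the points the paper delegates to Lemmas \ref{lem:D(G)pdense}, \ref{lem:Calphap} and the isomorphisms (\ref{eqn:isoMalphap}), (\ref{eqn:Mtildealphaisomorphism}).
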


\begin{nt}\label{nt:Notationfortildealpha}
Define
\begin{itemize}
    \item $\got a_{\tilde\alpha}=\got a_\alpha+(\tilde X_{m+1}-\tilde{\m G}_{\singpointnt_n})\subset k[X_1^{\pm 1},\dots,X_m^{\pm 1},\tilde X_{m+1}^{\pm 1},\tilde Y]$ and $M_{\tilde\alpha}$ as described in Remark \ref{rem:Matrixtildealpha};
    \item $j_{\tilde\alpha}=j_\alpha$, $R_{\tilde\alpha}=R_\alpha[\tilde X_{m+1}^{\pm 1}]/(\tilde X_{m+1}-\tilde{\m G}_{\singpointnt_n})$ and $D_{\tilde\alpha}=D_{\alpha}[\bar{\m G}_{\singpointnt_n}^{-1}]$;
    \item $\m F_{\tilde\alpha}=\m F_\alpha$ and $f|_{\tilde \alpha}=f|_{\alpha}$.
\end{itemize}
With the notation above, $\tilde\alpha$ satisfies the properties \ref{eqn:Bakerprop1}, \ref{eqn:Bakerprop2}, \ref{eqn:Bakerprop3} of \ref{construction}.
\end{nt}

\begin{defn}\label{defn:Sigman}
For any $p\in \singpointnt_n$ let
\[
\Sigma_p=\{\gamma=\beta\circ_{g_p}\alpha\mid\beta\in\Z_+^2\text{ primitive, and } C_{0,\gamma}\subsetneq C_\gamma\}\subset\Omega,
\]
and $\Sigma_{\singpointnt_n}=\textstyle{\bigcup_{p\in \singpointnt_n}}\Sigma_{p}$. Define 
\[\hat\Sigma_{n}=\Sigma_{n}\setminus\{\alpha\},\qquad\tilde\Sigma_n=\hat\Sigma_n\cup\{\tilde\alpha\},\qquad\Sigma_{n+1}=\Sigma_{\singpointnt_n}\cup\tilde\Sigma_{n}.\]
\end{defn}

% \begin{prop}\label{prop:C0gammadenseinC0}
% For any $\gamma\in\Sigma_{n+1}$, the curve $C_{0,\gamma}$ is dense in $C_0$.
% \proof
% Let $\gamma\in\Sigma_{n+1}$. Then either $\gamma\in\hat\Sigma_{n}$ or $\gamma\in\Sigma_{\singpointnt_n}$ or $\gamma=\tilde\alpha$. Suppose $\gamma\in\hat\Sigma_{n}$. Then $\gamma\in\Sigma_n$ and the lemma follows from the inductive hypothesis on $C_n$. Suppose $\gamma\in\Sigma_{\singpointnt_n}$, i.e.\ $\gamma\in\Sigma_p$ for some $p\in \singpointnt_n$.
% Hence
% $C_{0,\gamma}=C_{0,\alpha_p}$ is dense in $C_0$ by Lemma \ref{lem:Calphap}.
% Finally, suppose $\gamma=\tilde\alpha$.
% Then $C_{0,\tilde\alpha}$ is dense in $C_0$ by Lemma \ref{lem:Ctildealpha}.
% \endproof
% \end{prop}

Recall that for any $\gamma,\gamma'\in\Omega\sqcup\{0\}$ we have a canonical way to glue the curves $C_\gamma$, $C_{\gamma'}$ through the birational maps $s_{\gamma\gamma'}$. Then \[C_{n+1}=\bigcup_{\gamma\in\Sigma_{n+1}}C_\gamma\cup C_0.\] 
We also define the following curves.

\begin{defn}\label{defn:Cn}
For any $p\in \singpointnt_n$ define
\[C_{p}:=\bigcup_{\gamma\in\Sigma_{p}}C_\gamma,
% .\] 
% \[C_{\singpointnt_n}:=\bigcup_{\gamma\in\Sigma_{\singpointnt_n}}C_\gamma,
\qquad \hat C_n:=\bigcup_{\alpha'\in\hat\Sigma_{n}} C_{\alpha'}.\]
Then $C_{n+1}=\bigcup_{p\in\singpointnt_n}C_p\cup C_{\tilde\alpha}\cup\hat C_n\cup C_0$. 
\end{defn}

\subsection{The role of Newton polygons}\label{subsec:Newtonpolygons}
%Keep notation of \S\ref{subsec:BakerInductiveStep}. 
Let $p\in\singpointnt_n$. In this subsection we show that Newton polygons can be used to obtain an explicit description of the set $\Sigma_p$. We want to find all primitive vectors $\beta\in\Z_+^2$ such that $C_{0,\gamma}\subsetneq C_\gamma$, where $\gamma=\beta\circ_{g_p}\alpha$.

Let $\beta=(\beta_1,\beta_2)\in\Z_+^2$ be a primitive vector and let $\gamma=\beta\circ_{g_p}\alpha$. Recall that $f|_\gamma(X_{m+1})=\m F_\gamma(X_{m+1},0)$. Hence
$f|_{\gamma}\neq 0$ since $Y\nmid \m F_\gamma$.
Note that $D_\gamma=k[X_{m+1}^{\pm 1}]$ by Lemma \ref{lem:Dgamma}. Therefore $C_{\gamma}=C_{0,\gamma}$ if and only if $f|_\gamma$ is invertible in $k[X_{m+1}^{\pm1}]$. 
Since through the change of variables given by $M_{\beta}$
\[\m{F}_{\alpha,p}=X_{m+1}^\ast Y^{\ord_\beta(\m{F}_{\alpha,p})}\cdot\m{F}_\gamma,\]
from the Newton polygon $\Delta_{\alpha,p}$ of $\m{F}_{\alpha,p}$ one can see whether $f|_\gamma$ is invertible in $k[X_{m+1}^{\pm 1}]$ or not. 
Let $\phi:\Z^2\rightarrow\Z$ be the affine function defined by \[\phi(i,j)=\beta_1i+\beta_2j-\ord_\beta(\m{F}_{\alpha,p}).\]  
Then $f|_\gamma$ is not invertible in $k[X_{m+1}^{\pm 1}]$ if and only if $\phi^{-1}(0)\cap\Delta_{\alpha,p}=\text{edge}$.
Thus $\Sigma_p$ consists of all elements $\beta\circ_{g_p}\alpha$ such that $\beta\in\Z_+^2$ is the normal vector of some edge of $\Delta_{\alpha,p}$. All these elements are distinct as immediate consequence of Definition \ref{defn:circ}. Furthermore, note that this description shows that $\Sigma_p$ is finite and non-empty as $\tilde X_{m+1}\mid \m F_{\alpha,p}(\tilde X_{m+1},0)$ but $\tilde X_{m+1}\nmid \m F_{\alpha,p}$.

\begin{prop}\label{prop:reductionfgamma}
Let $\beta\in\Z_+^2$ be the normal vector of an edge $\ell$ of the Newton polygon $\Delta_{\alpha,p}$ of $\m{F}_{\alpha,p}$. Let $\gamma=\beta\circ_{g_p}\alpha$. Expand $\m{F}_{\alpha,p}=\sum_{i,j} c_{ij}\tilde X_{m+1}^i\tilde Y^j$, where $c_{ij}\in k$. Let $(i_0,j_0),\dots,(i_l,j_l)$ be the points of $\ell\cap\Z^2$, ordered along $\ell$ counterclockwise with respect to $\Delta_{\alpha,p}$. Then
\[f|_\gamma=X_{m+1}^d\sum_{r=0}^l c_{i_rj_r}X_{m+1}^r\]
for some $d\in\N$.
\proof
Let $(\delta_1,\delta_2)$ be the first row of $M_\beta$. It is easy to see that
\[f|_\gamma=\sum_{(i,j)\in \ell}c_{ij}X_{m+1}^{\delta_1i+\delta_2j+d'}\qquad\text{for some }d'\in\Z,\]
with $\delta_1i+\delta_2j+d'\geq 0$. Note that $(i_r,j_r)=(i_0,j_0)+r(\beta_2,-\beta_1)$. Therefore, for $d=d'+(\delta_1 i_0+\delta_2 j_0)$, the proposition follows since $\delta_1\beta_2-\delta_2\beta_1=1$.
\endproof
\end{prop}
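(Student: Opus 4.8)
The argument parallels that of Proposition~\ref{prop:reductionfalphabasecase}: the plan is to read off $f|_\gamma$ directly from the monomial expansion of $\m{F}_{\alpha,p}$. Write $M_\beta$ with first row $(\delta_1,\delta_2)$, so that the change of variables $(\tilde X_{m+1},\tilde Y)=(X_{m+1},Y)\bullet M_\beta$ carries a monomial $\tilde X_{m+1}^i\tilde Y^j$ to $X_{m+1}^{\delta_1 i+\delta_2 j}Y^{\beta_1 i+\beta_2 j}$. Substituting into $\m{F}_{\alpha,p}=\sum c_{ij}\tilde X_{m+1}^i\tilde Y^j$ and observing that $(i,j)\mapsto(\delta_1 i+\delta_2 j,\,\beta_1 i+\beta_2 j)$ is injective (its matrix has determinant $\delta_1\beta_2-\delta_2\beta_1=1$), no cancellation occurs, so the $Y$-adic order of the result is exactly $\min\{\beta_1 i+\beta_2 j : c_{ij}\neq 0\}=\ord_\beta(\m{F}_{\alpha,p})$; dividing off that power of $Y$ and a suitable power of $X_{m+1}$ yields $\m{F}_\gamma$ with $\ord_Y(\m{F}_\gamma)=0$, as in Notation~\ref{nt:Notationforgamma}.

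Setting $Y=0$ in $\m{F}_\gamma$ retains precisely the monomials at which $\beta_1 i+\beta_2 j$ attains its minimum over $\Delta_{\alpha,p}$. Since $\beta$ is the normal vector of $\ell$ in the sense of Definition~\ref{defn:normalvector}, this minimum locus is exactly the edge $\ell$, and hence $f|_\gamma=\sum_{(i,j)\in\ell\cap\Z^2}c_{ij}X_{m+1}^{\delta_1 i+\delta_2 j+d'}$ for a fixed $d'\in\Z$. To recover the stated form I would then parametrise $\ell\cap\Z^2$: because $\beta$ is primitive, consecutive lattice points of $\ell$ differ by the primitive vector $(\beta_2,-\beta_1)$, and matching this with the counterclockwise orientation induced by $\Delta_{\alpha,p}$ gives $(i_r,j_r)=(i_0,j_0)+r(\beta_2,-\beta_1)$ for $r=0,\dots,l$. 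Then $\delta_1 i_r+\delta_2 j_r=\delta_1 i_0+\delta_2 j_0+r(\delta_1\beta_2-\delta_2\beta_1)=\delta_1 i_0+\delta_2 j_0+r$, so $f|_\gamma=X_{m+1}^{d}\sum_{r=0}^{l}c_{i_r j_r}X_{m+1}^r$ with $d=\delta_1 i_0+\delta_2 j_0+d'$.

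It remains to see $d\in\N$. By Notation~\ref{nt:Notationforgamma} we have $\m{F}_\gamma\in k[X_{m+1},Y]$, so $f|_\gamma=\m{F}_\gamma(X_{m+1},0)\in k[X_{m+1}]$ has no negative exponents; moreover $(i_0,j_0)$, being an endpoint of $\ell$, is a vertex of $\Delta_{\alpha,p}$, so $c_{i_0 j_0}\neq 0$ and $d$ is the exponent of an actually occurring term, hence $d\geq 0$. The only step that is not pure bookkeeping is fixing the sign in $(i_r,j_r)=(i_0,j_0)+r(\beta_2,-\beta_1)$ — that is, checking that the counterclockwise ordering relative to $\Delta_{\alpha,p}$, combined with the fact that $\beta$ points into $\Delta_{\alpha,p}$, forces $+(\beta_2,-\beta_1)$ and not its negative; once that orientation is pinned down, everything else follows from the monomial computation and $\det M_\beta=1$.
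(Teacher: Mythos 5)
Your proof is correct and takes essentially the same route as the paper's: change variables by $M_\beta$, observe that setting $Y=0$ keeps exactly the monomials supported on the edge $\ell$ (the minimising face of $\beta_1 i+\beta_2 j$), parametrise $\ell\cap\Z^2$ by steps of $(\beta_2,-\beta_1)$, and conclude via $\delta_1\beta_2-\delta_2\beta_1=1$; you are merely filling in the step the paper dismisses as ``easy to see''. The one point you defer --- that counterclockwise traversal of $\partial\Delta_{\alpha,p}$ along $\ell$ proceeds in the direction $(\beta_2,-\beta_1)$ because $\beta$ is the inward normal --- does hold and is likewise asserted without proof in the paper, so there is no substantive gap.
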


\subsection{Inductive construction of the morphisms}\label{subsec:sninductiveconstruction}
%We keep assumptions and notation of \S\ref{subsec:BakerInductiveStep}. 
In this subsection we want to construct a birational morphism $s_n:C_{n+1}\rightarrow C_n$. In \S\ref{subsec:properties} we will prove that $s_n$ is proper with the exceptional locus $s_n^{-1}(\singpointnt_n\cap\Sing(C_n))$.

\begin{rem}\label{rem:Toricschemedisjointcharts}
Let $p\in \singpointnt_n$. Similarly to the classical case (Remark \ref{rem:disjointchartsclassical}), for any $\gamma,\gamma'\in\Sigma_p$, $\gamma\neq\gamma'$, one has $ C_\gamma\cap C_{\gamma'}=C_{0,\gamma}$. More precisely, the birational map $s_{\gamma\gamma'}:C_\gamma\--> C_{\gamma'}$ has domain of definition $C_{0,\gamma}$ giving an isomorphism $C_{0,\gamma}\rightarrow C_{0,\gamma'}$.
%Toric schemes over $k[X_1^{\pm 1},\dots, X_m^{\pm 1}]$
\end{rem}

\begin{rem}\label{rem:sgammaalphadefn}
Let $p\in\singpointnt_n$. For any primitive vector $\beta\in\Z_+^2$, if $\gamma=\beta\circ_{g_p}\alpha$ then from (\ref{eqn:homIxMbeta}) we obtain the homomorphism of rings
\begin{equation}\label{eqn:morphism_sn}
\frac{A_m[\tilde Y]}{(\m{F}_{\alpha})+\got a_\alpha}\simeq\frac{A_m[\tilde X_{m+1}, \tilde Y]}{(\m{F}_{\alpha},\tilde X_{m+1}-\tilde{\m{G}}_p)+\got a_\alpha}\xrightarrow{I_m\oplus M_\beta}\frac{A_{m+1}[Y]}{(\m{F}_{\gamma})+\got a_\gamma}
\end{equation}
that induces a birational morphism $C_\gamma\rightarrow C_\alpha$. In fact, from the definition of $M_\gamma$ we see that it agrees with $s_{\gamma\alpha}:C_\gamma\--> C_\alpha$ as rational map.
\end{rem}

\begin{lem}\label{lem:spstructure}
Let $p\in\singpointnt_n$ and $\gamma=\beta\circ_{g_p}\alpha$ for some primitive $\beta\in\Z_+^2$. Then $s_{\gamma\alpha}: C_\gamma\rightarrow C_{\alpha}$ restricts to an isomorphism $C_{0,\gamma}\stackrel{\sim}{\longrightarrow}D(\tilde{\m G}_p)\cap C_{0,\alpha}\subset C_{\alpha} $and $s_{\gamma\alpha}( C_\gamma\setminus C_{0,\gamma})\subseteq\{p\}$. 
\proof
Let $\gamma=\beta\circ_{g_p}\alpha$ for some $\beta\in\Z_+^2$. 
The first part of the lemma follows from Remark \ref{rem:sgammaalphadefn} and (\ref{eqn:ImMbetaisomorphism}).
The morphism $s_{\gamma\alpha}$ is induced by the ring homomorphism taking $\tilde Y\mapsto X_{m+1}^{\delta_2}Y^{\beta_2}$, with $M_\beta=\begin{psmallmatrix}\!\!\delta_1\!&\delta_2\!\!\cr \!\!\beta_1\!&\beta_2\!\!\end{psmallmatrix}$. Recall 
\[C_{\gamma}\setminus C_{0,\gamma}=\bar C_\gamma=\{Y=0\}\subset C_\gamma.\] 
Since $\bar{\m G}_p\equiv\m F_{m+1}\mod Y$, the morphism $s_{\gamma\alpha}$ takes $\bar C_{\gamma}$ into the closed subscheme $\{\bar{\m G}_p=0\}$ of $\bar C_\alpha$. This concludes the proof as $\{\bar{\m G}_p=0\}=\{p\}$.
\endproof
\end{lem}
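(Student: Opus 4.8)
The plan is to make the morphism $s_{\gamma\alpha}$ completely explicit and then read off both assertions from its coordinate formulas. By Remark~\ref{rem:sgammaalphadefn}, $s_{\gamma\alpha}\colon C_\gamma\to C_\alpha$ is a morphism, dual to the composite ring homomorphism (\ref{eqn:morphism_sn}); writing $M_\beta=\left(\begin{smallmatrix}\delta_1&\delta_2\\ \beta_1&\beta_2\end{smallmatrix}\right)$ as in the statement, this homomorphism fixes $X_i$ for $i\le m$, sends $\tilde Y\mapsto X_{m+1}^{\delta_2}Y^{\beta_2}$, and eliminates $\tilde X_{m+1}\mapsto X_{m+1}^{\delta_1}Y^{\beta_1}$ using the generator $\m F_{m+1}=X_{m+1}^{\delta_1}Y^{\beta_1}-\m G_p$ of $\got a_\gamma$. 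Everything below follows by inspecting these formulas.

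For the isomorphism $C_{0,\gamma}\xrightarrow{\sim}D(\tilde{\m G}_p)\cap C_{0,\alpha}$, I would factor $s_{\gamma\alpha}$ through $C_{\alpha_p}$. Indeed $C_{0,\gamma}=C_{0,\alpha_p}$ by the very definition of $\beta\circ_{g_p}\alpha$ and $(0,1)\circ_{g_p}\alpha$ (they share the same $\tuplent$-part), and $s_{\gamma\alpha_p}$ is, by construction, the identity on this common torus; moreover $s_{\gamma\alpha}=s_{\alpha_p\alpha}\circ s_{\gamma\alpha_p}$ as rational maps, since all the maps $s_{\cdot\,\cdot}$ agree with the identity on the overlap of the relevant tori inside $C_0$. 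By Lemma~\ref{lem:Calphap}, $s_{\alpha_p\alpha}$ is an open immersion $C_{\alpha_p}\hookrightarrow C_\alpha$ with image $D(\tilde{\m G}_p)$, carrying $C_{0,\alpha_p}$ isomorphically onto $D(\tilde{\m G}_p)\cap C_{0,\alpha}$; composing, $s_{\gamma\alpha}$ restricts on $C_{0,\gamma}$ to exactly this isomorphism. (Alternatively one inverts $Y$ in (\ref{eqn:morphism_sn}) and invokes the isomorphism (\ref{eqn:ImMbetaisomorphism}) of Laurent rings directly.)

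For the inclusion $s_{\gamma\alpha}(C_\gamma\setminus C_{0,\gamma})\subseteq\{p\}$, recall from \ref{construction} and Lemma~\ref{lem:Dgamma} that $C_\gamma\setminus C_{0,\gamma}=\bar C_\gamma=\{Y=0\}\subset C_\gamma$, and likewise $\bar C_\alpha=\{\tilde Y=0\}\subset C_\alpha$. Since $\beta_2>0$, the formula $\tilde Y\mapsto X_{m+1}^{\delta_2}Y^{\beta_2}$ shows that $\tilde Y$ pulls back into the ideal generated by $Y$ on $C_\gamma$, so $s_{\gamma\alpha}(\bar C_\gamma)\subseteq\{\tilde Y=0\}=\bar C_\alpha$. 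To pin down where inside $\bar C_\alpha$: the relation $\m F_{m+1}=X_{m+1}^{\delta_1}Y^{\beta_1}-\m G_p$ vanishes on $C_\gamma$, and since $\beta_1>0$ and $\m G_p\equiv\bar{\m G}_p\pmod Y$ with $\bar{\m G}_p\in k[X_{j_\alpha}]$, reduction modulo $Y$ shows that $\bar{\m G}_p$ vanishes on $\bar C_\gamma$; hence $s_{\gamma\alpha}$ factors through the closed subscheme $\{\bar{\m G}_p=0\}$ of $\bar C_\alpha$. Finally, $\bar{\m G}_p$ is monic of degree $1$ in $k[X_{j_\alpha}]$ and $(\bar{\m G}_p)$ is the maximal ideal of $\O_{\bar C_\alpha,p}$, so $\{\bar{\m G}_p=0\}=\{p\}$, which gives the claim.

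I do not expect a genuine obstacle: the argument is pure bookkeeping with the matrices ($M_\gamma=(I_m\oplus M_\beta)M_{\alpha_p}$) and the nested ideals $\got a_\alpha\subseteq\got a_\gamma$. The two points needing a little care are (i) that the abstractly defined birational map $s_{\gamma\alpha}$ (built from the open $U_{\gamma\alpha}$) really is the concrete ring map (\ref{eqn:morphism_sn})—this is exactly Remark~\ref{rem:sgammaalphadefn}, which I would simply cite—and (ii) the reduction $\m F_{m+1}\equiv-\bar{\m G}_p\pmod Y$, which uses $\beta_1\ge 1$ together with the normalisation $\tilde{\m G}_p-\bar{\m G}_p\in\tilde Y\,k[\tilde Y]$ and $\bar{\m G}_p\in k[X_{j_\alpha}]$ arranged when $\tilde{\m G}_p$ was chosen.
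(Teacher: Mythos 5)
Your proposal is correct and follows essentially the paper's own argument: the second assertion is proved exactly as in the paper (reduce $\m F_{m+1}=X_{m+1}^{\delta_1}Y^{\beta_1}-\m G_p$ modulo $Y$, using $\beta_1,\beta_2>0$ and $\m G_p\equiv\bar{\m G}_p \bmod Y$, to land in $\{\bar{\m G}_p=0\}=\{p\}$), while for the first assertion the paper simply cites Remark \ref{rem:sgammaalphadefn} and the Laurent-ring isomorphism (\ref{eqn:ImMbetaisomorphism}) — your alternative parenthetical remark — whereas your main route factors $s_{\gamma\alpha}$ through $C_{\alpha_p}$ and invokes Lemma \ref{lem:Calphap}, an equally valid piece of bookkeeping resting on the same facts.
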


Let $p\in\singpointnt_n$. Considering $p$ as a point of $C_{\alpha}$, denote by $U_{\alpha,p}$ the open subscheme $D(\tilde{\m G}_p)\cap C_{0,\alpha}$ of $C_{\alpha}$. Recall that $C_{0,\alpha}$ is dense in $C_{\alpha}$ by definition. Hence Lemma \ref{lem:D(G)pdense} implies that $U_{\alpha,p}$ is dense. Let $C_{\alpha,p}=U_{\alpha,p}\cup\{p\}$ as subset of $C_{\alpha}$. We want to show that $C_{\alpha,p}$ is dense and open in $C_\alpha$. From the density of $U_{\alpha,p}$ it follows that $C_{\alpha,p}$ is dense and that $V_p:=C_{\alpha}\setminus U_{\alpha,p}$ is a finite set of closed points of $C_{\alpha}$. Thus $C_{\alpha,p}$, complement of $V_p\setminus \{p\}$, is open in $C_\alpha$.

\begin{defn}
For any $p\in\singpointnt_n$ we define $C_{\alpha,p}$ to be the dense open subset $U_{\alpha,p}\cup\{p\}$ of $C_\alpha$, equipped with the canonical structure of open subscheme.
\end{defn}

Let $p\in\singpointnt_n$. By Remark \ref{rem:Toricschemedisjointcharts} and Lemma \ref{lem:spstructure}, the maps $s_{\gamma\alpha}:C_\gamma\rightarrow C_\alpha$, for $\gamma\in\Sigma_p$, glue to a morphism $C_p\rightarrow C_{\alpha,p}$.

\begin{defn}
For any $p\in\singpointnt_n$, define $s_p:C_p\rightarrow C_{\alpha,p}$ as the glueing of the morphisms $s_{\gamma\alpha}:C_\gamma\rightarrow C_\alpha$, for all $\gamma\in\Sigma_p$. 
\end{defn}

\begin{lem}\label{lem:spseparatedness}
The morphism $s_p: C_p\rightarrow C_{\alpha,p}$ is separated.
\proof
Consider the open immersion $\iota_p: C_{\alpha,p}\rightarrow C_\alpha$. 
By \cite[Proposition 3.3.9(e)]{Liu}
it suffices to prove that $\iota_p\circ s_p$ is separated. 
Since $C_\alpha$ is affine, we only have to show that $C_p$ is separated over $\Spec k$ by \cite[Exercise 3.3.2]{Liu}.
Let $\Delta_{\alpha,p}$ be the Newton polygon of $\m F_{\alpha,p}$. 
Recall from \S\ref{subsec:Newtonpolygons} that
\[C_p=\bigcup_{\beta} C_{\beta\circ_{g_p}\alpha}\qquad\text{with}\quad C_{\gamma}=\Spec\frac{A_m[X_{m+1}^{\pm 1},Y]}{(\m{F}_\gamma)+\got a_\gamma}, \quad\gamma=\beta\circ_{g_p}\alpha,\]
where $\beta$ runs through all normal vectors in $\Z_+^2$ of edges of $\Delta_{\alpha,p}$ and the curves $C_{\beta\circ_{g_p}\alpha}$ are glued along their common open $C_{0,\alpha_p}=C_{0,\beta\circ_{g_p}\alpha}$. To avoid confusion, if $\gamma=\beta\circ_{g_p}\alpha$, rename the variables $X_{m+1},Y$ of $\O_{C_\gamma}(C_\gamma)$ to $X_\beta,Y_\beta$. Since closed immersions are separated and separated morphisms are stable under base changes it suffices to prove that the toric variety $\bigcup_\beta\Spec k[X_\beta^{\pm 1},Y_\beta]\subset \T_{\Delta_{\alpha,p}}$ is separated. This follows from the classical theory on toric varieties.
\endproof
\end{lem}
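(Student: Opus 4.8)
The plan is to reduce the separatedness of $s_p$ first to that of $C_p$ as a $k$-scheme, and then to a statement about toric surfaces. First I would precompose $s_p$ with the open immersion $\iota_p\colon C_{\alpha,p}\hookrightarrow C_\alpha$: by the cancellation property for separated morphisms (\cite[Proposition 3.3.9(e)]{Liu}) it is enough to prove that $\iota_p\circ s_p\colon C_p\to C_\alpha$ is separated, and since $C_\alpha$ is affine, hence separated over $\Spec k$, another application of cancellation (\cite[Exercise 3.3.2]{Liu}) reduces this to proving that $C_p$ is separated over $\Spec k$.

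For this I would use the description of $C_p$ from \S\ref{subsec:Newtonpolygons} together with Remark \ref{rem:Toricschemedisjointcharts}: one has $C_p=\bigcup_\beta C_\gamma$, where $\gamma=\beta\circ_{g_p}\alpha$ runs over the finitely many normal vectors $\beta\in\Z_+^2$ of edges of the Newton polygon $\Delta_{\alpha,p}$ of $\m F_{\alpha,p}$, each chart $C_\gamma=\Spec\frac{A_m[X_{m+1}^{\pm1},Y]}{(\m F_\gamma)+\got a_\gamma}$ is affine, and they are glued along the single common dense open $C_{0,\alpha_p}$. Renaming the last two variables $X_{m+1},Y$ of $C_\gamma$ to $X_\beta,Y_\beta$ and writing $A_m[X_{m+1}^{\pm1},Y]=A_m\otimes_k k[X_\beta^{\pm1},Y_\beta]$ presents $C_\gamma$ as a closed subscheme of $\G_m^m\times\Spec k[X_\beta^{\pm1},Y_\beta]$. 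The key point is that the matrix $M_{\alpha_p}$ entering every $M_\gamma=(I_m\oplus M_\beta)M_{\alpha_p}$ is independent of $\beta$, so the transition map $s_{\gamma\gamma'}$ is induced by $I_m\oplus(M_\beta M_{\beta'}^{-1})$: it fixes $X_1,\dots,X_m$ and acts on $(X_\beta,Y_\beta)$ exactly as the transition between the affine charts $\Spec k[X_\beta^{\pm1},Y_\beta]$ of the toric surface $\T_{\Delta_{\alpha,p}}$. Hence these closed immersions glue to a closed immersion $C_p\hookrightarrow\G_m^m\times T'$, where $T':=\bigcup_\beta\Spec k[X_\beta^{\pm1},Y_\beta]$.

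It then remains only to show that $T'$ is separated over $k$: closed immersions are separated, separatedness is stable under base change, and $\G_m^m$ is separated over $k$, so none of these ingredients obstructs the conclusion once $T'$ is. But $T'$ is an open subscheme of the separated toric surface $\T_{\Delta_{\alpha,p}}$ (and if $\Delta_{\alpha,p}$ degenerates to a segment then $\Sigma_p$ is a singleton and $C_p$ is already affine), and open subschemes of separated schemes are separated; this is classical toric geometry.

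The step demanding the most care is the toric comparison in the second paragraph: one must verify that the gluing datum defining $C_p$ is genuinely the restriction of the gluing datum of the ambient toric scheme, i.e.\ that the birational transition maps $s_{\gamma\gamma'}$ — a priori only monomial maps between the open tori — extend to the claimed isomorphisms of toric charts along the locus $Y=0$. This rests on the identity $M_\gamma M_{\gamma'}^{-1}=I_m\oplus(M_\beta M_{\beta'}^{-1})$, which in turn holds because all the charts $C_\gamma$ are built from the single intermediate datum $\alpha_p$.
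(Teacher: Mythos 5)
Your proposal is correct and follows essentially the same route as the paper: cancellation (Liu 3.3.9(e) and Exercise 3.3.2) reduces to separatedness of $C_p$ over $\Spec k$, and then the chart description of $C_p$ embeds it into a product of a torus with the toric scheme $\bigcup_\beta\Spec k[X_\beta^{\pm 1},Y_\beta]\subset\T_{\Delta_{\alpha,p}}$, whose separatedness is classical. The only difference is that you spell out the compatibility of the gluing data via $M_\gamma M_{\gamma'}^{-1}=I_m\oplus(M_\beta M_{\beta'}^{-1})$, a detail the paper leaves implicit.
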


\begin{lem}\label{lem:spOverC0alpha}
The morphism $s_p$ induces an isomorphism $s_p^{-1}(U_{\alpha,p})\rightarrow U_{\alpha,p}$. In particular, $s_p$ is birational.
\proof
This result immediately follows from Lemma \ref{lem:spstructure} as $\Sigma_p\neq\varnothing$.
\endproof
\end{lem}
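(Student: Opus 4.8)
The plan is to deduce the statement directly from Lemma \ref{lem:spstructure}, which already describes the behaviour of each chart map $s_{\gamma\alpha}\colon C_\gamma\to C_\alpha$ for $\gamma\in\Sigma_p$. First I would record the elementary observation that $p\notin U_{\alpha,p}$: by definition $U_{\alpha,p}=D(\tilde{\m G}_p)\cap C_{0,\alpha}$ is contained in $C_{0,\alpha}$, whereas $p\in\bar C_\alpha=C_\alpha\setminus C_{0,\alpha}$. Combining this with the two conclusions of Lemma \ref{lem:spstructure} --- that $s_{\gamma\alpha}$ restricts to an isomorphism $C_{0,\gamma}\xrightarrow{\sim}U_{\alpha,p}$ and that $s_{\gamma\alpha}(C_\gamma\setminus C_{0,\gamma})\subseteq\{p\}$ --- yields $s_{\gamma\alpha}^{-1}(U_{\alpha,p})=C_{0,\gamma}$ for every $\gamma\in\Sigma_p$.

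Next I would pass from the individual charts to the glued curve $C_p$. Since $s_p$ is by construction the glueing of the maps $s_{\gamma\alpha}$ (each of which factors through $C_{\alpha,p}$), its preimage $s_p^{-1}(U_{\alpha,p})$ is the union, over $\gamma\in\Sigma_p$, of the opens $C_{0,\gamma}\subseteq C_\gamma\subseteq C_p$. By Remark \ref{rem:Toricschemedisjointcharts} the transition maps $s_{\gamma\gamma'}$ identify all of these opens with the single scheme $C_{0,\alpha_p}$, so that $s_p^{-1}(U_{\alpha,p})\cong C_{0,\alpha_p}$; and on this open $s_p$ agrees with the isomorphism $C_{0,\gamma}\xrightarrow{\sim}U_{\alpha,p}$ furnished by Lemma \ref{lem:spstructure}. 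Hence $s_p$ induces an isomorphism $s_p^{-1}(U_{\alpha,p})\xrightarrow{\sim}U_{\alpha,p}$.

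For the final assertion I would invoke the fact, established in \S\ref{subsec:Newtonpolygons}, that $\Sigma_p\neq\varnothing$ --- since $\tilde X_{m+1}\mid\m F_{\alpha,p}(\tilde X_{m+1},0)$ while $\tilde X_{m+1}\nmid\m F_{\alpha,p}$, the polygon $\Delta_{\alpha,p}$ has at least one edge with normal vector in $\Z_+^2$ --- so $C_p$ is non-empty; each $C_{0,\gamma}$ is dense in $C_\gamma$ by membership in $\Omega$, hence $s_p^{-1}(U_{\alpha,p})$ is dense in $C_p$, and $U_{\alpha,p}$ is dense in $C_{\alpha,p}$. Thus $s_p$ restricts to an isomorphism between dense opens and is therefore birational. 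I do not anticipate any genuine obstacle: all the substance lives in Lemma \ref{lem:spstructure}, and the only point requiring a moment's care is verifying that $p$ is \emph{not} a point of the target open $U_{\alpha,p}$, which guarantees that $s_p^{-1}(U_{\alpha,p})$ meets none of the exceptional points $C_\gamma\setminus C_{0,\gamma}$.
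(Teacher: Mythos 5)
Your argument is correct and is exactly the expansion of the paper's one-line proof, which likewise deduces the statement from Lemma \ref{lem:spstructure} together with $\Sigma_p\neq\varnothing$; the key points you spell out ($p\notin U_{\alpha,p}$, the identification of all the $C_{0,\gamma}$ with $C_{0,\alpha_p}$, and density) are precisely what the paper leaves implicit. One tiny inaccuracy: the density of $C_{0,\gamma}$ in $C_\gamma$ comes from $C_\gamma$ being defined as the Zariski closure of $C_{0,\gamma}$ in $\T_v$, not from membership in $\Omega$ (which concerns density in $C_0$), but this does not affect the argument.
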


\begin{lem}\label{lem:spproperness}
The morphism $s_p: C_p\rightarrow C_{\alpha,p}$ is proper.
\proof
By Lemma \ref{lem:spseparatedness}, the morpshism $s_p$ is separated. We will then prove the lemma via the valuative criterion for properness. Let $R$ be a discrete valuation ring with field of fractions $K$. We want to prove that any commutative diagram
\[\begin{tikzcd}
\Spec K \arrow[r, "t_{p}"] \arrow[d, hook] & C_p \arrow[d, "s_p"]\\
\Spec R \arrow[r, "t_{\alpha}"] \arrow[ru, dashed] & C_{\alpha,p}
\end{tikzcd}\]
can be filled in as shown. Let $\pi$ be a uniformiser of $R$ and let $\omega=(\pi)$ be the closed point of $\Spec R$. Since $C_{\alpha,p}=U_{\alpha,p}\cup\{p\}$ and $s_p^{-1}(U_{\alpha,p})\rightarrow U_{\alpha,p}$ is an isomorphism by Lemma \ref{lem:spOverC0alpha}, we can assume $p=t_{\alpha}(\omega)$. Indeed, if not, then $s_p^{-1}$ is defined on the open dense neighbourhood $U_{\alpha,p}$ of $t_\alpha(\omega)$, that therefore contains the image of $t_\alpha$. Moreover, $t_\alpha$ can be supposed not constant, otherwise $\Spec R\rightarrow C_p$ can be defined as the constant morphism of image $t_p((0))$.

Recall that the injective homomorphism $R_\alpha\hookrightarrow k(X_{j_\alpha},\tilde Y)$, given by \ref{construction}\ref{eqn:Bakerprop2}, induces an open immersion $C_\alpha\hookrightarrow \mathrm{Spec}( \nicefrac{k[X_{j_\alpha},\tilde Y]}{(\m F_\alpha)})$. In particular, the local ring $\O_{C_{\alpha},p}$, equal to $\O_{C_{\alpha,p},p}$, is naturally isomorphic to the localisation of $k[X_{j_\alpha},\tilde Y]/(\m F_\alpha)$ at the prime ideal $(\tilde{\m G}_p,\tilde Y)=(\bar{\m G}_p,\tilde Y)$. From the local homomorphism
\[\tau: \tfrac{k[X_{j_\alpha},\tilde Y]_{(\tilde{\m G}_p,\tilde Y)}}{(\m F_\alpha)}\simeq \O_{C_{\alpha,p},p}\xrightarrow{t_{\alpha,\omega}^\#}R\]
we observe that $\ord_\pi(\tilde{\m{G}}_p)>0$, $\ord_\pi(\tilde Y)>0$. We want to show that neither $\tilde Y$ nor $\tilde{\m G}_p$ are taken to $0$ by $\tau$. Note that $\ker(\tau)\subsetneq(\tilde{\m G}_p,\tilde Y)$, since $t_\alpha$ is not constant. 
Hence it suffices to prove that $\tau(\tilde Y)=0$ if and only if $\tau(\tilde{\m G}_p)=0$.

Suppose $\tau(\tilde Y)=0$. Then $\tau(f|_{\alpha})=0$ and $\tau(\tilde{\m G}_p)=\tau(\bar{\m G}_p)$. Recall that $\bar{\m G}_p$ is a factor of $f|_{\alpha}$. Let $h_p\in k[X_{j_\alpha}]$ with $\bar{\m G}_p\nmid h_p$ such that $f|_{\alpha}=h_p\cdot(\bar{\m G}_p)^{m_p}$, for some $m_p\in\Z_+$. Note that $\tau(h_p)$ is invertible as $h_p\notin(\tilde{\m G}_p,\tilde Y)$. Since $\tau(f|_{\alpha})=0$ and $R$ is reduced, it follows that $\tau(\tilde{\m G}_p)=0$. 

Suppose $\tau(\tilde{\m G}_p)=0$. Let $\m H_p\in k[\tilde Y]$ be the normal form of $\m F_\alpha$ by $\tilde{\m G}_p$ with respect to the lexicographic order on $k[X_{j_\alpha},\tilde Y]$ given by $X_{j_\alpha}>\tilde Y$. Note that $\tau(\m H_p)=0$ as $\tau(\m F_\alpha)=0$, but $\m H_p\neq 0$ as $\tilde{\m G}_p\nmid \m F_\alpha$. Recall that $\tilde{\m G}_p-\bar{\m G}_p\in \tilde Yk[\tilde Y]$. 
Since $\bar{\m G}_p$ is a degree $1$ factor of $f|_{\alpha}$ and $\m F_\alpha- f|_{\alpha}\in (\tilde Y)$, one has $\m H_p\in\tilde Yk[\tilde Y]$. Write $\m H_p=\tilde Y^t\cdot \m H$, for $t\in\Z_+$ and $\m H\notin \tilde Y k[\tilde Y]$. Note that $\tau(\m H)$ is invertible as $\m H\notin (\tilde{\m G}_p,\tilde Y)$. It follows that $\tau(\tilde Y)=0$ since $R$ is reduced and $\tau(\m H_p)=0$. 

Hence $\ord_\pi(\tilde{\m{G}}_p),\ord_\pi(\tilde Y)\in\Z_+$ and so the affine function
\[
    \phi:\Z^2 \rightarrow\Z,\quad
    (i,j)\mapsto \ord_\pi\tilde{\m{G}}_p^i \tilde Y^j
\]
is a non-trivial linear map with a rank $1$ kernel spanned by some primitive vector $(\beta_2,-\beta_1)\in \Z_{+}\times\Z_{-}$. 
%From
%\[C_\alpha=\Spec\frac{k[X_1^{\pm 1},\dots,X_m^{\pm 1}][X_{m+1},Y]}{(\m{F}_{\alpha,\m{G}},\m{F}_2,\dots,\m{F}_m,X_{m+1}-\m{G})}\]
%we consider the change of variables $\tilde X_{m+1}=X_{m+1}^{\beta_2} Y^{-\beta_1}$, $\tilde Y=X_{m+1}^{-\delta_2} Y^{\delta_1}$. 
Set $\beta=(\beta_1,\beta_2)$ and $\gamma=\beta\circ_{g_p}\alpha$. Then
\[C_\gamma=\Spec\frac{A_{m+1}[Y]}{(\m{F}_\gamma)+\got a_\gamma}\]
and $C_\gamma\subset C_p$ from the definition of $C_p$ (also when $\gamma\notin\Sigma_p$). Hence 
\begin{equation}\label{eqn:commdiagramproperness}
\begin{tikzcd}[row sep =3ex]
K &  \tfrac{A_m[X_{m+1}^{\pm 1},Y]}{(\m{F}_\gamma)+\got a_\gamma} \arrow[ld, dashed] \arrow[l] \\
R \arrow[u, hook]  & \tfrac{A_m[\tilde Y]}{(\m{F}_\alpha)+\got a_\alpha}, \arrow[u] \arrow[l]
\end{tikzcd}
\end{equation}
where the ring homomorphism on the right, inducing the map \[s_{\gamma\alpha}:C_\gamma\xrightarrow{s_p}C_{\alpha,p}\hookrightarrow C_\alpha,\] is given by $\tilde Y\mapsto X_{m+1}^{\delta_2} Y^{\beta_2}$ for $M_\beta=\lb\begin{smallmatrix}\!\!\delta_1\!&\delta_2\!\!\cr \!\!\beta_1\!&\beta_2\!\!\end{smallmatrix}\rb\in\SL_2(\Z)$.
To conclude the proof it suffices to show that the commutative diagram (\ref{eqn:commdiagramproperness}) can be filled in as shown.
Recall \vspace{-4pt}\[\m F_{m+1}=X_{m+1}^{\delta_1}Y^{\beta_1}-\m G_p\in \got a _\gamma.\]
\vspace{-4pt} Then \[\ord_\pi(X_{m+1})=\ord_\pi(\tilde{\m{G}}_p^{\beta_2}\tilde Y^{-\beta_1})=\phi((\beta_2,-\beta_1))=0\] 
and so $\ord_\pi(Y)>0$ as $\beta\in\Z_+^2$. Thus (\ref{eqn:commdiagramproperness}) can be filled in as shown.
\endproof
\end{lem}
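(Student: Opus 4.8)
The plan is to apply the valuative criterion for properness. Since $s_p$ is of finite type and, by Lemma \ref{lem:spseparatedness}, separated, it suffices to show that for every discrete valuation ring $R$ with fraction field $K$, uniformiser $\pi$ and closed point $\omega$, any commutative square built from $t_p\colon\Spec K\to C_p$ and $t_\alpha\colon\Spec R\to C_{\alpha,p}$ with $s_p\circ t_p=t_\alpha|_{\Spec K}$ admits a (unique) diagonal lift $\Spec R\to C_p$. Two reductions are immediate: by Lemma \ref{lem:spOverC0alpha} the morphism $s_p$ is an isomorphism over $U_{\alpha,p}=C_{\alpha,p}\setminus\{p\}$, so if $t_\alpha(\omega)\ne p$ one lifts by composing with this inverse; and if $t_\alpha$ is constant, any point of $s_p^{-1}(p)$ gives a constant lift. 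Hence we may assume $t_\alpha(\omega)=p$ and $t_\alpha$ non-constant.

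Next I would do the local analysis at $p$. By \ref{construction}\ref{eqn:Bakerprop2}--\ref{eqn:Bakerprop3}, the local ring $\O_{C_{\alpha,p},p}=\O_{C_\alpha,p}$ is the localisation of $k[X_{j_\alpha},\tilde Y]/(\m F_\alpha)$ at $(\bar{\m G}_p,\tilde Y)$, and $t_\alpha$ corresponds to a local homomorphism $\tau$ into $R$; locality gives $\ord_\pi(\tilde{\m G}_p)>0$ and $\ord_\pi(\tilde Y)>0$, a priori allowing the value $+\infty$. The essential point is to rule out the infinite case, and for this I would establish the equivalence $\tau(\tilde Y)=0\iff\tau(\tilde{\m G}_p)=0$; together with $\ker\tau\subsetneq(\bar{\m G}_p,\tilde Y)$ (non-constancy of $t_\alpha$) this forces $\ord_\pi(\tilde{\m G}_p),\ord_\pi(\tilde Y)\in\Z_+$. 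If $\tau(\tilde Y)=0$ then $\tau(f|_\alpha)=0$, and writing $f|_\alpha=h_p\cdot\bar{\m G}_p^{m_p}$ with $\bar{\m G}_p\nmid h_p$ (so $\tau(h_p)$ is a unit) gives $\tau(\bar{\m G}_p)=0=\tau(\tilde{\m G}_p)$ since $R$ is a domain. Conversely, if $\tau(\tilde{\m G}_p)=0$ I pass to the normal form $\m H_p\in k[\tilde Y]$ of $\m F_\alpha$ by $\tilde{\m G}_p$: it is nonzero since $\tilde{\m G}_p\nmid\m F_\alpha$, it lies in $\tilde Yk[\tilde Y]$ because $\bar{\m G}_p$ is a degree-one factor of $f|_\alpha$ and $\m F_\alpha\equiv f|_\alpha\bmod\tilde Y$, and $\tau(\m H_p)=0$; factoring out the power of $\tilde Y$, whose cofactor maps to a unit, forces $\tau(\tilde Y)=0$.

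With both orders finite and positive, the map $\phi\colon\Z^2\to\Z$, $(i,j)\mapsto\ord_\pi(\tilde{\m G}_p^{\,i}\tilde Y^{\,j})$, is a nonzero linear form whose kernel is spanned by a primitive vector $(\beta_2,-\beta_1)$ with $\beta=(\beta_1,\beta_2)\in\Z_+^2$. I would then set $\gamma=\beta\circ_{g_p}\alpha$; by the description in \S\ref{subsec:Newtonpolygons} one has $C_\gamma\subseteq C_p$ even when $\gamma\notin\Sigma_p$, so it is enough to fill in the induced square of rings, where $s_{\gamma\alpha}$ sends $\tilde Y\mapsto X_{m+1}^{\delta_2}Y^{\beta_2}$ for $M_\beta=\left(\begin{smallmatrix}\delta_1&\delta_2\\\beta_1&\beta_2\end{smallmatrix}\right)$. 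Using the relation $\m F_{m+1}=X_{m+1}^{\delta_1}Y^{\beta_1}-\m G_p\in\got a_\gamma$ and $\delta_1\beta_2-\delta_2\beta_1=1$, one computes $\ord_\pi(X_{m+1})=\phi((\beta_2,-\beta_1))=0$, and then $\ord_\pi(Y)>0$ from $\ord_\pi(\tilde Y)=\beta_2\,\ord_\pi(Y)$ and $\beta_2>0$. Hence the resulting ring map sends every invertible generator $X_i^{\pm1}$ to a unit of $R$ and $Y$ into the maximal ideal, so it factors through $A_{m+1}[Y]/((\m F_\gamma)+\got a_\gamma)$ and defines the desired morphism $\Spec R\to C_\gamma\subseteq C_p$; uniqueness is automatic from separatedness.

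I expect the main obstacle to be the local analysis of the second paragraph: ruling out that $\tilde Y$ or $\tilde{\m G}_p$ is annihilated by $\tau$ is exactly what converts a potentially degenerate situation into an honest rank-one valuation on the function ring, after which the Newton-polygon vector $\beta$ and the toric change of variables $M_\beta$ produce the chart $C_\gamma$ over which the lift lives. Everything else is bookkeeping with the matrices $M_\alpha$, $M_\beta$, $M_\gamma$ already constructed in \S\ref{sec:Bakerconstruction}.
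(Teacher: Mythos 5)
Your proposal is correct and follows essentially the same route as the paper's proof: the same reductions via Lemma \ref{lem:spOverC0alpha}, the same local analysis of $\tau$ at $p$ (including the factorisation $f|_\alpha=h_p\bar{\m G}_p^{m_p}$ for one direction and the normal form $\m H_p$ for the other), and the same construction of $\beta$ from the kernel of $\phi$ followed by the order computation $\ord_\pi(X_{m+1})=0$, $\ord_\pi(Y)>0$ to fill in the diagram on the chart $C_\gamma$.
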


\begin{lem}\label{lem:spisomorphism}
If $p\in\singpointnt_n$ is a regular point of $C_n$, then $s_p$ is an isomorphism.
\proof
As $p$ is a regular point of codimension $1$, the ring $\O_{C_{\alpha,p},p}$ is normal. Therefore there exists a normal integral open subscheme $U\subseteq C_{\alpha,p}$ containing $p$. Since $s_p$ is proper birational by Lemma \ref{lem:spproperness}, then so is $s_U:s_p^{-1}(U)\rightarrow U$. In particular, $s_p^{-1}(U)$ is integral. It follows from \cite[Corollary 4.4.3]{Liu}) that $s_U$ is an isomorphism. Thus $s_{p}$ is an isomorphism, since $s_p^{-1}(U_{\alpha,p})\rightarrow U_{\alpha,p}$ is an isomorphism by Lemma \ref{lem:spOverC0alpha}. 
\endproof
\end{lem}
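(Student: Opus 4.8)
The plan is to reduce the claim to a Zariski-local statement around $p$ in the target $C_{\alpha,p}$ and then invoke Zariski's main theorem in the form cited as \cite[Corollary 4.4.3]{Liu}. Recall from Lemmas \ref{lem:spproperness} and \ref{lem:spOverC0alpha} that $s_p\colon C_p\to C_{\alpha,p}$ is proper, birational, and already an isomorphism over the dense open $U_{\alpha,p}$; moreover, by Lemma \ref{lem:spstructure}, every fibre of $s_p$ is finite, since $s_p$ is an isomorphism over $C_{0,\alpha}$ and contracts each of the finite sets $\bar C_\gamma$ into $\{p\}$. Because $C_{\alpha,p}=U_{\alpha,p}\cup\{p\}$ and being an isomorphism is local on the target, it suffices to produce an open subscheme $U\subseteq C_{\alpha,p}$ containing $p$ over which $s_p$ is an isomorphism.

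To build $U$, first observe that $C_{\alpha,p}$ is an open subscheme of $C_\alpha$, which is an open subscheme of $C_n$, so $\O_{C_{\alpha,p},p}=\O_{C_n,p}$; by hypothesis this is a regular local ring of dimension one, hence a discrete valuation ring, in particular a normal domain. Since this local ring is a domain, $p$ lies on a single irreducible component of $C_{\alpha,p}$, and since the non-normal locus of an algebraic variety over $k$ is closed, one may shrink to an open $U\ni p$ in $C_{\alpha,p}$ that is integral and normal.

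Next I would base change $s_p$ along $U\hookrightarrow C_{\alpha,p}$ to get $s_U\colon s_p^{-1}(U)\to U$, which is still proper and birational. The one point requiring care is that $s_p^{-1}(U)$ be integral. It is reduced: it is open in $C_p=\bigcup_\gamma C_\gamma$, and each $C_\gamma$ is the scheme-theoretic closure of the smooth---hence reduced---curve $C_{0,\gamma}$, so is reduced. It is irreducible: $s_p$ restricts to an isomorphism over a dense open $W\subseteq U_{\alpha,p}$, so $s_p^{-1}(W\cap U)\cong W\cap U$ is irreducible, and it is dense in $s_p^{-1}(U)$ because $U\setminus(W\cap U)$ is a finite set of closed points and $s_p$, being proper with finite fibres, has finite preimages of finite sets. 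With $s_p^{-1}(U)$ integral, $U$ normal, integral and Noetherian, and $s_U$ proper birational, \cite[Corollary 4.4.3]{Liu} gives that $s_U$ is an isomorphism.

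Thus $s_p$ is an isomorphism over $U$ and over $U_{\alpha,p}$, and these two opens cover $C_{\alpha,p}$ because $p\in U$; being an isomorphism is local on the target, so $s_p$ is an isomorphism. I expect the only genuine obstacle to be the verification that $s_p^{-1}(U)$ is integral, and even that reduces to standard facts: scheme-theoretic closures of smooth curves are reduced, and a proper morphism of curves with finite fibres has finite preimages of finite sets.
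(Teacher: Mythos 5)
Your proposal is correct and follows essentially the same route as the paper: use regularity at $p$ to get a normal integral open neighbourhood $U$, apply \cite[Corollary 4.4.3]{Liu} to the proper birational restriction $s_p^{-1}(U)\rightarrow U$, and glue with the isomorphism over $U_{\alpha,p}$ from Lemma \ref{lem:spOverC0alpha}. The only difference is that you spell out the reducedness and irreducibility of $s_p^{-1}(U)$, which the paper passes over with ``in particular''; that verification is sound.
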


\begin{prop}\label{prop:C0gammadenseinCn+1}
For any $\gamma\in\Sigma_{n+1}$, the curve $C_{0,\gamma}$ is dense in $C_{n+1}$.
\proof
For any $\gamma\in\Sigma_{n+1}$ recall that $C_{0,\gamma}$ is dense in its closure $C_\gamma$. Therefore $C_0=\bigcup_{\gamma\in\Sigma_{n+1}}C_{0,\gamma}\cup C_0$ is dense in $C_{n+1}=\bigcup_{\gamma\in\Sigma_{n+1}}C_{\gamma}\cup C_0$. Fix $\gamma\in\Sigma_{n+1}$. It suffices to show that $C_{0,\gamma}$ is dense $C_0$. But this holds as $\gamma\in\Omega$.
\endproof
\end{prop}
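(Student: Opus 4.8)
The plan is to reduce the statement to the defining property of $\Omega$ together with transitivity of density. First I would record the two elementary facts in play: each $C_{0,\gamma}$ is, by definition of Zariski closure, dense in $C_\gamma$, and each $C_{0,\gamma}$ is identified via the gluing data $s_{\gamma 0}$ (through the canonical open immersion $C_{0,\gamma}\hookrightarrow C_0$) with an open subscheme of $C_0$ inside $C_{n+1}$. From the first fact, the closure of $C_0$ in $C_{n+1}$ contains the closure of every $C_{0,\gamma}$, hence contains $C_\gamma$ for each $\gamma\in\Sigma_{n+1}$; since $C_{n+1}=\bigcup_{\gamma\in\Sigma_{n+1}}C_\gamma\cup C_0$, this already shows that $C_0$ is dense in $C_{n+1}$.

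Next I would fix $\gamma\in\Sigma_{n+1}$. Because $C_0$ is dense in $C_{n+1}$ and density is transitive, it then suffices to prove that $C_{0,\gamma}$ is dense in $C_0$; but by the very definition of $\Omega$ as the set of those $\alpha\in\bigsqcup_m\Omega_m$ for which $C_{0,\alpha}$ is dense in $C_0$, this is precisely the assertion $\gamma\in\Omega$.

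So the only remaining point is $\Sigma_{n+1}\subseteq\Omega$, which I would check using the decomposition $\Sigma_{n+1}=\Sigma_{\singpointnt_n}\cup\hat\Sigma_n\cup\{\tilde\alpha\}$ of Definition \ref{defn:Sigman} (with $\Sigma_{\singpointnt_n}=\bigcup_{p\in\singpointnt_n}\Sigma_p$ and $\hat\Sigma_n=\Sigma_n\setminus\{\alpha\}$): elements of $\hat\Sigma_n\subseteq\Sigma_n$ lie in $\Omega$ by the standing hypothesis of \S\ref{subsec:BakerInductiveStep} that $\Sigma_n\subset\Omega$; $\tilde\alpha\in\Omega$ by Lemma \ref{lem:Ctildealpha}; and every $\gamma\in\Sigma_p$ lies in $\Omega$ since $C_{0,\gamma}=C_{0,\alpha_p}$ is dense in $C_0$ by Lemma \ref{lem:Calphap} (indeed $\Sigma_p\subset\Omega$ is already built into Definition \ref{defn:Sigman}). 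Assembling these gives $\gamma\in\Omega$, completing the argument.

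This is essentially bookkeeping, so I do not expect a genuinely hard step; the one place to be careful is the identification, through the gluing morphisms, of $C_{0,\gamma}$ with a single fixed open of $C_0$ inside $C_{n+1}$, so that "$C_{0,\gamma}$ dense in $C_0$" and "$C_{0,\gamma}$ dense in $C_{n+1}$" are linked by transitivity. This uses implicitly that $C_{0,\gamma}$ embeds in $U_{\gamma 0}$ and that the $s_{\gamma 0}$ restrict to the canonical immersion on $C_{0,\gamma}$, which is part of the setup recalled before Construction \ref{construction}.
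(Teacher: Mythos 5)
Your proposal is correct and follows essentially the same route as the paper's proof: density of each $C_{0,\gamma}$ in $C_\gamma$ gives density of $C_0$ in $C_{n+1}$, and then the statement reduces by transitivity to $C_{0,\gamma}$ being dense in $C_0$, which is exactly the condition $\gamma\in\Omega$. Your extra verification that $\Sigma_{n+1}\subseteq\Omega$ (via Lemmas \ref{lem:Calphap} and \ref{lem:Ctildealpha} and Definition \ref{defn:Sigman}) is just making explicit what the paper leaves implicit in its construction.
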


\begin{defn}\label{defn:psin}
Define a surjective function $\psi_n:\Sigma_{n+1}\sqcup\{0\}\rightarrow\Sigma_n\sqcup\{0\}$ by
$\psi_n(0)=0,\psi_n|_{\hat\Sigma_n}=id_{\hat\Sigma_n}, \psi_n(\Sigma_{n+1}\setminus\hat\Sigma_n)=\{\alpha\}$.
\end{defn}

Let $\gamma\in\Sigma_{n+1}\sqcup\{0\}$ and denote $\alpha_\gamma=\psi_n(\gamma)$. Then the birational map $s_{\gamma\alpha_\gamma}$ has domain of definition $C_\gamma$. Indeed, it is trivial when $\gamma=0$ or $\gamma\in\hat\Sigma_n$ while it follows from Remark \ref{rem:sgammaalphadefn} if $\gamma\in\Sigma_{\singpointnt_n}$ and from Lemma \ref{lem:Ctildealpha} if $\gamma=\tilde\alpha$.

\begin{thm}\label{thm:sndefn}
There exists a unique morphism $s_n:C_{n+1}\rightarrow C_n$ extending the birational maps $s_{\gamma'\alpha'}: C_{\gamma'}\--> C_{\alpha'}$ for $\gamma'\in\Sigma_{n+1}\sqcup\{0\}$, $\alpha'\in\Sigma_n\sqcup\{0\}$. In particular, 
\[s_n|_{C_0}:C_0\xrightarrow{id} C_0\subseteq C_n,\quad s_n|_{\hat C_n}:\hat C_n\xrightarrow{id}\hat C_n\subseteq C_n,
%\quad s_n|_{C_{\tilde\alpha}}:C_{\tilde\alpha}\lhook\joinrel\xrightarrow{s_{\tilde\alpha\alpha}}C_\alpha\subseteq C_n
\] 
and $s_n|_{C_p}:C_p\xrightarrow{s_p} C_{\alpha,p}\subseteq C_n$, for any $p\in\singpointnt_n$.
\proof
For any $\gamma\in\Sigma_{n+1}\sqcup\{0\}$ let $\alpha_\gamma=\psi_n(\gamma)$. We observed that the birational maps $s_{\gamma\alpha_\gamma}$ have domain of definition $C_\gamma$, and so define morphisms \[s_\gamma:C_\gamma\xrightarrow{s_{\gamma\alpha_\gamma}} C_{\alpha_\gamma}\subseteq C_n.\] 
Note that $s_\gamma|_{C_{0,\gamma}}$ is an open immersion. This fact is trivial when $\gamma\in\hat\Sigma_n\sqcup\{0\}$ and follows from Lemmas \ref{lem:Ctildealpha} and \ref{lem:spstructure} otherwise.

Recall the definition of the dense open $U_{\gamma\gamma'}$ of $C_\gamma$ for any $\gamma,\gamma'\in\Omega\sqcup\{0\}$.
We want to show that for any $\gamma,\gamma'\in\Sigma_{n+1}\sqcup\{0\}$ and any $\alpha'\in\Sigma_n\sqcup\{0\}$ the maps $s_{\gamma}$ and $s_{\gamma'\alpha'}:C_{\gamma'}\-->C_{\alpha'}\subseteq C_n$ agree on the intersection of their domains of definition. Let $D$ be the domain of definition of $s_{\gamma'\alpha'}$. Then $D\supseteq U_{\gamma'\alpha'}$. Let $U=D\cap C_\gamma\subseteq C_{n+1}$ and $U_0=C_{0,\gamma}\cap C_{0,\gamma'}\cap C_{0,\alpha'}$. Since $C_{0,\gamma'}\cap C_{0,\alpha'}\subseteq U_{\gamma'\alpha'}$, one has $U_0\subseteq D$. Hence $U_0$ is an open of $U$, dense by Proposition \ref{prop:C0gammadenseinCn+1}. Now, $U$ is reduced, $C_n$ is separated and $s_\gamma|_{U_0}=s_{\gamma'\alpha'}|_{U_0}$ by definition. Therefore \cite[Proposition 3.3.11]{Liu} implies the two maps coincide on $U$, as required.

Thus the morphisms $s_\gamma$ glue to a morphism $s_{n}:C_{n+1}\rightarrow C_{n}$ and $s_n$ extends the birational maps $s_{\gamma'\alpha'}: C_\gamma\--> C_{\alpha'}$ for $\gamma'\in\Sigma_{n+1}\sqcup\{0\}$, $\alpha'\in\Sigma_n\sqcup\{0\}$. Then the uniqueness follows. 
\endproof
\end{thm}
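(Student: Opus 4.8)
The plan is to build $s_n$ by glueing morphisms on the charts $C_\gamma$, $\gamma\in\Sigma_{n+1}\sqcup\{0\}$, organised by the function $\psi_n$ of Definition \ref{defn:psin}. For each such $\gamma$ put $\alpha_\gamma=\psi_n(\gamma)\in\Sigma_n\sqcup\{0\}$. By the discussion just before the statement (which uses Remark \ref{rem:sgammaalphadefn} when $\gamma\in\Sigma_{\singpointnt_n}$, Lemma \ref{lem:Ctildealpha} when $\gamma=\tilde\alpha$, and is trivial when $\gamma\in\hat\Sigma_n\sqcup\{0\}$) the birational map $s_{\gamma\alpha_\gamma}$ has domain of definition all of $C_\gamma$, hence is a genuine morphism $s_\gamma:C_\gamma\to C_{\alpha_\gamma}\subseteq C_n$. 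First I would record that $s_\gamma|_{C_{0,\gamma}}$ is the canonical open immersion of $C_{0,\gamma}$ into $C_0\subseteq C_n$: this is clear for $\gamma\in\hat\Sigma_n\sqcup\{0\}$, and otherwise follows from Lemma \ref{lem:Ctildealpha} ($\gamma=\tilde\alpha$) or Lemma \ref{lem:spstructure} ($\gamma\in\Sigma_{\singpointnt_n}$), which identify the restriction with an isomorphism onto an open of $C_{0,\alpha}$.

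The heart of the argument is the following compatibility claim: for all $\gamma,\gamma'\in\Sigma_{n+1}\sqcup\{0\}$ and all $\alpha'\in\Sigma_n\sqcup\{0\}$, the morphism $s_\gamma$ and the birational map $s_{\gamma'\alpha'}:C_{\gamma'}\-->C_{\alpha'}\subseteq C_n$ agree on the intersection of their domains of definition inside $C_{n+1}$. To prove it I would let $D$ denote the domain of definition of $s_{\gamma'\alpha'}$ (so $D\supseteq U_{\gamma'\alpha'}$), set $U=D\cap C_\gamma$ and $U_0=C_{0,\gamma}\cap C_{0,\gamma'}\cap C_{0,\alpha'}$. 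Because $C_{0,\gamma'}\cap C_{0,\alpha'}$ embeds in $U_{\gamma'\alpha'}\subseteq D$, the set $U_0$ is an open subscheme of $U$, and it is dense in $U$ by Proposition \ref{prop:C0gammadenseinCn+1}. On $U_0$ both maps restrict to the canonical open immersion of $U_0$ into $C_0\subseteq C_n$ — this is where the description of $s_\gamma|_{C_{0,\gamma}}$ above and the corresponding description of $s_{\gamma'\alpha'}$ on $C_{0,\gamma'}\cap C_{0,\alpha'}$ get used — so $s_\gamma|_{U_0}=s_{\gamma'\alpha'}|_{U_0}$. Since $U$ is reduced (an open of the curve $C_{n+1}$), $C_n$ is separated, and the two morphisms out of $U$ agree on the dense open $U_0$, they coincide on all of $U$ by \cite[Proposition 3.3.11]{Liu}.

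Granting the claim, the rest is formal. Taking $\gamma'=\gamma$ and varying $\alpha'$ shows $s_\gamma$ extends every $s_{\gamma\alpha'}$; taking $\alpha'=\alpha_{\gamma'}$ shows $s_\gamma$ and $s_{\gamma'}$ agree on $C_\gamma\cap C_{\gamma'}$ in $C_{n+1}$. As $C_{n+1}=\bigcup_{\gamma}C_\gamma\cup C_0$ with these overlaps, the $s_\gamma$ glue to a morphism $s_n:C_{n+1}\to C_n$ that, by construction, extends all the $s_{\gamma'\alpha'}$. Uniqueness follows because any such morphism must restrict on each $C_\gamma$ to an extension of $s_{\gamma\alpha_\gamma}$ to its whole domain, which is unique since $C_n$ is separated, so it necessarily equals $s_n$. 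Finally the three displayed identities are just a reading of the definitions: $\psi_n(0)=0$ and $\psi_n|_{\hat\Sigma_n}=\mathrm{id}$ give $s_n|_{C_0}=\mathrm{id}$ and $s_n|_{\hat C_n}=\mathrm{id}$, while $s_n|_{C_p}$ is by construction the glueing of the $s_{\gamma\alpha}$, $\gamma\in\Sigma_p$, i.e.\ the map $s_p$. I expect the only subtle point to be the compatibility step — in particular exhibiting a single dense open $U_0$ that lies in the domain $D$ of $s_{\gamma'\alpha'}$ (using the minimality $C_{0,\gamma'}\cap C_{0,\alpha'}\subseteq U_{\gamma'\alpha'}$) and is dense in $U$ (using Proposition \ref{prop:C0gammadenseinCn+1}); the remaining steps are bookkeeping with $\psi_n$ and citations of earlier lemmas.
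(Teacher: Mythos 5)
Your proposal is correct and follows essentially the same route as the paper: the same charts $s_\gamma$ via $\psi_n$, the same compatibility argument on $U=D\cap C_\gamma$ using the dense open $U_0=C_{0,\gamma}\cap C_{0,\gamma'}\cap C_{0,\alpha'}$ together with reducedness, separatedness and \cite[Proposition 3.3.11]{Liu}, and the same glueing and uniqueness conclusion. The extra detail you give (identifying the restrictions with the canonical open immersions into $C_0$, and spelling out uniqueness) is consistent with what the paper leaves implicit.
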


\begin{defn}\label{defn:sn}
Define $s_{n}:C_{n+1}\rightarrow C_{n}$ to be the birational morphism of $k$-schemes of Theorem \ref{thm:sndefn}. We call $s_n$ the morphism \textit{resolving $\singpointnt_n$} (although $s_n^{-1}(\singpointnt_n)$ is not necessarily non-singular).
\end{defn}

\begin{rem}\label{rem:snglueing}
Let $\gamma,\gamma'\in \Sigma_{n+1}\sqcup\{0\}$ and $\alpha'\in\Sigma_n\sqcup\{0\}$. Suppose there exist open subschemes $V_{\alpha'}\subseteq C_{\alpha'}$, $U_\gamma\subseteq C_\gamma$, $U_{\gamma'}\subseteq C_{\gamma'}$ such that $s_n$ restricts to isomorphisms $U_\gamma\rightarrow V_{\alpha'}$, $U_{\gamma'}\rightarrow V_{\alpha'}$. Since $s_n$ extends the rational maps $s_{\gamma\alpha'}$, $s_{\gamma'\alpha'}$, the map $s_{\gamma\gamma'}$ is defined on $U_\gamma$ and induces an isomorphism $U_\gamma\rightarrow U_{\gamma'}$. This implies that the opens $U_\gamma$, $U_{\gamma'}$ are glued, and so are equal in $C_{n+1}$.

It follows that if $U_1,U_2$ are opens of $C_{n+1}$ such that $s_n|_{U_1}$ and $s_n|_{U_2}$ are open immersions, then $s_n|_{U_1\cup U_2}$ is an open immersion.
\end{rem}

% \begin{rem}\label{rem:psisn}
% Note that if $\gamma\in \Sigma_{n+1}$, then $s_n(C_\gamma)\subseteq C_{\psi_n(\gamma)}$.
% \end{rem}

\subsection{Geometric properties}\label{subsec:properties}
In this subsection we will show that $\Sigma_{n+1}$ and $C_{n+1}$ satisfy all remaining properties of \ref{construction}, i.e.\ $C_n$ is a projective curve and $C_{\gamma}\cap C_{\gamma'}=C_{0,\gamma}\cap C_{0,\gamma'}$ for any $\gamma,\gamma\in\Sigma_{n+1}\sqcup\{0\}$, $\gamma\neq\gamma'$. Furthermore, we will prove that the morphism $s_n$ defined in Theorem \ref{thm:sndefn}, is a proper birational morphism with exceptional locus $s_n^{-1}(\singpointnt_n\cap\Sing(C_n))$. 

% \begin{lem}\label{lem:spsurjectivity}
% For any $p\in\singpointnt_n$, the morphism $s_p:C_p\rightarrow C_{\alpha,p}$ is surjective.
% \proof
% Let $\xi_\alpha$ be a generic point of $C_{\alpha,p}$. As $s_p$ is birational by Lemma \ref{lem:spOverC0alpha}, there exists a generic point $\xi_p$ of $C_p$ such that $s_n(\xi_p)=\xi_\alpha$. 
% Since Lemma \ref{lem:spproperness} shows that $s_p$ is proper, $s_p(\overline{\{\xi_p\}})=\overline{\{\xi_\alpha\}}$. Thus $s_n$ is surjective.
% \endproof
% \end{lem}

Consider the principal open $D(\tilde{\m G}_{\singpointnt_n})\subset C_{\alpha}$. Note that
\begin{equation}\label{eqn:CnOpenCover}
\m U=\{C_{\alpha'}\mid \alpha'\in\hat\Sigma_n\}\cup \{C_0\}\cup\{C_{\alpha,p}\mid p\in\singpointnt_n\}\cup\{ D(\tilde{\m G}_{\singpointnt_n})\}
\end{equation}
is an open cover of $C_n$.

\begin{lem}\label{lem:snsurjectivity}
The morphism $s_n:C_{n+1}\rightarrow C_n$ is surjective.
\proof
We want to show that every open in the cover (\ref{eqn:CnOpenCover}) is contained in the image of $s_n$. Recall $s_{\tilde\alpha\alpha}(C_{\tilde\alpha})=D(\tilde{\m G}_{\singpointnt_n})$ by Lemma \ref{lem:Ctildealpha}. Moreover, the morphism $s_p:C_p\rightarrow C_{\alpha,p}$ is surjective by Lemma \ref{lem:spstructure} as $\Sigma_p\neq\varnothing$.
Then the lemma follows from Theorem \ref{thm:sndefn}.
\endproof
\end{lem}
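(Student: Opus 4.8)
The plan is to reduce the statement to a finite check against the open cover $\m U$ of $C_n$ displayed in (\ref{eqn:CnOpenCover}). Indeed, if a morphism has image containing every member of an open cover of the target, then it is surjective; so it suffices to verify that $\mathrm{im}(s_n)$ contains each of $C_0$, the charts $C_{\alpha'}$ for $\alpha'\in\hat\Sigma_n$, the opens $C_{\alpha,p}$ for $p\in\singpointnt_n$, and $D(\tilde{\m G}_{\singpointnt_n})\subset C_\alpha$.

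For $C_0$ and for $C_{\alpha'}$, $\alpha'\in\hat\Sigma_n$, this is immediate from Theorem \ref{thm:sndefn}: there $s_n|_{C_0}$ is the identity onto $C_0\subseteq C_n$, and $s_n|_{\hat C_n}$ is the identity onto $\hat C_n=\bigcup_{\alpha'\in\hat\Sigma_n}C_{\alpha'}\subseteq C_n$, so both families of opens are hit. For $D(\tilde{\m G}_{\singpointnt_n})$ I would use the chart $C_{\tilde\alpha}$ of $C_{n+1}$ (note $\tilde\alpha\in\tilde\Sigma_n\subseteq\Sigma_{n+1}$): by Lemma \ref{lem:Ctildealpha} the birational map $s_{\tilde\alpha\alpha}$ is already defined on all of $C_{\tilde\alpha}$, where it is an open immersion into $C_\alpha$ with image exactly $D(\tilde{\m G}_{\singpointnt_n})$; since $s_n$ extends $s_{\tilde\alpha\alpha}$, the restriction $s_n|_{C_{\tilde\alpha}}$ equals this open immersion and hence surjects onto $D(\tilde{\m G}_{\singpointnt_n})$. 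Finally, for each $p\in\singpointnt_n$, Theorem \ref{thm:sndefn} identifies $s_n|_{C_p}$ with $s_p\colon C_p\to C_{\alpha,p}\subseteq C_n$, so it is enough to see that $s_p$ is surjective. Pick any $\gamma\in\Sigma_p$, which is possible since $\Sigma_p\neq\varnothing$ by the discussion in \S\ref{subsec:Newtonpolygons}. By Lemma \ref{lem:spstructure}, $s_{\gamma\alpha}$ maps $C_{0,\gamma}$ isomorphically onto $U_{\alpha,p}$ and maps $C_\gamma\setminus C_{0,\gamma}=\bar C_\gamma$ into $\{p\}$; moreover $\bar C_\gamma\neq\varnothing$, because $\gamma\in\Sigma_p$ forces $C_{0,\gamma}\subsetneq C_\gamma$, so the latter image is all of $\{p\}$. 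Hence $s_p(C_p)\supseteq s_{\gamma\alpha}(C_\gamma)=U_{\alpha,p}\cup\{p\}=C_{\alpha,p}$.

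Combining the four cases, $\mathrm{im}(s_n)$ contains the cover $\m U$, so $s_n$ is surjective. I do not expect a genuine obstacle here: everything is driven by Theorem \ref{thm:sndefn}, which describes $s_n$ chart by chart, together with Lemmas \ref{lem:Ctildealpha} and \ref{lem:spstructure}; the only point requiring care is keeping track, via $\psi_n$, of which chart of $C_{n+1}$ maps onto which piece of $C_n$, and that is bookkeeping rather than mathematics.
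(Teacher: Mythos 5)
Your proof is correct and follows essentially the same route as the paper: reduce to the open cover (\ref{eqn:CnOpenCover}), use Theorem \ref{thm:sndefn} for $C_0$ and $\hat C_n$, Lemma \ref{lem:Ctildealpha} for $D(\tilde{\m G}_{\singpointnt_n})$, and the surjectivity of $s_p$ (via Lemma \ref{lem:spstructure} and $\Sigma_p\neq\varnothing$) for $C_{\alpha,p}$. Your added detail that $\bar C_\gamma\neq\varnothing$ forces the image to be all of $\{p\}$ is exactly the point the paper leaves implicit.
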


\begin{lem}\label{lem:inverseimagesn}
%Let $s_n:C_{n+1}\rightarrow C_n$ be the morphism resolving $\singpointnt_n\subseteq \Sing(\bar C_\alpha)$, with $\alpha\in\Sigma_n$. 
For any $p\in \singpointnt_n$, we have
\begin{equation*}
    s_n^{-1}(p)=C_p\setminus C_{0,\alpha_p},\qquad\text{and}\qquad s_n^{-1}(C_{\alpha,p})=C_p.
\end{equation*}
Furthermore, the morphism $s_n^{-1}(C_n\setminus \singpointnt_n)\rightarrow C_n\setminus\singpointnt_n$ induced by $s_n$ is an isomorphism.
\proof
Let $p\in\singpointnt_n$. Lemma \ref{lem:spstructure} shows that \[s_p^{-1}(p)=\bigcup_{\gamma\in\Sigma_p}(C_\gamma\setminus C_{0,\gamma})=C_p\setminus C_{0,\alpha_p}, \]
where the last equality holds as $C_{0,\gamma}=C_{0,\alpha_p}$ for all $\gamma\in\Sigma_p$. Moreover, $p\notin s_n(C_q)$ for any $q\in \singpointnt_n$, $q\neq p$, and also $p\notin s_n(C_{\tilde\alpha})$ by Lemma \ref{lem:Ctildealpha}.
Recall $p\notin C_0$. In particular, $p\notin C_{\alpha'}$ for any $\alpha'\in\hat\Sigma_n$, since $C_\alpha\cap C_{\alpha'}\subseteq C_0$ (from our assumptions on $C_n$). 
Then $s_n^{-1}(p)=s_p^{-1}(p)$ by Theorem \ref{thm:sndefn}.

Let $U_{\singpointnt_n}=C_n\setminus \singpointnt_n$. We want to show that $s_n^{-1}(U_{\singpointnt_n})\rightarrow U_{\singpointnt_n}$ is an isomorphism. From above
\[s_n^{-1}(U_{\singpointnt_n})=\hat C_{n}\cup C_0\cup C_{\tilde\alpha},\]
as $C_{0,\gamma}\subseteq C_0$ for any $\gamma\in\Sigma_{\singpointnt_n}$. Note that $s_n|_{\hat C_n}$, $s_n|_{C_0}$ and $s_n|_{C_{\tilde\alpha}}$ are open immersions by Theorem \ref{thm:sndefn}. Thus $s_n^{-1}(U_{\singpointnt_n})\rightarrow U_{\singpointnt_n}$ is an isomorphism from Remark \ref{rem:snglueing} and Lemma \ref{lem:snsurjectivity}.

Recall that $C_{\alpha,p}\setminus\{p\}=U_{\alpha,p}\subseteq U_{\singpointnt_n}$ and $s_p^{-1}(U_{\alpha,p})=C_{0,\alpha_p}$ by Lemma \ref{lem:spstructure}. 
Moreover, $s_p$ induces an isomorphism $C_{0,\alpha_p}\rightarrow U_{\alpha,p}$ by Lemma \ref{lem:spOverC0alpha}.
Since $s_n^{-1}(U_{\singpointnt_n})\rightarrow U_{\singpointnt_n}$ is an isomorphism, $s_n^{-1}(C_{\alpha,p})=s_p^{-1}(C_{\alpha,p})=C_p$.
\endproof
\end{lem}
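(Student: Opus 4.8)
The plan is to establish the three assertions in order, using the open cover $\m U$ in (\ref{eqn:CnOpenCover}) and the explicit description of $s_n$ from Theorem \ref{thm:sndefn} as the organizing tools. For the first equality $s_n^{-1}(p)=C_p\setminus C_{0,\alpha_p}$, I would start from the fact that, on the chart $C_p$, the restriction of $s_n$ is $s_p\colon C_p\to C_{\alpha,p}$, so Lemma \ref{lem:spstructure} gives $s_p^{-1}(p)=\bigcup_{\gamma\in\Sigma_p}(C_\gamma\setminus C_{0,\gamma})$; since all the $C_{0,\gamma}$ for $\gamma\in\Sigma_p$ coincide with $C_{0,\alpha_p}$ (by definition of $\Sigma_p$ and Remark \ref{rem:Toricschemedisjointcharts}), this union is exactly $C_p\setminus C_{0,\alpha_p}$. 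What remains is to check that the point $p$ has no preimage on any of the \emph{other} charts of $C_{n+1}$: on $C_q$ for $q\neq p$ this follows from Lemma \ref{lem:spstructure} applied at $q$, on $C_{\tilde\alpha}$ from the image computation in Lemma \ref{lem:Ctildealpha} (its image is $D(\tilde{\m G}_{\singpointnt_n})$, which avoids $p$), on $C_0$ because $p\notin C_0$ as a point of $C_n$, and on $C_{\alpha'}$ for $\alpha'\in\hat\Sigma_n$ because $C_\alpha\cap C_{\alpha'}\subseteq C_0$ by the standing assumption in \ref{construction} on $C_n$. Theorem \ref{thm:sndefn} then lets me patch these local computations into the global statement $s_n^{-1}(p)=s_p^{-1}(p)$.

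Next I would handle the isomorphism over $U_{\singpointnt_n}=C_n\setminus\singpointnt_n$. The key point is to identify $s_n^{-1}(U_{\singpointnt_n})$: since every $C_{0,\gamma}$ for $\gamma\in\Sigma_{\singpointnt_n}$ lies inside $C_0$, and the fibres over points of $\singpointnt_n$ are the loci $C_\gamma\setminus C_{0,\gamma}$ just computed, one gets $s_n^{-1}(U_{\singpointnt_n})=\hat C_n\cup C_0\cup C_{\tilde\alpha}$. On each of these three pieces $s_n$ is an open immersion by Theorem \ref{thm:sndefn} (it is the identity on $\hat C_n$ and $C_0$, and it is $s_{\tilde\alpha\alpha}$, an open immersion, on $C_{\tilde\alpha}$ by Lemma \ref{lem:Ctildealpha}). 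By Remark \ref{rem:snglueing} a union of opens on which $s_n$ is an open immersion is again an open immersion, so $s_n|_{s_n^{-1}(U_{\singpointnt_n})}$ is an open immersion; combined with the surjectivity from Lemma \ref{lem:snsurjectivity} (restricted to $U_{\singpointnt_n}$), this open immersion is onto $U_{\singpointnt_n}$, hence an isomorphism.

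Finally, for $s_n^{-1}(C_{\alpha,p})=C_p$ I would combine the two previous parts. Since $C_{\alpha,p}=U_{\alpha,p}\sqcup\{p\}$ with $U_{\alpha,p}\subseteq U_{\singpointnt_n}$, and we have just shown $s_n$ restricts to an isomorphism over $U_{\singpointnt_n}$, the preimage $s_n^{-1}(U_{\alpha,p})$ equals $s_p^{-1}(U_{\alpha,p})=C_{0,\alpha_p}$ by Lemma \ref{lem:spstructure}; together with $s_n^{-1}(p)=C_p\setminus C_{0,\alpha_p}$ from the first part, we get $s_n^{-1}(C_{\alpha,p})=C_{0,\alpha_p}\cup(C_p\setminus C_{0,\alpha_p})=C_p$. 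I expect the main obstacle to be bookkeeping rather than depth: one must be careful that the chart-by-chart vanishing of preimages of $p$ genuinely covers \emph{all} of $C_{n+1}$ (using that $\m U$ is a cover and that $C_{n+1}$ is glued precisely from $\hat C_n$, $C_0$, $C_{\tilde\alpha}$, and the $C_p$), and that the passage from "open immersion on each piece" to "open immersion on the union" is exactly what Remark \ref{rem:snglueing} provides, so no separatedness subtlety is being swept under the rug.
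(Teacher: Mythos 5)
Your proposal is correct and follows essentially the same route as the paper's proof: the same chart-by-chart computation of $s_n^{-1}(p)$ via Lemma \ref{lem:spstructure}, Lemma \ref{lem:Ctildealpha} and the inductive hypothesis $C_\alpha\cap C_{\alpha'}\subseteq C_0$, the same identification of $s_n^{-1}(U_{\singpointnt_n})$ with $\hat C_n\cup C_0\cup C_{\tilde\alpha}$ combined with Remark \ref{rem:snglueing} and Lemma \ref{lem:snsurjectivity}, and the same assembly of $s_n^{-1}(C_{\alpha,p})$ from the two previous parts.
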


\begin{thm}\label{thm:Bakercompleteness}
The morphism $s_n:C_{n+1}\rightarrow C_{n}$ resolving $\singpointnt_n\subseteq\Sing(\bar C_\alpha)$
%, for some $\alpha\in\Sigma_n$, 
is a surjective proper birational morphism with exceptional locus contained in $s_n^{-1}(\singpointnt_n)$. In particular, the curve $C_{n+1}$ is projective.
\proof
First recall $s_n$ is surjective by Lemma \ref{lem:snsurjectivity}.
Consider the open cover $\m U$ of $C_n$ introduced in (\ref{eqn:CnOpenCover}).
As properness is a local property on the codomain, if $s_n^{-1}(U)\rightarrow U$ is proper for any $U\in\m U$, then $s_n$ is proper. Lemma \ref{lem:inverseimagesn} implies that $s_n^{-1}(U)\rightarrow U$ is an isomorphism except when $U=C_{\alpha,p}$ for some $p\in\singpointnt_n$. But $s_n^{-1}(C_{\alpha,p})=C_p$ for any $p\in\singpointnt_n$ again by Lemma \ref{lem:inverseimagesn}. Hence Lemma \ref{lem:spproperness} implies that $s_n$ is proper.
It follows that the curve $C_{n+1}$ is complete, and then projective, since so is $C_n$.

Proposition \ref{prop:C0gammadenseinCn+1} implies that $C_0$ is dense in $C_{n+1}$. Let $U_{\singpointnt_n}=C_n\setminus \singpointnt_n$, dense in $C_n$. Since $C_0\subseteq s_n^{-1}(U_{\singpointnt_n})$, the isomorphism $s_n^{-1}(U_{\singpointnt_n})\rightarrow U_{\singpointnt_n}$ implies that $s_n$ is birational with exceptional locus contained in $s_n^{-1}(\singpointnt_n)$. 
% Finally, let $\xi_n$ be a generic point of $C_n$. As $s_n$ is birational, there exists a generic point $\xi_{n+1}$ of $C_{n+1}$ such that $s_n(\xi_{n+1})=\xi_n$. 
% Since $s_n$ is proper, $s_n(\overline{\{\xi_{n+1}\}})=\overline{\{\xi_n\}}$. Thus $s_n$ is surjective.
\endproof
\end{thm}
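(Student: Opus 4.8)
The plan is to establish in turn that $s_n$ is surjective, proper, and birational, then to pin down its exceptional locus and deduce projectivity of $C_{n+1}$; essentially everything will be assembled from the lemmas already proved. Surjectivity is exactly Lemma~\ref{lem:snsurjectivity}. For properness I would argue locally on the target, since properness is local on the codomain: it suffices to check that $s_n^{-1}(U)\to U$ is proper for every $U$ in the open cover $\m U$ of $C_n$ recorded in (\ref{eqn:CnOpenCover}). For $U$ equal to a chart $C_{\alpha'}$ with $\alpha'\in\hat\Sigma_n$, to $C_0$, or to $D(\tilde{\m G}_{\singpointnt_n})$, Lemma~\ref{lem:inverseimagesn} shows $s_n^{-1}(U)\to U$ is an isomorphism, hence proper; for $U=C_{\alpha,p}$ with $p\in\singpointnt_n$ the same lemma identifies $s_n^{-1}(C_{\alpha,p})$ with $C_p$ and the induced morphism with $s_p$, which is proper by Lemma~\ref{lem:spproperness}. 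Hence $s_n$ is proper.

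From properness of $s_n$ and projectivity of $C_n$ — which is part of the running hypothesis of Construction~\ref{construction}, with base case Remark~\ref{rem:C1completeness} — the structure morphism $C_{n+1}\to\Spec k$ is proper, so $C_{n+1}$ is a complete curve and therefore projective.

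It remains to treat birationality and the exceptional locus. Writing $U_{\singpointnt_n}=C_n\setminus\singpointnt_n$, which is dense in $C_n$ (a curve minus finitely many closed points), Lemma~\ref{lem:inverseimagesn} gives that $s_n^{-1}(U_{\singpointnt_n})\to U_{\singpointnt_n}$ is an isomorphism. Since the points of $\singpointnt_n$ lie on $\bar C_\alpha$ and hence off $C_0$, we have $C_0\subseteq s_n^{-1}(U_{\singpointnt_n})$; as $C_0$ is dense in $C_{n+1}$ by Proposition~\ref{prop:C0gammadenseinCn+1} (and dense in $C_n$), this isomorphism over $U_{\singpointnt_n}$ exhibits $s_n$ as birational with exceptional locus contained in $s_n^{-1}(\singpointnt_n)$.

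The one genuinely delicate point is the bookkeeping behind the reduction to the cover $\m U$: one must check that $\m U$ really covers $C_n$ and that Lemma~\ref{lem:inverseimagesn} accounts for each of its members. For the first, the charts $C_{\alpha'}$ ($\alpha'\in\hat\Sigma_n$) together with $C_\alpha$ and $C_0$ already cover $C_n$ by Construction~\ref{construction}, and inside $C_\alpha$ the complement of $D(\tilde{\m G}_{\singpointnt_n})$ consists of the finitely many points of $\singpointnt_n$, each absorbed into the corresponding $C_{\alpha,p}$, together with points of $C_{0,\alpha}\subseteq C_0$; so $\m U$ is an open cover. For the second one uses that $s_{\tilde\alpha\alpha}$ has image exactly $D(\tilde{\m G}_{\singpointnt_n})$ (Lemma~\ref{lem:Ctildealpha}) and the explicit description of the restrictions of $s_n$ from Theorem~\ref{thm:sndefn}.
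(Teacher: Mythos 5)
Your proposal is correct and follows essentially the same argument as the paper: surjectivity from Lemma \ref{lem:snsurjectivity}, properness checked locally on the cover (\ref{eqn:CnOpenCover}) via Lemmas \ref{lem:inverseimagesn} and \ref{lem:spproperness}, completeness/projectivity inherited from $C_n$, and birationality with the exceptional locus controlled by the isomorphism over $C_n\setminus\singpointnt_n$ together with the density of $C_0$ from Proposition \ref{prop:C0gammadenseinCn+1}. The extra bookkeeping you add about why $\m U$ covers $C_n$ is a harmless elaboration of what the paper leaves implicit.
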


\begin{lem}\label{lem:snexceptionallocus}
Let $s_n: C_{n+1}\rightarrow C_n$ be the morphism resolving $\singpointnt_n\subseteq\Sing(\bar C_\alpha)$. 
Let $p\in\singpointnt_n$. Then $p\in \Reg(C_n)$ if and only if the exceptional locus of $s_n$ is contained in $s_n^{-1}(\singpointnt_n\setminus \{p\})$. In that case, $\bar C_\gamma$ is regular for all $\gamma\in\Sigma_p$.
\proof
If $p\in \Reg(C_n)$ then $s_p:C_p\rightarrow C_{\alpha,p}$ is an isomorphism by Lemma \ref{lem:spisomorphism}.
Then the exceptional locus of $s_n$ is contained in $s_n^{-1}(\singpointnt_n\setminus \{p\})$ by Lemma \ref{lem:inverseimagesn} and Theorem \ref{thm:sndefn}.

Suppose the exceptional locus of $s_n$ is contained in $s_n^{-1}(\singpointnt_n\setminus \{p\})$. In particular, there exists an open neighbourhood $U$ of $p$ such that $s_n^{-1}(U)\rightarrow U$ is an isomorphism. This implies that $s_p: C_p\rightarrow C_{\alpha,p}$ is an isomorphism by Theorem \ref{thm:sndefn} and Lemma \ref{lem:inverseimagesn}. Recall $\Sigma_p\neq\varnothing$. Let $\gamma\in\Sigma_p$ so that $\gamma=\beta\circ_{g_p}\alpha$ with $\beta\in\Z_+^2$. 
As in \S\ref{subsec:BakerInductiveStep}, write
\[C_\alpha=\Spec\frac{A_m[\tilde Y]}{(\m{F}_\alpha)+\got a_\alpha},\qquad C_\gamma=\Spec \frac{A_m[X_{m+1}^{\pm 1},Y]}{(\m{F}_\gamma)+\got a_\gamma}.\]
Consider the morphism $s_{\gamma\alpha}:C_\gamma\rightarrow C_\alpha$ induced by the ring homomorphism taking $\tilde Y\mapsto X_{m+1}^{\delta_2} Y^{\beta_2}$, where $\beta=(\beta_1,\beta_2)$ and $\delta_1,\delta_2\in\Z$ such that $\delta_1\beta_2-\delta_2\beta_1=1$. Recall that $s_{\gamma\alpha}(C_\gamma\setminus C_{0,\gamma})=\{p\}$ by Lemma \ref{lem:spstructure}. 
As $s_p$ is an isomorphism, $s_{\gamma\alpha}$ is an open immersion. 
In particular, 
\begin{equation}\label{eqn:morphismsgammaalpha}
s_{\gamma\alpha}^\#(U_{\alpha\gamma}):\O_{C_\alpha}(U_{\alpha\gamma})\rightarrow \O_{C_\gamma}(C_\gamma)
\end{equation}
is an isomorphism, where $U_{\alpha\gamma}=s_{\gamma\alpha}(C_\gamma)$. 
In fact, $U_{\alpha\gamma}=C_{\alpha,p}$.
Then $p\in U_{\alpha\gamma}$ and $\got m_{p}=(\tilde{\m{G}}_p,\tilde Y)+\got a_\alpha$ is the maximal ideal of $\O_{C_\alpha}(U_{\alpha\gamma})$ corresponding to $p$. Recall $\m{F}_{m+1}=X_{m+1}^{\delta_1} Y^{\beta_1}-\m{G}_p\in\got a_\gamma$. Then $\got m_{p}\O_{C_\gamma}(C_\gamma)\subseteq(\m{F}_\gamma,Y)+\got a_\gamma$, which implies the equality, since $\got m_{p}\O_{C_\gamma}(C_\gamma)$ has to be maximal. It follows that the ring isomorphism (\ref{eqn:morphismsgammaalpha}) induces
\[k\simeq\frac{D_\alpha}{(\bar{\m{G}}_p)}\simeq\frac{A_m[\tilde Y]_{(\tilde{\m{G}}_p,\tilde Y)+\got a_\alpha}}{(\tilde{\m{G}}_p,\tilde Y)+\got a_\alpha}\xrightarrow{\sim}\frac{A_m[X_{m+1}^{\pm 1},Y]_{(\m{F}_\gamma,Y)+\got a_\gamma}}{(\m{F}_\gamma,Y)+\got a_\gamma}\simeq \frac{D_\gamma}{(f|_\gamma)}.\]
Therefore $\bar C_\gamma=\Spec D_\gamma/(f|_{\gamma})\simeq \Spec k$, and so is regular. In particular, the point $w=s_n^{-1}(p)$ is a non-singular point of $C_{n+1}$ (Remark \ref{rem:nonsingularbarjacobian}). Thus $C_n$ is regular at $p$, as $w$ is not in the exceptional locus of $s_n$.
\endproof
\end{lem}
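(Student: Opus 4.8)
The plan is to prove the biconditional in both directions, the bridge in each case being the assertion that the morphism $s_p\colon C_p\to C_{\alpha,p}$ of \S\ref{subsec:sninductiveconstruction} is an isomorphism, and then to read off the supplementary claim about the $\bar C_\gamma$. For the \emph{only if} direction, suppose $p\in\Reg(C_n)$. Then $s_p$ is an isomorphism by Lemma~\ref{lem:spisomorphism}. Since $s_n^{-1}(C_{\alpha,p})=C_p$ by Lemma~\ref{lem:inverseimagesn} and $s_n|_{C_p}$ is $s_p$ followed by the open immersion $C_{\alpha,p}\hookrightarrow C_n$ (Theorem~\ref{thm:sndefn}), the morphism $s_n$ is an open immersion over $C_{\alpha,p}$; it is also an isomorphism over $C_n\setminus\singpointnt_n$ by Lemma~\ref{lem:inverseimagesn}. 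As $U_{\alpha,p}\subseteq C_{0,\alpha}\subseteq C_n\setminus\singpointnt_n$, these two opens cover $C_n\setminus(\singpointnt_n\setminus\{p\})$, so Remark~\ref{rem:snglueing} together with surjectivity (Lemma~\ref{lem:snsurjectivity}) shows $s_n$ restricts to an isomorphism over $C_n\setminus(\singpointnt_n\setminus\{p\})$; hence its exceptional locus lies in $s_n^{-1}(\singpointnt_n\setminus\{p\})$.

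For the \emph{if} direction, suppose the exceptional locus $E$ of $s_n$ is contained in $s_n^{-1}(\singpointnt_n\setminus\{p\})$. Then $p\notin s_n(E)$, a closed set since $s_n$ is proper (Theorem~\ref{thm:Bakercompleteness}); thus $s_n$ is an isomorphism over the open $C_n\setminus s_n(E)$, and intersecting with $C_{\alpha,p}$ gives an open $U$ with $p\in U\subseteq C_{\alpha,p}$ over which $s_n$, equivalently $s_p$ (using $s_n^{-1}(C_{\alpha,p})=C_p$), is an isomorphism. Since $U\cup U_{\alpha,p}=C_{\alpha,p}$ and $s_p$ is also an isomorphism over $U_{\alpha,p}$ by Lemma~\ref{lem:spOverC0alpha}, and being an isomorphism is local on the target, $s_p\colon C_p\to C_{\alpha,p}$ is an isomorphism. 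It remains to deduce regularity, which I do via the $\bar C_\gamma$.

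Fix $\gamma=\beta\circ_{g_p}\alpha\in\Sigma_p$, $\beta=(\beta_1,\beta_2)\in\Z_+^2$, and recall from \S\ref{subsec:BakerInductiveStep} the presentations of $C_\alpha,C_\gamma$ and that $s_{\gamma\alpha}$ is induced by $\tilde Y\mapsto X_{m+1}^{\delta_2}Y^{\beta_2}$. By Lemma~\ref{lem:spstructure}, $s_{\gamma\alpha}(C_\gamma\setminus C_{0,\gamma})\subseteq\{p\}$, a nonempty set since $\gamma\in\Sigma_p$, so $s_{\gamma\alpha}$ has image $C_{\alpha,p}$; being the restriction of the isomorphism $s_p$ to an open, it is itself an isomorphism onto $C_{\alpha,p}$. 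Hence the induced ring map carries the maximal ideal $\got m_p=(\tilde{\m G}_p,\tilde Y)+\got a_\alpha$ at $p$ to a maximal ideal of $\O_{C_\gamma}(C_\gamma)$. Using $\m F_{m+1}=X_{m+1}^{\delta_1}Y^{\beta_1}-\m G_p\in\got a_\gamma$ and $\beta_1,\beta_2>0$, the images of $\tilde{\m G}_p$ and $\tilde Y$ lie in $(\m F_\gamma,Y)+\got a_\gamma$, a proper ideal as $f|_\gamma$ is non-invertible ($\gamma\in\Sigma_p$); a maximal ideal contained in a proper ideal equals it, so $\got m_p\O_{C_\gamma}(C_\gamma)=(\m F_\gamma,Y)+\got a_\gamma$ and, quotienting, $k\simeq D_\alpha/(\bar{\m G}_p)\xrightarrow{\sim}D_\gamma/(f|_\gamma)$, i.e.\ $\bar C_\gamma\simeq\Spec k$ is regular. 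By Remark~\ref{rem:nonsingularbarjacobian} the point $w=s_n^{-1}(p)$ is then regular on $C_{n+1}$, and since $w\notin E$, $s_n$ is a local isomorphism at $w$, so $\O_{C_n,p}$ is regular and $p\in\Reg(C_n)$.

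I expect the last step to be the main obstacle: establishing the equality $\got m_p\O_{C_\gamma}(C_\gamma)=(\m F_\gamma,Y)+\got a_\gamma$ requires tracking the images of $\tilde{\m G}_p$ and $\tilde Y$ under the change of variables $I_m\oplus M_\beta$ and exploiting the exact shape $\m F_{m+1}=X_{m+1}^{\delta_1}Y^{\beta_1}-\m G_p$ of one of the generators of $\got a_\gamma$ (in particular the positivity of $\beta_1,\beta_2$) for the containment, while the hypothesis $\gamma\in\Sigma_p$ — i.e.\ non-invertibility of $f|_\gamma$ — is exactly what upgrades it to an equality.
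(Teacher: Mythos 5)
Your proposal is correct and follows essentially the same route as the paper: the forward direction via Lemma \ref{lem:spisomorphism} together with Lemma \ref{lem:inverseimagesn}/Theorem \ref{thm:sndefn}, and the converse by first upgrading the hypothesis to ``$s_p$ is an isomorphism'', then computing $\got m_p\O_{C_\gamma}(C_\gamma)=(\m F_\gamma,Y)+\got a_\gamma$ via $\m F_{m+1}=X_{m+1}^{\delta_1}Y^{\beta_1}-\m G_p$ to get $\bar C_\gamma\simeq\Spec k$, and concluding with Remark \ref{rem:nonsingularbarjacobian} and the exceptional-locus condition. The only differences are that you spell out a few steps the paper leaves implicit (the local-on-the-target gluing in the forward direction and the properness of $(\m F_\gamma,Y)+\got a_\gamma$ from $\gamma\in\Sigma_p$), which is fine.
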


\begin{prop}\label{prop:nonsingularisomorphism}
Let $s_n: C_{n+1}\rightarrow C_n$ be the morphism resolving $\singpointnt_n$.
Then $\singpointnt_n\subset \Reg(C_n)$ if and only if $s_n$ is an isomorphism.
In that case, $\bar C_\gamma$ is regular for all $\gamma\in\Sigma_{\singpointnt_n}$.
\proof
The proposition follows from Lemma \ref{lem:snexceptionallocus}.
\endproof
\end{prop}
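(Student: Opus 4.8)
The plan is to deduce the proposition directly from Lemma \ref{lem:snexceptionallocus}, which gives the per-point version of the equivalence. First I would treat the implication ``$\singpointnt_n\subseteq\Reg(C_n)\Rightarrow s_n$ an isomorphism''. Applying Lemma \ref{lem:snexceptionallocus} separately to each $p\in\singpointnt_n$ shows that the exceptional locus $E$ of $s_n$ is contained in $s_n^{-1}(\singpointnt_n\setminus\{p\})$ for \emph{every} $p\in\singpointnt_n$; intersecting these inclusions gives
\[
E\subseteq\bigcap_{p\in\singpointnt_n}s_n^{-1}(\singpointnt_n\setminus\{p\})=s_n^{-1}\Big(\bigcap_{p\in\singpointnt_n}(\singpointnt_n\setminus\{p\})\Big)=s_n^{-1}(\varnothing)=\varnothing,
\]
so $s_n$ has empty exceptional locus. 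To promote this to ``isomorphism'' I would use the open cover $\m U$ of $C_n$ from (\ref{eqn:CnOpenCover}): every member of $\m U$ other than the charts $C_{\alpha,p}$ lies in $C_n\setminus\singpointnt_n$ (the $C_{\alpha'}$ and $C_0$ miss $\singpointnt_n$ because $\singpointnt_n\subseteq\bar C_\alpha\setminus C_0$ and $C_\alpha\cap C_{\alpha'}\subseteq C_0$, while $D(\tilde{\m G}_{\singpointnt_n})$ misses $\singpointnt_n$ by construction), hence over these $s_n$ is an isomorphism by Lemma \ref{lem:inverseimagesn}; and over $C_{\alpha,p}$ we have $s_n^{-1}(C_{\alpha,p})=C_p$ with $s_n|_{C_p}=s_p$ by Theorem \ref{thm:sndefn}, which is an isomorphism by Lemma \ref{lem:spisomorphism} since $p\in\Reg(C_n)$. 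As $s_n$ is an isomorphism over each chart of a cover of $C_n$, it is an isomorphism.

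For the converse, if $s_n$ is an isomorphism its exceptional locus is empty and \emph{a fortiori} contained in $s_n^{-1}(\singpointnt_n\setminus\{p\})$ for each $p\in\singpointnt_n$; the ``if'' half of Lemma \ref{lem:snexceptionallocus} then yields $p\in\Reg(C_n)$, and since $p$ is arbitrary, $\singpointnt_n\subseteq\Reg(C_n)$. Finally, under the hypothesis $\singpointnt_n\subseteq\Reg(C_n)$ each $p\in\singpointnt_n$ is a regular point, so the last assertion of Lemma \ref{lem:snexceptionallocus} gives that $\bar C_\gamma$ is regular for all $\gamma\in\Sigma_p$; taking the union over $p\in\singpointnt_n$ and using $\Sigma_{\singpointnt_n}=\bigcup_{p\in\singpointnt_n}\Sigma_p$ from Definition \ref{defn:Sigman} gives the regularity of $\bar C_\gamma$ for all $\gamma\in\Sigma_{\singpointnt_n}$.

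There is no substantial obstacle here: the argument is essentially a bookkeeping reduction to Lemma \ref{lem:snexceptionallocus}. The only point worth a sentence is the passage from ``empty exceptional locus'' to ``isomorphism'', which I would either settle via the cover $\m U$ as above (the safe route, since the curves involved need not be connected or normal) or by invoking that a surjective proper birational morphism of curves with empty exceptional locus must be an isomorphism.
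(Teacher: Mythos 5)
Your proposal is correct and follows the same route as the paper, whose proof is simply the observation that the statement follows from Lemma \ref{lem:snexceptionallocus} applied to each $p\in\singpointnt_n$. Your extra bookkeeping (the intersection of exceptional-locus bounds, and the passage from local isomorphisms over the cover $\m U$ via Lemmas \ref{lem:inverseimagesn}, \ref{lem:spisomorphism} and Theorem \ref{thm:sndefn} to a global isomorphism) just makes explicit what the paper leaves implicit.
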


Recall from \ref{construction} that $C_{\gamma}\cap C_{\gamma'}=C_{0,\gamma}\cap C_{0,\gamma'}$ for any $\gamma,\gamma'\in\Sigma_n\sqcup\{0\}$, $\gamma\neq\gamma'$. We now want to show this fact is true for $\Sigma_{n+1}$ as well.

\begin{prop}\label{prop:disjointbarcharts}
%Let $s_n:C_{n+1}\rightarrow C_n$ be the morphism resolving $\singpointnt_n\subseteq\Sing(\bar C_\alpha)$, for $\alpha\in\Sigma_n$.
For any $\gamma,\gamma'\in\Sigma_{n+1}\sqcup\{0\}$, if $\gamma\neq\gamma'$, then \[C_\gamma\cap C_{\gamma'}=C_{0,\gamma}\cap C_{0,\gamma'}.\]
\proof
Let $\gamma,\gamma'\in\Sigma_{n+1}\sqcup\{0\}$, $\gamma\neq\gamma'$. Recall $s_n^{-1}(C_0)=C_0$ and that $s_n$ restricts to the identity $C_0\rightarrow C_0$. Hence it suffices to show that 
\[s_n(C_\gamma)\cap s_n(C_{\gamma'})= s_n(C_{0,\gamma})\cap s_n(C_{0,\gamma'}).\]

Consider the open $D(\tilde{\m G}_{\singpointnt_n})\subseteq C_\alpha$ and let $U_{\alpha,\singpointnt_n}=D(\tilde{\m G}_{\singpointnt_n})\cap C_{0,\alpha}$. If both $\gamma$ and $\gamma'$ belong to $\tilde\Sigma_n\sqcup\{0\}$, we can conclude by the hypothesis on $C_n$ (see \ref{construction}), since $s_{n}(C_{\tilde\alpha})=D(\tilde{\m G}_{\singpointnt_n})$ and $s_n(C_{0,\tilde\alpha})=U_{\alpha,\singpointnt_n}$ by Lemma \ref{lem:Ctildealpha}.
Then assume $\gamma\in\Sigma_p$ for some $p\in \singpointnt_n$. Lemma \ref{lem:spstructure} shows that $s_n(C_\gamma)=C_{\alpha,p}$ and $s_n(C_{0,\gamma})=U_{\alpha,p}$. 
If $\gamma'\in\Sigma_p$ as well, then $C_{0,\gamma}=C_{\gamma}\cap C_{\gamma'}=C_{0,\gamma'}$ from Remark \ref{rem:Toricschemedisjointcharts}. 
If $\gamma'\in\Sigma_{q}$ for some $q\in\singpointnt_n$, $q\neq p$, then \[s_n(C_\gamma)\cap s_n(C_{\gamma'})=C_{\alpha,p}\cap C_{\alpha,q}=U_{\alpha,p}\cap U_{\alpha,q}=s_n(C_{0,\gamma})\cap s_n(C_{0,\gamma'}).\] 
If $\gamma'=\tilde\alpha$, then \[s_n(C_\gamma)\cap s_n(C_{\tilde\alpha})=C_{\alpha,p}\cap D(\tilde{\m G}_{\singpointnt_n})=U_{\alpha,p}\cap U_{\alpha,\singpointnt_n}=s_n(C_{0,\gamma})\cap s_n(C_{0,\tilde\alpha}).\]
Finally, suppose $\gamma'\in\hat\Sigma_n\sqcup\{0\}$. Note that $U_{\alpha,p}=C_{\alpha,p}\cap C_{0,\alpha}$. Then 
\[s_n(C_\gamma)\cap s_n(C_{\gamma'})=C_{\alpha,p}\cap C_{\gamma'}=U_{\alpha,p}\cap C_{0,\gamma'}=s_n(C_{0,\gamma})\cap s_n(C_{0,\gamma'}),\] 
as $C_\alpha\cap C_{\gamma'}=C_{0,\alpha}\cap C_{0,\gamma'}$ from the inductive hypothesis on $C_n$.
\endproof
\end{prop}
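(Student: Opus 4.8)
The plan is to deduce the statement for $C_{n+1}$ from the corresponding property of $C_n$ recorded in \ref{construction} --- that $C_{\alpha'}\cap C_{\alpha''}=C_{0,\alpha'}\cap C_{0,\alpha''}$ for distinct $\alpha',\alpha''\in\Sigma_n\sqcup\{0\}$ (the base case $n=1$ being Remark \ref{rem:disjointchartsclassical}) --- by pushing intersections forward along the morphism $s_n\colon C_{n+1}\to C_n$ of Theorem \ref{thm:sndefn}. Since the inclusion $C_\gamma\cap C_{\gamma'}\supseteq C_{0,\gamma}\cap C_{0,\gamma'}$ is built into the gluing, only the reverse inclusion is at stake.

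\emph{Reduction step.} I would first observe that for every $\gamma\in\Sigma_{n+1}$ the open $C_{0,\gamma}$ lies inside the chart $C_0$ of $C_{n+1}$ (it embeds into $U_{\gamma 0}$, which is glued to an open of $C_0$), that $s_n$ restricts to the identity on $C_0$, and that $s_n^{-1}(C_0)=C_0$ --- the last being a quick consequence of Lemma \ref{lem:inverseimagesn}, using $\singpointnt_n\cap C_0=\varnothing$ together with the facts that $s_n|_{\hat C_n}$ is the identity and $s_n|_{C_{\tilde\alpha}}$ an isomorphism onto $D(\tilde{\m G}_{\singpointnt_n})$. Granting these, it suffices to prove $s_n(C_\gamma)\cap s_n(C_{\gamma'})=s_n(C_{0,\gamma})\cap s_n(C_{0,\gamma'})$: a point $x\in C_\gamma\cap C_{\gamma'}$ then has $s_n(x)\in s_n(C_{0,\gamma})\subseteq C_0$, hence $x\in s_n^{-1}(C_0)=C_0$, hence $x=s_n(x)\in s_n(C_{0,\gamma})\cap s_n(C_{0,\gamma'})=C_{0,\gamma}\cap C_{0,\gamma'}$.

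\emph{Case analysis for the images.} Here I would use the explicit descriptions already available: $s_n(C_{\alpha'})=C_{\alpha'}$ for $\alpha'\in\hat\Sigma_n\sqcup\{0\}$; $s_n(C_{\tilde\alpha})=D(\tilde{\m G}_{\singpointnt_n})\subseteq C_\alpha$ with $s_n(C_{0,\tilde\alpha})=U_{\alpha,\singpointnt_n}$ (Lemma \ref{lem:Ctildealpha}); and $s_n(C_\gamma)=C_{\alpha,p}=U_{\alpha,p}\cup\{p\}$ with $s_n(C_{0,\gamma})=U_{\alpha,p}$ for $\gamma\in\Sigma_p$ (Lemma \ref{lem:spstructure}). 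If both $\gamma,\gamma'$ avoid $\Sigma_{\singpointnt_n}$, the required equality reduces, after intersecting inside $C_\alpha$, to the inductive hypothesis. If $\gamma,\gamma'\in\Sigma_p$ for the same $p$, Remark \ref{rem:Toricschemedisjointcharts} gives $C_\gamma\cap C_{\gamma'}=C_{0,\gamma}=C_{0,\alpha_p}=C_{0,\gamma'}$ outright. In every remaining mixed case one intersects $C_{\alpha,p}$ with $C_{\alpha,q}$ ($q\neq p$), with $D(\tilde{\m G}_{\singpointnt_n})$, or with a chart $C_{\gamma'}\subseteq C_n$: the boundary point $p$ lies outside $C_{0,\alpha}$ and hence outside $U_{\alpha,q}$, outside $D(\tilde{\m G}_{\singpointnt_n})$ (as $\bar{\m G}_p\mid\tilde{\m G}_{\singpointnt_n}$), and outside $C_{\gamma'}$ (since $C_\alpha\cap C_{\gamma'}=C_{0,\alpha}\cap C_{0,\gamma'}$ by induction and $p\notin C_{0,\alpha}$), so it drops out; the intersection then sits inside $C_{0,\alpha}$, where the inductive hypothesis on $C_\alpha\cap C_{\gamma'}$, or the plain set-disjointness of the opens $U_{\alpha,p}$, finishes the computation.

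I expect the genuine content to be the \emph{reduction step} rather than the case analysis: one must be confident that passing to images under the non-injective $s_n$ loses no information. It does not, precisely because $s_n$ is an isomorphism over $C_0$ while, over $C_n\setminus C_0$, it contracts the boundary $\bar C_\gamma$ of each $\gamma\in\Sigma_p$ onto the \emph{single} point $p$ and contracts $\bar C_{\tilde\alpha}$ onto $\bar C_\alpha\setminus\singpointnt_n$ --- loci pairwise disjoint for distinct $p$ and for $\tilde\alpha$ --- so two boundary points belonging to different new charts are never identified by $s_n$. Everything else is careful bookkeeping of the three families of charts making up $\Sigma_{n+1}$.
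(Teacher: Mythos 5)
Your proposal is correct and follows essentially the same route as the paper: reduce the statement to the equality of images $s_n(C_\gamma)\cap s_n(C_{\gamma'})=s_n(C_{0,\gamma})\cap s_n(C_{0,\gamma'})$ using $s_n^{-1}(C_0)=C_0$ and $s_n|_{C_0}=\mathrm{id}$, then run the same case analysis via Lemma \ref{lem:Ctildealpha}, Lemma \ref{lem:spstructure}, Remark \ref{rem:Toricschemedisjointcharts} and the inductive hypothesis of \ref{construction}. The only difference is that you spell out the reduction step (which the paper leaves as a one-line "hence it suffices") and justify $s_n^{-1}(C_0)=C_0$ explicitly via Lemma \ref{lem:inverseimagesn}, which is a fair expansion rather than a new idea.
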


% \begin{rem}\label{rem:Sigmastardisjointunion}
% Let $p\in\singpointnt_n$ and let $\gamma\in\Sigma_p$ such that $C_\gamma\not\subseteq C_0$. In the proof of Proposition \ref{prop:disjointbarcharts} we showed that $\gamma\notin\tilde\Sigma_n$ and $\gamma\notin\Sigma_q$ for any $q\in\singpointnt_n$, $q\neq p$. Similarly, if $C_{\tilde\alpha}\not\subseteq C_0$, then $\tilde\alpha\notin\hat\Sigma_n$.
% \end{rem}

\section{A generalised Baker's model}\label{sec:GenBakersthm}

Let $k$ be an algebraically closed field. Let $f\in k[x_1^{\pm 1},y^{\pm 1}]$ be a Laurent polynomial defining a smooth curve $C_0:f=0$ over $\G_m^2$. Let $\Delta$ be the Newton polygon of $f$ and let $C_1$ be the completion of $C_0$ with respect to $\Delta$.

\begin{defn}\label{defn:simpleBR}
Let $C_0$ and $C_1$ as above. A \textit{simple Baker's resolution of $C_0$} is a sequence of proper birational morphisms of $k$-schemes
\begin{equation}\label{eqn:simpleBR}
    \dots\stackrel{s_{n+1}}{\twoheadrightarrow}C_{n+1}\stackrel{s_n}{\twoheadrightarrow}C_{n}\stackrel{s_{n-1}}{\twoheadrightarrow}\dots\stackrel{s_1}{\twoheadrightarrow}C_1
\end{equation}
where the curves $C_n/k$ are constructed from subsets $\Sigma_n\subset\Omega$ as described in \ref{construction} and the maps $s_n$ are the morphisms resolving sets $\singpointnt_n\subseteq\Sing(\bar C_\alpha)$, for some $\alpha\in\Sigma_n$. 
\end{defn}

We have showed how to construct simple Baker's resolutions of $C_0$ recursively for any choice of sets $\singpointnt_n\subseteq\Sing(\bar C_\alpha)$, $\alpha\in\Sigma_n$. 
We want
to prove that for any simple Baker's resolution of $C_0$, the sets $\singpointnt_n$ are eventually empty. Thus simple Baker's resolutions can be used to desingularise $C_1$.

\begin{defn}\label{defn:GeneralisedBakerModelAlgebraicallyClosed}
Recall $k$ is supposed algebraically closed. Let $C/k$ be a smooth projective curve.
A smooth curve $\tilde C/k$ is a \textit{generalised Baker's model} of $C$ if there exist a smooth curve $\tilde C_0\subset\G_m^2$, birational to $C$, and a simple Baker's resolution 
\[\dots\stackrel{s_{n+1}}{\twoheadrightarrow}C_{n+1}\stackrel{s_n}{\twoheadrightarrow}C_{n}\stackrel{s_{n-1}}{\twoheadrightarrow}\dots\stackrel{s_1}{\twoheadrightarrow}C_1\]
of $\tilde C_0$ so that $\tilde C=C_n$ for some $n\in\Z_+$. In this case we say that $\tilde C$ is a generalised Baker's model of $C$ \textit{with respect to $\tilde C_0$}. Note that $\tilde C$ is a \textit{model of $C$ over $k$}, i.e.\ $\tilde C\simeq C$, by Lemma \ref{lem:birationalequalinclusion}.
\end{defn}

For the remainder of the section we fix a simple Baker's resolution of $C_0$
\[\dots\stackrel{s_{n+1}}{\twoheadrightarrow}C_{n+1}\stackrel{s_n}{\twoheadrightarrow}C_{n}\stackrel{s_{n-1}}{\twoheadrightarrow}\dots\stackrel{s_1}{\twoheadrightarrow}C_1\]
where the maps $s_n$ are the morphisms resolving $\singpointnt_n\subseteq\Sing(\bar C_\alpha)$, $\alpha\in\Sigma_n$.

\begin{thm}\label{thm:eventuallyregular}
%Let $f\in k[x^{\pm 1},y^{\pm 1}]$ and $C_0:f=0$ a smooth curve over $\G_m^2$. 
% Consider a simple Baker's resolution of $C_0$
% \[\dots\stackrel{s_{n+1}}{\twoheadrightarrow}C_{n+1}\stackrel{s_n}{\twoheadrightarrow}C_{n}\stackrel{s_{n-1}}{\twoheadrightarrow}\dots\stackrel{s_1}{\twoheadrightarrow}C_1\]
% where the maps $s_n$ are the morphisms resolving $\singpointnt_n\subseteq\Sing(\bar C_\alpha)$, for $\alpha\in\Sigma_n$. Then there 
There exists $h\in\Z_+$ such that $\singpointnt_n\subset\Reg(C_n)$ for all $n\geq h$.
\proof
Let $n\in\Z_+$ and consider $s_{n}:C_{n+1}\twoheadrightarrow C_{n}$ resolving $\singpointnt_n$. As birational morphism between projective curves, $s_n$ is finite (\cite[Lemma 7.3.10]{Liu}).
By Theorem \ref{thm:Bakercompleteness} we have an exact sequence of sheaves
\[0\rightarrow \O_{C_{n}}\rightarrow s_{n}^\ast \O_{C_{n+1}} \rightarrow \m S_n\rightarrow 0,\]
where $\m S_n$ is a skyscraper sheaf with support contained in $\singpointnt_n$. Denote the arithmetic genus of a curve $X/k$ by $p_a(X)$. Then we get
\begin{equation}\label{eqn:arithgenus}
p_a(C_{n+1})=p_a(C_{n})-\dim_k H^0(C_{n},\m S_n).
\end{equation}
Let $r$ be the number of irreducible components of $C_0$. 
For any $n$, recall there is a natural open immersion $C_0\hookrightarrow C_n$ with dense image. Therefore the curve $C_n$ is reduced and has $r$ irreducible components $X_1,\dots,X_r$. Let $i=1,\dots,r$ and let $X_i'$ be the normalisation of $X_i$. Then $H^0(X_i', \O_{X_i'})=k$ as $k$ is algebraically closed (\cite[Corollary 3.3.21]{Liu}). Hence $p_a(X_i')\geq 0$. Therefore $p_a(C_n)\geq 1-r$ by \cite[Proposition 7.5.4]{Liu}. It follows from (\ref{eqn:arithgenus}) that $(p_a(C_{n}))_{n\in\Z_+}$ is a decreasing sequence in $\Z$ bounded below by $1-r$. Hence it is eventually constant, i.e.\ there exists $h\in\Z_+$ such that $p_a(C_{n+1})=p_a(C_n)$ for all $n\geq h$. From (\ref{eqn:arithgenus}), we have $H^0(C_{n},\m S_n)=0$, that implies $\m S_n=0$, as it is a skyscraper sheaf. Hence $\O_{C_n}\simeq s_{n}^\ast \O_{C_{n+1}}$. It follows that $s_n$ is an isomorphism since it is affine. Thus Lemma \ref{prop:nonsingularisomorphism} shows that $\singpointnt_n\subset\Reg(C_n)$ for any $n\geq h$.
\endproof
\end{thm}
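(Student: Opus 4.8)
The plan is to bound the arithmetic genera $p_a(C_n)$ from below and exploit the fact that each morphism $s_n$ in a simple Baker's resolution can only decrease $p_a$, forcing the resolution to stabilise. The key input is Theorem~\ref{thm:Bakercompleteness}, which tells us $s_n:C_{n+1}\twoheadrightarrow C_n$ is a surjective proper birational morphism of projective curves with exceptional locus contained in the finite set $s_n^{-1}(\singpointnt_n)$; by \cite[Lemma~7.3.10]{Liu} such a morphism is finite. Since $s_n$ is finite and birational, pushing forward the structure sheaf gives a short exact sequence $0\to\O_{C_n}\to s_{n\ast}\O_{C_{n+1}}\to\m S_n\to 0$ with $\m S_n$ a skyscraper sheaf supported on $\singpointnt_n\cap\Sing(C_n)$ (the exceptional locus). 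Taking Euler characteristics and using that $s_n$ is affine so $H^i$ of the pushforward agrees with $H^i$ on $C_{n+1}$, we obtain $p_a(C_{n+1})=p_a(C_n)-\dim_k H^0(C_n,\m S_n)\le p_a(C_n)$, with equality if and only if $\m S_n=0$, i.e.\ $s_n$ is an isomorphism.

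Next I would establish a lower bound for $p_a(C_n)$ independent of $n$. For every $n$ there is a dense open immersion $C_0\hookrightarrow C_n$ (this is how the $C_n$ are built in Construction~\ref{construction}), so $C_n$ is reduced with the same number $r$ of irreducible components as $C_0$, say $X_1,\dots,X_r$. Normalising each component, $H^0(X_i',\O_{X_i'})=k$ since $k=\bar k$ (\cite[Corollary~3.3.21]{Liu}), hence $p_a(X_i')=1-\dim_k H^1(X_i',\O_{X_i'})\le 1$, equivalently each $X_i'$ has non-negative genus. Combining the components via \cite[Proposition~7.5.4]{Liu} yields $p_a(C_n)\ge 1-r$ for all $n$. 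Therefore $(p_a(C_n))_{n\ge 1}$ is a non-increasing sequence of integers bounded below, so it is eventually constant: there exists $h\in\Z_+$ with $p_a(C_{n+1})=p_a(C_n)$ for all $n\ge h$.

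For $n\ge h$ the equality in the genus formula forces $H^0(C_n,\m S_n)=0$, and a skyscraper sheaf with no global sections is zero, so $\O_{C_n}\simeq s_{n\ast}\O_{C_{n+1}}$; since $s_n$ is affine this makes $s_n$ an isomorphism. Finally, by Proposition~\ref{prop:nonsingularisomorphism}, $s_n$ being an isomorphism is equivalent to $\singpointnt_n\subset\Reg(C_n)$. Hence $\singpointnt_n\subset\Reg(C_n)$ for all $n\ge h$, as claimed.

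The main obstacle I anticipate is purely bookkeeping rather than conceptual: making sure the short exact sequence and the resulting genus identity are set up correctly when $C_n$ is possibly disconnected and non-normal, and in particular getting the uniform lower bound $1-r$ right for reducible curves (this is exactly where \cite[Proposition~7.5.4]{Liu} on the arithmetic genus of a curve with several components, together with the normalisation bound, is needed). Everything else — finiteness of $s_n$, the skyscraper structure of $\m S_n$, affineness giving the isomorphism from $\O_{C_n}\simeq s_{n\ast}\O_{C_{n+1}}$ — is a direct appeal to results already in place (Theorems~\ref{thm:Bakercompleteness} and \ref{prop:nonsingularisomorphism}).
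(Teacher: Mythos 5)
Your proof is correct and takes essentially the same route as the paper: the genus-drop formula coming from the finite birational morphism $s_n$, the uniform lower bound $p_a(C_n)\ge 1-r$ via the normalised irreducible components, stabilisation of the genus sequence, vanishing of the skyscraper sheaf, and then Proposition \ref{prop:nonsingularisomorphism}. One cosmetic slip: since $H^0(X_i',\O_{X_i'})=k$, the identity should read $p_a(X_i')=\dim_k H^1(X_i',\O_{X_i'})\ge 0$ rather than $1-\dim_k H^1$, but the bound you actually use is the correct one.
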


\begin{rem}\label{rem:singsetforregularity}
Let $n\in\Z_+$. In Remark \ref{rem:nonsingularbarjacobian} we noticed that any singular point of $C_n$ is the image of a point in $\Sing(\bar C_\alpha)$ under the immersion $\bar C_\alpha\hookrightarrow C_n$, for some $\alpha\in\Sigma_n$. Therefore if $C_n$ is singular, we can choose $\singpointnt_n\subseteq \Sing(\bar C_\alpha)$, $\alpha\in\Sigma_n$, such that $\singpointnt_n\cap\Sing(C_n)\neq\varnothing$.
\end{rem}

\begin{thm}\label{thm:Sequenceregularity}
% Consider a simple Baker's resolution of $C_0$
% % \[\dots\stackrel{s_{n+1}}{\twoheadrightarrow}C_{n+1}\stackrel{s_n}{\twoheadrightarrow}C_{n}\stackrel{s_{n-1}}{\twoheadrightarrow}\dots\stackrel{s_1}{\twoheadrightarrow}C_1\]
% %described in \S \ref{sec:Bakerdescription} and constructed in \S\ref{sec:Bakerconstruction} 
% as in Theorem \ref{thm:eventuallyregular}. 
Let $N=\{n\in\Z_+\mid C_n \text{ is singular}\}$. Suppose $\singpointnt_n\cap\Sing(C_n)\neq\varnothing$  for all $n\in N$. Then $N$ is finite. In other words, there exists $h\in\Z_+$ so that $C_n$ is regular for all $n\geq h$. In particular, for any $n\geq h$, the curve $C_n$ is a generalised Baker's model of the smooth completion of $C_0$.
\proof
The result follows from Theorem \ref{thm:eventuallyregular}. 
%The curve $C_n$ is isomorphic to the smooth completion of $C_0$ by Theorem \ref{thm:Bakercompleteness}.
\endproof
\end{thm}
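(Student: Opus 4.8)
The plan is to reduce the statement entirely to Theorem~\ref{thm:eventuallyregular}, which has already been established. First I would apply that theorem to the fixed simple Baker's resolution to obtain an integer $h\in\Z_+$ with $\singpointnt_n\subset\Reg(C_n)$ for every $n\geq h$. The key (and essentially only) observation is that the hypothesis of the present theorem, namely $\singpointnt_n\cap\Sing(C_n)\neq\varnothing$ whenever $n\in N$, is precisely the negation of the condition $\singpointnt_n\subseteq\Reg(C_n)$. Hence no index $n\geq h$ can belong to $N$, so $N\subseteq\{1,\dots,h-1\}$ is finite and $C_n$ is regular for all $n\geq h$.

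For the final clause I would simply unwind the relevant definitions. Fix $n\geq h$. The sequence we fixed is a simple Baker's resolution of $C_0$ by construction (Definition~\ref{defn:simpleBR}), and each $C_n$ is a projective curve by Construction~\ref{construction} together with Theorem~\ref{thm:Bakercompleteness}, carrying a dense open immersion $C_0\hookrightarrow C_n$; therefore $C_n$ is birational to $C_0$, hence to the smooth completion $C$ of $C_0$. Since $C_n$ is now smooth (equivalently regular, as $k$ is perfect) and projective, Definition~\ref{defn:GeneralisedBakerModelAlgebraicallyClosed} applies verbatim: $C_n$ is a generalised Baker's model of $C$ with respect to $C_0$, and $C_n\simeq C$ by the uniqueness of the smooth completion (Lemma~\ref{lem:birationalequalinclusion}).

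I do not expect any genuine obstacle here: the real content — the monotonicity of the sequence $(p_a(C_n))_n$, its lower bound $1-r$ in terms of the number $r$ of irreducible components of $C_0$, and the deduction that $s_n$ is eventually an isomorphism — is internal to Theorem~\ref{thm:eventuallyregular}. The only point requiring a moment's care is that Theorem~\ref{thm:eventuallyregular} is stated for the \emph{same} fixed resolution and the \emph{same} choices of $\singpointnt_n\subseteq\Sing(\bar C_\alpha)$ used here, so no re-indexing or re-selection is needed to align the two statements. It is also worth recording, via Remark~\ref{rem:singsetforregularity}, that whenever $C_n$ is singular one may legitimately choose $\singpointnt_n$ to meet $\Sing(C_n)$, so the hypothesis is never a vacuous constraint and the desingularisation procedure genuinely terminates at a regular model.
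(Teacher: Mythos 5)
Your proposal is correct and follows exactly the route of the paper, whose proof is simply the citation of Theorem~\ref{thm:eventuallyregular}: you take the $h$ with $\singpointnt_n\subset\Reg(C_n)$ for $n\geq h$, note the hypothesis forces $n\notin N$ for such $n$, and unwind Definition~\ref{defn:GeneralisedBakerModelAlgebraicallyClosed} for the final clause. No gaps; your filled-in details (including the remark that the hypothesis is the negation of $\singpointnt_n\subseteq\Reg(C_n)$) are precisely what the paper leaves implicit.
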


\begin{rem}
The arithmetic genus of the curve $C_1$ is $p_a(C_1)=|\Delta(\Z)|$, where $|\Delta(\Z)|$ is the number of internal integer points of the Newton polygon of $C_0$ (\cite[Remark 2.6(d)]{Dok}). Therefore it can be explicitly computed. Equation (\ref{eqn:arithgenus}) gives a recursive way to calculate the arithmetic genus of the following curves $C_n$. 

By choosing the sets $\singpointnt_n$ as in Theorem \ref{thm:Sequenceregularity}, we would eventually compute the genus $g$ of the smooth completion of $C_0$.
Furthermore, if $h\in\Z_+$ is as in Theorem \ref{thm:Sequenceregularity} then $g\leq |\Delta(\Z)|-h$. Hence the number of steps needed to desingularise $C_1$ via a simple Baker's resolution is $\leq|\Delta(\Z)|$.
\end{rem}

\begin{lem}\label{lem:star}
For any $n\in\Z_+$, 
\[C_n\setminus C_0=\bigsqcup_{\gamma\in\Sigma_n} C_\gamma\setminus C_{0,\gamma}=\bigsqcup_{\gamma\in\Sigma_n}\bar C_\gamma.\]
\proof
From \ref{construction}, for any $\gamma,\gamma'\in\Sigma_n\sqcup\{0\}$, one has $C_{\gamma}\cap C_{\gamma'}=C_{0,\gamma}\cap C_{0,\gamma'}$. This implies that if $\gamma\in\Sigma_n$ then $C_{\gamma}\cap C_0=C_{0,\gamma}$ and $C_\gamma\cap C_{\gamma'}\subseteq C_0$ for every $\gamma'\in\Sigma_n$, $\gamma'\neq\gamma$. The lemma follows.
\endproof
\end{lem}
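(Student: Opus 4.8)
The plan is to read the statement off the description of $C_n$ given in \ref{construction}, namely $C_n=\bigcup_{\gamma\in\Sigma_n}C_\gamma\cup C_0$, together with the intersection property recorded there and established inductively in Proposition \ref{prop:disjointbarcharts}: for all $\gamma,\gamma'\in\Sigma_n\sqcup\{0\}$ with $\gamma\neq\gamma'$ one has $C_\gamma\cap C_{\gamma'}=C_{0,\gamma}\cap C_{0,\gamma'}$.

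First I would prove the leftmost equality. Removing $C_0$ from the cover gives $C_n\setminus C_0=\bigcup_{\gamma\in\Sigma_n}(C_\gamma\setminus C_0)$, so it suffices to identify $C_\gamma\setminus C_0$ for each $\gamma\in\Sigma_n$. Applying the intersection property with $\gamma'=0$ (recall $C_{0,0}=C_0$) yields $C_\gamma\cap C_0=C_{0,\gamma}\cap C_0$; since $\gamma\in\Omega$, the canonical map $C_{0,\gamma}\hookrightarrow C_0$ is an open immersion, so $C_{0,\gamma}\cap C_0=C_{0,\gamma}$, and hence $C_\gamma\setminus C_0=C_\gamma\setminus C_{0,\gamma}$. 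By the last paragraph of \ref{construction}, $C_\gamma\setminus C_{0,\gamma}$ equals $\bar C_\gamma$ as a set. This gives $C_n\setminus C_0=\bigcup_{\gamma\in\Sigma_n}(C_\gamma\setminus C_{0,\gamma})=\bigcup_{\gamma\in\Sigma_n}\bar C_\gamma$.

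It remains to upgrade the unions to disjoint unions. For distinct $\gamma,\gamma'\in\Sigma_n$ I would compute $(C_\gamma\setminus C_{0,\gamma})\cap(C_{\gamma'}\setminus C_{0,\gamma'})\subseteq C_\gamma\cap C_{\gamma'}=C_{0,\gamma}\cap C_{0,\gamma'}\subseteq C_{0,\gamma}$, and the left-hand side is disjoint from $C_{0,\gamma}$ by construction, so the intersection is empty. Combining with the previous paragraph finishes the proof.

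I do not expect any genuine obstacle here: the argument is pure bookkeeping over \ref{construction} and Proposition \ref{prop:disjointbarcharts}. The only point deserving a moment's care is the reduction $C_\gamma\cap C_0=C_{0,\gamma}$, which relies both on the minimality of the glueing opens in the definition of $C_n$ and on the fact that, by definition of $\Omega$, $C_{0,\gamma}$ sits inside $C_0$ as a (dense) open subscheme via the canonical immersion.
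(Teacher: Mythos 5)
Your proposal is correct and follows essentially the same route as the paper: both arguments read off the intersection property $C_\gamma\cap C_{\gamma'}=C_{0,\gamma}\cap C_{0,\gamma'}$ from \ref{construction}, deduce $C_\gamma\cap C_0=C_{0,\gamma}$ and the disjointness of the boundary pieces, and identify $C_\gamma\setminus C_{0,\gamma}$ with $\bar C_\gamma$ as a set. Your version merely spells out the bookkeeping (and the reference to Proposition \ref{prop:disjointbarcharts}) that the paper leaves implicit.
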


\begin{thm}\label{thm:eventually_pn_in_C0}
% Consider a simple Baker's resolution of $C_0$
% % \[\dots\stackrel{s_{n+1}}{\twoheadrightarrow}C_{n+1}\stackrel{s_n}{\twoheadrightarrow}C_{n}\stackrel{s_{n-1}}{\twoheadrightarrow}\dots\stackrel{s_1}{\twoheadrightarrow}C_1\]
% %described in \S \ref{sec:Bakerdescription} and constructed in \S\ref{sec:Bakerconstruction} 
% as in Theorem \ref{thm:eventuallyregular}. 
% Then there 
There exists $h\in\Z_+$ such that $\singpointnt_n=\varnothing$ for all $n\geq h$.
\proof
By Theorem \ref{thm:eventuallyregular} there exists $h'\in\Z_+$ such that $\singpointnt_n\subset\Reg(C_n)$ for all $n\geq h'$. Let $n\geq h'$. For any $\Gamma\subseteq\Sigma_n$ let $N(\Gamma)$ be the number of points of $C_n\setminus C_0$ which are singular on $\bar C_\gamma$ for some $\gamma\in\Gamma$. Note that by Lemma \ref{lem:star}, one has $N(\Gamma)=\sum_{\gamma\in\Gamma}N(\{\gamma\})$.

Let $\alpha\in\Sigma_n$ such that $\singpointnt_n\subseteq\Sing(\bar C_\alpha)$. Since $C_{\tilde\alpha}$ embeds in $C_\alpha$ via $s_{n}$ and $\singpointnt_n=\bar C_{\alpha}\setminus s_{n}(\bar C_{\tilde\alpha})$ by Lemma \ref{lem:Ctildealpha}, we have $N(\tilde\Sigma_n)=N(\Sigma_n)-|\singpointnt_n|$. 
On the other hand, $N(\Sigma_{\singpointnt_n})=0$ by Proposition \ref{prop:nonsingularisomorphism}, as $\singpointnt_n\subset\Reg(C_n)$. Hence
\[N(\Sigma_{n+1})=N(\Sigma_{\singpointnt_n})+N(\tilde\Sigma_n)=N(\Sigma_n)-|\singpointnt_n|\leq N(\Sigma_n).\]

Then $N(\Sigma_n)_{n\geq h'}$ forms a decreasing sequence bounded below by $0$. Thus it is eventually constant, i.e.\ there exists $h\in\Z_+$ such that $N(\Sigma_{n+1})=N(\Sigma_{n})$ for all $n\geq h$.
But we saw above that this happens only if $\singpointnt_n=\varnothing$.
\endproof
\end{thm}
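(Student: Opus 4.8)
The plan is to introduce a monovariant counting the singular points of the charts $\bar C_\gamma$ and to show it drops by $|\singpointnt_n|$ at each stage, once we are past the threshold furnished by Theorem \ref{thm:eventuallyregular}. So first I would invoke Theorem \ref{thm:eventuallyregular} to fix $h'\in\Z_+$ with $\singpointnt_n\subset\Reg(C_n)$ for all $n\geq h'$, and work from stage $h'$ onwards, where Proposition \ref{prop:nonsingularisomorphism} is available. For $n\geq h'$ I set $N_n=\sum_{\gamma\in\Sigma_n}|\Sing(\bar C_\gamma)|$. By Lemma \ref{lem:star} the schemes $\bar C_\gamma$ with $\gamma\in\Sigma_n$ partition $C_n\setminus C_0$, so $N_n$ is genuinely the number of points of $C_n\setminus C_0$ which are singular on the (unique) chart $\bar C_\gamma$ containing them; in particular $N_n\in\N$.

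The heart of the argument is the computation $N_{n+1}=N_n-|\singpointnt_n|$. Write $\alpha\in\Sigma_n$ for the chart with $\singpointnt_n\subseteq\Sing(\bar C_\alpha)$, so that $\Sigma_{n+1}=\Sigma_{\singpointnt_n}\cup\hat\Sigma_n\cup\{\tilde\alpha\}$ with $\hat\Sigma_n=\Sigma_n\setminus\{\alpha\}$. For $\gamma\in\hat\Sigma_n$ the chart $\bar C_\gamma$ is literally unchanged, so it contributes the same to $N_{n+1}$ as to $N_n$. By Lemma \ref{lem:Ctildealpha} one has $\bar C_{\tilde\alpha}\simeq\bar C_\alpha\setminus\singpointnt_n$; since $\bar C_\alpha$ is finite and $\singpointnt_n\subseteq\Sing(\bar C_\alpha)$, this replaces the summand $|\Sing(\bar C_\alpha)|$ by $|\Sing(\bar C_\alpha)|-|\singpointnt_n|$. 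Finally, since $\singpointnt_n\subset\Reg(C_n)$, Proposition \ref{prop:nonsingularisomorphism} shows that $\bar C_\gamma$ is regular for every $\gamma\in\Sigma_{\singpointnt_n}$, so the newly created charts contribute nothing. Summing over $\Sigma_{n+1}$ gives $N_{n+1}=N_n-|\singpointnt_n|$.

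Consequently $(N_n)_{n\geq h'}$ is a non-increasing sequence of non-negative integers, hence eventually constant: there is $h\geq h'$ with $N_{n+1}=N_n$ for all $n\geq h$, and for such $n$ the relation $N_{n+1}=N_n-|\singpointnt_n|$ forces $\singpointnt_n=\varnothing$, which is the claim. I expect no deep obstacle here; the only points demanding care are that the $\bar C_\gamma$ really are pairwise disjoint inside $C_n\setminus C_0$ (Lemma \ref{lem:star}, resting on Proposition \ref{prop:disjointbarcharts}), so that the count $N_n$ is additive over $\Sigma_n$, and that the charts $\bar C_\gamma$ for $\gamma\in\Sigma_{\singpointnt_n}$ are genuinely nonsingular — which is exactly where the hypothesis $\singpointnt_n\subset\Reg(C_n)$, available only for large $n$ by Theorem \ref{thm:eventuallyregular}, enters through Proposition \ref{prop:nonsingularisomorphism}.
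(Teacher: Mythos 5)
Your proposal is correct and follows essentially the same argument as the paper: after the threshold from Theorem \ref{thm:eventuallyregular}, you track the count of singular points on the charts $\bar C_\gamma$ (which, by Lemma \ref{lem:star}, equals the paper's quantity $N(\Sigma_n)$), show it drops by exactly $|\singpointnt_n|$ using Lemma \ref{lem:Ctildealpha} for $\tilde\alpha$ and Proposition \ref{prop:nonsingularisomorphism} for the new charts in $\Sigma_{\singpointnt_n}$, and conclude by monotonicity of a non-negative integer sequence. The only difference is notational (defining the counter as a sum over charts rather than as a point count on $C_n\setminus C_0$), which Lemma \ref{lem:star} makes equivalent.
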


\begin{defn}\label{defn:outerregular}
% Recall $k$ is supposed algebraically closed. Let $C_0\subset\G_m^2$ be a smooth curve and consider a simple Baker's resolution of $C_0$
% \[\dots\stackrel{s_{n+1}}{\twoheadrightarrow}C_{n+1}\stackrel{s_n}{\twoheadrightarrow}C_{n}\stackrel{s_{n-1}}{\twoheadrightarrow}\dots\stackrel{s_1}{\twoheadrightarrow}C_1.\]
For any $n\in\Z_+$, the curve $C_n$ is said \textit{outer regular} if $\bar C_\gamma$ is regular for any $\gamma\in\Sigma_n$. In other words, $C_n$ is outer regular if the closed subset $C_n\setminus C_0$ of $C_n$, equipped with the structure of closed subscheme coming from Lemma \ref{lem:star}, is regular.
\end{defn}

Note that from Remark \ref{rem:nonsingularbarjacobian}, if $C_n$ is outer regular, then it is regular.

\begin{thm}\label{thm:secondmodel}
% Consider a simple Baker's resolution of $C_0$
% \[\dots\stackrel{s_{n+1}}{\twoheadrightarrow}C_{n+1}\stackrel{s_n}{\twoheadrightarrow}C_{n}\stackrel{s_{n-1}}{\twoheadrightarrow}\dots\stackrel{s_1}{\twoheadrightarrow}C_1\]
% %described in \S \ref{sec:Bakerdescription} and constructed in \S\ref{sec:Bakerconstruction} 
% as in Theorem \ref{thm:eventually_pn_in_C0}. 
Suppose $\singpointnt_n\neq\varnothing$ for all $n\in\Z_+$ such that $C_n$ is not outer regular. Then there exists $h\in\Z_+$ so that for all $n\geq h$ the closed subschemes $\bar C_\gamma$ are regular for all $\gamma\in\Sigma_n$. 
In particular, the curve $C_h$ is an outer regular generalised Baker's model of the smooth completion of $C_0$.
\proof
The result follows from Theorem \ref{thm:eventually_pn_in_C0}.
\endproof
\end{thm}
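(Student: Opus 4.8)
The strategy is to reduce the statement entirely to Theorem \ref{thm:eventually_pn_in_C0}, which carries all the substance. That theorem produces an index $h\in\Z_+$ with $\singpointnt_n=\varnothing$ for every $n\geq h$, so the only work left is to convert the hypothesis on the sets $\singpointnt_n$ into outer regularity and then to recognise the resulting $C_h$ as a generalised Baker's model. I do not expect a genuine obstacle here: the hard combinatorial input — the decreasing-sequence arguments of Theorems \ref{thm:eventuallyregular} and \ref{thm:eventually_pn_in_C0}, ultimately resting on the monotonicity of the arithmetic genus along the resolution — is already in place, and what remains is bookkeeping with the definitions.

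Concretely, I would first fix $h$ as provided by Theorem \ref{thm:eventually_pn_in_C0}, so that $\singpointnt_n=\varnothing$ for all $n\geq h$. The hypothesis of the present theorem, read contrapositively, asserts that $\singpointnt_n=\varnothing$ forces $C_n$ to be outer regular; hence for every $n\geq h$ the curve $C_n$ is outer regular, which by Definition \ref{defn:outerregular} is precisely the assertion that $\bar C_\gamma$ is regular for all $\gamma\in\Sigma_n$. This proves the first claim.

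For the ``in particular'' part I would specialise to $n=h$. Outer regularity of $C_h$ gives regularity of $C_h$ by the remark following Definition \ref{defn:outerregular}, while projectivity of $C_h$ follows by induction along the resolution from Remark \ref{rem:C1completeness} together with Theorem \ref{thm:Bakercompleteness}; since $C_0$ embeds as a dense open of $C_h$, the curve $C_h$ is birational to $C_0$. A regular (hence smooth, as $k$ is perfect) projective curve birational to the smooth affine curve $C_0$ is, componentwise, the smooth completion $C$ of $C_0$ — this is the only point deserving an explicit mention, and it is just the uniqueness of the smooth completion. Finally, $C_h$ occurs as the $h$-th term of a simple Baker's resolution of the smooth curve $C_0\subset\G_m^2$, which is birational to $C$, so by Definition \ref{defn:GeneralisedBakerModelAlgebraicallyClosed} it is a generalised Baker's model of $C$; and it is outer regular by what was shown above.
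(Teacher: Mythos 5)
Your proposal is correct and follows essentially the same route as the paper, which simply derives the theorem from Theorem \ref{thm:eventually_pn_in_C0}: you take the $h$ it provides, read the hypothesis contrapositively to get outer regularity for all $n\geq h$, and then unwind Definitions \ref{defn:outerregular} and \ref{defn:GeneralisedBakerModelAlgebraicallyClosed} (with Lemma \ref{lem:birationalequalinclusion}) for the ``in particular'' part. The extra bookkeeping you supply is exactly what the paper leaves implicit.
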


\begin{cor}
Every smooth projective curve defined over an algebraically closed field $k$ admits an outer regular generalised Baker's model.
\proof
By Corollary \ref{cor:smoothopenmodel}, for any smooth projective curve $C$ there exists a smooth curve $C_0\subset \G_m^2$ birational to $C$. Construct a simple Baker's resolution (\ref{eqn:simpleBR}) of $C_0$ recursively by choosing $\singpointnt_n\neq\varnothing$ whenever $C_n$ is not outer regular. Theorem \ref{thm:secondmodel} concludes the proof.
\endproof
\end{cor}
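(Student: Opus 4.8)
The plan is to deduce the statement directly from Theorem \ref{thm:secondmodel}, so that the only genuine work is to manufacture an admissible input to that theorem. Given a smooth projective curve $C/k$, I would first invoke Corollary \ref{cor:smoothopenmodel} to obtain a smooth curve $C_0\subset\G_m^2$ birational to $C$; a Laurent polynomial $f$ cutting out $C_0$ then provides the starting data for the construction of \S\ref{sec:Bakerconstruction}, with $C_1$ the completion of $C_0$ with respect to the Newton polygon of $f$.

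Next I would build a simple Baker's resolution $\dots\twoheadrightarrow C_{n+1}\twoheadrightarrow C_n\twoheadrightarrow\dots\twoheadrightarrow C_1$ of $C_0$ by recursion on $n$, exactly as permitted by \S\ref{subsec:BakerInductiveStep}, but with a prescribed rule for choosing the sets $\singpointnt_n$: if $C_n$ is already outer regular I make an arbitrary choice (possibly $\singpointnt_n=\varnothing$), and if $C_n$ is not outer regular then, by Definition \ref{defn:outerregular}, some $\bar C_\gamma$ with $\gamma\in\Sigma_n$ is singular, so I take $\singpointnt_n$ to be a non-empty subset of $\Sing(\bar C_\gamma)$ (e.g.\ all of it), with this $\gamma$ playing the role of the distinguished element $\alpha\in\Sigma_n$ in the inductive step. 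Since such a $\gamma$ exists at every stage where $C_n$ fails to be outer regular, this recursion is well defined and produces a bona fide simple Baker's resolution in the sense of Definition \ref{defn:simpleBR}.

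Finally I would observe that this resolution satisfies the hypothesis of Theorem \ref{thm:secondmodel} by construction, namely $\singpointnt_n\neq\varnothing$ for every $n$ for which $C_n$ is not outer regular. Theorem \ref{thm:secondmodel} then yields an $h\in\Z_+$ such that $\bar C_\gamma$ is regular for all $\gamma\in\Sigma_n$ and all $n\geq h$; in particular $C_h$ is outer regular, hence regular by Remark \ref{rem:nonsingularbarjacobian}, and it is a generalised Baker's model of the smooth completion of $C_0$, which is $C$ up to isomorphism. I do not anticipate a real obstacle: all the substance sits in the earlier finiteness results (the decreasing arithmetic-genus sequence of Theorem \ref{thm:eventuallyregular} and the decreasing defect sequence $N(\Sigma_n)$ of Theorem \ref{thm:eventually\_pn\_in\_C0}), and the only point needing a moment's care is the tautological remark that ``$C_n$ not outer regular'' means precisely ``$\Sing(\bar C_\gamma)\neq\varnothing$ for some $\gamma\in\Sigma_n$'', which is exactly what makes the required non-empty choice of $\singpointnt_n$ available.
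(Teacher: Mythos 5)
Your proposal is correct and follows essentially the same route as the paper's own proof: obtain $C_0$ via Corollary \ref{cor:smoothopenmodel}, build a simple Baker's resolution recursively with $\singpointnt_n\neq\varnothing$ whenever $C_n$ fails to be outer regular (possible precisely because non-outer-regularity means some $\bar C_\gamma$, $\gamma\in\Sigma_n$, is singular), and conclude by Theorem \ref{thm:secondmodel}. The extra detail you supply about choosing the distinguished chart $\gamma$ and taking $\singpointnt_n\subseteq\Sing(\bar C_\gamma)$ is exactly what the paper leaves implicit.
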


\begin{lem}\label{lem:regularpointsofbarCgamma}
Let $n\in\Z_+$. For any $\gamma\in\Sigma_n$ we have a natural bijection
\[\Reg(\bar C_\gamma)\xleftrightarrow{1:1}\{\text{simple roots of $f|_\gamma$ in $k^\times$}\}.\]
%where $\Reg(\bar C_\gamma)$ denote the set of regular points of $\bar C_\gamma$.
\proof
For any $\gamma\in\Sigma_n$, we have
\[\Reg\Big(\Spec\sfrac{k[X_{j_\gamma}^{\pm 1}]}{(f|_\gamma)}\Big)\xleftrightarrow{1:1}\{\text{simple roots of $f|_\gamma$ in $k^\times$}\}.\]
We will prove by induction on $n$ that $\Reg(\bar C_\gamma)=\Reg\big(\Spec k[X_{j_\gamma}^{\pm 1}]/(f|_\gamma)\big)$. If $n=1$, the statement follows since $D_\gamma=k[X_{j_\gamma}^{\pm 1}]$ for all $\gamma\in\Sigma_1$. Suppose $n>0$ and $\gamma\in\Sigma_{n+1}$. 
Let $s_n:C_{n+1}\rightarrow C_n$ be the morphism resolving $\singpointnt_n\subseteq\Sing(\bar C_\alpha)$, for $\alpha\in\Sigma_n$.
%Let $\alpha\in\Sigma_n$ such that $\singpointnt_n\subseteq\Sing(\bar C_\alpha)$. 
By Definition \ref{defn:Sigman} the result follows from the inductive hypothesis except when either $\gamma=\tilde\alpha$ or $\gamma\in\Sigma_{\singpointnt_n}$. If $\gamma\in\Sigma_{\singpointnt_n}$, then $D_\gamma=k[X_{j_\gamma}^{\pm 1}]$ by Lemma \ref{lem:Dgamma}, so $\bar C_\gamma=\Spec k[X_{j_\gamma}^{\pm 1}]/(f|_\gamma)$. If $\gamma=\tilde\alpha$, then $\bar C_{\gamma}=\bar C_{\alpha}\setminus \singpointnt_n$ by Lemma \ref{lem:Ctildealpha}. Then $\Reg(\bar C_{\gamma})=\Reg(\bar C_\alpha)$. 
Thus
% $D_{\gamma}=(D_\alpha)_{\bar{\m G}_{\singpointnt_n}}$, where $\bar{\m G}_{\singpointnt_n}=\prod_{p\in\singpointnt_n}\bar{\m G}_p$. But $\bar{\m{G}}_p$ is a multiple factor of $f|_{\gamma}=f|_\alpha$ for any $p\in \singpointnt_n$, so
$\Reg(\bar C_{\gamma})=\Reg\big(\Spec k[X_{j_{\gamma}}^{\pm 1}]/(f|_{\gamma})\big)$ since $j_{\tilde\alpha}=j_\alpha$ and $f|_{\tilde\alpha}=f|_\alpha$.
\endproof
\end{lem}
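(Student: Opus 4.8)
The plan is to reduce everything to the elementary computation of the regular locus of $\Spec k[X_{j_\gamma}^{\pm 1}]/(f|_\gamma)$ and then to show that $\bar C_\gamma$ has the same regular locus. For the first part, since $k$ is algebraically closed one factors $f|_\gamma = c\,X^d\prod_i (X-a_i)^{e_i}$ with $c\in k^\times$, $d\in\N$, and $a_i\in k^\times$ distinct; inverting $X$ and applying the Chinese Remainder Theorem gives $k[X^{\pm 1}]/(f|_\gamma)\simeq\prod_i k[X]/((X-a_i)^{e_i})$, a finite product of local Artinian $k$-algebras whose spectrum is the discrete set $\{a_i\}$. A factor is a regular local ring (i.e.\ a field) precisely when $e_i=1$, so $\Reg\big(\Spec k[X^{\pm 1}]/(f|_\gamma)\big)$ is in natural bijection with the simple roots of $f|_\gamma$ in $k^\times$. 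Hence it suffices to prove $\Reg(\bar C_\gamma)=\Reg\big(\Spec k[X_{j_\gamma}^{\pm 1}]/(f|_\gamma)\big)$, which I would do by induction on $n$.

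For $n=1$ the discussion in \S\ref{subsec:BakerFirstStep} gives $D_\gamma=k[X_{j_\gamma}^{\pm 1}]$, so $\bar C_\gamma=\Spec k[X_{j_\gamma}^{\pm 1}]/(f|_\gamma)$ on the nose and there is nothing to check. For the inductive step, fix $\gamma\in\Sigma_{n+1}$ and recall from Definition \ref{defn:Sigman} that $\Sigma_{n+1}=\Sigma_{\singpointnt_n}\cup\hat\Sigma_n\cup\{\tilde\alpha\}$, where $s_n$ resolves $\singpointnt_n\subseteq\Sing(\bar C_\alpha)$ for some $\alpha\in\Sigma_n$. If $\gamma\in\hat\Sigma_n\subseteq\Sigma_n$, then $\bar C_\gamma$, $j_\gamma$ and $f|_\gamma$ are literally the data attached to $\gamma$ at stage $n$, and the inductive hypothesis applies verbatim. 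If $\gamma\in\Sigma_{\singpointnt_n}$, then Lemma \ref{lem:Dgamma} gives $D_\gamma=k[X_{m+1}^{\pm 1}]$ with $j_\gamma=m+1$, so once more $\bar C_\gamma=\Spec k[X_{j_\gamma}^{\pm 1}]/(f|_\gamma)$ directly.

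The only case needing an argument is $\gamma=\tilde\alpha$. Here Lemma \ref{lem:Ctildealpha} identifies $\bar C_{\tilde\alpha}$ with the open subscheme $\bar C_\alpha\setminus\singpointnt_n$ of $\bar C_\alpha$, and Notation \ref{nt:Notationfortildealpha} records $j_{\tilde\alpha}=j_\alpha$ and $f|_{\tilde\alpha}=f|_\alpha$. Since regularity is local, $\Reg(\bar C_{\tilde\alpha})=\Reg(\bar C_\alpha)\cap(\bar C_\alpha\setminus\singpointnt_n)$, and because $\singpointnt_n\subseteq\Sing(\bar C_\alpha)$ this equals $\Reg(\bar C_\alpha)$. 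Applying the inductive hypothesis to $\alpha\in\Sigma_n$ then yields $\Reg(\bar C_{\tilde\alpha})=\Reg\big(\Spec k[X_{j_\alpha}^{\pm 1}]/(f|_\alpha)\big)=\Reg\big(\Spec k[X_{j_{\tilde\alpha}}^{\pm 1}]/(f|_{\tilde\alpha})\big)$, closing the induction. I do not expect a serious obstacle: the substance is entirely in Lemmas \ref{lem:Dgamma} and \ref{lem:Ctildealpha}, and the only care needed is to check that the three cases of Definition \ref{defn:Sigman} are exhaustive and that the pair $(j_\gamma,f|_\gamma)$ is tracked correctly through each of them.
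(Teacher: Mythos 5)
Your proposal is correct and follows essentially the same route as the paper: reduce to the bijection for $\Spec k[X_{j_\gamma}^{\pm 1}]/(f|_\gamma)$, then induct on $n$ using the three cases of Definition \ref{defn:Sigman}, with Lemma \ref{lem:Dgamma} handling $\Sigma_{\singpointnt_n}$ and Lemma \ref{lem:Ctildealpha} together with $\singpointnt_n\subseteq\Sing(\bar C_\alpha)$ handling $\tilde\alpha$. The only difference is that you spell out the CRT computation and the locality of regularity, which the paper leaves implicit.
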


\begin{thm}\label{thm:pointsbijection}
Let $n\in\Z_+$. Suppose $C_n$ is outer regular.
%i.e. $\bar C_\gamma$ is regular for all $\gamma\in\Sigma_n$.
Then we have a natural bijection
\[C_n(k)\setminus C_0(k)\xleftrightarrow{1:1}\bigsqcup_{\gamma\in\Sigma_n}\{\text{simple roots of $f|_\gamma$ in $k^\times$}\}.\]
\proof
Lemma \ref{lem:star} shows that $C_n\setminus C_0=\bigsqcup_{\gamma\in\Sigma_n}\bar C_\gamma$. 
Thus Lemma \ref{lem:regularpointsofbarCgamma} concludes the proof.
\endproof
\end{thm}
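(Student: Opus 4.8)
The plan is to reduce the statement to the two structural facts already established: the disjoint decomposition of the boundary in Lemma \ref{lem:star}, and the identification of the regular points of each boundary chart in Lemma \ref{lem:regularpointsofbarCgamma}.

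First I would observe that, since $C_0\hookrightarrow C_n$ is an open immersion with dense image (see \ref{construction} and the discussion preceding Theorem \ref{thm:eventuallyregular}), every $k$-point of $C_n$ lies either in the open subscheme $C_0$ or in its closed complement; hence $C_n(k)\setminus C_0(k)$ is exactly the set of $k$-points of the closed subscheme $C_n\setminus C_0$. As $k$ is algebraically closed and $C_n$ is reduced (it contains $C_0$ as a dense open), giving a $k$-point of $C_n\setminus C_0$ is the same as giving a closed point of that scheme. By Lemma \ref{lem:star} this closed subscheme decomposes as $\bigsqcup_{\gamma\in\Sigma_n}\bar C_\gamma$, so its set of closed points is the disjoint union of the sets of closed points of the $\bar C_\gamma$, and the problem splits chart by chart.

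Next, fix $\gamma\in\Sigma_n$. Since $C_n$ is outer regular, $\bar C_\gamma$ is regular, i.e.\ $\bar C_\gamma=\Reg(\bar C_\gamma)$, so every closed point of $\bar C_\gamma$ is a regular point. Lemma \ref{lem:regularpointsofbarCgamma} then provides a natural bijection between $\Reg(\bar C_\gamma)$ and the simple roots of $f|_\gamma$ in $k^\times$ (concretely, $\bar C_\gamma=\Spec D_\gamma/(f|_\gamma)$ with $D_\gamma$ a localisation of $k[X_{j_\gamma}]$ inverting $X_{j_\gamma}$, and a closed point is regular precisely when it corresponds to a simple root of $f|_\gamma$, which then automatically lies in $k^\times$). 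Assembling these bijections over all $\gamma\in\Sigma_n$ and combining with the decomposition of the previous paragraph yields
\[C_n(k)\setminus C_0(k)\ \xleftrightarrow{1:1}\ \bigsqcup_{\gamma\in\Sigma_n}\{\text{simple roots of }f|_\gamma\text{ in }k^\times\}.\]
Every step is a translation through definitions and the two cited lemmas, so I do not expect a genuine obstacle; the only point needing a little care is the first one, namely that "$k$-points outside $C_0$" coincides with "$k$-points of the scheme-theoretic boundary $C_n\setminus C_0$", which is immediate from openness of $C_0$ in $C_n$. The substance of the theorem thus lies entirely in Lemmas \ref{lem:star} and \ref{lem:regularpointsofbarCgamma}, and the present argument merely packages them.
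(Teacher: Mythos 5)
Your proof is correct and follows essentially the same route as the paper: decompose the boundary via Lemma \ref{lem:star} and apply Lemma \ref{lem:regularpointsofbarCgamma} chart by chart, using outer regularity to identify $\bar C_\gamma$ with $\Reg(\bar C_\gamma)$. The extra care you take in identifying $C_n(k)\setminus C_0(k)$ with the closed points of the boundary subscheme is a detail the paper leaves implicit, but it does not change the argument.
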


We conclude the section with the following two lemmas, proving that for any $n\in\Z_+$ the unions in Definition \ref{defn:Sigman} are all disjoint. This fact is particularly useful in applications: together with Proposition \ref{prop:disjointbarcharts} it implies the points in $C_{\gamma}\setminus C_{0,\gamma}$ for $\gamma\in\Sigma_{\singpointnt_n}\cup\{\tilde\alpha\}$ are not visible on $\hat C_n$.

Recall the partial order $<$ on $\Omega$ given in Definition \ref{defn:Omegaposet}.

\begin{lem}\label{lem:noinequalityinSigman}
Let $n\in\Z_+$. For any $\gamma,\gamma'\in\Sigma_n$, neither $\gamma<\gamma'$ nor $\gamma'>\gamma$.
\proof
We are going to prove the lemma by induction on $n$. If $n=1$ the result is trivial. Suppose $n>0$ and let $\alpha\in\Sigma_n$ such that $\singpointnt_n\subseteq\Sing(\bar C_\alpha)$. Suppose by contradiction there exist $\gamma,\gamma'\in\Sigma_{n+1}$ such that $\gamma<\gamma'$. By definition $\Sigma_{n+1}=\tilde\Sigma_{\singpointnt_n}\cup\hat\Sigma_n$,
where $\tilde\Sigma_{\singpointnt_n}=\Sigma_{\singpointnt_n}\cup\{\tilde\alpha\}$. Let $m\in\Z_+$ such that $\alpha\in\Omega_m$. Then $\alpha'\in\Omega_{m+1}$ for any $\alpha'\in\tilde\Sigma_{\singpointnt_n}$. In particular, $\gamma$ and $\gamma'$ cannot be both in $\tilde\Sigma_{\singpointnt_n}$. In fact, by inductive hypothesis, either $\gamma\in \tilde\Sigma_{\singpointnt_n}$ and $\gamma'\in \hat\Sigma_{n}$ or viceversa. Suppose $\gamma\in\tilde\Sigma_{\singpointnt_n}$. Then $\alpha<\gamma<\gamma'$. But this gives a contradiction since $\alpha,\gamma'\in\Sigma_n$. Suppose $\gamma'\in\tilde\Sigma_{\singpointnt_n}$. Then $\alpha$ is the unique element of $\Omega_m$ such that $\alpha<\gamma'$. In particular, $\gamma\leq \alpha$. But $\gamma\neq \alpha$ since $\gamma\in\hat\Sigma_n$. Thus $\gamma<\alpha$, contradicting the inductive hypothesis on $\Sigma_n$.
\endproof
\end{lem}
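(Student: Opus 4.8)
The plan is to induct on $n$. For the base case $n=1$ there is nothing to prove: every element of $\Sigma_1$ lies in $\Omega_1$, whereas each generating relation $\gamma'=\beta\circ_g\gamma$ sends $\Omega_m$ into $\Omega_{m+1}$ by Definition \ref{defn:circ}, so a chain of such relations witnessing $\gamma<\gamma'$ forces $\gamma\in\Omega_a$ and $\gamma'\in\Omega_b$ with $b>a$. In particular distinct elements of $\Omega_1$ are incomparable. I will use this ``grading by $\Omega_\bullet$'' observation throughout.

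For the inductive step I assume the statement for $\Sigma_n$. Let $\alpha\in\Sigma_n$ be the element with $\singpointnt_n\subseteq\Sing(\bar C_\alpha)$, say $\alpha\in\Omega_m$, and recall from Definition \ref{defn:Sigman} that $\Sigma_{n+1}=\tilde\Sigma_{\singpointnt_n}\cup\hat\Sigma_n$, where $\tilde\Sigma_{\singpointnt_n}=\Sigma_{\singpointnt_n}\cup\{\tilde\alpha\}$ and $\hat\Sigma_n=\Sigma_n\setminus\{\alpha\}$. First I would record two facts about $\tilde\Sigma_{\singpointnt_n}$: every $\gamma'\in\tilde\Sigma_{\singpointnt_n}$ is of the form $\beta\circ_g\alpha$ (for $\gamma'\in\Sigma_p$ this is the definition of $\Sigma_p$, and $\tilde\alpha=(0,1)\circ_{g_{\singpointnt_n}}\alpha$), so $\gamma'\in\Omega_{m+1}$ and $\alpha<\gamma'$; and, by the uniqueness property stated right after Definition \ref{defn:circ}, $\alpha$ is then the unique element of $\Omega_m$ lying strictly below $\gamma'$, since the term at index $m$ of any chain of generating relations ending at $\gamma'$ is a $\circ$-predecessor of $\gamma'$.

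Now suppose, for contradiction, that $\gamma<\gamma'$ with $\gamma,\gamma'\in\Sigma_{n+1}$. They are not both in $\tilde\Sigma_{\singpointnt_n}$ (that set lies in $\Omega_{m+1}$, and comparable elements have different $\Omega_\bullet$-indices), and not both in $\hat\Sigma_n\subseteq\Sigma_n$ (inductive hypothesis). If $\gamma\in\tilde\Sigma_{\singpointnt_n}$ and $\gamma'\in\hat\Sigma_n$, then $\alpha<\gamma<\gamma'$ gives $\alpha<\gamma'$ with $\alpha,\gamma'\in\Sigma_n$, contradicting the inductive hypothesis. If $\gamma\in\hat\Sigma_n$ and $\gamma'\in\tilde\Sigma_{\singpointnt_n}$, write a chain of generating relations $\gamma=\eta_0,\dots,\eta_k=\gamma'$; its penultimate term $\eta_{k-1}$ has index $m$ and is a $\circ$-predecessor of $\gamma'$, hence $\eta_{k-1}=\alpha$ by the uniqueness property, so $\gamma=\eta_0\le\alpha$, and since $\gamma\ne\alpha$ (as $\gamma\in\hat\Sigma_n$) we get $\gamma<\alpha$ with $\gamma,\alpha\in\Sigma_n$ --- again contradicting the inductive hypothesis. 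This exhausts all cases. I expect the only real obstacle to be the bookkeeping: making the grading by $\Omega_\bullet$ precise and invoking uniqueness of $\circ$-predecessors correctly; the rest is formal.
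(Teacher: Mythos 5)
Your proof is correct and follows essentially the same route as the paper: induction on $n$, the observation that $\tilde\Sigma_{\singpointnt_n}\subset\Omega_{m+1}$ while comparability forces distinct $\Omega_\bullet$-indices, and in the mixed cases either transitivity ($\alpha<\gamma<\gamma'$) or the uniqueness of the $\circ$-predecessor of $\gamma'$ to conclude $\gamma<\alpha$, contradicting the inductive hypothesis for $\Sigma_n$. You merely spell out more explicitly the grading and chain arguments that the paper leaves implicit.
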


\begin{lem}\label{lem:SigmaDisjointUnion}
Let $n\in\Z_+$ and let $\alpha\in\Sigma_n$ such that $\singpointnt_n\subseteq\Sing(\bar C_{\alpha})$.
Then the sets $\hat\Sigma_n$, $\{\tilde\alpha\}$, and $\Sigma_p$, for $p\in \singpointnt_n$, are pairwise disjoint.
\proof
Let $p\in\singpointnt_n$. First note that for every $\gamma\in\Sigma_p$ and $\gamma'\in\bigcup_{q\in\singpointnt_n\setminus\{p\}}\Sigma_q\cup\{\tilde\alpha\}$, the images of $C_\gamma$ and $C_{\gamma'}$ under $s_n$ are different. Then $\gamma\neq\gamma'$. It remains to show that if $\gamma\in\Sigma_p\cup\{\tilde\alpha\}$ and $\alpha'\in\hat\Sigma_n$, then $\gamma\neq\alpha'$. Note that $\gamma>\alpha$. Therefore if $\gamma=\alpha'$ then $\alpha'>\alpha$, where both $\alpha$ and $\alpha'$ are elements of $\Sigma_n$. But this is not possible by Lemma \ref{lem:noinequalityinSigman}.
\endproof
\end{lem}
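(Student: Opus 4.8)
The plan is to separate the three families by two complementary arguments: a geometric one, tracking the image in $C_\alpha\subseteq C_n$ of each relevant chart under $s_n$, to tell $\tilde\alpha$ and the various $\Sigma_p$ apart from one another; and an order-theoretic one, using the poset structure on $\Omega$ of Definition \ref{defn:Omegaposet} together with Lemma \ref{lem:noinequalityinSigman}, to separate all of these ``new'' indices from $\hat\Sigma_n$.

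First I would fix $p\in\singpointnt_n$ and show $\Sigma_p$ is disjoint from $\{\tilde\alpha\}$ and from $\Sigma_q$ for every $q\in\singpointnt_n\setminus\{p\}$. For $\gamma\in\Sigma_p$, the set $\bar C_\gamma=C_\gamma\setminus C_{0,\gamma}$ is nonempty, since $C_{0,\gamma}\subsetneq C_\gamma$ by Definition \ref{defn:Sigman}, and Lemma \ref{lem:spstructure} says $s_{\gamma\alpha}$ sends it into $\{p\}$; as $s_n|_{C_\gamma}=s_{\gamma\alpha}$ followed by $C_{\alpha,p}\subseteq C_n$ (Theorem \ref{thm:sndefn}), this gives $p\in s_n(C_\gamma)\subseteq C_{\alpha,p}$. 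For $\gamma'\in\Sigma_q$ with $q\neq p$ one gets $s_n(C_{\gamma'})\subseteq C_{\alpha,q}=U_{\alpha,q}\cup\{q\}$, and $p\notin C_{\alpha,q}$ because $p\notin C_0\supseteq U_{\alpha,q}$ while $p\neq q$. For $\gamma'=\tilde\alpha$ one has $s_n(C_{\tilde\alpha})=D(\tilde{\m G}_{\singpointnt_n})$ by Lemma \ref{lem:Ctildealpha}, and $p\notin D(\tilde{\m G}_{\singpointnt_n})$ since its factor $\tilde{\m G}_p$ vanishes at $p$. In either case $s_n(C_\gamma)\neq s_n(C_{\gamma'})$, hence $\gamma\neq\gamma'$.

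It remains to show $\Sigma_p\cup\{\tilde\alpha\}$ is disjoint from $\hat\Sigma_n$. Every $\gamma\in\Sigma_p$ equals $\beta\circ_{g_p}\alpha$ with $\beta\in\Z_+^2\subseteq\N\times\Z_+$, and $\tilde\alpha=(0,1)\circ_{g_{\singpointnt_n}}\alpha$ with $(0,1)\in\N\times\Z_+$, so in each case $\alpha<\gamma$ in $\Omega$. Were such a $\gamma$ in $\hat\Sigma_n\subseteq\Sigma_n$, then $\alpha,\gamma\in\Sigma_n$ would be comparable, contradicting Lemma \ref{lem:noinequalityinSigman}; this settles the remaining cases.

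The step I expect to be the main obstacle is exactly this last one: $\hat\Sigma_n$, $\{\tilde\alpha\}$ and the $\Sigma_p$ can all sit inside the same $\Omega_{m+1}$, so no crude numerical invariant separates them, and the remark following Definition \ref{defn:circ} recovers only $\alpha$ from an equality $\beta\circ_g\alpha=\beta'\circ_{g'}\alpha'$, not $\beta$ or $g$. Lemma \ref{lem:noinequalityinSigman} is precisely what bridges that gap; everything else is bookkeeping of chart images via Lemmas \ref{lem:spstructure} and \ref{lem:Ctildealpha}.
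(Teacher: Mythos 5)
Your proposal is correct and takes essentially the same route as the paper: the charts indexed by $\Sigma_p$, $\Sigma_q$ ($q\neq p$) and $\tilde\alpha$ are separated by comparing their images under $s_n$ (via Lemmas \ref{lem:spstructure} and \ref{lem:Ctildealpha}), and these are separated from $\hat\Sigma_n$ by the relation $\alpha<\gamma$ together with Lemma \ref{lem:noinequalityinSigman}. You only make explicit the image comparison (locating $p$ in one image and not the other) that the paper's proof leaves implicit.
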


\section{Simultaneous resolution of different charts}\label{sec:multipleSing}
Let $k$ be an algebraically closed field and let $f\in k[x_1^{\pm 1},y^{\pm 1}]$ be a Laurent polynomial defining a smooth curve $C_0:f=0$ over $\G_m^2$. Let $C_1$ be the completion of $C_0$ with respect to its Newton polygon.
In the previous sections we showed that we can construct a sequence of proper birational morphisms
\[\dots\stackrel{s_{n+1}}{\twoheadrightarrow}C_{n+1}\stackrel{s_n}{\twoheadrightarrow}C_{n}\stackrel{s_{n-1}}{\twoheadrightarrow}\dots\stackrel{s_1}{\twoheadrightarrow}C_1,\]
where the curves $C_n/k$ are constructed from sets $\Sigma_n\subseteq\Omega$ as described in \ref{construction} and the maps $s_n$ are the morphisms resolving $\singpointnt_n\subset\Sing(\bar C_{\alpha_n})$ for $\alpha_n\in\Sigma_n$. 

Let $n\in\Z_+$. 
Note that, once we have chosen the polynomials $\tilde {\m G}_p$ for any $p\in\singpointnt_n$, the construction of $\Sigma_{n+1}\setminus\hat\Sigma_n$ only depends on $\alpha_n$ and $\singpointnt_n$ by Lemma \ref{lem:SigmaDisjointUnion}. 
Suppose $\alpha_{n+1}\in\hat\Sigma_n$. 
Then
\[\Sigma_{n+2}=\Sigma_{\singpointnt_n}\cup \Sigma_{\singpointnt_{n+1}}\cup\{\tilde\alpha_{n},\tilde\alpha_{n+1}\}\cup(\Sigma_n\setminus\{\alpha_n,\alpha_{n+1}\}).\] 
Thus $\Sigma_{n+2}$ would have been defined in the same way if, instead of resolving $\singpointnt_n$ first and then $\singpointnt_{n+1}$, we had resolved $\singpointnt_{n+1}$ first and then $\singpointnt_{n}$. In other words, the construction of $\Sigma_{n+2}$, and so of $C_{n+2}$, from $\Sigma_n$ does not depend on the order of resolution of $\singpointnt_n$ and $\singpointnt_{n+1}$.

In this section we will show that from our construction we can \textit{resolve} points coming from different charts simultaneously. More precisely, we will explain how to construct a sequence as in \S\ref{sec:Bakerdescription} where the morphisms $s_n$ resolve finite sets of points $\singpointnt_n\subseteq\bigsqcup_{\alpha\in\Sigma_n}\Sing(\bar C_\alpha)$. Note that by Lemma \ref{lem:star} we can identify the points in $\singpointnt_n$ with points of $C_n$ via the immersions $\bar C_\alpha\hookrightarrow C_n$.

Suppose that, for some $n\in\Z_+$, we have constructed $\Sigma_n\subset\Omega$ and $C_n$ as in \ref{construction}. Let $\singpointnt_n\subseteq\bigsqcup_{\alpha\in\Sigma_n}\Sing(\bar C_\alpha)$. Denote $\singpointnt_{n,\alpha}=\singpointnt_n\cap\bar C_{\alpha}$ for any $\alpha\in\Sigma_n$. 
Consider the subset $\Sigma_{n,\singpointnt_n}:=\{\alpha\in\Sigma_n\mid \singpointnt_{n,\alpha}\neq\varnothing\}$ of $\Sigma_n$ and order its elements $\alpha_{0},\alpha_{1},\dots,\alpha_{h}$. 
For each $i=0,\dots,h$ we can recursively construct the morphism $s_{n+i}: C_{n+i+1}\rightarrow C_{n+i}$ resolving $\singpointnt_{n,\alpha_{i}}\subseteq\Sing(\bar C_{\alpha_{i}})$ as described in \S\ref{sec:Bakerconstruction}. 
Indeed $\alpha_{0}\in\Sigma_n$ and $\alpha_{i}\in\Sigma_{n+i}$ since \[\alpha_{i}\in\Sigma_n\setminus\{\alpha_{0},\dots,\alpha_{i-1}\}\subseteq\hat\Sigma_{n+i-1}\qquad \text{for any $i\geq 1$}.\]
Therefore from the observation made at the beginning of the section
\begin{align*}
\Sigma_{n+h+1}&=\bigcup_{i=0}^h\Sigma_{\singpointnt_{n,\alpha_{i}}}\cup\{\tilde\alpha_{0},\dots,\tilde\alpha_{h}\}\cup(\Sigma_n\setminus\{\alpha_{0},\dots,\alpha_{h}\})\\
&=\bigcup_{\alpha\in\Sigma_{n,\singpointnt_n}}(\Sigma_{\singpointnt_{n,\alpha}}\cup\{\tilde\alpha\})\cup(\Sigma_n\setminus\Sigma_{n,\singpointnt_n}).
\end{align*}
In particular, $\Sigma_{n+h+1}$ is independent of the order chosen for the elements in $\Sigma_{n,\singpointnt_n}$.
This approach eventually constructs a complete curve $C_{n+h+1}$ and a surjective birational morphism
\[C_{n+h+1}\xrightarrow{s_{n+h}\circ s_{n+h-1}\circ\dots\circ s_{n}}C_n,\]
with exceptional locus equal to the inverse image of $\singpointnt_n\cap\Sing(C_n)$. This morphism does not depend on the order chosen for the elements $\alpha_{i}$ of $\Sigma_{n,\singpointnt_n}$. Indeed by Theorem \ref{thm:sndefn} it is the unique morphism extending the birational maps $s_{\gamma\alpha}:C_\gamma\--> C_\alpha$ for $\gamma\in\Sigma_{n+h+1}\sqcup\{0\}$ and $\alpha\in\Sigma_n\sqcup\{0\}$.

\begin{defn}
We will say that $s_{n+h}\circ\dots\circ s_{n}$ is the morphism \textit{resolving} the finite set $\singpointnt_n\subseteq\bigsqcup_{\alpha\in\Sigma_n}\Sing(\bar C_\alpha)$.
\end{defn}

We can then redefine $\Sigma_{n+1}:=\Sigma_{n+h+1}$ and $C_{n+1}:=C_{n+h+1}$ to see that we can construct finite subsets $\Sigma_n\subset\Omega$ and projective curves $C_n/k$ as described in \ref{construction} and a sequence of proper birational morphisms
\[\dots\stackrel{s_{n+1}}{\twoheadrightarrow}C_{n+1}\stackrel{s_n}{\twoheadrightarrow}C_{n}\stackrel{s_{n-1}}{\twoheadrightarrow}\dots\stackrel{s_1}{\twoheadrightarrow}C_1,\]
where the maps $s_n:C_{n+1}\rightarrow C_n$ are the morphisms resolving freely chosen sets $\singpointnt_n\subseteq\bigsqcup_{\alpha\in\Sigma_n}\Sing(\bar C_\alpha)$.

\begin{defn}\label{defn:BakerResolution}
% Recall $k$ is supposed algebraically closed.
Let $C_0$ and $C_1$ as above. A \textit{Baker's resolution of $C_0$} is
a sequence of proper birational morphisms of $k$-schemes %of projective curves
\[
    \dots\stackrel{s_{n+1}}{\twoheadrightarrow}C_{n+1}\stackrel{s_n}{\twoheadrightarrow}C_{n}\stackrel{s_{n-1}}{\twoheadrightarrow}\dots\stackrel{s_1}{\twoheadrightarrow}C_1
\]
where the curves $C_n/k$ are constructed from subsets $\Sigma_n\subset\Omega$ as indicated in \ref{construction} and the maps $s_n$ are the morphisms resolving sets $\singpointnt_n\subseteq\bigsqcup_{\alpha\in\Sigma_n}\Sing(\bar C_\alpha)$. 
\end{defn}

Simple Baker's resolutions are Baker's resolution. In fact, from what discussed in this section, Baker's resolutions of $C_0$ are just contraptions of simple Baker's resolutions.
Hence the results in \S\ref{sec:GenBakersthm} extends to Baker's resolutions. Let us explicitly restate Theorem \ref{thm:eventually_pn_in_C0} in light of the terminology introduced in the current section as an example.

\begin{thm}\label{thm:multiplesingmodel}
For any Baker's resolution of $C_0$ given as in Definition \ref{defn:BakerResolution}, 
there exists $h\in\Z_+$ such that $\singpointnt_n=\varnothing$ for any $n\geq h$.
\end{thm}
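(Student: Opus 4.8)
The plan is to reduce Theorem \ref{thm:multiplesingmodel} to Theorem \ref{thm:eventually_pn_in_C0}, which already establishes the analogous statement for \emph{simple} Baker's resolutions. The key observation, made in \S\ref{sec:multipleSing}, is that a morphism $s_n$ resolving a set $\singpointnt_n\subseteq\bigsqcup_{\alpha\in\Sigma_n}\Sing(\bar C_\alpha)$ is by definition a composite $s_{n+h}\circ\dots\circ s_n'$ of finitely many morphisms, each of which resolves a set of singular points lying on a single chart $\bar C_{\alpha_i}$, $\alpha_i\in\Sigma_{n+i}$. Thus any Baker's resolution, when its arrows are broken up into these constituent single-chart steps, becomes a simple Baker's resolution (possibly after reindexing), with exactly the same curves $C_n$ appearing as a subsequence.

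First I would make this unpacking precise: given the Baker's resolution
\[\dots\stackrel{s_{n+1}}{\twoheadrightarrow}C_{n+1}\stackrel{s_n}{\twoheadrightarrow}C_{n}\stackrel{s_{n-1}}{\twoheadrightarrow}\dots\stackrel{s_1}{\twoheadrightarrow}C_1,\]
expand each $s_n$ as $s_{n}^{(h_n)}\circ\dots\circ s_{n}^{(0)}$, where $s_n^{(i)}$ resolves a single-chart set $\singpointnt_{n,\alpha_i}$. Concatenating all these gives a simple Baker's resolution
\[\dots\twoheadrightarrow D_{m+1}\twoheadrightarrow D_m\twoheadrightarrow\dots\twoheadrightarrow D_1=C_1,\]
whose associated single-chart point-sets I will call $Q_m$. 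By Theorem \ref{thm:eventually_pn_in_C0} applied to this simple resolution, there is an $h'\in\Z_+$ such that $Q_m=\varnothing$ for all $m\geq h'$. Since $Q_m=\varnothing$ forces $s_m^{(i)}$ to be an isomorphism and leave $\Sigma$ unchanged, it follows that beyond some index the curves $D_m$ stabilise; translating back through the (finite, increasing) reindexing $n\mapsto m(n)$ that records where $C_n$ sits in the $D$-sequence, there is an $h\in\Z_+$ with the property that for $n\geq h$ every constituent step $s_n^{(i)}$ is an isomorphism. Each such step being an isomorphism means the single-chart set $\singpointnt_{n,\alpha_i}$ it resolves is contained in $\Reg(C_n)$; I would then invoke Proposition \ref{prop:nonsingularisomorphism} in the contrapositive, or more directly rerun the skyscraper-sheaf / arithmetic-genus argument of Theorem \ref{thm:eventuallyregular} and the counting argument of Theorem \ref{thm:eventually_pn_in_C0} in this combined setting, to conclude $\singpointnt_n=\varnothing$ for all $n\geq h$.

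Alternatively — and this may be the cleaner route — I would avoid the reindexing bookkeeping entirely by adapting the proof of Theorem \ref{thm:eventually_pn_in_C0} verbatim. The two ingredients there are: (i) the arithmetic genus $p_a(C_n)$ is non-increasing in $n$ and bounded below by $1-r$ (with $r$ the number of irreducible components of $C_0$), hence eventually constant, which is exactly the content of Theorem \ref{thm:eventuallyregular} and depends only on $s_n$ being a finite birational morphism between projective curves with a skyscraper cokernel sheaf supported on $\singpointnt_n$ — all of which hold for the generalised morphisms of \S\ref{sec:multipleSing}; and (ii) once $\singpointnt_n\subseteq\Reg(C_n)$, the quantity $N(\Sigma_n)$ counting singular points of the $\bar C_\gamma$ drops by exactly $|\singpointnt_n|$ at each step. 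For (ii), by Lemma \ref{lem:star} we have $N(\Sigma_n)=\sum_{\gamma\in\Sigma_n}N(\{\gamma\})$, and the set $\Sigma_{n+1}$ is, by the displayed formula in \S\ref{sec:multipleSing}, $\bigcup_{\alpha\in\Sigma_{n,\singpointnt_n}}(\Sigma_{\singpointnt_{n,\alpha}}\cup\{\tilde\alpha\})\cup(\Sigma_n\setminus\Sigma_{n,\singpointnt_n})$; by Proposition \ref{prop:nonsingularisomorphism} the new charts $\Sigma_{\singpointnt_{n,\alpha}}$ contribute nothing to $N$, and by Lemma \ref{lem:Ctildealpha} each $\bar C_{\tilde\alpha}=\bar C_\alpha\setminus\singpointnt_{n,\alpha}$ contributes $N(\{\alpha\})-|\singpointnt_{n,\alpha}|$, so $N(\Sigma_{n+1})=N(\Sigma_n)-|\singpointnt_n|$. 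As $N(\Sigma_n)$ is then a decreasing $\N$-valued sequence, it is eventually constant, forcing $|\singpointnt_n|=0$ for $n$ large.

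The main obstacle I anticipate is purely organisational rather than mathematical: making sure that the generalisation of \S\ref{sec:multipleSing} genuinely preserves the hypotheses of the lemmas being cited — in particular that $\singpointnt_n$, now a subset of $\bigsqcup_{\alpha\in\Sigma_n}\Sing(\bar C_\alpha)$ rather than of a single $\Sing(\bar C_\alpha)$, still yields a morphism $s_n$ that is finite, birational, surjective, and has skyscraper cokernel supported in $\singpointnt_n$. This is exactly what the final paragraphs of \S\ref{sec:multipleSing} assert (the composite has exceptional locus the inverse image of $\singpointnt_n\cap\Sing(C_n)$), so in the write-up I would state at the outset that all of Theorems \ref{thm:Bakercompleteness}, \ref{thm:eventuallyregular}, \ref{thm:eventually_pn_in_C0} and Proposition \ref{prop:nonsingularisomorphism} carry over to this setting with identical proofs — the excerpt essentially says as much with ``the results in \S\ref{sec:GenBakersthm} extend to Baker's resolutions'' — and then the statement is immediate.
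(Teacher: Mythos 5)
Your proposal is correct and is essentially the paper's own argument: the paper proves Theorem \ref{thm:multiplesingmodel} precisely by observing that, by construction in \S\ref{sec:multipleSing}, each morphism resolving a multi-chart set $\singpointnt_n$ is a composite of single-chart resolving morphisms, so every Baker's resolution unpacks into a simple Baker's resolution to which Theorem \ref{thm:eventually_pn_in_C0} applies. Note only that your detour through ``each constituent step is an isomorphism, hence $\singpointnt_{n,\alpha_i}\subseteq\Reg(C_n)$'' is unnecessary (and by itself weaker than what is needed): since the single-chart sets $Q_m$ of the unpacked simple resolution are exactly the pieces $\singpointnt_{n,\alpha_i}$ of $\singpointnt_n$, the conclusion $\singpointnt_n=\varnothing$ for large $n$ follows immediately from $Q_m=\varnothing$.
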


Baker's resolutions are not really a new concept, but rather a more general point of view which will be useful in the next section, where we tackle the case of a non-algebraically closed base field.

\section{The case of non-algebraically closed base field}\label{sec:GaloisBaker}
In this section let $k$ be a perfect field with algebraic closure $\bar k$. Denote by $G_k$ the absolute Galois group $\Gal(\bar k/k)$. Let $f\in k[x_1^{\pm 1},y^{\pm 1}]$ such that $C_{0,k}: f=0$ is a smooth curve defined over $\G_{m,k}^2$. Set $C_0=C_{0,k}\times_k \bar k$. In the previous section we showed how to construct a sequence of proper birational morphisms of $\bar k$-schemes
\[\dots\xrightarrow{s_{n+1}}C_{n+1}\xrightarrow{s_n}C_{n}\xrightarrow{s_{n-1}}\dots\xrightarrow{s_1}C_1,\]
called Baker's resolution of $C_0$, where the curves $C_n/\bar k$ are equipped with canonical open immersions $\iota_n:C_0\hookrightarrow C_n$ such that $s_{n}\circ\iota_{n+1}=\iota_n$. Suppose that for any $n\in\Z_+$ one has $G_k\subseteq \mathrm{Aut}(C_n)$ and $s_n\circ\sigma=\sigma\circ s_n$ for all $\sigma\in G_k$. Then, from the universal property of quotient schemes, one has an induced sequence of proper birational morphisms of $k$-schemes
\[\dots\xrightarrow{s_{n+1,k}} C_{n+1,k}\xrightarrow{s_{n,k}} C_{n,k} \xrightarrow{s_{n-1,k}}\dots\xrightarrow{s_{1,k}} C_{1,k}.\]
where the curves $C_{n,k}:=C_n/G_k$ are defined over $k$. Furthermore, the morphisms $\iota_n$ induce open immersions $\iota_{n,k}:C_{0,k}\hookrightarrow C_{n,k}$ such that $s_{n,k}\circ\iota_{n+1,k}=\iota_{n,k}$.
In fact, $C_n\simeq C_{n,k}\times_k \bar k$ and the quotient morphism $C_n\rightarrow C_{n,k}$ is the canonical projection. Then $C_n$ is smooth if and only if so is $C_{n,k}$. 

The argument above motivates the subject of this section, which is constructing a Baker's resolution of $C_0$ such that $G_k\subseteq \mathrm{Aut}(C_n)$ and $s_n$ is Galois-invariant for any $n\in\Z_+$. 
The following definition extends Definitions \ref{defn:GeneralisedBakerModelAlgebraicallyClosed}, \ref{defn:outerregular} to the case of general perfect fields.

\begin{defn}\label{defn:GeneralisedBakerModel}
Let $C/k$ be a smooth projective curve. A curve $\tilde C/k$ is a \textit{generalised Baker's model} of $C$ if $\tilde C\simeq C$ and there exists a smooth curve $\tilde C_{0,k}/k$ such that the base extended curve $\tilde C\times_k\bar k$ is a generalised Baker's model of $C\times_k\bar k$ with respect to $\tilde C_{0,k}\times_k\bar k$.
Furthermore, a generalised Baker's model $\tilde C$ of $C$ is \textit{outer regular} if $\tilde C\times_k\bar k$ is outer regular.
\end{defn}

Let us first describe a group action of $G_k$ on $\Omega$. Let $\alpha=(v,\tuplent)\in\Omega$ and $\sigma\in G_k$. Let $m\in\Z_+$ such that $\alpha\in\Omega_m$ and write $\tuplent=(g_2,\dots,g_m)$ for Laurent polynomials $g_i\in\bar k[x_1^{\pm 1},y^{\pm 1}]$. Set $\tuplent^\sigma=(g_2^\sigma,\dots,g_m^\sigma)$ and define $(v,\tuplent)^\sigma=(v,\tuplent^\sigma)$. Recall
\[C_{0,\alpha}=\Spec\frac{\bar k[x_1^{\pm 1},\dots, x_m^{\pm 1}, y^{\pm 1}]}{(f_1,f_2,\dots,f_m)}\]
with $f_1=f\in k[x_1^{\pm 1},y^{\pm 1}]$ and $f_i=x_i-g_i\in \bar k[x_1^{\pm 1},y^{\pm 1}]$ for $i\geq 2$. Hence $C_{0,\alpha^\sigma}=C_{0,\alpha}^\sigma$. Then $C_{0,\alpha^\sigma}$ is dense in $C_0$ and so $\alpha^\sigma\in\Omega$. Thus the element $\alpha^\sigma$ is set as the image of $\alpha$ under the action of $\sigma$.
%Note that for any $\sigma\in G_k$, the equality $C_{(v,\tuplent)}^\sigma=C_{(v,\tuplent)^\sigma}$ holds. 
The next lemma follows.

\begin{lem}\label{lem:Galoisconjgamma}
Let $\sigma\in G_k$ and $\alpha\in\Omega$. Let $m\in\Z_+$ such that $\alpha\in\Omega_m$. If $\gamma=\beta\circ_{g}\alpha$, for some primitive vector $\beta\in\N\times\Z_+$ and $g\in \bar k[x_1,\dots,x_m,y]$ then $\gamma^\sigma=\beta\circ_{g^\sigma}\alpha^\sigma$.
\end{lem}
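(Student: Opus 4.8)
The plan is to unwind the definition of $\beta \circ_g \alpha$ (Definition \ref{defn:circ}) and apply the Galois automorphism $\sigma$ to each piece, checking compatibility. Write $\alpha = (v, \tuplent)$ with $\tuplent = (g_2, \dots, g_m)$, and recall $\beta \circ_g \alpha = (\beta \circ_g v, (g_2, \dots, g_m, g_{m+1}))$, where $g_{m+1} \in \bar k[x_1^{\pm 1}, y^{\pm 1}]$ is the unique Laurent polynomial with $g_{m+1} \equiv g \bmod (f_2, \dots, f_m)$ and $f_i = x_i - g_i$. The point is that $\sigma$ fixes $v$ and $\beta$ (these are integer vectors), so $\beta \circ_g v$ depends on $g$ and $v$ only through $\ord_v(g)$; hence I need $\ord_v(g^\sigma) = \ord_v(g)$. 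This is immediate because $\sigma$ acts on $\bar k[x_1^{\pm 1}, \dots, x_m^{\pm 1}, y^{\pm 1}]$ by acting on coefficients only, permuting the monomials trivially, so it preserves the $v$-adic valuation: $\ord_v(g^\sigma) = \min_i \phi_v(i)$ over the same support. Thus $\beta \circ_{g} v = \beta \circ_{g^\sigma} v$, and in particular $\beta \circ_{g^\sigma} \alpha^\sigma$ has the same underlying vector $\beta \circ_g v$ as $\gamma$.

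Next I would handle the tuple component. By definition $\alpha^\sigma = (v, \tuplent^\sigma)$ with $\tuplent^\sigma = (g_2^\sigma, \dots, g_m^\sigma)$, and $\beta \circ_{g^\sigma} \alpha^\sigma = (\beta \circ_{g^\sigma} v, (g_2^\sigma, \dots, g_m^\sigma, h))$, where $h$ is the unique Laurent polynomial with $h \equiv g^\sigma \bmod (f_2^\sigma, \dots, f_m^\sigma)$ and $f_i^\sigma = x_i - g_i^\sigma$. On the other hand, $\gamma^\sigma = (\beta \circ_g v, (g_2^\sigma, \dots, g_m^\sigma, g_{m+1}^\sigma))$ by the definition of the Galois action on $\Omega$ given just before the lemma. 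So it suffices to check $g_{m+1}^\sigma = h$, i.e.\ that $g_{m+1}^\sigma$ is the unique Laurent polynomial congruent to $g^\sigma$ modulo $(f_2^\sigma, \dots, f_m^\sigma)$. Applying $\sigma$ to the congruence $g_{m+1} \equiv g \bmod (f_2, \dots, f_m)$ gives $g_{m+1}^\sigma \equiv g^\sigma \bmod (f_2^\sigma, \dots, f_m^\sigma)$, since $\sigma$ is a ring automorphism that maps the ideal $(f_2, \dots, f_m)$ onto $(f_2^\sigma, \dots, f_m^\sigma)$. Moreover $g_{m+1}^\sigma \in \bar k[x_1^{\pm 1}, y^{\pm 1}]$ because $g_{m+1}$ lies in that subring and $\sigma$ preserves it; and $g^\sigma$ is a polynomial in $k[x_1, \dots, x_m, y]$-coefficients-extended-to-$\bar k$, so uniqueness of the reduction (which holds because the quotient by $(f_2, \dots, f_m)$ identifies with $\bar k[x_1^{\pm 1}, y^{\pm 1}]$ via $x_i \mapsto g_i$) forces $g_{m+1}^\sigma = h$.

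Putting these together, $\gamma^\sigma$ and $\beta \circ_{g^\sigma} \alpha^\sigma$ agree in both coordinates, hence are equal in $\Omega_{m+1}$. I would also remark that $\beta \in \N \times \Z_+$ is needed only to ensure $\beta \circ_{g^\sigma} \alpha^\sigma$ genuinely lies in $\Omega$ (via Definition \ref{defn:circ}), which is automatic once we know $\gamma^\sigma \in \Omega$, and $\gamma^\sigma \in \Omega$ holds because $C_{0, \gamma^\sigma} = C_{0,\gamma}^\sigma$ is dense in $C_0$ (the Galois conjugate of a dense open is dense).

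I do not expect any real obstacle here: the lemma is essentially bookkeeping, and the only substantive input is that $\sigma$ acts $\bar k$-semilinearly on the Laurent polynomial rings by coefficient-wise action, so it commutes with taking $\ord_v$, with reduction modulo the $f_i$, and with the formation of $\beta \circ_c v$. The one point requiring a line of care is the uniqueness clause in the definition of $g_{m+1}$ — I would make sure to invoke the isomorphism $\bar k[x_1^{\pm 1}, \dots, x_m^{\pm 1}, y^{\pm 1}]/(f_2, \dots, f_m) \simeq \bar k[x_1^{\pm 1}, y^{\pm 1}]$ (and its $\sigma$-twist) so that "the unique Laurent polynomial congruent to $g$" is well-posed and transported correctly by $\sigma$.
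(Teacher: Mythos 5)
Your proof is correct and matches the paper's intent: the paper gives no separate argument, stating that the lemma follows immediately from the definition of the Galois action on $\Omega$ and of $\beta\circ_g\alpha$, which is exactly the bookkeeping you carry out (invariance of $\ord_v$ under coefficient-wise action, and compatibility of the unique reduction $g_{m+1}$ with $\sigma$). No gaps.
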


We will show that if the morphisms $s_n$ resolve Galois-invariant sets of points for any $n\in\Z_+$, the curves $C_n$ can be constructed from subsets $\Sigma_n\subset\Omega$ with the properties of \ref{construction} and the following additional one:
\begin{enumerate}[label=(d)]
    \item 
    The action of $G_k$ on $\Omega$ restricts to $\Sigma_n$.
    Furthermore, for any $\sigma\in G_k$ and any $\alpha\in\Sigma_n$, we have $M_{\alpha^\sigma}=M_\alpha$, $j_{\alpha^\sigma}=j_\alpha$, $\m{F}_{\alpha^\sigma}=\m{F}_\alpha^\sigma$. \label{eqn:Bakeradditionalprop}
\end{enumerate}
In particular, if \ref{eqn:Bakeradditionalprop} holds for $n\in\Z_+$, then $G_k\subseteq\mathrm{Aut}(C_n)$. 
%The description of the action of $G_k$ on $C_n$ then easily follows.

Suppose the set $\Sigma_n$ defining $C_n$ satisfies the additional property \ref{eqn:Bakeradditionalprop}.
Let $\alpha\in \Sigma_n$ and let $m\in\Z_+$ such that $\alpha\in\Omega_m$. 
%Write $\alpha=(v,\tuplent)$ with $\tuplent=(g_2,\dots,g_m)$. 
Let $\sigma\in G_k$. 
% Write
% \[C_\alpha=\Spec\frac{\bar k[X_1^{\pm 1},\dots, X_m^{\pm 1},\tilde Y]}{(\m{F}_\alpha,\tilde{\m{F}}_2,\dots,\tilde{\m{F}}_m)}.\]
From \ref{eqn:Bakeradditionalprop} it follows that $\alpha^\sigma\in\Sigma_n$ and $\got a_\alpha^\sigma=\got a_{\alpha^\sigma}$, $R_{\alpha^\sigma}=R_\alpha^\sigma$, $C_{\alpha^\sigma}=C_\alpha^\sigma$, $f|_{\alpha^\sigma}=f|_\alpha^\sigma$. Hence $D_{\alpha^\sigma}=D_\alpha^\sigma$ and so $\bar C_{\alpha^\sigma}=\bar C_{\alpha}^\sigma$.

Let $p\in\Sing(\bar C_\alpha)$. Recall $\bar{\m{G}}_p\in \bar k[X_{j_\alpha}]$ is monic of degree $1$ generating the maximal ideal of $\O_{\bar C_\alpha,p}$. Since 
$\bar C_\alpha^\sigma=\bar C_{\alpha^\sigma}$, the ideal $(\bar{\m{G}}_p^\sigma)$ is the maximal ideal of $\O_{\bar C_\alpha^\sigma,p^\sigma}$. Therefore $\bar{\m G}_{p^\sigma}=\bar{\m{G}}_p^\sigma$ as $\bar{\m{G}}_p^\sigma\in \bar k[X_{j_{\alpha^\sigma}}]$ is linear and monic. Finally, the equality $\m F_{\alpha^\sigma}=\m F_{\alpha}^\sigma$ implies that we can choose $\tilde{\m G}_{p^\sigma}=\tilde{\m G}_p^\sigma$. 
Let $g_p\in \bar k[x_1,\dots,x_m,y]$ related to $\tilde{\m{G}}_p$ by $M_{\alpha}$. If $\tilde{\m G}_{p^\sigma}=\tilde{\m G}_p^\sigma$, then $g_p^\sigma$ is the polynomial related to $\tilde{\m G}_{p^\sigma}$ by $M_{\alpha^\sigma}=M_{\alpha}$; hence $g_{p^\sigma}=g_p^\sigma$.

Now let $\singpointnt_n\subseteq\bigsqcup_{\alpha\in\Sigma_n}\Sing(\bar C_\alpha)$ be a $G_k$-invariant set. Consider the morphism $s_n:C_{n+1}\rightarrow C_n$ resolving $\singpointnt_n$.
We want to show that we can construct the collection $\Sigma_{n+1}$ defining $C_{n+1}$ in such a way that it satisfies \ref{eqn:Bakeradditionalprop}. 
Define $\singpointnt_{n,\alpha}=\singpointnt_n\cap \bar C_\alpha$ for any $\alpha\in\Sigma_n$ and $\Sigma_{n,\singpointnt_n}=\{\alpha\in\Sigma_n\mid \singpointnt_{n,\alpha}\neq\varnothing\}$. Note that since $\singpointnt_n$ is $G_k$-invariant, so is $\Sigma_{n,\singpointnt_n}$. Moreover, \[\singpointnt_{n,\alpha}^\sigma=\{p^\sigma\mid p\in\singpointnt_{n,\alpha}\}=\singpointnt_{n,\alpha^\sigma}\] for any $\alpha\in\Sigma_n$ and $\sigma\in G_k$.

Let $\gamma\in\Sigma_{n+1}$ and $\sigma\in G_k$. Assume $\tilde{\m G}_{p^\sigma}=\tilde{\m G}_p^\sigma$ for any $p\in\singpointnt_n$. If $\gamma\notin\Sigma_n$, then for some $\alpha\in\Sigma_{n,\singpointnt_n}$ either $\gamma=\tilde\alpha$ or $\gamma=\beta\circ_{g_p}\alpha$, for some $p\in\singpointnt_{n,\alpha}$, and $\beta\in\Z_+^2$ primitive. It follows from Lemma \ref{lem:Galoisconjgamma} that $\gamma^\sigma$ equals either $\widetilde{\alpha^\sigma}$ or $\beta\circ_{g_{p^\sigma}}\alpha^\sigma$. 
In particular, the matrix $M_{\gamma^\sigma}$, the positive integer $j_{\gamma^\sigma}$ and the polynomial $\m F_{\gamma^\sigma}$ have been defined in \S\ref{subsec:BakerInductiveStep} even when $\gamma^\sigma\notin\Sigma_{n+1}$ (see Notation \ref{nt:Notationforgamma}, \ref{nt:Notationfortildealpha}). This allows us to state the following result.

\begin{thm}\label{thm:Galoisequivariantpolynomials}
Let $s_n:C_{n+1}\rightarrow C_n$ be the morphism resolving the $G_k$-invariant set $\singpointnt_n \subseteq\bigsqcup_{\alpha\in\Sigma_n}\Sing(\bar C_\alpha)$. Suppose $\Sigma_n$ satisfies the additional property \ref{eqn:Bakeradditionalprop}. Then $\Sigma_{n+1}$ satisfies \ref{eqn:Bakeradditionalprop} if for all $\sigma\in G_k$, one has
\begin{enumerate}[label=(\arabic*)]
    \item $\tilde{\m G}_{p^\sigma}=\tilde{\m G}_{p}^\sigma$ for all $p\in \singpointnt_n$; \label{item:Galoistheorem(i)}
    \item $M_\gamma=M_{\gamma^\sigma}$ for any $\gamma\in\Sigma_{n+1}$; \label{item:Galoistheorem(ii)}
    \item $\ord_{X_{j_\gamma}}(\m{F}_\gamma)=\ord_{X_{j_{\gamma^\sigma}}}(\m{F}_{\gamma^\sigma})$ for any $\gamma\in\Sigma_{n+1}$. \label{item:Galoistheorem(iii)}
\end{enumerate}
Furthermore, if $\alpha_1,\dots,\alpha_h\in\Sigma_{n, \singpointnt_n}$ so that $\Sigma_{n, \singpointnt_n}=\bigsqcup_{i=1}^h G_k\,\alpha_i$, then
\[\Sigma_{n+1}=G_k\cdot\bigcup_{i=1}^h(\Sigma_{\singpointnt_{n,\alpha_i}}\cup\{\tilde\alpha_i\})\cup(\Sigma_n\setminus \Sigma_{n,\singpointnt_n}).\]
\proof
Assume \ref{item:Galoistheorem(i)}, \ref{item:Galoistheorem(ii)} and \ref{item:Galoistheorem(iii)} and let $\sigma\in G_k$. Let $\gamma\in\Sigma_{n+1}\setminus\Sigma_n$. Then there exists $\alpha\in\Sigma_{n,\singpointnt_n}$ such that $\gamma=\tilde\alpha$ or $\gamma\in\Sigma_{\singpointnt_{n,\alpha}}$. 

Suppose $\gamma=\tilde\alpha$ for some $\alpha\in\Sigma_{n,\singpointnt_n}$. Then $\gamma^\sigma=\widetilde{\alpha^\sigma}$ by Lemma \ref{lem:Galoisconjgamma} and so $\gamma^\sigma\in\Sigma_{n+1}$. Note that $j_{\gamma}=j_{\gamma^\sigma}$ and $\m F_{\gamma}=\m F_{\gamma^\sigma}$. Indeed, $j_{\tilde\alpha}=j_\alpha$, $\m F_{\tilde\alpha}=\m F_{\alpha}$ by construction, and $j_{\gamma^\sigma}=j_{\alpha^\sigma}=j_\alpha$ and $\m F_{\gamma^\sigma}=\m F_{\alpha^\sigma}=\m F_\alpha^\sigma$, where the last equalities follow from the fact that $\Sigma_n$ satisfies \ref{eqn:Bakeradditionalprop}.

Suppose now that $\gamma\in\Sigma_{\singpointnt_{n,\alpha}}$. Then $\gamma=\beta\circ_{g_p}\alpha$ for some $p\in\singpointnt_{n,\alpha}$ and some primitive vector $\beta\in\Z_+^2$. Lemma \ref{lem:Galoisconjgamma} implies that $\gamma^\sigma=\beta\circ_{g_{p^\sigma}}\alpha^\sigma$. Let $m\in\Z_+$ such that $\alpha\in\Omega_m$.
Note that $j_\gamma=j_{\gamma^\sigma}$. Indeed $j_\gamma=m+1$ by construction, and similarly $j_{\gamma^\sigma}=m+1$ since $\alpha^\sigma\in\Omega_m$.

Now we want to show that $\m{F}_\gamma^\sigma=\m F_{\gamma^\sigma}$.  
Let $\alpha_p=(0,1)\circ_{g_p}\alpha$, as in \S\ref{subsec:BakerInductiveStep}. Then $(\alpha_p)^\sigma=(0,1)\circ_{g_{p^\sigma}}\alpha^\sigma=(\alpha^\sigma)_{p^\sigma}$ by Lemma \ref{lem:Galoisconjgamma}. Since $g_p^\sigma=g_{p^\sigma}$ and $M_\alpha=M_{\alpha^\sigma}$, Remark \ref{rem:Matrixalphap} shows that $M_{\alpha_p}=M_{(\alpha^\sigma)_{p^\sigma}}$.
Recall that the matrix $M_\gamma$ is obtained as the product $(I_m\oplus M_\beta)\cdot M_{\alpha_p}$, for some matrix $M_\beta$ attached to $\beta$. Similarly, $M_{\gamma^\sigma}=(I_m\oplus M_\beta')\cdot M_{(\alpha^\sigma)_{p^\sigma}}$ for some matrix $M_\beta'$ attached to $\beta$. It follows that $M_\beta=M_\beta'$ as we are assuming $M_\gamma=M_{\gamma^\sigma}$.

We recall $\m{F}_\gamma$ and $\m F_{\gamma^\sigma}$ are constructed from $\m{F}_{\alpha,p}$ and $\m F_{\alpha^\sigma,p^\sigma}$ respectively, via the change of variables given by $M_\beta$. Explicitly,
\[\m{F}_{\alpha,p}\stackrel{M_\beta}{=}X_{m+1}^{n_1}Y^{n_2}\cdot\m{F}_\gamma,\qquad\m F_{\alpha^\sigma,p^\sigma}\stackrel{M_\beta}{=}X_{m+1}^{n_3}Y^{n_4}\cdot\m F_{\gamma^\sigma},\]
for some $n_1,n_2,n_3,n_4\in\Z$. Note that $\m F_{\alpha,p}^\sigma=\m F_{\alpha^\sigma, p^\sigma}$ since $\tilde{\m G}_p^\sigma=\tilde{\m G}_{p^\sigma}$ and $\m F_\alpha^\sigma=\m F_{\alpha^\sigma}$. Therefore $\m F_\gamma^\sigma=\m F_{\gamma^\sigma}$ as $\ord_{X_{m+1}}(\m{F}_\gamma)=\ord_{X_{m+1}}(\m{F}_{\gamma^\sigma})$ by assumption and $\ord_{Y}(\m{F}_\gamma)=\ord_{Y}(\m{F}_{\gamma^\sigma})=0$ by construction.

To conclude the proof it only remains to show that $\bar C_{\gamma^\sigma}\neq\varnothing$ since this would imply $\gamma^\sigma\in\Sigma_{\singpointnt_{n,\alpha^\sigma}}$. We showed $j_{\gamma^\sigma}=j_\gamma$, $M_{\gamma^\sigma}=M_\gamma$ and $\m F_{\gamma^\sigma}=\m F_\gamma^\sigma$, and so $\bar C_{\gamma^\sigma}=\bar C_\gamma^\sigma$. But $\bar C_\gamma\neq \varnothing$ since $\gamma\in\Sigma_{\singpointnt_{n,\alpha}}$. Thus $\bar C_{\gamma^\sigma}\neq\varnothing$.
\endproof
\end{thm}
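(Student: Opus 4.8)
The plan is to verify the additional property \ref{eqn:Bakeradditionalprop} for $\Sigma_{n+1}$ directly, by going through the three kinds of elements out of which $\Sigma_{n+1}$ is built. Recall from \S\ref{sec:multipleSing} that, as a set, $\Sigma_{n+1}=\bigcup_{\alpha\in\Sigma_{n,\singpointnt_n}}\bigl(\Sigma_{\singpointnt_{n,\alpha}}\cup\{\tilde\alpha\}\bigr)\cup\bigl(\Sigma_n\setminus\Sigma_{n,\singpointnt_n}\bigr)$. Since $\singpointnt_n$ is $G_k$-invariant, so is $\Sigma_{n,\singpointnt_n}$, and $\singpointnt_{n,\alpha}^\sigma=\singpointnt_{n,\alpha^\sigma}$ for all $\alpha\in\Sigma_n$ and $\sigma\in G_k$. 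So I fix $\gamma\in\Sigma_{n+1}$ and $\sigma\in G_k$ and show $\gamma^\sigma\in\Sigma_{n+1}$ together with $M_{\gamma^\sigma}=M_\gamma$, $j_{\gamma^\sigma}=j_\gamma$, and $\m F_{\gamma^\sigma}=\m F_\gamma^\sigma$. The case $\gamma\in\Sigma_n\setminus\Sigma_{n,\singpointnt_n}$ is immediate: by $G_k$-invariance of $\Sigma_{n,\singpointnt_n}$ and the inductive property \ref{eqn:Bakeradditionalprop} for $\Sigma_n$, $\gamma^\sigma$ again lies in $\Sigma_n\setminus\Sigma_{n,\singpointnt_n}\subseteq\Sigma_{n+1}$ and the three equalities are inherited.

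Next I would treat $\gamma=\tilde\alpha$ for $\alpha\in\Sigma_{n,\singpointnt_n}$. Using hypothesis \ref{item:Galoistheorem(i)}, one has $g_{p^\sigma}=g_p^\sigma$ for each $p\in\singpointnt_{n,\alpha}$ (the derivation recalled just before the theorem), hence $g_{\singpointnt_{n,\alpha}}^\sigma=\prod_{p}g_{p^\sigma}=g_{\singpointnt_{n,\alpha^\sigma}}$; Lemma \ref{lem:Galoisconjgamma} then gives $\tilde\alpha^\sigma=\widetilde{\alpha^\sigma}$, which lies in $\Sigma_{n+1}$ because $\alpha^\sigma\in\Sigma_{n,\singpointnt_n}$. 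Since $j_{\tilde\alpha}=j_\alpha$ and $\m F_{\tilde\alpha}=\m F_\alpha$ (Notation \ref{nt:Notationfortildealpha}), and likewise for $\widetilde{\alpha^\sigma}$, the equalities for $\tilde\alpha$ follow from those for $\alpha\in\Sigma_n$, with the matrix equality supplied by hypothesis \ref{item:Galoistheorem(ii)}.

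The substantive case is $\gamma=\beta\circ_{g_p}\alpha\in\Sigma_{\singpointnt_{n,\alpha}}$ with $\alpha\in\Sigma_{n,\singpointnt_n}\cap\Omega_m$, $p\in\singpointnt_{n,\alpha}$, and $\beta\in\Z_+^2$ the normal vector of an edge of the Newton polygon $\Delta_{\alpha,p}$ of $\m F_{\alpha,p}$. By Lemma \ref{lem:Galoisconjgamma}, $\gamma^\sigma=\beta\circ_{g_{p^\sigma}}\alpha^\sigma$, and $j_{\gamma^\sigma}=m+1=j_\gamma$ since $\alpha^\sigma\in\Omega_m$ (Notation \ref{nt:Notationforgamma}). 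For the matrices, Remark \ref{rem:Matrixalphap} together with $M_{\alpha^\sigma}=M_\alpha$ and $g_{p^\sigma}=g_p^\sigma$ gives $M_{(\alpha^\sigma)_{p^\sigma}}=M_{\alpha_p}$; writing $M_\gamma=(I_m\oplus M_\beta)M_{\alpha_p}$ and $M_{\gamma^\sigma}=(I_m\oplus M_\beta')M_{(\alpha^\sigma)_{p^\sigma}}$ with $M_\beta,M_\beta'$ both attached to $\beta$, hypothesis \ref{item:Galoistheorem(ii)} forces $M_\beta=M_\beta'$. Then, because $\m F_{\alpha,p}^\sigma=\m F_{\alpha^\sigma,p^\sigma}$ (using $\tilde{\m G}_p^\sigma=\tilde{\m G}_{p^\sigma}$ from \ref{item:Galoistheorem(i)} and $\m F_\alpha^\sigma=\m F_{\alpha^\sigma}$ from property \ref{eqn:Bakeradditionalprop} for $\Sigma_n$), and $\m F_\gamma$, $\m F_{\gamma^\sigma}$ arise from $\m F_{\alpha,p}$, $\m F_{\alpha^\sigma,p^\sigma}$ by the single change of variables $M_\beta$ after dividing out powers of $X_{m+1}$ and $Y$, we get $\m F_\gamma^\sigma=\m F_{\gamma^\sigma}$ once the divided powers match: the $Y$-exponent is $0$ on both sides by construction, and the $X_{m+1}$-exponents agree by hypothesis \ref{item:Galoistheorem(iii)}. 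These three equalities give $\bar C_{\gamma^\sigma}=\bar C_\gamma^\sigma\neq\varnothing$, so $\gamma^\sigma\in\Sigma_{\singpointnt_{n,\alpha^\sigma}}\subseteq\Sigma_{n+1}$ by Definition \ref{defn:Sigman}. This establishes property \ref{eqn:Bakeradditionalprop}; the displayed description of $\Sigma_{n+1}$ then follows by partitioning $\Sigma_{n,\singpointnt_n}$ into its $G_k$-orbits $G_k\alpha_i$ and using $\bigl(\Sigma_{\singpointnt_{n,\alpha}}\cup\{\tilde\alpha\}\bigr)^\sigma=\Sigma_{\singpointnt_{n,\alpha^\sigma}}\cup\{\widetilde{\alpha^\sigma}\}$, which is exactly what the case analysis above shows.

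I expect the main obstacle to be the bookkeeping around the non-canonical choices built into the construction of \S\ref{subsec:BakerInductiveStep}: a matrix ``attached to $\beta$'' and the normalization defining $\m F_\gamma$ (insisting $\ord_Y\m F_\gamma=0$ and stripping $X$- and $Y$-powers) are not intrinsic, and hypotheses \ref{item:Galoistheorem(ii)}--\ref{item:Galoistheorem(iii)} are precisely what is needed to make these choices compatible with the $G_k$-action. Carefully propagating $g_{p^\sigma}=g_p^\sigma$, then $M_{\alpha_p}=M_{(\alpha^\sigma)_{p^\sigma}}$, then $\m F_{\alpha,p}^\sigma=\m F_{\alpha^\sigma,p^\sigma}$, and finally the normalized $\m F_\gamma$, while keeping straight which equalities are hypotheses and which are consequences, is where the real care concentrates.
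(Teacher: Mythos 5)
Your proposal is correct and follows essentially the same route as the paper's proof: the same case split of $\Sigma_{n+1}$ (old elements, $\tilde\alpha$, and $\beta\circ_{g_p}\alpha$), the same use of Lemma \ref{lem:Galoisconjgamma} and Remark \ref{rem:Matrixalphap} to get $\gamma^\sigma$ and $M_{\alpha_p}=M_{(\alpha^\sigma)_{p^\sigma}}$, hypotheses \ref{item:Galoistheorem(ii)}--\ref{item:Galoistheorem(iii)} to pin down $M_\beta$ and the normalization of $\m F_\gamma$, and the final $\bar C_{\gamma^\sigma}=\bar C_\gamma^\sigma\neq\varnothing$ step to conclude $\gamma^\sigma\in\Sigma_{\singpointnt_{n,\alpha^\sigma}}$. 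Your brief orbit argument for the displayed description of $\Sigma_{n+1}$ is also consistent with how the paper treats it.
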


\begin{rem}\label{rem:Galoisconditionscanbeachieved}
Suppose $\Sigma_n$ satisfies \ref{eqn:Bakeradditionalprop}. In this remark we show that conditions \ref{item:Galoistheorem(i)},\ref{item:Galoistheorem(ii)},\ref{item:Galoistheorem(iii)} of Theorem \ref{thm:Galoisequivariantpolynomials} can always be obtained. 

Let $\sigma\in G_k$. We have already observed that we can choose the polynomials $\tilde{\m G}_p$, for $p\in\singpointnt_n$, satisfying \ref{item:Galoistheorem(i)}. Let $\gamma\in\Sigma_{n+1}$. If $\gamma\in\Sigma_n$, then the equalities $M_\gamma=M_{\gamma^\sigma}$ and $\ord_{X_{j_\gamma}}(\m{F}_\gamma)=\ord_{X_{j_{\gamma^\sigma}}}(\m{F}_{\gamma^\sigma})$ follow from the fact that $\Sigma_n$ satisfies \ref{eqn:Bakeradditionalprop}. Suppose $\gamma=\tilde\alpha$ for some $\alpha\in\Sigma_{n,\singpointnt_n}$. Assuming \ref{item:Galoistheorem(i)}, the equality $M_\gamma=M_{\gamma^\sigma}$ follows from Lemma \ref{lem:Galoisconjgamma} and Remark \ref{rem:Matrixtildealpha}. Furthermore, $j_{\gamma}=j_{\gamma^{\sigma}}$ and $\m{F}_\gamma^{\sigma}=\m{F}_{\gamma^\sigma}$ as $j_{\alpha}=j_{\gamma^{\alpha}}$ and $\m F_{\alpha}^\sigma=\m F_{\alpha^\sigma}$.
Suppose $\gamma\in\Sigma_{\singpointnt_{n,\alpha}}$ for some $\alpha\in\Sigma_{n,\singpointnt_n}$. Then $\gamma=\beta\circ_{g_p}\alpha$ for some primitive $\beta\in\Z_+^2$ and some $p\in\singpointnt_{n,\alpha}$. Let $m\in\Z_+$ such that $\alpha\in\Omega_m$. In the proof of Theorem \ref{thm:Galoisequivariantpolynomials} we showed that $M_\gamma=(I_m\oplus M_\beta)\cdot M_{\alpha_p}$ and $M_{\gamma^\sigma}=(I_m\oplus M_\beta')\cdot M_{\alpha_p}$, for some matrices $M_\beta,M_\beta'$ attached to $\beta$ that can be freely chosen. Therefore it suffices to choose $M_\beta=M_\beta'$ to have $M_\gamma=M_{\gamma^\sigma}$. Finally, the polynomial $\m F_\gamma$ is fixed up to a power of $X_{j_\gamma}$, so we can easily require $\ord_{X_{j_\gamma}}(\m{F}_\gamma)=\ord_{X_{j_{\gamma^\sigma}}}(\m{F}_{\gamma^\sigma})$. 
\end{rem}

\begin{rem}\label{rem:strictGaloisconditions}
The conditions of Theorem \ref{thm:Galoisequivariantpolynomials} are satisfied if
\begin{enumerate}[label=(\arabic*)]
    \item $\tilde{\m G}_p=\bar{\m G}_p$ for any $p\in \singpointnt_n$;
    \item \label{Equivariant(ii)} for any primitive $\beta\in\Z_+^2$, a fixed matrix $M_\beta\in\SL_2(\Z)$ attached to $\beta$ is chosen whenever choosing a matrix attached to $\beta$ is required; 
    \item \label{Equivariant(i)} there exists $a\in\N$ such that $\ord_{X_{j_\gamma}}(\m{F}_\gamma)=a$ for any $\gamma\in\Sigma_{n+1}$. 
\end{enumerate}
Note that point \ref{Equivariant(ii)} implies that if $\gamma=\beta\circ_{g_p}\alpha$, for some $\alpha\in\Sigma_n$, $p\in\singpointnt_{n,\alpha}$, and some primitive vector $\beta\in\Z_+^2$, then we use the fixed matrix $M_\beta$ to construct $M_\gamma=(I_m\oplus M_\beta)\cdot M_{\alpha_p}$.
\end{rem}

Let $C_1$ be the completion of $C_0$ with respect to its Newton polygon. From \S\ref{subsec:BakerFirstStep} we easily see that $\gamma^\sigma=\gamma$ and $\m F_\gamma^\sigma=\m F_\gamma$ for any $\gamma\in\Sigma_1$ and any $\sigma\in G_k$. Hence the set $\Sigma_1\subset\Omega$, defining $C_1$, satisfies \ref{eqn:Bakeradditionalprop}. Theorem \ref{thm:Galoisequivariantpolynomials} and Remark \ref{rem:Galoisconditionscanbeachieved} show that 
we can construct Baker's resolutions of $C_0$
\[\dots\xrightarrow{s_{n+1}}C_{n+1}\xrightarrow{s_n}C_{n}\xrightarrow{s_{n-1}}\dots\xrightarrow{s_1}C_1,\]
% where the maps $s_n$ are the morphisms resolving Galois-invariant sets $\singpointnt_n\subseteq\bigsqcup_{\alpha\in\Sigma_n}\Sing(\bar C_\alpha)$, and 
such that for all $n\in\Z_+$ the sets $\Sigma_n$ satisfy the additional property \ref{eqn:Bakeradditionalprop}. In particular, $G_k\subseteq\mathrm{Aut}(C_n)$ and $\Sigma_n$ is $G_k$-invariant. The Galois-invariance of $\Sigma_n$ makes the action on $C_n$ easy to describe. Fix such a Baker's resolution.

\begin{lem}\label{lem:snGalois}
Let $n\in\Z_+$. Then $\sigma\circ s_n=s_n\circ\sigma$, for any $\sigma\in G_k$.
\proof
Recall that $s_n$ restricts to the identity on $C_0$. Then the two morphisms of $k$-schemes $\sigma\circ s_n$ and $s_n\circ\sigma$ agree on $C_0$. But $C_0$ is a dense open of $C_{n+1}$, thus $\sigma\circ s_n=s_n\circ\sigma$ by \cite[Proposition 3.3.11]{Liu}.
\endproof
\end{lem}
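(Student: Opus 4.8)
The plan is to prove $\sigma\circ s_n=s_n\circ\sigma$ by checking the equality on the dense open subscheme $C_0\hookrightarrow C_{n+1}$ and then propagating it by a standard density argument.

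First I would assemble the two ingredients already at hand. By Theorem~\ref{thm:sndefn} the morphism $s_n\colon C_{n+1}\to C_n$ restricts to the identity on $C_0$; in the notation of this section, $s_n\circ\iota_{n+1}=\iota_n$, where $\iota_m\colon C_0\hookrightarrow C_m$ is the canonical dense open immersion. Second, since each $\Sigma_m$ is built so as to satisfy property~\ref{eqn:Bakeradditionalprop}, the action of $G_k$ on $C_m$ is the natural Galois action on the base change $C_{m,k}\times_k\bar k\simeq C_m$; as $\iota_m$ is itself defined over $k$ (it is the base change of the immersion $C_{0,k}\hookrightarrow C_{m,k}$), this action is compatible with $\iota_m$, i.e.\ $\sigma\circ\iota_m=\iota_m\circ\sigma_0$ for every $m$, where $\sigma_0$ denotes the natural action of $\sigma$ on $C_0=C_{0,k}\times_k\bar k$.

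Combining these, I would compute
\[
(\sigma\circ s_n)\circ\iota_{n+1}=\sigma\circ\iota_n=\iota_n\circ\sigma_0,\qquad
(s_n\circ\sigma)\circ\iota_{n+1}=s_n\circ\iota_{n+1}\circ\sigma_0=\iota_n\circ\sigma_0,
\]
so the two $k$-morphisms $\sigma\circ s_n$ and $s_n\circ\sigma$ from $C_{n+1}$ to $C_n$ agree on the dense open $C_0$. To conclude, I would use that $C_{n+1}$ is reduced (it is covered by the reduced charts $C_\gamma$, $\gamma\in\Sigma_{n+1}$, together with $C_0$) and that $C_n$ is separated, being projective; hence by \cite[Proposition~3.3.11]{Liu} two morphisms $C_{n+1}\to C_n$ coinciding on a dense open subscheme of the reduced source coincide everywhere. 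Thus $\sigma\circ s_n=s_n\circ\sigma$.

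The argument is essentially formal, and no real obstacle arises; the only point deserving a line of justification is the Galois-equivariance $\sigma\circ\iota_n=\iota_n\circ\sigma_0$, which is immediate because $\iota_n$ is obtained by base change from a morphism of $k$-schemes.
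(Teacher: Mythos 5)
Your argument is correct and is essentially the paper's proof: both check the equality of $\sigma\circ s_n$ and $s_n\circ\sigma$ on the dense open $C_0$, where $s_n$ restricts to the identity and the Galois action is the standard one, and then conclude by \cite[Proposition 3.3.11]{Liu} using that $C_{n+1}$ is reduced and $C_n$ separated. The only cosmetic caveat is that justifying $\sigma\circ\iota_n=\iota_n\circ\sigma_0$ by viewing $\iota_n$ as the base change of an immersion $C_{0,k}\hookrightarrow C_{n,k}$ slightly anticipates the descent being established; it is cleaner to note that the chart-wise definition of the $G_k$-action on $C_n$ (via property \ref{eqn:Bakeradditionalprop}) visibly restricts to the usual semilinear action on $C_0$.
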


Let $n\in\Z_+$. Recall that for any $\sigma\in G_k$ and $\gamma\in\Sigma_n$, we have $f|_{\gamma^\sigma}=f|_\gamma^\sigma$, as $\Sigma_n$ satisfies \ref{eqn:Bakeradditionalprop}.
Therefore there is a natural action of $G_k$ on the set 
\[\textstyle\bigsqcup_{\gamma\in\Sigma_n}\{\text{simple roots of $f|_\gamma$ in $\bar k^\times$}\},\] 
where the simple root $r\in\bar k^\times$ of $f|_\gamma$ is taken to the simple root $\sigma(r)$ of $f|_{\gamma^\sigma}$.

\begin{thm}\label{thm:Galoismodel}
Let $f\in k[x_1^{\pm 1},y^{\pm 1}]$ be a Laurent polynomial defining a smooth curve $C_{0,k}:f=0$ over $\G_{m,k}^2$. Denote $C_0=C_{0,k}\times_k \bar k$.
We can recursively construct a Baker's resolution of $C_0$
\[\dots\stackrel{s_{n+1}}{\twoheadrightarrow}C_{n+1}\stackrel{s_n}{\twoheadrightarrow}C_{n}\stackrel{s_{n-1}}{\twoheadrightarrow}\dots\stackrel{s_1}{\twoheadrightarrow}C_1\]
%described in \S \ref{sec:Bakerdescription} and constructed in \S\ref{sec:Bakerconstruction} 
where the maps $s_n$ are the birational morphisms resolving Galois-invariant sets $\singpointnt_n\subseteq\bigsqcup_{\alpha\in\Sigma_n}\Sing(\bar C_\alpha)$ (chosen arbitrarily) and the sets $\Sigma_n$, defining the curves $C_n/\bar k$, satisfy the additional property \ref{eqn:Bakeradditionalprop}. For any such sequence:
\begin{enumerate}[label=(\roman*)]
    \item There exists $h\in\Z_+$ such that $\singpointnt_n=\varnothing$ for any $n\geq h$.
    \label{item:Galoismodelitem(i)}
    \item If $\Sing(\bar C_\alpha)\subseteq\Reg(C_n)$ for all $\alpha\in\Sigma_n$, then the scheme-theoretical quotient $C_n/G_k$ is a generalised Baker's model of the smooth completion $C$ of $C_{0,k}$.\label{item:Galoismodelitem(ii)}
    \item If $C_n$ is outer regular, then there is a natural bijection
\[C(\bar k)\setminus C_{0,k}(\bar k)\xleftrightarrow{1:1}\bigsqcup_{\gamma\in\Sigma_n}\{\text{simple roots of $f|_\gamma$ in $\bar k^\times$}\},\]
preserving the action of the Galois group $G_k$.\label{item:Galoismodelitem(iii)}
\end{enumerate}
\proof
Theorem \ref{thm:Galoisequivariantpolynomials} and Remark \ref{rem:Galoisconditionscanbeachieved} show that the sequence can be constructed recursively, for any choice of Galois-invariant $\singpointnt_n\subseteq\bigsqcup_{\alpha\in\Sigma_n}\Sing(\bar C_\alpha)$. Part \ref{item:Galoismodelitem(i)} follows from Theorem \ref{thm:multiplesingmodel}. Part \ref{item:Galoismodelitem(ii)} is implied by Remark \ref{rem:nonsingularbarjacobian}, Lemma \ref{lem:snGalois} and the argument presented at the beginning of the current section.
Now assume $C_n$ is outer regular, i.e.\ $\bar C_\alpha$ is regular for all $\alpha\in\Sigma_n$. Therefore Lemma \ref{lem:regularpointsofbarCgamma} shows that, for every $\gamma\in\Sigma_n$, from the definition $\bar C_\gamma=\Spec\frac{D_\gamma}{(f|_\gamma)}$ we obtain a natural bijective map
\[\bar C_\gamma\stackrel{1:1}{\longleftrightarrow}\{\text{simple roots of $f|_\gamma$ in $\bar k^\times$}\}.\]
%Note then that the action of $G_k$ on the set $\{\text{simple roots of $f|_\gamma$ in $\bar k^\times$}\}$ is the one induced by the Galois action on $\bar C_\gamma$ through the bijection above. 
By part \ref{item:Galoismodelitem(ii)}, the smooth completion $C$ of $C_{0,k}$ is isomorphic to the quotient $C_n/G_k$. Therefore $C\times_k \bar k\simeq C_n$ and so $C(\bar k)\simeq C_n(\bar k)$. Since $C_{0,k}(\bar k)\simeq C_0(\bar k)$ by definition, Lemma \ref{lem:star} implies part \ref{item:Galoismodelitem(iii)}.
\endproof
\end{thm}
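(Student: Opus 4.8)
The plan is to assemble the statement from results already in hand, treating its three parts in order, after first addressing the existence of the asserted sequence. For existence: $\Sigma_1$ satisfies the additional property \ref{eqn:Bakeradditionalprop} trivially, since $\gamma^\sigma=\gamma$ and $\m F_\gamma^\sigma=\m F_\gamma$ for all $\gamma\in\Sigma_1$ and $\sigma\in G_k$; and Theorem \ref{thm:Galoisequivariantpolynomials} together with Remark \ref{rem:Galoisconditionscanbeachieved} shows that whenever $\Sigma_n$ satisfies \ref{eqn:Bakeradditionalprop} and $\singpointnt_n$ is $G_k$-invariant, one may choose the auxiliary data (the polynomials $\tilde{\m G}_p$, the matrices $M_\beta$, the normalisations of the $\m F_\gamma$) so that $\Sigma_{n+1}$ again satisfies \ref{eqn:Bakeradditionalprop}. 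Induction then produces the desired Baker's resolution for any prescribed family of Galois-invariant sets $\singpointnt_n\subseteq\bigsqcup_{\alpha\in\Sigma_n}\Sing(\bar C_\alpha)$. Part \ref{item:Galoismodelitem(i)} is then immediate from Theorem \ref{thm:multiplesingmodel} (the Baker's-resolution version of Theorem \ref{thm:eventually_pn_in_C0}).

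For Part \ref{item:Galoismodelitem(ii)}, I would first observe that the hypothesis forces $C_n$ to be regular: by Remark \ref{rem:nonsingularbarjacobian} every singular point of $C_n$ is the image of a singular point of some $\bar C_\alpha$, and these images lie in $\Reg(C_n)$ by assumption, a contradiction unless $\Sing(C_n)=\varnothing$. Since \ref{eqn:Bakeradditionalprop} holds we have $G_k\subseteq\mathrm{Aut}(C_n)$, and $s_n$ commutes with this action by Lemma \ref{lem:snGalois}; the argument at the start of \S\ref{sec:GaloisBaker} then yields $C_n\simeq C_{n,k}\times_k\bar k$ with $C_{n,k}=C_n/G_k$, smoothness of one curve being equivalent to smoothness of the other. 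As $C_n$ is projective (Theorem \ref{thm:Bakercompleteness}), smooth, and contains $C_0=C_{0,k}\times_k\bar k$ as a dense open, it is the smooth completion of $C_0$; descending, $C_{n,k}$ is a smooth projective $k$-curve containing $C_{0,k}$ densely, hence $C_{n,k}\simeq C$. Finally, a curve occurring in a Baker's resolution occurs in a simple Baker's resolution, so $C_n$ is a generalised Baker's model of $C\times_k\bar k$ with respect to $C_{0,k}\times_k\bar k$ in the sense of Definition \ref{defn:GeneralisedBakerModelAlgebraicallyClosed}, and therefore $C_{n,k}$ is a generalised Baker's model of $C$ by Definition \ref{defn:GeneralisedBakerModel}.

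For Part \ref{item:Galoismodelitem(iii)}: outer regularity of $C_n$ means every $\bar C_\gamma$ is regular, which forces $C_n$ regular (the remark after Definition \ref{defn:outerregular}), so Part \ref{item:Galoismodelitem(ii)} applies and gives $C\times_k\bar k\simeq C_n$ $G_k$-equivariantly, hence $C(\bar k)\setminus C_{0,k}(\bar k)\simeq C_n(\bar k)\setminus C_0(\bar k)=(C_n\setminus C_0)(\bar k)$ as $G_k$-sets. By Lemma \ref{lem:star} this equals $\bigsqcup_{\gamma\in\Sigma_n}\bar C_\gamma(\bar k)$, and since each $\bar C_\gamma$ is regular, Lemma \ref{lem:regularpointsofbarCgamma} identifies $\bar C_\gamma(\bar k)$ with the simple roots of $f|_\gamma$ in $\bar k^\times$. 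Concatenating gives the bijection; $G_k$-equivariance follows because $\bar C_{\gamma^\sigma}=\bar C_\gamma^\sigma$ and $f|_{\gamma^\sigma}=f|_\gamma^\sigma$ (both consequences of \ref{eqn:Bakeradditionalprop}), so the identification $\bar C_\gamma=\Spec D_\gamma/(f|_\gamma)$ of \S\ref{sec:Bakerdescription} intertwines the two Galois actions, matching the action $r\mapsto\sigma(r)$ on roots.

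The main obstacle I expect is not a computation but ensuring the descent in Part \ref{item:Galoismodelitem(ii)} interlocks cleanly: one must confirm that the $G_k$-action built on $C_n$ is exactly the Galois action for which $C_{n,k}\times_k\bar k\simeq C_n$, that it restricts on the dense open $C_0$ to the standard action on $C_{0,k}\times_k\bar k$, and hence that the isomorphism $C_{n,k}\simeq C$ with the smooth completion is genuinely defined over $k$. This is precisely what the opening paragraph of \S\ref{sec:GaloisBaker} and Lemma \ref{lem:snGalois} are arranged to provide, so once those facts are cited the remaining steps are bookkeeping.
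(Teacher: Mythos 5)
Your proposal is correct and follows essentially the same route as the paper: existence via Theorem \ref{thm:Galoisequivariantpolynomials} and Remark \ref{rem:Galoisconditionscanbeachieved}, part (i) from Theorem \ref{thm:multiplesingmodel}, part (ii) from Remark \ref{rem:nonsingularbarjacobian}, Lemma \ref{lem:snGalois} and the descent argument opening \S\ref{sec:GaloisBaker}, and part (iii) from part (ii) together with Lemmas \ref{lem:star} and \ref{lem:regularpointsofbarCgamma}. You merely spell out details the paper leaves implicit (regularity of $C_n$ from the hypothesis of (ii), the passage from Baker's to simple Baker's resolutions, and the Galois-equivariance via $f|_{\gamma^\sigma}=f|_\gamma^\sigma$), all of which are consistent with the paper's argument.
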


\begin{cor}\label{cor:OuterRegBakerModExistenceGeneral}
Any smooth projective curve $C$ defined over a perfect field $k$ has an outer regular generalised Baker's model.
\proof
By Corollary \ref{cor:smoothopenmodel}, for any projective smooth curve $C/k$ there exists a curve $C_{0,k}/k$ as in Theorem \ref{thm:Galoismodel}, birational to $C$. By Theorem \ref{thm:Galoismodel} we can construct a Baker's resolution of $C_{0,k}\times_k \bar k$ 
\[\dots\stackrel{s_{n+1}}{\twoheadrightarrow}C_{n+1}\stackrel{s_n}{\twoheadrightarrow}C_{n}\stackrel{s_{n-1}}{\twoheadrightarrow}\dots\stackrel{s_1}{\twoheadrightarrow}C_1\]
where $s_n$ are the birational morphisms resolving the Galois-invariant sets $\singpointnt_n=\bigsqcup_{\alpha\in\Sigma_n}\Sing(\bar C_\alpha)$ and the sets $\Sigma_n$ satisfy the additional property \ref{eqn:Bakeradditionalprop}. Furthermore, by Theorem \ref{thm:Galoismodel}\ref{item:Galoismodelitem(i)} there exists $n\in\Z_+$ such that $\singpointnt_n=\varnothing$. It follows that $\bar C_\gamma$ is regular for all $\gamma\in\Sigma_n$, i.e.\ $C_n$ is outer regular. Let $\tilde C=C_n/G_k$. Thus $\tilde C$ is an outer regular generalised Baker's model of $C$.
\endproof
\end{cor}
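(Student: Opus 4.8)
The plan is to obtain this as a direct application of Theorem~\ref{thm:Galoismodel}, run with the greediest possible choice of the sets $\singpointnt_n$. First I would use Corollary~\ref{cor:smoothopenmodel} to realise the given curve $C$, up to isomorphism, as the smooth completion of a smooth affine curve $C_{0,k}:f=0$ inside $\G_{m,k}^2$, with $f\in k[x_1^{\pm 1},y^{\pm 1}]$. Setting $C_0=C_{0,k}\times_k\bar k$, I would then start constructing a Baker's resolution of $C_0$ exactly as produced by Theorem~\ref{thm:Galoismodel}: Theorem~\ref{thm:Galoisequivariantpolynomials} together with Remark~\ref{rem:Galoisconditionscanbeachieved} (or the simpler sufficient conditions of Remark~\ref{rem:strictGaloisconditions}) allow one to arrange that every set $\Sigma_n$ satisfies the additional property~\ref{eqn:Bakeradditionalprop}, hence that $G_k\subseteq\mathrm{Aut}(C_n)$ and $\Sigma_n$ is $G_k$-stable. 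At the $n$-th step I would resolve the \emph{entire} outer singular locus, $\singpointnt_n:=\bigsqcup_{\alpha\in\Sigma_n}\Sing(\bar C_\alpha)$.

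The key steps, in order, are then: (1) check that this $\singpointnt_n$ is admissible, i.e.\ $G_k$-invariant, so that Theorem~\ref{thm:Galoismodel} applies --- this is immediate from~\ref{eqn:Bakeradditionalprop}, since for $\sigma\in G_k$ and $\alpha\in\Sigma_n$ one has $\alpha^\sigma\in\Sigma_n$ and $\bar C_{\alpha^\sigma}=\bar C_\alpha^\sigma$, so $\Sing(\bar C_{\alpha^\sigma})=\Sing(\bar C_\alpha)^\sigma$ and $G_k$ just permutes the summands of the disjoint union; (2) invoke Theorem~\ref{thm:Galoismodel}\ref{item:Galoismodelitem(i)} to get $n\in\Z_+$ with $\singpointnt_n=\varnothing$, which by our choice of $\singpointnt_n$ forces $\Sing(\bar C_\alpha)=\varnothing$ for all $\alpha\in\Sigma_n$, i.e.\ $C_n$ is outer regular; (3) since then $\Sing(\bar C_\alpha)=\varnothing\subseteq\Reg(C_n)$ for every $\alpha$, apply Theorem~\ref{thm:Galoismodel}\ref{item:Galoismodelitem(ii)} to conclude that $\tilde C:=C_n/G_k$ is a generalised Baker's model of the smooth completion of $C_{0,k}$, which is $C$; (4) observe $\tilde C\times_k\bar k\simeq C_n$ is outer regular, so by Definition~\ref{defn:GeneralisedBakerModel} $\tilde C$ is an outer regular generalised Baker's model of $C$.

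I do not expect any serious obstacle here: all the substantive work --- termination of the resolution process, Galois descent of the model to $k$, the genus/singularity-count monotonicity behind Theorem~\ref{thm:Galoismodel}\ref{item:Galoismodelitem(i)} --- is already packaged in the results cited above, and the fact that a suitable $C_{0,k}/k$ birational to $C$ exists is Corollary~\ref{cor:smoothopenmodel}. The only point requiring genuine care is step~(1), verifying that the maximal choice $\singpointnt_n=\bigsqcup_{\alpha}\Sing(\bar C_\alpha)$ remains Galois-invariant at each stage; this hinges entirely on property~\ref{eqn:Bakeradditionalprop}, which in particular gives $\bar C_{\alpha^\sigma}=\bar C_\alpha^\sigma$, so the argument closes immediately.
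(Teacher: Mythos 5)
Your proposal is correct and follows essentially the same route as the paper's proof: realise $C$ as the smooth completion of a plane curve via Corollary \ref{cor:smoothopenmodel}, run the Galois-equivariant Baker's resolution of Theorem \ref{thm:Galoismodel} with the maximal choice $\singpointnt_n=\bigsqcup_{\alpha\in\Sigma_n}\Sing(\bar C_\alpha)$, use part \ref{item:Galoismodelitem(i)} to get $\singpointnt_n=\varnothing$ for some $n$, and take $\tilde C=C_n/G_k$. Your extra verification that this maximal choice is $G_k$-invariant (via $\bar C_{\alpha^\sigma}=\bar C_\alpha^\sigma$ from property \ref{eqn:Bakeradditionalprop}) is a point the paper leaves implicit, and it is argued correctly.
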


In the next proof we will show how Algorithm \ref{alg:TheAlgorithm} and Theorem \ref{thm:introduction} follow from previous results.

\begin{proof}[Proof of Theorem \ref{thm:introduction}]
Suppose $C_{0,k}$ is geometrically connected. We recursively construct a Baker's resolution of $C_0=C_{0,k}\times_k\bar k$
\[\dots\stackrel{s_{n+1}}{\twoheadrightarrow}C_{n+1}\stackrel{s_n}{\twoheadrightarrow}C_{n}\stackrel{s_{n-1}}{\twoheadrightarrow}\dots\stackrel{s_1}{\twoheadrightarrow}C_1\]
%described in \S \ref{sec:Bakerdescription} and constructed in \S\ref{sec:Bakerconstruction} 
where the morphisms $s_n$ resolve the sets $\singpointnt_n=\bigsqcup_{\alpha\in\Sigma_n}\Sing(\bar C_\alpha)$. In the construction, for any $n\in\Z_+$, we make the following choices:
\begin{enumerate}[label=(\arabic*)]
    \item For any point $p\in\singpointnt_n$ choose $\tilde{\m G}_p=\bar{\m G}_p$. This is always possible, since $C_0$ is connected (see Remark \ref{rem:tildeGpchoice}).
    \item Every time we need to choose a matrix $M_\beta\in\SL_2(\Z)$ attached to some primitive vector $\beta=(\beta_1,\beta_2)\in\Z_+^2$,
    choose $M_\beta=\big(\begin{smallmatrix}\delta_1\!\!&\delta_2\!\cr\beta_1\!\!&\beta_2\!\end{smallmatrix}\big)$, where $(\delta_1,\delta_2)=\delta_\beta$ (Notation \ref{nt:deltabeta}).
    \item For any $\gamma\in\Sigma_{n+1}\setminus\Sigma_n$, choose $\m F_\gamma$ with $\ord_{X_{j_{\gamma}}}(\m F_\gamma)=0$. 
\end{enumerate}
With the choices above, by Theorem \ref{thm:Galoisequivariantpolynomials} and Remark \ref{rem:strictGaloisconditions}, the sets $\Sigma_n$ satisfy the additional property \ref{eqn:Bakeradditionalprop} and the sets $\singpointnt_n$ are Galois-invariant.
Theorem \ref{thm:Galoismodel}\ref{item:Galoismodelitem(i)} implies that there exists $n\in\Z_+$ such that $\bar C_\alpha$ is regular for all $\alpha\in\Sigma_n$. In other words, $C_n$ is outer regular.
Let $n$ be as small as possible, i.e.\ such that $C_h$ is not outer regular for every $h<n$.
%Let $n\in\Z_+$ such that $C_n$ is the curve terminating the sequence. 
%Recall that if $\alpha\notin\Sigma_n$ then $C_\alpha\subseteq C_0$ by Lemma \ref{lem:star}. 
By Theorem \ref{thm:Galoismodel}\ref{item:Galoismodelitem(iii)} there is a natural bijection preserving the action of the Galois group $G_k$,
\[C(\bar k)\setminus C_{0,k}(\bar k)\xleftrightarrow{1:1}\bigsqcup_{\gamma\in\Sigma_n}\{\text{simple roots of $f|_\gamma$ in $\bar k^\times$}\},\]
where $C$ is the smooth completion of $C_{0,k}$.

For any $h<n$ recall $\singpointnt_{h,\alpha}=\singpointnt_h\cap\bar C_\alpha$ for any $\alpha\in\Sigma_h$, and note that
\[\Sigma_{h,\singpointnt_h}=\{\alpha\in\Sigma_h\mid\singpointnt_{h,\alpha}\neq\varnothing\}=\{\alpha\in\Sigma_h\mid \bar C_\alpha\text{ is singular}\},\]
as $\singpointnt_{h,\alpha}=\Sing(\bar C_{\alpha})$. Define 
\[\tilde\Sigma_h=\{\tilde\alpha\mid\alpha\in\Sigma_{h,\singpointnt_h}\}\cup(\Sigma_h\setminus\Sigma_{h,\singpointnt_h}),\quad\text{and}\quad \Sigma_{h+1}^+=\textstyle{\bigcup_{\alpha\in\Sigma_{h,\singpointnt_h}}}\Sigma_{\singpointnt_{h,\alpha}},\]
so that $\Sigma_{h+1}=\Sigma_{h+1}^+\cup\tilde\Sigma_h$. We are going to show that $\bar C_\gamma$ is regular for any $\gamma\in\tilde\Sigma_h$. 
From the choice of $\singpointnt_h$, we have $\bar C_\gamma$ regular for any $\gamma\in\Sigma_h\setminus\Sigma_{h,\singpointnt_h}$. 
Now let $\alpha\in\Sigma_{h,\singpointnt_h}$. Lemma \ref{lem:Ctildealpha} shows that $\bar C_{\tilde\alpha}$ is isomorphic to $\bar C_\alpha\setminus\singpointnt_{h,\alpha}$. 
This is a regular scheme since $\singpointnt_{h,\alpha}=\Sing(\bar C_\alpha)$. 

Now we want to describe the set $\singpointnt_h$ for any $h<n$. Define $\Sigma_1^+=\Sigma_1$. If $h>1$, then $\bar C_\gamma$ is regular when $\gamma\in\tilde\Sigma_{h-1}$. Therefore $\Sigma_{h,\singpointnt_h}\subseteq\Sigma_{h}^+$ for all $h<n$. 
Now $D_\gamma=k[X_{j_\gamma}^{\pm 1}]$ for all $\gamma\in\Sigma_h^+$ from \S\ref{subsec:BakerFirstStep} (case $h=1$) and Lemma \ref{lem:Dgamma} (case $h>1$). 
Hence the points in $\singpointnt_h$ bijectively corresponds to non-zero multiple roots of $f|_\gamma$, $\gamma\in\Sigma_{h}^+$. 
Furthermore, given $\gamma\in\Sigma_h^+$, for any multiple root $r\in\bar k^\times$ of $f|_\gamma$ we have $\bar{\m G}_{p_r}=X_{j_{\gamma}}-r$, where $p_r$ is the point of $\singpointnt_h$ corresponding to $r$.

Let $P_h$, $P$ be the indexed sets of polynomials in $\bar k[X,Y]$ constructed in \S\ref{subsec:IntroductionTheorem} via Algorithm \ref{alg:TheAlgorithm}. We are going to prove the following facts:
\begin{enumerate}[label=(\roman*)]
    \item $P_h=\bigsqcup_{\gamma\in\Sigma_h^+}\{\m F_\gamma(X,Y)\}$ for any $h\leq n$;\label{item:Introductionthm(i)}
    \item $\bigsqcup_{i=1}^h P_i=\bigsqcup_{\gamma\in\Sigma_h}\{\m F_\gamma(X,Y)\}$ for any $h\leq n$; \label{item:Introductionthm(ii)}
    \item $P=\bigsqcup_{\gamma\in\Sigma_n}\{\m F_\gamma(X,Y)\}$; \label{item:Introductionthm(iii)}
\end{enumerate}
where $\m F_\gamma(X,Y)$ denotes the image of $\m F_\gamma$ under the isomorphism 
\[\bar k[X_{j_\gamma},Y]\rightarrow \bar k[X,Y],\quad X_{j_\gamma}\mapsto X,\,\, Y\mapsto Y.\] 
Note that \ref{item:Introductionthm(iii)} concludes the proof of Theorem \ref{thm:introduction}. 
%since $\m F_{\gamma}(X,0)=f|_{\gamma}(X)$.

We prove \ref{item:Introductionthm(i)} by induction on $h$. If $h=1$, then $\Sigma_1^+=\Sigma_1$, and so the equality follows from \S\ref{subsec:BakerFirstStep}. Suppose $h\geq 1$. We want to show that 
\[P_{h+1}=\bigsqcup_{\gamma\in\Sigma_{h+1}^+}\{\m F_\gamma(X,Y)\}.\] 
Let us recall the steps that have to be done to construct the polynomials $\m F_\gamma$, for $\gamma\in\Sigma_{h+1}^+$. We observed that the points in $\singpointnt_h$ correspond to non-zero multiple roots of $f|_\alpha$ for $\alpha\in\Sigma_h^+$. For any $\alpha\in\Sigma_h^+$ and any multiple root $a\in \bar k^\times$ of $f|_\alpha$ do:
\begin{enumerate}[label=(\arabic*)]
    \item Replace $Y$ with $\tilde Y$ in $\m F_\alpha$ so that $\m F_\alpha\in\bar k[X_{j_{\alpha}},\tilde Y]$.
    \item Denote by $p_a$ the point of $\singpointnt_h$ corresponding to $a$. We noted that $\bar{\m G}_{p_a}=X_{j_\alpha}-a$. Since we chose $\tilde{\m G}_{p_a}=\bar{\m G}_{p_a}$, the normal form $\m F_{\alpha,p_a}$ of $\m F_\alpha$ by $\tilde X_{m+1}-\tilde{\m G}_{p_a}$ with respect to the lexicographic order given by $X_{j_\alpha}>\tilde X_{m+1}>\tilde Y$ is 
    \[\m F_{\alpha,p_a}(\tilde X_{m+1},\tilde Y)=\m F_\alpha(\tilde X_{m+1}+a,\tilde Y)\]
    (here $m\in\Z_+$ such that $\alpha\in\Omega_m$).
    \item Draw the Newton polygon $\Delta_{\alpha,p_a}$ of $\m F_{\alpha,p_a}$.
    \item Let $\gamma=\beta\circ_{g_{p_a}}\alpha$ for the normal vector $\beta\in\Z_+^2$ of some edge of $\Delta_{\alpha,p_a}$. From \S\ref{subsec:Newtonpolygons}, we have $\gamma\in\Sigma_{p_a}$.
    \item The fixed matrix $M_\beta=\left(\begin{smallmatrix}\!\delta_1\!\!&\delta_2\!\!\cr\!\beta_1\!\!&\beta_2\!\!\end{smallmatrix}\right)$ gives the change of variables 
    \[(\tilde X_{m+1},\tilde Y)=(X_{m+1},Y)\bullet M_\beta=(X_{m+1}^{\delta_1}Y^{\beta_1}, X_{m+1}^{\delta_2}Y^{\beta_2}).\]
    Via this transformation we define $\m F_\gamma$ to be the unique polynomial in $\bar k[X_{m+1},Y]$ such that $\ord_{X_{m+1}}\m F_\gamma=\ord_Y\m F_\gamma=0$, satisfying
    \[\m F_{\alpha,p_a}(\tilde X_{m+1},\tilde Y)\stackrel{M_\beta}{=}X_{m+1}^{n_X}Y^{n_Y}\cdot \m F_\gamma(X_{m+1},Y),\]
    for some $n_X,n_Y\in\Z$.
    \item In fact, all elements $\gamma\in\Sigma_{p_a}$ equals $\beta\circ_{g_{p_a}}\alpha$ with $\beta\in\Z_+^2$ normal vector of some edge of $\Delta_{\alpha,p_a}$. 
\end{enumerate}
The procedure presented here describes how to construct the polynomials $\m F_\gamma$ for all $\gamma\in\Sigma_{h+1}^+$ knowing the polynomials $\m F_\alpha$, for all $\alpha\in\Sigma_{h, \singpointnt_n}\subseteq\Sigma_h^+$. Comparing it with Algorithm \ref{alg:TheAlgorithm} we see that $P_{h+1}=\bigsqcup_{\gamma\in\Sigma_{h+1}^+}\{\m F_\gamma(X,Y)\}$ since $\bigsqcup_{\alpha\in\Sigma_{h}^+}\{\m F_\alpha(X,Y)\}=P_{h}$ by inductive hypothesis.

We now prove \ref{item:Introductionthm(ii)} by induction on $h$. If $h=1$, then \ref{item:Introductionthm(ii)} follows from \ref{item:Introductionthm(i)} as $\Sigma_1=\Sigma_1^+$ by definition. Suppose then $h\geq 1$. We want to show that $\bigsqcup_{i=1}^{h+1} P_i=\bigsqcup_{\gamma\in\Sigma_{h+1}}\{\m F_\gamma(X,Y)\}$. 
But $\Sigma_{h+1}=\Sigma_{h+1}^+\sqcup\tilde\Sigma_{h}$, so, by \ref{item:Introductionthm(i)} and inductive hypothesis, it suffices to show that
\[\bigsqcup_{\gamma\in\tilde\Sigma_{h}}\{\m F_\gamma(X,Y)\}=\bigsqcup_{\gamma\in\Sigma_{h}}\{\m F_\gamma(X,Y)\}.\]
But this easily follows from the definition of $\tilde\Sigma_{h}$ since $\m F_{\tilde\alpha}=\m F_\alpha$ for any $\alpha\in\Sigma_{\singpointnt_h,h}$ (Notation \ref{nt:Notationfortildealpha}).

To prove \ref{item:Introductionthm(iii)}, first note that from \ref{item:Introductionthm(i)}, for any $h\leq n$ the indexed set $P_h$ is non-empty. Then \ref{item:Introductionthm(iii)} is implied by \ref{item:Introductionthm(ii)} if for any $f_\ell\in P_n$, the polynomial $f|_\ell(X)=f_\ell(X,0)$ has no non-zero multiple roots. But this follows from \ref{item:Introductionthm(i)} since $\bar C_\alpha$ is regular for any $\alpha\in\Sigma_n$, and so $f|_\gamma$ has no multiple roots in $\bar k^\times$ for any $\gamma\in\Sigma_h^+$ as $D_\gamma=k[X_{j_\gamma}^{\pm 1}]$ in this case (Lemma \ref{lem:Dgamma}). As already observed, this concludes the proof of Theorem \ref{thm:introduction}.
\end{proof}

\section{Superelliptic equations}\label{sec:Superelliptic}
Let $k$ be a perfect field and let $\bar k$ be an algebraic closure of $k$. Denote by $G_k$ the absolute Galois group of $k$. As application of the construction presented in the previous sections, we consider a curve $C_{0,k}$ in $\G_{m,k}^2$ defined by an equation
\[y^s=h(x),\]
for some polynomial $h\in k[x]$ and some $s\in\Z_+$ not divisible by $\mathrm{char}(k)$. By convention the polynomial $f(x,y)$ defining $C_{0,k}$ will be $y^s-h(x)$. 
Denote by $C_0$ the curve $C_{0,k}\times_{k}\bar k$. 
Note that $C_0$ is smooth, but may be not connected, e.g.\ when $h(x)$ is an $s$-th power. 
Expand
\[h(x)=\sum_{i=m_0}^d c_i x^i,\qquad c_i\in k,\]
where $c_{m_0}$ and $c_d$ are non-zero.
We want to study a Baker's resolution of $C_0$
\[\dots\stackrel{s_{n+1}}{\twoheadrightarrow}C_{n+1}\stackrel{s_n}{\twoheadrightarrow}C_{n}\stackrel{s_{n-1}}{\twoheadrightarrow}\dots\stackrel{s_1}{\twoheadrightarrow}C_1\]
as in Theorem \ref{thm:Galoismodel}, where the Galois-invariant sets $\singpointnt_n$ which the birational morphisms $s_n$ resolve are as large as possible, i.e.\ $\singpointnt_n=\bigsqcup_{\alpha\in\Sigma_n}\Sing(\bar C_\alpha)$. For the purpose of the construction of the Baker's resolution $x_1=x$.

The Newton polygon $\Delta$ of $f$ always has at least two edges: $\ell_1$ with endpoints $(m_0,0)$, $(0,s)$ and normal vector $\gcd(m_0,s)^{-1}(s,m_0)$, and $\ell_2$ with endpoints $(d,0)$, $(0,s)$ and normal vector $\gcd(d,s)^{-1}(-s,-d)$. If $h$ is a monomial then $\Delta$ is a segment, otherwise $\Delta$ is a triangle. In the latter case, the third edge $\ell$ has endpoints $(m_0,0), (d,0)$ and normal vector $(0,1)$. Construct the completion $C_1$ of $C_0$ with respect to $\Delta$, as described in \S\ref{subsec:BakerFirstStep}. For any $i=1,2$ let $v_i\neq(0,1)$ be the normal vector of $\ell_i$ and set $\alpha_i=(v_i,())\in\Sigma_1$. From Proposition \ref{prop:reductionfalphabasecase} it follows that
\[f|_{\alpha_i}= X_1^\ast\cdot (a_l X_1^l+a_0),\qquad l\in\Z_+,\,\,\,a_0,a_l\in k^\times,\]
where $\mathrm{char}(k)\nmid l$. In fact, if $i=1$ then $l=\gcd(m_0,s)$, $a_l=-c_{m_0}$, $a_0=1$, while if $i=2$ then $l=\gcd(d,s)$, $a_l=1$, $a_0=-c_d$. In particular, $f|_{\alpha_i}$ has no multiple roots in $\bar k^\times$. 
% The next result follows.

% \begin{lem}
% If $h$ is monomial, then $C_1$ an outer regular generalised Baker's model of the smooth completion of $C_0$.
% \end{lem}

Suppose now that $h$ is not a monomial. Let $v=(0,1)$ be the normal vector of $\ell$ and let $\alpha=(v,())$ be the corresponding element of $\Sigma_1$. Consider $\m F_\alpha\in\bar k[X_1,Y]$. Note that since $v=(0,1)$, we can choose $M_\alpha=\lb\begin{smallmatrix}\!1\!&0\!\\\!0\!&1\!\end{smallmatrix}\rb$ and so $\m F_\alpha=f(X_1,Y)$. In particular, $f|_\alpha=f(X_1,0)=-h(X_1)$. Since $D_\alpha=\bar k[X_1^{\pm 1}]$, the singular points of $\bar C_\alpha$ correspond to the non-zero multiple roots of $f|_\alpha$, or, equivalently, to the non-zero multiple roots of $h$. Hence $\singpointnt_1$ is the set of those points. If $\singpointnt_1=\varnothing$, then $C_1$ is (outer) regular. We deduce the following lemma.

\begin{lem}
If $h$ has no multiple root in $\bar k^\times$, then $C_1$ is an outer regular (generalised) Baker's model of the smooth completion of $C_0$.
\end{lem}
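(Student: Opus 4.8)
The plan is to reduce everything to the outer regularity of $C_1$; the rest is formal. By Definition~\ref{defn:outerregular} outer regularity of $C_1$ means that $\bar C_\gamma$ is regular for every $\gamma\in\Sigma_1$. Recall from \S\ref{subsec:BakerFirstStep} (as used above for superelliptic $f$) that $\Sigma_1=\{\alpha_1,\alpha_2\}$ when $h$ is a monomial, so that $\Delta$ is a segment, and $\Sigma_1=\{\alpha_1,\alpha_2,\alpha\}$ when $h$ is not a monomial, where $\alpha=((0,1),())$ corresponds to the horizontal edge $\ell$ of $\Delta$. For each $\gamma\in\Sigma_1$ one has $j_\gamma=1$ and $D_\gamma=\bar k[X_1^{\pm1}]$, hence $\bar C_\gamma=\Spec\bar k[X_1^{\pm1}]/(f|_\gamma)$. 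By Lemma~\ref{lem:regularpointsofbarCgamma} — equivalently, because a finite $\bar k$-algebra of this shape is a product of fields precisely in that case — the scheme $\bar C_\gamma$ is regular if and only if $f|_\gamma$ has no repeated root in $\bar k^\times$.

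Next I would check this root condition for each of the (at most three) polynomials $f|_\gamma$. For $\gamma=\alpha_1,\alpha_2$ the reduction already computed above gives $f|_{\alpha_i}=X_1^{\ast}\cdot(a_lX_1^{l}+a_0)$ with $a_0,a_l\in k^\times$ and $l\in\Z_+$ such that $\mathrm{char}(k)\nmid l$ (indeed $l=\gcd(m_0,s)$ or $\gcd(d,s)$, a divisor of $s$, and $\mathrm{char}(k)\nmid s$); since $a_0\neq0$ and $\mathrm{char}(k)\nmid l$, the binomial $a_lX_1^{l}+a_0$ has $l$ pairwise distinct nonzero roots in $\bar k$, so $f|_{\alpha_i}$ has no repeated root in $\bar k^\times$, with no assumption on $h$ at all. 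For $\gamma=\alpha$, which occurs only when $h$ is not a monomial, the choice of $M_\alpha$ as the identity matrix made above gives $\m F_\alpha=f(X_1,Y)$ and hence $f|_\alpha=f(X_1,0)=-h(X_1)$; the hypothesis that $h$ has no multiple root in $\bar k^\times$ says exactly that $f|_\alpha$ has no repeated root in $\bar k^\times$. Thus $\bar C_\gamma$ is regular for every $\gamma\in\Sigma_1$, i.e.\ $C_1$ is outer regular.

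Finally I would conclude. By the observation following Definition~\ref{defn:outerregular} (Remark~\ref{rem:nonsingularbarjacobian}) an outer regular curve is regular, and since $C_1$ is the completion of $C_0$ with respect to its Newton polygon, applying Theorem~\ref{thm:secondmodel} to the Baker's resolution of $C_0$ with $\singpointnt_n=\varnothing$ for all $n$ (admissible here precisely because $C_1$ is already outer regular) shows that $C_1$ is an outer regular generalised Baker's model of the smooth completion $C$ of $C_0$; in particular $C_1\simeq C$ by Lemma~\ref{lem:birationalequalinclusion}. I do not expect a real obstacle: the argument simply assembles the explicit reductions $f|_{\alpha_1},f|_{\alpha_2},f|_\alpha$ already at hand, and the only points needing a little care are the case distinction according to whether $h$ is a monomial — so that $\Sigma_1$ is enumerated correctly — and the harmless powers of $X_1$ in the definition of $f|_\gamma$, which are irrelevant for roots in $\bar k^\times$.
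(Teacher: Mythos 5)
Your proof is correct and follows essentially the same route as the paper: the reductions $f|_{\alpha_1},f|_{\alpha_2}$ are separable binomials (up to a power of $X_1$) because $\mathrm{char}(k)\nmid s$, while with $M_\alpha$ the identity one gets $f|_\alpha=-h(X_1)$, so the hypothesis on $h$ forces $\singpointnt_1=\varnothing$ and hence outer regularity of $C_1$. Your final appeal to Theorem~\ref{thm:secondmodel} is an unnecessary (but harmless) detour, since once $C_1$ is outer regular, hence regular, it is a generalised Baker's model directly from Definition~\ref{defn:GeneralisedBakerModelAlgebraicallyClosed} with $n=1$.
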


Suppose $\singpointnt_1\neq\varnothing$. Construct the morphism $s_1:C_2\rightarrow C_1$ resolving $\singpointnt_1$. 
Let $v$ and $\alpha$ as above. Rename the variable $Y$ of $\m F_\alpha$ to $\tilde Y$, so that $\m F_\alpha\in \bar k[X_1,\tilde Y]$. Let $p\in\singpointnt_1$ and let $r\in\bar k^\times$ be the multiple root of $h$ corresponding to $p$. 
One has $\bar{\m G}_p=X_1-r$. Note that $\bar{\m G}_p$ does not divide $\m F_\alpha$, so choose $\tilde{\m G}_p=\bar{\m G}_p$. 
Then \[\m F_{\alpha,p}(\tilde X_2,\tilde Y)=\m F_\alpha(\tilde X_2+r,\tilde Y)=f(\tilde X_2+r,\tilde Y)=\tilde Y^s-h(\tilde X_2+r).\]
It follows that the Newton polygon $\Delta_{\alpha,p}$ of $\m F_{\alpha,p}$ has a unique edge $\ell_r$ with normal vector in $\Z_+^2$. Denoting by $m_r$ the multiplicity of the root $r$ of $h$, the endpoints of $\ell_r$ are $(m_r,0)$, $(0,s)$ and $\beta_r=\gcd(m_r,s)^{-1}(s,m_r)$ is its normal vector. 
Let $\gamma_r=\beta_r\circ_{g_p}\alpha$, where $g_p$ is the polynomial related to $\tilde{\m G}_p$ by $M_\alpha$. Define $h_r(x)=h(x)/(x-r)^{m_r}\in\bar k[x]$. Then Proposition \ref{prop:reductionfgamma} implies 
\[f|_{\gamma_r}(X_2)=X_2^\ast\cdot(-a_rX_2^{\gcd(m_r,s)}+1),\]
where $a_r=h_r(r)$. In particular, since $\mathrm{char}(k)\nmid s$, the polynomial $f|_{\gamma_r}$ has no multiple root in $\bar k^\times$. Therefore $\bar C_{\gamma_r}$ is regular for any non-zero multiple root $r$ of $h$. Moreover, $\bar C_{\tilde\alpha}$ is also regular as $\bar C_{\tilde\alpha}\simeq \bar C_\alpha\setminus \singpointnt_1$. Recall the notation $\tilde\Sigma_1=\hat\Sigma_1\cup\{\tilde\alpha\}$, where $\hat\Sigma_1=\Sigma_1\setminus\{\alpha\}$. Since
\[\Sigma_2=\{\gamma_r\mid r\,\,\text{multiple root of }h\}\cup\tilde\Sigma_1\] 
the schemes $\bar C_\gamma$ are regular for all $\gamma\in\Sigma_2$. We obtain the following result.

\begin{lem}
If $h$ has multiple roots in $\bar k^\times$, then $C_1$ is singular, but $C_2$ is an outer regular generalised Baker's model of the smooth completion of $C_0$.
\end{lem}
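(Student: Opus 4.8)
The plan is to read off both halves of the statement from the analysis carried out just above the lemma, assembling results already proved. First, for the singularity of $C_1$: since $h$ has a nonzero multiple root in $\bar k^\times$, the polygon $\Delta$ is a triangle and the chart $\alpha=((0,1),())\in\Sigma_1$ is available; choosing $M_\alpha$ to be the identity gives $\m F_\alpha=f(X_1,Y)$ and $f|_\alpha=-h(X_1)$. As $D_\alpha=\bar k[X_1^{\pm 1}]$, Lemma \ref{lem:regularpointsofbarCgamma} shows $\bar C_\alpha$ is singular exactly at the points corresponding to the nonzero multiple roots of $h$, so $\singpointnt_1=\Sing(\bar C_\alpha)\neq\varnothing$, and Remark \ref{rem:nonsingularbarjacobian} transports a singular point of $\bar C_\alpha$ to a singular point of $C_1$; hence $C_1$ is singular.

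Next I would form the morphism $s_1\colon C_2\to C_1$ resolving $\singpointnt_1$ and check that $C_2$ is outer regular, i.e.\ that $\bar C_\gamma$ is regular for every $\gamma\in\Sigma_2=\{\gamma_r\mid r\ \text{a nonzero multiple root of }h\}\cup\hat\Sigma_1\cup\{\tilde\alpha\}$, where $\hat\Sigma_1=\Sigma_1\setminus\{\alpha\}$. The charts in $\hat\Sigma_1$ are the $\alpha_i$ ($i=1,2$) from the base step, for which $f|_{\alpha_i}=X_1^\ast(a_lX_1^l+a_0)$ with $\mathrm{char}(k)\nmid l$ and $a_0,a_l\in k^\times$; such a binomial has only simple roots in $\bar k^\times$, so $\bar C_{\alpha_i}$ is regular by Lemma \ref{lem:regularpointsofbarCgamma}. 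The chart $\bar C_{\tilde\alpha}$ is isomorphic to $\bar C_\alpha\setminus\singpointnt_1$ by Lemma \ref{lem:Ctildealpha}, hence regular because we removed all of $\Sing(\bar C_\alpha)$.

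The substantive step is the regularity of each $\bar C_{\gamma_r}$, where $r$ is a nonzero multiple root of $h$ of multiplicity $m_r$. Taking $\tilde{\m G}_p=\bar{\m G}_p=X_1-r$, the normal form is $\m F_{\alpha,p}(\tilde X_2,\tilde Y)=f(\tilde X_2+r,\tilde Y)=\tilde Y^s-h(\tilde X_2+r)$, whose Newton polygon $\Delta_{\alpha,p}$ has exactly one edge with normal vector in $\Z_+^2$, namely $\beta_r=\gcd(m_r,s)^{-1}(s,m_r)$. Applying Proposition \ref{prop:reductionfgamma} to that edge gives $f|_{\gamma_r}(X_2)=X_2^\ast(-a_rX_2^{\gcd(m_r,s)}+1)$ with $a_r=h_r(r)\neq 0$, where $h_r(x)=h(x)/(x-r)^{m_r}$; since $\mathrm{char}(k)\nmid s$ and $\gcd(m_r,s)\mid s$, we have $\mathrm{char}(k)\nmid\gcd(m_r,s)$, so this binomial has no multiple root in $\bar k^\times$, and $\bar C_{\gamma_r}$ is regular by Lemma \ref{lem:regularpointsofbarCgamma} since $D_{\gamma_r}=\bar k[X_2^{\pm 1}]$ by Lemma \ref{lem:Dgamma}. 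Hence $C_2$ is outer regular, and in particular regular by Remark \ref{rem:nonsingularbarjacobian}. Finally, as $\Sigma_1$ and $\singpointnt_1$ are $G_k$-invariant and the choices of $\tilde{\m G}_p$ and of the matrices $M_\beta$ can be made so that $\Sigma_2$ satisfies the additional property \ref{eqn:Bakeradditionalprop}, Theorem \ref{thm:Galoismodel}\ref{item:Galoismodelitem(ii)} shows $C_2/G_k$ is a generalised Baker's model of the smooth completion of $C_0$, outer regular by the above. The only real obstacle is the Newton-polygon bookkeeping in this last step — isolating the edge of $\Delta_{\alpha,p}$ with normal vector in $\Z_+^2$ and computing $f|_{\gamma_r}$ precisely enough to see that $\mathrm{char}(k)\nmid s$ removes every multiple root — after which the statement is a direct application of the general theory.
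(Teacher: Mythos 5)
Your verification that $C_2$ is outer regular is essentially the paper's own argument, chart by chart: $\bar C_{\alpha_1},\bar C_{\alpha_2}$ via the binomial shape of $f|_{\alpha_i}$, $\bar C_{\tilde\alpha}\simeq\bar C_\alpha\setminus\singpointnt_1$ via Lemma \ref{lem:Ctildealpha}, and $\bar C_{\gamma_r}$ via the normal form $\tilde Y^s-h(\tilde X_2+r)$, the single edge of $\Delta_{\alpha,p}$ with normal vector in $\Z_+^2$, Proposition \ref{prop:reductionfgamma} and $\mathrm{char}(k)\nmid\gcd(m_r,s)$. That part is correct and matches the source. (Your closing appeal to Theorem \ref{thm:Galoismodel}\ref{item:Galoismodelitem(ii)} proves the statement about the quotient $C_2/G_k$ over $k$; the lemma itself is about the $\bar k$-curve $C_2$, which follows directly from outer regularity, hence regularity, and the definition of a generalised Baker's model over an algebraically closed field. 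This is cosmetic, not a gap.)

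The genuine problem is your justification that $C_1$ is singular. You invoke Remark \ref{rem:nonsingularbarjacobian} to ``transport a singular point of $\bar C_\alpha$ to a singular point of $C_1$'', but that remark states the opposite implication: every singular point of $C_\alpha$ lies over a singular point of $\bar C_\alpha$ (equivalently, $C_n$ singular $\Rightarrow$ some $\bar C_\alpha$ singular). The converse is false in general --- this is exactly the gap between regularity and outer regularity that the paper emphasizes; in \S\ref{sec:Example} the curve $C_3$ is regular while $\bar C_{\gamma_5}$ is singular, and already in the present family with $s=1$ the chart $C_\alpha:\, Y-h(X_1)=0$ is smooth even though $\bar C_\alpha$ is singular at every multiple root of $h$. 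So this step needs a direct argument, which is where $s\geq 2$ actually enters: at a point $(X_1,Y)=(r,0)$ of $C_\alpha$ with $r\in\bar k^\times$ a multiple root of $h$, one has $\m F_\alpha=Y^s-h(X_1)$ with $\partial_{X_1}\m F_\alpha(r,0)=-h'(r)=0$ and $\partial_Y\m F_\alpha(r,0)=sY^{s-1}|_{Y=0}=0$, so the point is singular on $C_\alpha$ and hence on $C_1$, since $C_\alpha$ is open in $C_1$. With this replacement the proof is complete (for $s\geq 2$); as written, the singularity claim is unsupported.
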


\begin{rem}
Note that $C_2=\bigcup_{\gamma\in\Sigma_2} C_\gamma$ since $C_0\subseteq C_\gamma$ for any $\gamma\in\Sigma_2$.
\end{rem}

% Note that $C_0\subseteq C_\gamma$ for any $\gamma\in\Sigma_2$. This is trivial when $\gamma\in\Sigma_1$, while if $\gamma\notin\Sigma_1$ it follows from the fact that for any multiple root $r\in\bar k^\times$ of $h$ the open $D(X_1-r)\cap C_{0,\alpha}$ of $C_\alpha$ equals $C_{0,\alpha}=C_0$. Thus $C_2=\bigcup_{\gamma\in\Sigma_2} C_\gamma$.

We want to give an explicit description of the curve $C_{2,k}=C_2/G_k$, when $h$ has multiple roots in $\bar k^\times$. First note that for any $\gamma\in\tilde\Sigma_1$ the polynomials defining the curves $C_\gamma$ have coefficients in $k$. Therefore $G_k\subseteq\mathrm{Aut}(C_\gamma)$ for all $\gamma\in\tilde\Sigma_1$ and the charts $C_\gamma/G_k$ of $C_{2,k}$ easily follows. It remains to describe the curve $\big(\bigcup_{\sigma\in G_k}C_{\gamma_{\sigma(r)}}\big)/G_k$ for any non-zero multiple root $r$ of $h$.  

Let $g\in k[x]$ be the minimal polynomial of a multiple root $r\in\bar k^\times$ of $h$. Let $m_r$, $h_r$, $\beta_r$, $\gamma_r$ as above. Set $s_r=\gcd(m_r,s)$. Note that $\ord_g(h)=m_r$. If $\lb\begin{smallmatrix}\!\delta_1\!&\delta_2\!\\\!\beta_1\!&\beta_2\!\end{smallmatrix}\rb$ is the matrix attached to $\beta_r$ used for the construction of $C_{\gamma_r}$ then
\[\O_{C_{\gamma_r}}(C_{\gamma_r})=\sfrac{\bar k[X_1^{\pm 1},X_2^{\pm 1},Y]}{(1-X_2^{s_r}\cdot h_r(X_1), X_2^{\delta_1}Y^{\beta_1}-X_1+r)}.\]
Define $g_r, h_g\in \bar k[x]$ by $g_r(x)=g(x)/(x-r)$, $h_g(x)=h(x)/g(x)^{m_r}$. 
% We want to show that $\m G_r:=g_r(X_1)$ is invertible in $\O_{C_{\gamma_r}}(C_{\gamma_r})$. Let $q$ be a point of $C_{\gamma_r}$. If $\ord_q(Y)>0$, then $\ord_q(\m G_p)>0$ and so $\ord_q(\m G_r)=0$. If $\ord_q(Y)=0$, then $M_{\gamma_r}$ gives an isomorphism $\O_{C_0,q_0}=\O_{C_{\gamma_r},q}$ for some point $q_0\in C_0$. But then $\m G_r$ is invertible in $\O_{C_{\gamma_r},q}$, since $g_r$ is invertible in $\O_{C_0,q_0}$ for any $q_0$. 
Note that $g_r(X_1)$ is invertible in $\O_{C_{\gamma_r}}(C_{\gamma_r})$.
Consider the homomorphism
\[\phi_r:\sfrac{\bar k[X_1^{\pm 1},X_2^{\pm 1},Y]}{(1-X_2^{s_r}\cdot h_g(X_1), X_2^{\delta_1}Y^{\beta_1}-g(X_1))}\longrightarrow
\sfrac{\bar k[X_1^{\pm 1},X_2^{\pm 1},Y]}{(1-X_2^{s_r}\cdot h_r(X_1), X_2^{\delta_1}Y^{\beta_1}-X_1+r)},\]
taking $X_1\mapsto X_1$, $X_2\mapsto X_2\cdot g_r(X_1)^{\beta_2}$, $Y\mapsto Y\cdot g_r(X_1)^{-\delta_2}$. Let $A_g:=\mathrm{Dom}(\phi_r)$. Note that $\Spec A_g=C_{\gamma_g}$, where $\gamma_g=\beta_r\circ_{g}\alpha\in\Omega$. Then $\phi_r$ induces an open immersion $\iota_r:C_{\gamma_r}\hookrightarrow C_{\gamma_g}$.
The glueing of the open immersions $\iota_{\sigma(r)}$, for $\sigma\in G_k$, gives an isomorphism 
\[\big(\textstyle\bigcup_{\sigma\in G_k}C_{\gamma_{\sigma(r)}}\big)\simeq C_{\gamma_g},\] 
commuting with the Galois action. Since $C_{\gamma_g}$ is defined by polynomials with coefficients in $k$, the quotient $C_{\gamma_g}/G_k$ is easy to describe, as required.

% Compute the normal form $\m F_{\alpha,c}\in k[X_1,\tilde T_c,\tilde Y]$ of $\m F_\alpha$ by $\tilde X_2-g(X_1)$ with respect to the lexicographic order $X_1>\tilde X_2>\tilde Y$. Via the change of variable $(\tilde X_2,\tilde Y)=(X_2,Y)\bullet M_{\beta_r}$, for a matrix $M_{\beta_r}$ attached to $\beta_r$, find a polynomial $\m F_g\in k[X_1,X_2,Y]$ such that $\ord_{Y}(\m F_g)=0$ and
% \[\m F_{\alpha,g}(\tilde X_2,\tilde Y)=X_2^{n_T} Y^{n_Y} \cdot \m F_g(X_2,Y),\]
% for some $n_T,n_Y\in\Z$.
% Then \[C_{\gamma_g}=\Spec \sfrac{\bar k[X_1^{\pm 1},X_2^{\pm 1},Y]}{(\m F_g, \m F_2)},\]
%where $\m F_2$ is the image of $\tilde X_2-g(X_1)$ under $M_{\beta_r}$.

\section{Example}\label{sec:Example}
Let $C_{0.\F_2}:f=0\subset \G_{m,\F_2}^2$ with $f=x_1^4+1+y^2+y^3$. Note that $C_{0,\F_2}$ is smooth. Write $C_0=C_{0,\F_2}\times_{\F_2}\bar \F_2$, where $\bar \F_2$ is an algebraic closure of $\F_2$.

\subsection{Construction of $C_1$}

The Newton polygon $\Delta$ of $f$ is
\[\begin{tikzpicture}[scale=0.5]
    \draw[->] (-0.6,0) -- (4.6,0) node[right] {$x_1$};
    \draw[->] (0,-0.6) -- (0,3.6) node[above] {$y$};
    \tkzDefPoint(4,0){A}
    \tkzDefPoint(0,2){B}
    \tkzDefPoint(0,3){C}
    \tkzDefPoint(0,0){O}
    \tkzLabelPoint[below](A){$(4,0)$}
    \tkzLabelPoint[left](C){$(0,3)$}
    \foreach \n in {O,A,B,C}
    \node at (\n)[circle,fill,inner sep=1.5pt]{};
    \draw (A) -- (O) node [midway,below, fill=none] {$\ell_3$};     
    \draw (O) -- (C) node [midway,left, fill=none] {$\ell_1$};
    \draw (C) -- (A) node [midway,right, above, fill=none] {$\ell_2$};
\end{tikzpicture}\]
We want to construct the completion $C_1$ of $C_0$ with respect to $\Delta$ as explained in \S\ref{subsec:BakerFirstStep}.
For any edge $\ell_i$ of $\Delta$ let $\beta_i$ be the normal vector of $\ell_i$. Then $\beta_1=(1,0)$, $\beta_2=(-3,-4)$, $\beta_3=(0,1)$. Let $\alpha_i=(\beta_i,())\in\Sigma_1$ for $i=1,2,3$. Then $\Sigma_1=\{\alpha_1,\alpha_2,\alpha_3\}$ and
\[C_1=C_{\alpha_1}\cup C_{\alpha_2}\cup C_{\alpha_3},\]
where we omitted $C_0$ as $C_0\subset C_\alpha$ for every $\alpha\in\Sigma_1$.
From Proposition \ref{prop:reductionfalphabasecase} 
%compute $f|_{\alpha_3}=X_1^4+1$, $f|_{\alpha_1}=X_1^3+X_1+1$, $f|_{\alpha_2}=X_1+1$. Since
the polynomials
$f|_{\alpha_1}$ and $f|_{\alpha_2}$ are separable (up to a power of $X_1$)
and so
the corresponding curves $C_{\alpha_1}$ and $C_{\alpha_2}$ are regular. 
On the other hand, $1\in \F_2$ is a non-zero multiple root of $f|_{\alpha_3}$, so $C_{\alpha_3}$ may be singular.
Let us compute the defining polynomial $\m{F}_{\alpha_3}$. 
The identity matrix $I\in\SL_2(\Z)$ is attached to $\beta_3$, so we fix $M_{\alpha_3}=I$. Via $I$ we get
\[\m{F}_{\alpha_3}=X_1^4+1+Y^2+Y^3.\]
Then $C_{\alpha_3}=\Spec \bar\F_2[X_1^{\pm1},Y]/(\m{F}_{\alpha_3})$ is singular. 
Thus $C_1$ is not smooth, having $1$ singular point, visible on $C_{\alpha_3}$. 

\subsection{Construction of $C_2$}

Rename the variable $Y$ of $C_{\alpha_3}$ to $\tilde Y$. Let $p$ be the singular point of $C_{\alpha_3}$. Then $\bar{\m G}_{p}=X_1+1$. Choose $\tilde{\m G}_{p}=\bar{\m G}_{p}$. 
We will construct the morphism $s_1:C_2\rightarrow C_1$ resolving the set $\singpointnt_1=\{p\}$. Note that $\singpointnt_1=\bigsqcup_{\alpha\in\Sigma_1}\Sing(\bar C_\alpha)$.
Let $\alpha=\alpha_3$ and $\beta=\beta_3$.
Then
\[\tilde{\m{G}}_{p}\lb(x_1,y)\bullet M_{\alpha}^{-1}\rb=x_1+1,\] 
so $g_p=x_1+1\in \F_2[x_1,y]$ is the polynomial related to $\tilde{\m G}_{p}$ by $M_\alpha$. Define $g_2=g_p$ and $f_2=x_2-g_2$. Note that since $\singpointnt_1$ consists of a single point, we have $\tilde{\m G}_{\singpointnt_1}=\tilde{\m G}_p$ and $g_{\singpointnt_1}=g_p$. Then $\alpha_p=\tilde\alpha$. Compute $\ord_{\beta}(g_p)=0$ and $\tilde\alpha=\alpha_p=(0,1)\circ_{g_{\singpointnt_1}}\alpha=((0,0,1),(g_2))$. Then
\[C_{\tilde\alpha}=C_{\alpha_p}=\Spec\frac{\bar \F_2[X_1^{\pm 1},\tilde X_2^{\pm 1},\tilde Y]}{(\m{F}_{\alpha_3}, \tilde X_2+X_1+1)}.\]
The normal form of $\m{F}_{\alpha_3}$ by $\tilde X_2-\m{G}$ with respect to the lexicographic order given by $X_1>\tilde X_2>\tilde Y$ is
\[\m{F}_{\alpha,p}=\m F_\alpha\big(\tilde X_2+1,\tilde Y\big)=\tilde X_2^4+\tilde Y^2+\tilde Y^3.\]
The Newton polygon of $\m{F}_{\alpha,p}$ is
\[\begin{tikzpicture}[scale=0.5]
    \draw[->] (-0.6,0) -- (4.6,0) node[right] {$\tilde X_2$};
    \draw[->] (0,-0.6) -- (0,3.6) node[above] {$\tilde Y$};
    \tkzDefPoint(4,0){A}
    \tkzDefPoint(0,2){B}
    \tkzDefPoint(0,3){C}
    \tkzLabelPoint[below](A){$(4,0)$}
    \tkzLabelPoint[below left](B){$(0,2)$}
    \tkzLabelPoint[right](C){$(0,3)$}
    \foreach \n in {A,B,C}
    \node at (\n)[circle,fill,inner sep=1.5pt]{};
    \draw (A) -- (B) node [midway,below, fill=none] {$\ell_4$};     
    \draw (B) -- (C);
    \draw (C) -- (A);
\end{tikzpicture}\]
There is only $1$ edge, denoted $\ell_4$, with normal vector in $\Z_+^2$. The normal vector of $\ell_4$ is $\beta_4=(1,2)$. It follows that $v_4=\beta_4\circ_{g_p}\beta=(0,1,2)$. Hence $\gamma_4=\beta_4\circ_{g_p}\alpha=(v_4,(g_2))$ is the corresponding element of $\Sigma_p$.
Then $\Sigma_2=\{\alpha_1,\alpha_2,\tilde\alpha_3,\gamma_4\}$.

 To check whether $C_{\gamma_4}$ is regular, compute $\m{F}_{\gamma_4}$. The matrix $M_{\beta_4}=\left(\begin{smallmatrix}\!1\!&1\!\\\!1\!&2\!\end{smallmatrix}\right)$, attached to $\beta_4$, defines the change of variables $\tilde X_2=X_2Y$, $\tilde Y=X_2Y^2$, from which we get
\begin{align*}
&\m{F}_{\alpha,p}=X_2^2Y^4\m{F}_{\gamma_4},&&\m{F}_{\gamma_4}=X_2^2+1+X_2Y^2,&\\
&\tilde X_2-\tilde{\m{G}}_p=\m{F}_2,&&\m{F}_2=X_2Y+X_1+1,&
\end{align*}
where $\m F_2$ is the generator of the ideal $\got a_{\gamma_4}$. Therefore the curve
\[C_{\gamma_4}=\Spec \frac{\bar \F_2[X_1^{\pm 1},X_2^{\pm 1},Y]}{(\m{F}_{\gamma_4})+\got a_{\gamma_4}}\]
is singular, and so is the projective curve $C_2=C_{\alpha_1}\cup C_{\alpha_2}\cup C_{\tilde\alpha_3} \cup C_{\gamma_4}$. In the union we omitted $C_0$, as $C_0\subset C_{\alpha_1}$.

\subsection{Construction of $C_3$}
Let $q$ be the singular point of $C_{\gamma_4}$.
We now construct the morphism $s_2:C_3\rightarrow C_2$ resolving $\singpointnt_2=\{q\}$. Let $\gamma=\gamma_4$. Rename the variable $Y$ of $C_{\gamma}$ to $\tilde Y$.
Choose $\tilde{\m{G}}_q=\bar{\m{G}}_q=X_2+1$. By definition
\[M_{\gamma}=((1)\oplus M_{\beta_4})\cdot M_{\alpha_p}=\left(\begin{smallmatrix}1&0&0\\0&1&1\\0&1&2\end{smallmatrix}\right)\cdot \left(\begin{smallmatrix}1&0&0\\0&1&0\\0&0&1\end{smallmatrix}\right)=\left(\begin{smallmatrix}1&0&0\\0&1&1\\0&1&2\end{smallmatrix}\right),\quad M_\gamma^{-1}=\left(\begin{smallmatrix}1&0&0\\0&2&-1\\0&-1&1\end{smallmatrix}\right).\]
Then $g_q=x_2^2+y\in \F_2[x_1,x_2,y]$ is the polynomial related to $\tilde{\m G}_q$ by $M_\gamma$, as 
\[\tilde{\m{G}}_q\lb(x_1,x_2,y)\bullet M_{\gamma}^{-1}\rb=x_2^2y^{-1}+1.\] 
Let $g_3=(x_1+1)^2+y$ be the Laurent polynomial in $k[x_1^{\pm 1}, y^{\pm 1}]$ congruent to $g_q$ modulo $f_2$. Compute $\ord_{v_4}(g_q)=2$. Then \[\tilde\gamma=\gamma_q=(0,1)\circ_{g_q}\gamma=((0,1,2,2),(g_2,g_3)).\]
The normal form of $\m{F}_{\gamma}$ by $\tilde X_3-\tilde{\m{G}}_q$ with respect to the lexicographic order given by $X_2>\tilde X_3>\tilde Y$ is \[\m{F}_{\gamma,q}=\tilde X_3^2+(\tilde X_3+1)\tilde Y^2.\] The Newton polygon of $\m{F}_{\gamma,q}$ is
\[\begin{tikzpicture}[scale=0.5]
    \draw[->] (-0.6,0) -- (2.6,0) node[right] {$\tilde X_3$};
    \draw[->] (0,-0.6) -- (0,2.6) node[above] {$\tilde Y$};
    \tkzDefPoint(2,0){A}
    \tkzDefPoint(0,2){B}
    \tkzDefPoint(1,2){C}
    \tkzLabelPoint[below](A){$(2,0)$}
    \tkzLabelPoint[left](B){$(0,2)$}
    \tkzLabelPoint[right](C){$(1,2)$}
    \foreach \n in {A,B,C}
    \node at (\n)[circle,fill,inner sep=1.5pt]{};
    \draw (A) -- (B) node [midway,below, fill=none] {$\ell_5$};     
    \draw (B) -- (C);
    \draw (C) -- (A);
\end{tikzpicture}\]
There is only $1$ edge, denoted $\ell_5$, with normal vector in $\Z_+^2$. The normal vector of $\ell_5$ is $\beta_5=(1,1)$ and so the corresponding element of $\Sigma_q$ is \[\gamma_5=\beta_5\circ_{g_q}\gamma=((0,1,3,2),(g_2,g_3)).\] 
Hence $\Sigma_3=\{\alpha_1,\alpha_2,\tilde\alpha_3,\tilde\gamma_4,\gamma_5\}$. 

The matrix $M_{\beta_5}=\left(\begin{smallmatrix}\!1\!&0\!\\\!1\!&1\!\end{smallmatrix}\right)$, attached to $\beta_5$, defines the change of variables $\tilde X_3=X_3Y$, $\tilde Y=Y$ from which we get
\begin{align*}
&\m{F}_{\gamma,q}=Y^2\m{F}_{\gamma_5}&&\m{F}_{\gamma_5}=X_3^2+X_3Y+1,&\\
&\tilde X_3-\tilde{\m{G}}_q=\m{F}_3&&\m{F}_3=X_3Y+X_2+1,&
\end{align*}
and $\m F_2=X_2Y+X_1+1$ is the image of the generator of $\got a_\gamma$ under $M_{\beta_5}$.
Then $\got a_{\gamma_5}=(\m F_2,\m F_3)$ and
\[C_{\gamma_5}=\Spec \frac{\bar \F_2[X_1^{\pm 1},X_2^{\pm 1},X_3^{\pm 1},Y]}{(\m{F}_{\gamma_5})+\got a_{\gamma_5}}\]
is regular (even if $f|_{\gamma_5}$ is not separable).
Therefore the curve
\[C_3=C_{\alpha_2}\cup C_{\alpha_3}\cup C_{\tilde\alpha_3} \cup C_{\tilde\gamma_4} \cup C_{\gamma_5}\]
is regular as well, and is a generalised Baker's model of the smooth completion of $C_0$. It is not outer regular, since $\bar C_{\gamma_5}$ has a singular point. One more step is therefore necessary (and sufficient by Proposition \ref{prop:nonsingularisomorphism}) to construct an outer regular generalised Baker's model. Note that in the description of $C_3$ we omitted $C_0$, as $C_0\subset C_{\alpha_1}$. Finally, the polynomials defining the charts $C_\gamma$, $\gamma\in\Sigma_3$ have coefficients in $\F_2$, so the construction of the generalised Baker's model $C_3/G_{\F_2}$ of the smooth completion of $C_{0,\F_2}$ easily follows.

\appendix
\section{Birational smooth hypersurface of a variety}\label{appendix:smoothopen}
Let $k$ be a perfect field. Recall that an algebraic variety $Z$ over $k$, denoted $Z/k$, is a scheme $Z\rightarrow \Spec k$ of finite type.

\begin{lem}\label{lem:GeneralCurvesFunctionRing}
Let $Z/k$ be a geometrically reduced algebraic variety, pure of dimension $n$. Suppose either $n>0$ or $k$ infinite. Then there exists a separable polynomial $f\in k(x_1,\dots,x_n)[y]$, such that $k(Z)=k(x_1,\dots,x_n)[y]/(f)$.
\proof
Let $Z_1,\dots, Z_m$ be the irreducible components of $Z$. From \cite[Proposition 7.1.15]{Liu}, \cite[Lemma 7.5.2(a)]{Liu} it follows that $k(Z)\simeq \bigoplus_{i=1}^mk(Z_i)$. Let $i=1,\dots,m$. As $Z$ is pure, $\dim Z_i=\dim Z=n$. Since $Z_i$ is geometrically reduced and integral, it follows from \cite[Proposition 3.2.15]{Liu} that the field of functions $k(Z_i)$ is a finite separable extension of a purely trascendental extension $k(x_1,\dots,x_n)$. 
Hence there exists a monic irreducible separable polynomial $f_i\in k(x_1,\dots,x_n)[y]$ such that \[k(Z_i)\simeq k(x_1,\dots,x_n)[y]/(f_i).\] 

We want to show that we can inductively choose the polynomials $f_i$ above such that $\gcd(f_i,f_j)=1$ for all $j<i$. Suppose we have fixed $f_1,\dots,f_{i-1}$ for some $i\geq 1$, and let $g_i\in k(x_1,\dots,x_n)[y]$ be any monic irreducible polynomial such that $k(Z_i)\simeq k(x_1,\dots,x_n)[y]/(g_i)$. Since $k(x_1,\dots,x_n)$ is infinite, there exists $c\in k(x_1,\dots,x_n)$ such that $\tau_c g_i\neq f_j$ for any $j<i$, where $\tau_c g_i$ is the polynomial defined by $\tau_c g_i(y)=g_i(y-c)$. But $\tau_c g_i$ and $f_j$ are irreducible monic polynomials, so $\gcd(\tau_c g_i,f_j)=1$. Moreover, $\tau_c g_i$ is separable and
\[k(x_1,\dots,x_n)[y]/(g_i)\simeq k(x_1,\dots,x_n)[y]/(\tau_c g_i)\]
via the map taking $y\mapsto y-c$. Then choose $f_i=\tau_cg_i$.

Thus assume $\gcd(f_i, f_j)=1$ for any $i,j=1,\dots,m$. From the Chinese Remainder Theorem it follows that
\[k(Z)\simeq \bigoplus_{i=1}^mk(Z_i)\simeq \bigoplus_{i=1}^m\frac{k(x_1,\dots,x_n)[y]}{(f_i)}\simeq \frac{k(x_1,\dots,x_n)[y]}{(f)},\]
where $f=\prod_{i=1}^mf_i$.
\endproof
\end{lem}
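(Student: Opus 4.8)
The plan is to reduce to the irreducible case, treat each component separately, and then reassemble via the Chinese Remainder Theorem.

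First I would use that $Z$ is reduced to split its ring of rational functions as a product over irreducible components: writing $Z_1,\dots,Z_m$ for these components, one has $k(Z)\cong\bigoplus_{i=1}^m k(Z_i)$, which follows from the description of the sheaf of meromorphic functions together with \cite[Proposition 7.1.15, Lemma 7.5.2(a)]{Liu}. Since $Z$ is pure of dimension $n$, every $Z_i$ has dimension $n$; and each $Z_i$ is integral and geometrically reduced, so by \cite[Proposition 3.2.15]{Liu} its function field $k(Z_i)$ is a finite \emph{separable} extension of a purely transcendental subextension, which I may abstractly identify with $k(x_1,\dots,x_n)$. The primitive element theorem then presents each $k(Z_i)$ as $k(x_1,\dots,x_n)[y]/(f_i)$ for some monic irreducible separable $f_i$.

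The substantive part is to arrange that $\prod_i f_i$ is again separable; since the $f_i$ are monic and irreducible this reduces to making them pairwise distinct. I would do this inductively: having fixed $f_1,\dots,f_{i-1}$, choose any presentation $k(Z_i)\cong k(x_1,\dots,x_n)[y]/(g_i)$ with $g_i$ monic irreducible separable, observe that the substitution $y\mapsto y-c$ sends $g_i$ to a monic irreducible separable polynomial $\tau_c g_i$ defining the same field, and pick $c\in k(x_1,\dots,x_n)$ so that $\tau_c g_i\neq f_j$ for all $j<i$; this is possible precisely because $k(x_1,\dots,x_n)$ is infinite. Set $f_i:=\tau_c g_i$. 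With the $f_i$ now pairwise coprime, the Chinese Remainder Theorem yields $k(Z)\cong\bigoplus_i k(x_1,\dots,x_n)[y]/(f_i)\cong k(x_1,\dots,x_n)[y]/(f)$ with $f=\prod_i f_i$, and $f$ is separable as a product of pairwise coprime separable polynomials.

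The main obstacle, and the only place the hypothesis enters, is this last coprimality step: the shifting trick needs the coefficient field $k(x_1,\dots,x_n)$ to be infinite, which is guaranteed by assuming $n>0$ (so that $k(x_1,\dots,x_n)$ contains a transcendental) or, failing that, that $k$ itself is infinite. Everything else is a routine assembly of standard facts on function fields of geometrically reduced varieties and the primitive element theorem.
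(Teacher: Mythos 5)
Your proposal is correct and follows essentially the same route as the paper: the same decomposition $k(Z)\simeq\bigoplus_i k(Z_i)$ via \cite[Proposition 7.1.15, Lemma 7.5.2(a)]{Liu}, the same use of \cite[Proposition 3.2.15]{Liu} and the primitive element theorem, the same inductive translation trick $y\mapsto y-c$ exploiting that $k(x_1,\dots,x_n)$ is infinite, and the same final assembly by the Chinese Remainder Theorem. No gaps.
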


The following result is a variant of \cite[Theorem 5.7]{BMS}.

\begin{thm}\label{thm:Openofvariety}
Let $Z/k$ be a geometrically reduced, separated algebraic variety, pure of dimension $n$. Suppose either $n>0$ or $k$ infinite. Then there exists a smooth affine hypersurface $V$ in $\A_k^{n+1}$ birational to $Z$.
\proof
Lemma \ref{lem:GeneralCurvesFunctionRing} shows that there exists a separable polynomial $f\in k(x_1,\dots,x_n)[y]$ such that $k(Z)\simeq k(x_1,\dots,x_n)[y]/(f)$. Rescaling $f$ by an element of $k(x_1,\dots,x_n)$ if necessary, we can assume that $f$ is a polynomial in $k[x_1,\dots,x_n,y]$ with no irreducible factors in $k[x_1,\dots,x_n]$. Hence the total quotient ring of $k[x_1,\dots,x_n,y]/(f)$ is $k(x_1,\dots,x_n)[y]/(f)$.
It follows that there exists a birational map $Z\-->Z_0$, where $Z_0$ is the affine hypersurface defined by $f(x_1,\dots,x_n,y)=0$. 
Let $A=k[x_1,\dots,x_n,y]/(f)$ be the coordinate ring of $Z_0$.
If $Z_0$ is smooth then we are done. Suppose $Z_0$ is not smooth. Then there exists $h\in J\cap k[x_1,\dots,x_n]$, where $J\subset k[x_1,\dots,x_n,y]$ is the ideal defining the singular locus of $Z_0$.

The rest of the proof follows the spirit of \cite[Theorem 5.7]{BMS}. Expand $f=\sum_{i=0}^dc_iy^i$, where $c_i\in k[x_1,\dots,x_n]$, and $c_0\neq 0$. Via the change of variable $(hc_0^2)y'=y$ we get $f=\sum_{i=0}^d c_i(hc_0^2)^i(y')^i$. Dividing by $c_0$, we define $f'=1+\sum_{i=1}^d c_ic_0^{i-1}(hc_0y')^i$ and $Z_0'=\Spec k[x_1,\dots,x_n,y']/(f')$. Then via the homomorphism $y\mapsto (hc_0^2)y'$ we see that $Z_0'$ is isomorphic to the smooth dense open subvariety $D(hc_0)$ of $Z_0$. Thus $Z_0'$ is a smooth affine hypersurface in $\A_k^{n+1}$ birational to $Z$.
\endproof
\end{thm}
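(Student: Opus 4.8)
The plan is to reduce the statement to a concrete affine hypersurface model of $Z$ and then to excise its singular locus by a rescaling of the last coordinate, following the strategy of \cite[Theorem 5.7]{BMS}. First I would invoke Lemma \ref{lem:GeneralCurvesFunctionRing}: since $Z$ is geometrically reduced and pure of dimension $n$, and either $n>0$ or $k$ is infinite, there is a separable polynomial $f\in k(x_1,\dots,x_n)[y]$ with $k(Z)\simeq k(x_1,\dots,x_n)[y]/(f)$. After multiplying $f$ by a suitable element of $k(x_1,\dots,x_n)$ one may assume $f\in k[x_1,\dots,x_n,y]$ with no irreducible factor lying in $k[x_1,\dots,x_n]$; then the total ring of fractions of $A:=k[x_1,\dots,x_n,y]/(f)$ is exactly $k(x_1,\dots,x_n)[y]/(f)\simeq k(Z)$, so the affine hypersurface $Z_0=\Spec A$ in $\A_k^{n+1}$ is birational to $Z$. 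If $Z_0$ is already smooth we are done, so we may assume it is singular.

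The crucial step is to produce a nonzero polynomial $h\in k[x_1,\dots,x_n]$ vanishing on $\Sing(Z_0)$. Let $J\subseteq k[x_1,\dots,x_n,y]$ be the ideal defining $\Sing(Z_0)$; I want to show $J\cap k[x_1,\dots,x_n]\neq 0$, i.e.\ that $\Sing(Z_0)$ does not dominate $\A_k^n$ under the projection forgetting $y$. This is where separability of $f$ in $y$ enters: the generic fibre of $Z_0\to\A_k^n$ is $\Spec k(x_1,\dots,x_n)[y]/(f)$, a finite separable, hence regular (as $k$ is perfect), $k(x_1,\dots,x_n)$-algebra, so the generic point of each component of $Z_0$ is a regular point. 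Therefore $\Sing(Z_0)$ meets every component in a proper closed subset, hence lies over a proper closed subset of $\A_k^n$, which gives the desired $h$.

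Finally I would carry out the rescaling. Writing $f=\sum_{i=0}^d c_i y^i$ with $c_i\in k[x_1,\dots,x_n]$ and $c_0\neq 0$, the substitution $y=(hc_0^2)y'$ followed by division by $c_0$ yields $f'=1+\sum_{i=1}^d c_i c_0^{\,i-1}(hc_0 y')^i\in k[x_1,\dots,x_n,y']$, and the homomorphism $y\mapsto (hc_0^2)y'$ identifies $Z_0':=\Spec k[x_1,\dots,x_n,y']/(f')$ with the principal open $D(hc_0)\subseteq Z_0$. Since $h$ vanishes on $\Sing(Z_0)$, the open subscheme $D(hc_0)$ is smooth, hence so is $Z_0'$; and $Z_0'$ is by construction an affine hypersurface in $\A_k^{n+1}$ birational to $Z$, so $V=Z_0'$ works. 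I expect the main obstacle to be the nonvanishing assertion $J\cap k[x_1,\dots,x_n]\neq 0$ — making precise that separability of $f$ in $y$ forces the singular locus to sit over a proper closed subset of $\A_k^n$ — together with the routine but slightly fiddly bookkeeping that the open-immersion and smoothness claims survive clearing denominators and the rescaling.
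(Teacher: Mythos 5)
Your proposal is correct and takes essentially the same route as the paper: the reduction via Lemma \ref{lem:GeneralCurvesFunctionRing} to a hypersurface $Z_0:f=0$ with $f$ separable in $y$ and no factors in $k[x_1,\dots,x_n]$, and the BMS-style rescaling $y=(hc_0^2)y'$ producing $f'$ and $Z_0'\simeq D(hc_0)$, are exactly the paper's argument. The only difference is that you supply a justification (regularity of $Z_0$ at the points over the generic point of $\A_k^n$, coming from separability of $f$, so that $\Sing(Z_0)$ has dimension at most $n-1$ and lies over a proper closed subset) for the existence of a nonzero $h\in k[x_1,\dots,x_n]$ vanishing on $\Sing(Z_0)$, a point the paper asserts without comment; your argument for it is sound.
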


\begin{lem}\label{lem:birationalequalinclusion}
If a smooth affine curve $C_0/k$ is birational to a smooth projective curve $C/k$, then $C$ is isomorphic to the smooth completion of $C_0$. Equivalently, there exists an open immersion with dense image $C_0\hookrightarrow C$.
\proof
Since $C$ is complete and $C_0$ is smooth and separated (as affine), the birational map $C_0\--> C$ uniquely extends to a separated birational morphism $\iota: C_0\rightarrow C$. Denoting by $\tilde C$ the smooth completion of $C_0$ note that $\iota$ decomposes into the canonical open immersion $C_0\hookrightarrow \tilde C$ and the morphism $\tilde\iota:\tilde C\rightarrow C$ extending the rational map given by $\iota$. Therefore it suffices to prove that $\tilde\iota$ is an isomorphism.

First note that $\tilde \iota$ is proper by \cite[Proposition 3.3.16(e)]{Liu} since $\tilde C$ and $C$ are complete. Furthermore, both $\tilde C$ and $C$ are smooth, so they are geometrically reduced and have irreducible connected components. For any connected component $\tilde U$ of $\tilde C$ there is a connected component $U$ of $C$ such that $\tilde \iota$ restricts to a morphism $\iota_U:\tilde U\rightarrow U$. Note that $\iota_U$ is a proper birational morphism, as $\tilde U$ is a closed subscheme of $\tilde C$ and $\tilde\iota$ is proper birational. Since both $\tilde U$ and $U$ are integral and smooth of dimension $1$, and so normal, \cite[Corollary 4.4.3(b)]{Liu} implies that $\iota_{U}:\tilde U\rightarrow U$ is an isomorphism. It follows that $\tilde \iota: \tilde C\rightarrow C$ is an isomorphism.
\endproof
\end{lem}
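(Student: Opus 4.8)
\textbf{Proof proposal for Lemma \ref{lem:birationalequalinclusion}.}

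The plan is to extend the given birational map to an honest morphism, factor it through the smooth completion, and then recognise the resulting map as an isomorphism using normality in dimension one. First I would observe that $C$ is complete while $C_0$ is smooth (hence regular) and separated, being affine; therefore the birational map $C_0\dashrightarrow C$ extends uniquely to a morphism $\iota\colon C_0\rightarrow C$, since a rational map from a regular one-dimensional scheme to a proper scheme is everywhere defined (\cite[Proposition 3.3.16(e)]{Liu} and the valuative criterion of properness, localising at each closed point of $C_0$). Next, letting $\tilde C$ denote the smooth completion of $C_0$, there is a canonical open immersion $C_0\hookrightarrow\tilde C$ with dense image; the same extension principle applied to $\tilde C$ (again regular, one-dimensional) shows that $\iota$ extends to a morphism $\tilde\iota\colon\tilde C\rightarrow C$ making the obvious triangle commute. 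So it suffices to prove that $\tilde\iota$ is an isomorphism.

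I would then check that $\tilde\iota$ is proper: both $\tilde C$ and $C$ are complete over $k$, and a $k$-morphism between proper $k$-schemes is proper (\cite[Proposition 3.3.16(e)]{Liu}). Since $k$ is perfect, smoothness of $\tilde C$ and $C$ gives that they are geometrically reduced, and their connected components are therefore integral and normal of dimension $1$. The point of recording this is that the lemma does not assume connectedness, so the argument must be run componentwise. For each connected component $\tilde U$ of $\tilde C$, the image $\tilde\iota(\tilde U)$ lies in a single connected component $U$ of $C$ (by connectedness of $\tilde U$), and $\tilde\iota$ restricts to a morphism $\iota_U\colon\tilde U\rightarrow U$ which inherits properness (as $\tilde U$ is closed in $\tilde C$) and birationality (it induces an isomorphism on the common dense open coming from $C_0$).

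Finally, $\iota_U\colon\tilde U\rightarrow U$ is a proper birational morphism between integral, normal curves, so by \cite[Corollary 4.4.3(b)]{Liu} it is an isomorphism; running over all components yields that $\tilde\iota$ is an isomorphism, hence $C\simeq\tilde C$ and the composite $C_0\hookrightarrow\tilde C\xrightarrow{\sim}C$ is the desired open immersion with dense image. The one step requiring care — and the place where the ``curves need not be connected'' convention of the paper really matters — is the componentwise reduction: one must verify that $\tilde\iota$ genuinely maps components to components and that the restricted maps remain proper and birational, so that the normality criterion applies on each piece; the rest is formal manipulation of extension-of-rational-maps and properness statements.
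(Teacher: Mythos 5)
Your proposal is correct and follows essentially the same route as the paper's own proof: extend the birational map to $\iota\colon C_0\to C$, factor it through the smooth completion $\tilde C$, note $\tilde\iota$ is proper via \cite[Proposition 3.3.16(e)]{Liu}, and conclude componentwise with \cite[Corollary 4.4.3(b)]{Liu} using normality of the integral one-dimensional components. The componentwise reduction you flag as the delicate point is handled in exactly the same way in the paper.
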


\begin{cor}\label{cor:smoothopenmodel}
Every smooth projective curve $C/k$ has a dense affine open which is isomorphic to a smooth plane curve.
\proof
From Theorem \ref{thm:Openofvariety} there exists a smooth affine plane curve $C_0$ birational to $C$. Then Lemma \ref{lem:birationalequalinclusion} concludes the proof.
\endproof
\end{cor}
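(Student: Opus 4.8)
The plan is to deduce the statement directly from Theorem~\ref{thm:Openofvariety} and Lemma~\ref{lem:birationalequalinclusion}, so the real work is only to check that these two results apply to a smooth projective curve $C/k$ and to read off the conclusion.

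First I would verify the hypotheses of Theorem~\ref{thm:Openofvariety} with $Z = C$. Since $C$ is a curve in the sense of \S\ref{subsec:OutlineandNotation}, it is equidimensional of dimension $1$, hence pure of dimension $n = 1$; being projective it is separated; and being smooth it is geometrically reduced. As $n = 1 > 0$, the dichotomy ``$n > 0$ or $k$ infinite'' is met with no extra assumption on $k$. Theorem~\ref{thm:Openofvariety} then produces a smooth affine hypersurface $V \subset \A_k^{1+1} = \A_k^2$ birational to $C$; that is, $V$ is a smooth plane curve birational to $C$.

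Next I would apply Lemma~\ref{lem:birationalequalinclusion} to the pair $(V, C)$. The curve $V$ is smooth, affine (being a hypersurface in $\A_k^2$), of dimension $1$, and birational to the smooth projective curve $C$; the lemma then yields an open immersion with dense image $V \hookrightarrow C$. Identifying $V$ with its image, we obtain a dense affine open of $C$ isomorphic to the smooth plane curve $V$, which is precisely the assertion of the corollary.

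I do not expect a genuine obstacle here: the only points requiring care are the verifications that a smooth projective curve satisfies the standing hypotheses of Theorem~\ref{thm:Openofvariety} (geometric reducedness, separatedness, purity of dimension), all immediate from the conventions in \S\ref{subsec:OutlineandNotation}, together with the trivial remark that a smooth affine hypersurface in $\A_k^2$ is what we mean by a ``smooth plane curve''. Everything substantive is already packaged in the two cited results.
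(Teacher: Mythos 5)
Your proposal is correct and coincides with the paper's own proof: both invoke Theorem~\ref{thm:Openofvariety} (with $Z=C$, $n=1$) to get a smooth affine plane curve birational to $C$, and then Lemma~\ref{lem:birationalequalinclusion} to upgrade the birational map to a dense open immersion. The hypothesis checks you spell out (purity, separatedness, geometric reducedness, $n>0$) are exactly the implicit ones in the paper's two-line argument.
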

\section{Existence of a Baker's model}\label{appendix:ExistenceBakerModel}
Let $k$ be a perfect field. We say that a curve $C/k$ is \textit{nice} if it is geometrically connected, smooth and projective over $k$.
% \begin{lem}
% Let $C$ be a smooth projective curve. Suppose $C$ is not geometrically connected and that $C_{\bar k}=C\times_k\bar k$ has a connected component of genus $>0$. Then $C$ does not have a Baker's model.
% \proof
% Suppose by contradiction that there exists a smooth curve $C_0\subset \G_{m,k}^2$, birational to $C$, such that the completion $C_1$ of $C_0$ with respect to its Newton polygon $\Delta$ is regular. Since $k$ is perfect $C_1$ is smooth and so $C_{1,\bar k}= C_1\times_k\bar k$ is regular. Note that $C_{1,\bar k}$ is the completion of the curve $C_{0,\bar k}=C_0\times_k\bar k$ with respect to its Newton polygon $\Delta$. But $C_{0,\bar k}$ is birational to $C_{\bar k}$ and so $C_{1,\bar k}\simeq C_{\bar k}$. Then $C_{1,\bar k}$ is not connected. From \cite[Remark 2.6(d)]{Dok} it follows that $\mathrm{vol}(\Delta)=0$. Hence the Newton polygon $\Delta$ is a line segment $\ell$. But then $C_{1,\bar k}\simeq (\P^1)^{l(\ell)}$, where $l(\ell)+1$ is the number of points on $\ell$ with integer coordinates.
% \endproof
% \end{lem}
% In the remainder of this appendix we slightly extend some results in \cite{CV1, CV2} for studying the existence of a Baker's model instead of the nondegeneracy of a curve $C$.
In this appendix we slightly extend some results in \cite{CV1, CV2} for studying the existence of a Baker's model of a nice curve.
Define the \textit{index} of a nice curve $C/k$ to be the smallest extension degree of a field $K/k$ such that $C(K)\neq \varnothing$.

\begin{lem}
Let $C$ be a nice curve of genus $1$. Then $C$ admits a Baker's model if and only if $C$ has index at most $3$.
\proof
Suppose $C$ has index at most $3$. Then by \cite[Lemma 4.1]{CV1} the curve $C$ is nondegenerate. Hence $C$ has an outer regular Baker's model.

Suppose now that $C$ admits a Baker's model. Then there exists a smooth curve $C_0\hookrightarrow C$ defined in $\G_{m,k}^2$ by $f\in k[x^{\pm 1},y^{\pm 1}]$ such that the completion $C_1$ of $C_0$ with respect to the Newton polygon $\Delta$ of $f$ is regular. We follow the spirit of the proof of \cite[Lemma 4.1]{CV1}. Since the arithmetic genus of $C$ is $1$ there is exactly $1$ interior integer point of $\Delta$. There are $16$ equivalence classes of integral polytopes with this condition (see \cite[Appendix]{CV1}). Then without loss of generality we can assume $\Delta$ is in this list. Note that there is an edge $\ell\subseteq\partial\Delta$ such that $\#(\ell\cap\Z^2)\leq 4$. Let $v$ be the normal vector of $\ell$ and $\alpha=(v,())\in\Sigma_1$. Then $f|_{\alpha}$ has at most $3$ roots in $\bar k^\times$ by Proposition \ref{prop:reductionfalphabasecase}. Therefore the splitting field $K$ of $f|_{\alpha}$ has degree $\leq 3$ over $k$. Furthermore, by definition $C_1$ has at least one point defined over $K$ visible on $C_\alpha$. Thus $C_1$, and so $C$, has index at most $3$.
\endproof
\end{lem}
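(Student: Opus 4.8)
The plan is to prove the two implications separately. For the ``if'' direction, assume $C$ has index at most $3$, i.e.\ $C$ has a closed point of degree $\leq 3$. I would then invoke \cite[Lemma 4.1]{CV1}, which says precisely that a genus $1$ nice curve with a point of degree at most $3$ is nondegenerate. Since a nondegenerate curve admits an outer regular Baker's model by definition, and an outer regular Baker's model is in particular a Baker's model, this direction reduces to a citation together with unwinding of definitions.

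For the ``only if'' direction, suppose $C$ admits a Baker's model. By definition there is a smooth curve $C_0\subset\G_{m,k}^2$, cut out by some $f\in k[x^{\pm 1},y^{\pm 1}]$ and birational to $C$, such that the completion $C_1$ of $C_0$ with respect to the Newton polygon $\Delta$ of $f$ is regular; in particular $C_1\simeq C$ by Lemma \ref{lem:birationalequalinclusion}. The first key step is to pin down $\Delta$: since $p_a(C_1)=p_a(C)=1$ and $p_a(C_1)$ equals the number of interior lattice points of $\Delta$ by \cite[Remark 2.6(d)]{Dok}, $\Delta$ has exactly one interior lattice point. I would then appeal to the classification, up to affine unimodular equivalence, of the $16$ lattice polygons with a single interior lattice point, tabulated in \cite[Appendix]{CV1}; as all the constructions involved are invariant under $\SL_2(\Z)$-changes of coordinates, there is no loss in assuming $\Delta$ is one of these.

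The second key step is the combinatorial observation, checked directly against that list, that every such $\Delta$ has an edge $\ell\subseteq\partial\Delta$ with $\#(\ell\cap\Z^2)\leq 4$ (the extremal case being the triangle with vertices $(0,0),(3,0),(0,3)$, each of whose edges contains exactly $4$ lattice points). Fix such an $\ell$, let $v$ be its normal vector and $\alpha=(v,())\in\Sigma_1$. Proposition \ref{prop:reductionfalphabasecase} then gives $f|_{\alpha}=X_1^d\cdot\sum_{r=0}^{l}c_{i_rj_r}X_1^r$ with $l=\#(\ell\cap\Z^2)-1\leq 3$ and $c_{i_0j_0},c_{i_lj_l}\neq 0$, so the non-monomial factor of $f|_{\alpha}$ is a nonconstant polynomial of degree $l\leq 3$. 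Choosing any root $\rho\in\bar k^\times$ of it, one has $[k(\rho):k]\leq l\leq 3$; moreover $\rho$ corresponds to a closed point of $\bar C_\alpha=\Spec k[X_1^{\pm 1}]/(f|_\alpha)$ with residue field $k(\rho)$, and via $\bar C_\alpha\hookrightarrow C_\alpha\hookrightarrow C_1$ this yields a closed point of $C_1$ of degree $\leq 3$. Since $C_1\simeq C$, the curve $C$ has index at most $3$.

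I expect the main obstacle to be the combinatorial step: verifying case by case over the $16$ equivalence classes that a short edge always exists --- although this is routine once the list is in hand. A secondary point to keep straight is that the index only requires a \emph{single} point of small degree, so the bound comes from the degree of $f|_\alpha$ (equivalently, the lattice length of $\ell$), not from the degree of a splitting field: there is no need to split $f|_\alpha$ completely, only to exhibit one of its roots.
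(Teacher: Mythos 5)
Your proposal is correct and takes essentially the same route as the paper: the ``if'' direction by citing \cite[Lemma 4.1]{CV1} for nondegeneracy, and the ``only if'' direction via the one-interior-lattice-point classification, the existence of an edge with at most $4$ lattice points, and Proposition \ref{prop:reductionfalphabasecase}. Your closing remark in fact tightens the paper's wording, which appeals to the splitting field of $f|_\alpha$ (a priori of degree up to $6$ for a cubic), whereas, as you note, a single root and its residue field already give the point of degree $\leq 3$.
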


\begin{rem}
The lemma above implies that there are nice curves which does not have a Baker's model. Indeed, if $k$ is a number field, \cite{Cla} proves there exist nice curves of genus $1$ of any index.
\end{rem}

\begin{thm}
Let $C$ be a nice curve of genus $g\leq 3$. If $k$ is finite or $C(k)\neq\varnothing$ then $C$ admits a Baker's model.
\proof
The first theorem in \cite{CV1} and \cite[Proposition 3.2]{CV2} show $C$ is nondegenerate except when $C$ is birational to a curve $C_0$ given in $\G_{m,k}^2$ by 
\begin{align*}
    f^{(2)}&=(x+y)^4+(xy)^2+xy(x+y+1)+(x+y+1)^2,& &\text{with }k=\F_2,\text{ or}\\
    f^{(3)}&=(x^2+1)^2+y-y^3, & &\text{with }k=\F_3.
\end{align*}
Recall that if $C$ is nondegenerate then it has an outer regular Baker's model. Therefore it suffices to show that in the two exceptional cases above the completion $C_1$ of the curve $C_0$ with respect to its Newton polygon is smooth.
We use the notation of \S\ref{subsec:IntroductionTheorem}.

Suppose $k=\F_2$ and $C_0: f^{(2)}=0$ over $\G_{m,\F_2}^2$. Note that $C_0$ is smooth. Denote $f=f^{(2)}$. The Newton polygon $\Delta$ of $f$ is
\[\begin{tikzpicture}[scale=0.4]
    \draw[->] (-0.6,0) -- (4.6,0) node[right] {$x$};
    \draw[->] (0,-0.6) -- (0,4.6) node[above] {$y$};
    \tkzDefPoint(4,0){A}
    \tkzDefPoint(0,4){B}
    \tkzDefPoint(2,2){C}
    \tkzDefPoint(2,1){D}
    \tkzDefPoint(1,2){E}
    \tkzDefPoint(1,1){F}
    \tkzDefPoint(2,0){G}
    \tkzDefPoint(0,2){H}
    \tkzDefPoint(0,0){O}
    \tkzLabelPoint[below](A){$(4,0)$}
    \tkzLabelPoint[left](B){$(0,4)$}
    \foreach \n in {O,A,B,C,D,E,F,G,H}
    \node at (\n)[circle,fill,inner sep=1.5pt]{};
    \draw (A) -- (O) node [midway,below, fill=none] {$\ell_1$};     
    \draw (O) -- (B) node [midway,left, fill=none] {$\ell_2$};
    \draw (B) -- (A) node [midway,right, above, fill=none] {$\ell_3$};
\end{tikzpicture}\]
where the normal vectors of the edges $\ell_1$, $\ell_2$, $\ell_3$ of $\Delta$ are respectively $\beta_1=(0,1)$, $\beta_2=(1,0)$, $\beta_3=(-1,-1)$. Then by fixing $\delta_{\beta_1}=(1,0)$, $\delta_{\beta_2}=(-1,-1)$, $\delta_{\beta_3}=(0,1)$ we have
\[
    f_{\ell_i}(X,Y)=(X^2+X+1)^2+X(X+1)Y+(X^2+X+1)Y^2+Y^4,
\]
for every $i=1,2,3$. Note that the points on $Y=0$ are regular points of $C_{\ell_i}$. Thus $C_{\ell}$ is smooth for any edge $\ell$ of $\Delta$ and so $C_1$ is smooth.

Suppose $k=\F_3$ and $C_0: f^{(3)}=0$ over $\G_{m,\F_3}^2$. Note that $C_0$ is smooth. Denote $f=f^{(3)}$. The Newton polygon $\Delta$ of $f$ is
\[\begin{tikzpicture}[scale=0.4]
    \draw[->] (-0.6,0) -- (4.6,0) node[right] {$x$};
    \draw[->] (0,-0.6) -- (0,3.6) node[above] {$y$};
    \tkzDefPoint(4,0){A}
    \tkzDefPoint(0,3){B}
    \tkzDefPoint(2,0){C}
    \tkzDefPoint(0,1){D}
    \tkzDefPoint(0,0){O}
    \tkzLabelPoint[below](A){$(4,0)$}
    \tkzLabelPoint[left](B){$(0,4)$}
    \foreach \n in {O,A,B,C,D}
    \node at (\n)[circle,fill,inner sep=1.5pt]{};
    \draw (A) -- (O) node [midway,below, fill=none] {$\ell_1$};     
    \draw (O) -- (B) node [midway,left, fill=none] {$\ell_2$};
    \draw (B) -- (A) node [midway,right, above, fill=none] {$\ell_3$};
\end{tikzpicture}\]
where the normal vectors of the edges $\ell_1$, $\ell_2$, $\ell_3$ of $\Delta$ are respectively $\beta_1=(0,1)$, $\beta_2=(1,0)$, $\beta_3=(-3,-4)$. We can choose $\delta_{\beta_1}=(1,0)$ so that
\[
    f_{\ell_1}(X,Y)=(X^2+1)^2+Y-Y^3.
\]
The points on $Y=0$ are regular points of $C_{\ell_1}$ and so $C_{\ell_1}$ is smooth. Furthermore, up to a power of $X$ the polynomials $f|_{\ell_2}$ and $f|_{\ell_3}$ equal $X^3+X^2-1$ and $-X+1$ respectively. It follows that the charts $C_{\ell_2}$ and $C_{\ell_3}$ of $C_1$ are regular. Thus $C_1$ is smooth.
\endproof
\end{thm}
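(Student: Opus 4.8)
The plan is to dichotomize according to whether $C$ is nondegenerate. Recall from \S\ref{subsec:Overview} that a nondegenerate curve possesses an outer regular Baker's model, and in particular a Baker's model; so the first move is to invoke the classification of low-genus nondegenerate curves. Under the present hypotheses ($g\leq 3$, and $k$ finite or $C(k)\neq\varnothing$), the first theorem of \cite{CV1} together with \cite[Proposition 3.2]{CV2} shows that $C$ is nondegenerate \emph{unless} it is birational to one of exactly two explicit curves $C_0\subset\G_{m,k}^2$: one over $\F_2$ cut out by $f^{(2)}=(x+y)^4+(xy)^2+xy(x+y+1)+(x+y+1)^2$, and one over $\F_3$ cut out by $f^{(3)}=(x^2+1)^2+y-y^3$. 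For every $C$ outside this short list the theorem follows at once, so the whole problem reduces to these two exceptional curves.

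For each exceptional curve the plan is to exhibit a Baker's model by hand, namely to show that the completion $C_1$ of $C_0$ with respect to its Newton polygon $\Delta$ is already regular --- then $C_1$ itself is the desired Baker's model. Concretely, I would first check that $C_0$ is smooth, draw $\Delta$, list its edges $\ell$ with normal vectors $\beta_\ell$, fix auxiliary vectors $\delta_{\beta_\ell}$ (Notation \ref{nt:deltabeta}), and compute the transformed polynomials $f_\ell(X,Y)$ through the substitution $x=X^{\delta_1}Y^{\beta_1}$, $y=X^{\delta_2}Y^{\beta_2}$ of \S\ref{subsec:IntroductionTheorem}. Since $C_1=\bigcup_{\ell}C_\ell$ with $C_\ell:f_\ell=0$ and $C_\ell\cap\G_{m,k}^2=C_0$, and $C_0$ is smooth, any singular point of $C_\ell$ must sit on $\{Y=0\}$; one then applies the Jacobian criterion to $f_\ell$ along $Y=0$, exactly as in the reasoning behind Remark \ref{rem:nonsingularbarjacobian}.

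The crux --- and the reason these two curves are genuinely exceptional --- is that for them some $f|_\ell(X)=f_\ell(X,0)$ acquires a multiple root in $\bar k^\times$, which is precisely the failure of the nondegeneracy criterion (so $\bar C_\ell$ is singular). The observation that makes the argument go through is that a multiple root of $f|_\ell$ only forces singularity of $\bar C_\ell=C_\ell\cap\{Y=0\}$, not of $C_\ell$: the derivative $\partial f_\ell/\partial Y$ need not vanish at the offending points. Thus the actual content is the explicit verification that for both $f^{(2)}$ and $f^{(3)}$ the full Jacobian $(\partial f_\ell/\partial X,\partial f_\ell/\partial Y)$ is nonvanishing at every point of $C_\ell$ lying on $Y=0$. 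I expect this to come down to short symmetric computations: for $f^{(2)}$ over $\F_2$ the three edges all yield the same $f_\ell=(X^2+X+1)^2+X(X+1)Y+(X^2+X+1)Y^2+Y^4$, whose $Y=0$ locus is regular; for $f^{(3)}$ over $\F_3$ the horizontal edge gives $f_{\ell_1}=(X^2+1)^2+Y-Y^3$, again regular on $Y=0$ because the $Y$-coefficient is a unit there, while the remaining two edges give $f|_\ell$ equal to $X^3+X^2-1$ and $-X+1$ up to powers of $X$, which are separable and so give regular charts by the nondegeneracy criterion. With all charts regular, $C_1$ is smooth and hence a Baker's model of $C$ in both cases, which finishes the proof.
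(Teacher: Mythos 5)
Your proposal is correct and follows essentially the same route as the paper: reduce to the two exceptional curves via \cite{CV1} and \cite[Proposition 3.2]{CV2}, then verify chart by chart that the toric completion $C_1$ is regular, the key point being that the multiple roots of $f|_\ell$ only make $\bar C_\ell$ singular while the Jacobian of $f_\ell$ (in particular $\partial f_\ell/\partial Y$) does not vanish at the points of $C_\ell$ on $Y=0$. The explicit polynomials and edge data you predict are exactly those used in the paper's proof.
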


\end{document}